\date{}
\title{Semiflows induced by length metrics: On the way to extinction}
\author[Á.~Martínez-Pérez]{Álvaro Martínez-Pérez}
\address{Departamento de Análisis Económico y finanzas\\ Universidad de Castila-La Mancha\\ Avda Real Fábrica de Sedas s/n. Talavera de la Reina, 45600,  Spain\\ Telephone: +34 902 204 100. Fax +34  902 204 130.}
\email{alvaro.martinezperez@uclm.es}
\author[M.~A.~Morón]{Manuel A. Morón}
\address{Departamento de Geometría y Topología\\ Universidad Complutense de Madrid\\ Madrid, 28040,  Spain}
\email{mamoron@mat.ucm.es}
\thanks{The authors are partially supported by MTM-2009-07030.}
\begin{document}

\begin{abstract} Bing and Moise proved, independently, that any Peano continuum
admits a length metric $d$. We treat non-degenerate Peano continua with a length metric as evolution systems instead of stationary objects. For any compact length space $(X, d)$ we consider a
semiflow in the hyperspace $2^X$ of all non-empty closed sets in $X$. This semiflow starts with a canonical copy of the
Peano continuum $(X,d)$ at $t = 0$ and, at some time, collapses everything into a
point. We study some properties of this semiflow for several classes of spaces,
manifolds, graphs and finite polyhedra among them.
\end{abstract}

\maketitle

\newtheorem{definicion}{Definition}[section]
\newtheorem{nota}[definicion]{Remark}
\newtheorem{prop}[definicion]{Proposition}
\newtheorem{lema}[definicion]{Lemma}
\newtheorem{obs}[definicion]{Remark}
\newtheorem{teorema}[definicion]{Theorem}
\newtheorem{cor}[definicion]{Corollary}
\newtheorem{ejp}[definicion]{Example}
\newtheorem{contejp}[definicion]{Counterexample}
\newtheorem{quest}[definicion]{Question}

\tableofcontents

\begin{footnotesize}
Keywords:  Peano continuum, length metric, hyperspace, semiflow, dynamical cone.
\end{footnotesize}

\begin{footnotesize}
MSC:  Primary: 37B45; 54E35. Secondary: 54B20.
\end{footnotesize}

\section{Introduction}

Along this paper, $X$ will represent a non-degenerate  Peano continuum, that is  a connected and locally connected compact metrizable space with more than one point. As known, see the preliminaries, we can always define a geodesic metric in $X$ inducing the original topology.

Let $(Y,d)$ be a compact metric space. The hyperspace of all
non-empty closed sets in $Y$ is denoted by $2^Y$. The Hausdorff
metric, $d_H$, induced by $d$ on $2^Y$ is defined to be, given
$A_1,A_2 \in 2^Y$,
$$d_H(A_1,A_2):=max\{\sup_{x\in A_1}\{d(x,A_2)\},
\sup_{y\in A_2}\{d(y,A_1)\}\},$$ or equivalently,
$$d_H(A_1,A_2):= \inf\{\varepsilon>0 \ | \ A_1\subset
B(A_2,\varepsilon) \\ \mbox{ y } A_2\subset B(A_1,\varepsilon)\}.$$
$2^Y_H$ represents $2^Y$ endowed with the Hausdorff metric.

Our starting point comes from a result by S. B.  Nadler in \cite{N3}.
Considering a length metric on a Peano continuum $X$, we define a
semiflow in the hyperspace $2^X_H$ such that for every non-empty
compact subset $A$ and every positive $t$, the image is the
generalized closed ball about $A$ of radius $t$. This semiflow has
a global asymptotically stable attractor which is the point
$\{X\}\in 2^X$. This attractor is reached from any orbit at a
finite time which is at most equal to the diameter of $(X,d)$. In
this sense we say that this evolution system is {\it
extinguishable}.

Our main idea is to consider any non-degenerate Peano continuum $X$ with a length metric $d$ as an extinguishable evolution system
instead of an stationary object. By this way many natural questions appear. In this paper we get only few answers and some adequate examples.

In this semiflow any point  $x\in X$, considered as a unitary
closed subset of $X$, evolves following a geodesic in $2^X_H$ till
reaching the point $\{X\}\in2^X$. This geodesic, which is the
trajectory of $x$, takes values on closed metric balls centered at
$x$. In this sense the canonical copy $X$ inside $2^X_H$ evolves,
with constant speed $1$, through the subspace of real closed
metric balls till the extinction which is produced just at the
time equal to the diameter of $(X,d)$. Then from time $t=0$ to
$t=diam(X)$ we pass from an isometric copy of $(X,d)$ to a trivial
space (i.e. a single point) through geodesics.  Consequently, many
questions naturally arise. For example, how many different
topological types (homotopy types) of spaces appear throughout the
evolution till the extinction? Since any point in the canonical
copy is moving along a geodesic, a general principle is in order:

{\it The whole canonical copy is moving under a minimum energy
principle till the extinction.}

What does this principle imply regarding the topological types
(homotopy types, etc.) you have to reach in the evolution?
Specifically, for each time $t \in [0,diam(X)]$ the semiflow
converts the canonical copy $X$ into the space $X_{t}$ which is
the subspace of all closed balls with radius $t$ with the
Hausdorff metric. How are topologically (homotopically, etc.)
related $X$ to $X_{t}$? Note that $X_{0}=X$ and $X_{diam(X)}$ is a
point. What kind of properties of $(X,d)$ are {\it positively
invariant} under the action of the semiflow? A property P is said
to be positively invariant when if $X$ has P, this implies $X_{t}$
has P for all $t>0$. Obviously, the property of being a Peano
continuum is positively invariant.

Another interesting question is to detect what kind of properties
are {\it Weierstrass-type} for the evolution induced by the
semiflow. Without treating to give an exhaustive definition of
this terminology,  we recall that the classical Weierstrass
Theorem in Real Analysis, later extended to Topology, asserts that
any continuous real function on a closed finite real interval is
bounded and it attains its maximal and minimum value. Think about
the semiflow as a {\it continuous} function $t\longrightarrow
X_{t}$ defined on the real interval $[0,diam(X)]$. Any consistent
definition of a property P to be {\it bounded} for the evolution
should imply that if the semiflow reach only a finite number of
different P-type then P has to be bounded.

Associated to the above paragraphs, there is also a problem on the
stability of the topological type of the canonical copy $(X,d)$.
This problem is related to the existence of a positive time
$\varepsilon_{0}$ such that the semiflow behaves like a flow in
the subspace of closed balls for $t\in [0,\varepsilon_{0})$. This
is a kind of parallelizability   property of the semiflow near
$t=0$. It is equivalent to a very natural geometric question: If
$(X,d)$ is a Peano continuum with a geodesic metric, is it true
that there is a positive real $\varepsilon_{0}$ such that each
closed ball of radius lower than $\varepsilon_{0}$ determines
univocally its center? We call this property as the {\it
topological robustness} of $(X,d)$. It is obviously invariant by isometries.
We show that this property
strongly depends on the (metric) geometry of $(X,d)$ and not on
the topology of $X$. In fact we give an example of bilipschitz
homeomorphic pairs,
 one of them being topologically robust and the other not.

After some preliminaries we describe, in Section $3$, the semiflow
and some of its basic properties. The main element is what we call
the {\it dynamical cone} of $(X,d)$ which is just a dynamical view
of the subspace of closed balls with the Hausdorff metric. Using
this we give a homological model of the evolution. We introduce a
Lyapunov function for the semiflow which plays the role of a
potential function on the canonical copy. It allows us to define
{\it centers}, as points of minimal energy, and {\it extremes}, as
points of maximal energy. Some examples are
 provided.

In section $4$ we focus on topological robustness giving some
positive and negative results. We also give an example of a
compact geodesic space that has to pass through countable many
different topological type before the extinction, although the
homotopy type remains constant.

In section $5$, we prove that being, topologically, a finite graph
is a positively invariant property for the semiflow as it is also
being a finite tree. We study the semiflow for finite graphs with
natural geodesic metrics in some depth. We prove that, in this
case, the topological properties of
 the levels for the semiflow are all {\it bounded} in the sense that the canonical copy only goes through a finite number of topological types until
 the extinction. This means that in the framework of finite metric graphs all the topological properties are {\it Weierstrass-type} properties. This gives
 some meaning to our minimum energy principle. We also put examples that prevent on the monotonicity on the changes of
 topological types in the sense that our example is a graph with the homotopy type of a $1$-sphere and on the way to extinction it has to pass through the
 homotopy type of the figure eight.

In Section $6$ we point out that the Whitney functions in hyperspaces are intrinsically related to Lyapunov functions for the semiflow. Using this we give some geometric model for the semiflow in terms of geodesically complete $\mathbb{R}$-trees and their corresponding end spaces and give a characterization of the topological robustness.

The work in this paper can be extended to the non-compact case by considering complete connected proper metric spaces and the hyperspace of compact subsets
with the Hausdorff metric.

\section{Preliminaries}

To avoid introducing too many concepts which are unnecessary for this work,
we are going to introduce length spaces in a very restricted way. Thus, instead of talking about length structures, in which we must fix a set of admissible paths and a measure for them, see \cite{Bu-Bu},
we are going to start with a metric space and consider the length structure induced by the metric when all the paths are admissible. For the general framework of length spaces we also follow \cite{B-H}.

\begin{definicion} Let $(X,d)$ be a metric space. The length
$l(c)$ of a path $c:[a,b] \to X$ is \[l(c)= \underset{a=t_0\leq
t_1 \leq \cdots \leq t_n=b}{sup}
\sum_{i=0}^{n-1}d(c(t_i),c(t_{i+1})),\] where the supremum is taken among all possible partitions of the interval $a=t_0\leq t_1
\leq \cdots \leq t_n=b$.
\end{definicion}

\begin{definicion} Let $(X,d)$ be a metric space. $d$ is a \emph{length metric} if the distance between every pair of points $x,y\in X$ is equal to the infimum of the lengths of the paths joining them. (If there is not such a path then $d(x,y)=\infty$). If $d$ is a length metric, then $(X,d)$ is called \emph{length space}.
\end{definicion}

\begin{definicion}\label{camin_geod} Let $(X,d)$ be a metric space. A \emph{geodesic path} from $x\in X$ to $y\in X$ is a map $c$ from a closed interval $[0,l]\subset \mathbb{R}$ to $X$ such that $c(0)=x$, $c(l)=y$ and
$d(c(t),c(t'))=|t-t'| \ \forall t,t'\in [0,l]$. In particular,
$l=d(x,y)$. The image of $c$ is called \emph{geodesic segment} with endpoints $x$ and $y$ and it is denoted as $[x,y]$. When the context is clear, we may abuse of the notation and refer to the geodesic segment simply as geodesic.
\end{definicion}

\begin{definicion} If $X$ is a metric space such that for every pair of points there is a geodesic path joining them, then $X$ is said to be
\emph{geodesic}.
\end{definicion}

In general, not every length space is geodesic. Let us consider, for example, the euclidean plane without the origin. In this case, there is no path from $x$ to its symmetric with respect to the origin, $-x$, with length $d(x,-x)$, although it is clear that there are paths between them whose length is as close to that distance as we want.

Next result was proved, independently, by R.
H. Bing and E. Moise, in \cite{Bi} and \cite{Moi} respectively, in 1949.

\begin{teorema}[Bing and Moise] Every Peano continuum $(X,\tau)$
admits a metric $d$ such that $(X,\tau)$ and $(X,d)$ are homeomorphic and $(X,d)$ is a length space.
\end{teorema}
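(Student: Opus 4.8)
The plan is to construct a compatible length metric on $X$ by hand, following Bing's method of \emph{partitionings} (Moise's argument is a parallel combinatorial scheme). It is worth noting at the outset why one cannot simply pass to the induced intrinsic metric $d(x,y)=\inf\{l(c)\mid c \text{ a path in } X \text{ from } x \text{ to } y\}$ of an arbitrary compatible metric $\rho$: rectifiable connectedness is not a topological invariant, and a Peano continuum may carry a compatible metric in which no non-degenerate arc has finite length, so that this $d$ is identically $+\infty$ off the diagonal. Hence the metric genuinely has to be manufactured, by rescaling the small pieces of $X$.

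\textbf{Setup.} Fix any compatible metric $\rho$. Since $X$ is a Peano continuum it has Sierpi\'nski's Property S, and is therefore uniformly locally connected: for every $\varepsilon>0$ there is $\delta(\varepsilon)>0$ such that any two points with $\rho(x,y)<\delta(\varepsilon)$ lie together in a connected subset of $\rho$-diameter $<\varepsilon$ (and, reapplying local connectedness, in a Peano subcontinuum). Using this I would build a sequence $\mathcal G_1,\mathcal G_2,\dots$ of finite covers of $X$ by connected open sets with $\operatorname{mesh}_\rho(\mathcal G_n)\to 0$, with $\mathcal G_{n+1}$ a star-refinement of $\mathcal G_n$, and with the incidence pattern (the nerve) of $\mathcal G_{n+1}$ controlled over that of $\mathcal G_n$; the existence of such refinements is standard in metric spaces.

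\textbf{The metric.} For each $n$ assign to every $g\in\mathcal G_n$ a weight $w_n(g)>0$ and put
\[ d_n(x,y)=\inf\Big\{\,\sum_{j=0}^{k-1}w_n(g_j)\ \,\Big|\,\ g_0,\dots,g_k\in\mathcal G_n,\ x\in g_0,\ y\in g_k,\ g_j\cap g_{j+1}\neq\emptyset\,\Big\}. \]
The weights are chosen recursively, distributing the weight of each $g\in\mathcal G_n$ among the $\mathcal G_{n+1}$-sets refining it, so that: (a) the $d_n$ form a uniformly Cauchy sequence, whence $d:=\lim_n d_n$ exists and is a genuine metric; (b) $d$ is compatible with the topology of $X$ --- here one uses $\operatorname{mesh}\to 0$ and the fact that the $\mathcal G_n$ are honest open covers, so $d$-balls and $\rho$-balls are mutually small; and (c) $d$ is convex, i.e. every pair $x,y$ admits a metric midpoint: splitting a near-optimal $\mathcal G_n$-chain from $x$ to $y$ at a vertex yields two halves of weighted length $\tfrac12 d(x,y)$ up to $\operatorname{mesh}(\mathcal G_n)$, and letting $n\to\infty$ the approximate midpoints converge (using that $(X,d)$, being homeomorphic to the compact $X$ by (b), is complete) to an exact midpoint. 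By the classical theorem of Menger a complete convex metric space is geodesic; in particular $(X,d)$ is a length space, homeomorphic to $X$ by (b).

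\textbf{Main obstacle.} The crux is achieving (a) and (b) simultaneously: the weights at level $n+1$ must be distributed finely enough that the covers still resolve the topology (keeping $d$ compatible) yet consistently enough that $d_{n+1}$ stays within a summable error of $d_n$. This is a bookkeeping recursion on the nerves of the covers, and it is precisely where Bing's and Moise's arguments do their work; the remaining ingredients --- Property S, the existence of the refining covers, and the midpoint/Menger step --- are routine.
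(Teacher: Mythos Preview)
The paper does not prove this theorem. It is stated in the preliminaries as a classical result and attributed to Bing \cite{Bi} and Moise \cite{Moi} (1949), with no argument given; the paper simply uses it to justify that every Peano continuum carries a compatible geodesic metric.

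Your sketch is therefore not to be compared with anything in the paper, but with the original 1949 arguments, and as such it is a reasonable outline of Bing's approach: the construction of a compatible convex metric via a nested sequence of finite connected covers with controlled nerves, the chain-metric definition, and the appeal to Menger's theorem to pass from midpoints to geodesics. You correctly isolate the real difficulty in the recursive weight assignment, which is exactly where Bing's paper does its work. As a sketch it is honest about what is routine and what is not; if you were asked to supply a full proof, the missing content is precisely the construction of the partitionings and the verification that the limiting $d$ is a metric inducing the original topology, neither of which is short.
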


The following is a slightly weak version for length spaces of the Hopf-Rinow theorem (see \cite{Hopf}).
It is also known as Hopf-Rinow-Cohn-Vossen Theorem, see \cite[Theorem 2.5.28]{Bu-Bu}.

\begin{prop}\cite[Proposition 3.7]{B-H}
Let $X$  be a length space. If $X$ is complete and locally compact, then:
\begin{itemize}\item[(1)] Every closed bounded set of $X$ is compact; \item[(2)] $X$ is a geodesic space.
\end{itemize}
\end{prop}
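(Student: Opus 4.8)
The plan is to run the classical Hopf--Rinow--Cohn--Vossen argument, whose heart is the assertion that \emph{every closed metric ball $\bar B(x_0,r)$ in $X$ is compact}; granting that, (1) is immediate since a closed bounded set sits as a closed subset of some such ball, and (2) follows from an Arzel\`a--Ascoli compactness argument for paths. So fix $x_0\in X$ and set $\rho:=\sup\{r\ge 0 : \bar B(x_0,r)\text{ is compact}\}$. First I would note $\rho>0$: local compactness provides a compact neighbourhood $N$ of $x_0$, which contains some open ball $B(x_0,r)$, and then $\bar B(x_0,r/2)$ is a closed subset of the compact set $N$, hence compact. The goal is to show $\rho=\infty$.

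Two bootstrapping steps do this. \emph{Step A (the supremum is attained).} Assuming $\rho<\infty$ and that $\bar B(x_0,r)$ is compact for every $r<\rho$, I would show $\bar B(x_0,\rho)$ is totally bounded; being closed in the complete space $X$ it is then complete, hence compact. Given $\varepsilon>0$, cover the compact ball $\bar B(x_0,\rho-\tfrac{\varepsilon}{3})$ by finitely many balls of radius $\tfrac{\varepsilon}{3}$. For $y$ with $d(x_0,y)\le\rho$, the length hypothesis gives a unit-speed path $c:[0,L]\to X$ from $x_0$ to $y$ with $L<d(x_0,y)+\tfrac{\varepsilon}{3}$; the point $w:=c\bigl(\max(0,L-\tfrac{2\varepsilon}{3})\bigr)$ satisfies $d(x_0,w)\le\rho-\tfrac{\varepsilon}{3}$ and $d(w,y)\le\tfrac{2\varepsilon}{3}$, so $y$ lies within $\varepsilon$ of the finite net. \emph{Step B (the supremum can be exceeded).} Assuming $\rho<\infty$ and $\bar B(x_0,\rho)$ compact, choose for each $x\in\bar B(x_0,\rho)$ a radius $\varepsilon_x>0$ with $\bar B(x,\varepsilon_x)$ compact, extract a finite subcover by balls $B(x_i,\varepsilon_i/2)$, $i=1,\dots,n$, and put $\varepsilon:=\tfrac12\min_i\varepsilon_i$. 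The same length-space trick (for $y$ with $d(x_0,y)\le\rho+\varepsilon$, take the point at arclength $\rho$ along a near-shortest path from $x_0$ to $y$, which lies in $\bar B(x_0,\rho)$ and within $\varepsilon$ of $y$) shows $\bar B(x_0,\rho+\varepsilon)\subseteq\bigcup_i\bar B(x_i,\varepsilon_i)$, a finite union of compact sets, so $\bar B(x_0,\rho+\varepsilon)$ is compact --- contradicting the choice of $\rho$. Hence $\rho=\infty$, every closed ball is compact, and (1) follows.

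For (2), given $x,y\in X$ (with $d(x,y)<\infty$, e.g. $X$ connected, since otherwise there is nothing to join) I would pick paths $c_n$ from $x$ to $y$ with $l(c_n)\le d(x,y)+\tfrac1n$, reparametrise each proportionally to arclength on $[0,1]$ so the family is uniformly $\bigl(d(x,y)+1\bigr)$-Lipschitz with image in the compact ball $\bar B\bigl(x,d(x,y)+1\bigr)$, and apply Arzel\`a--Ascoli to extract a uniformly convergent subsequence with limit $c$. Lower semicontinuity of length under uniform convergence gives $l(c)\le\liminf_n l(c_n)=d(x,y)$, while $l(c)\ge d(x,y)$ trivially; so $c$ is a shortest path, and its arclength reparametrisation is a geodesic from $x$ to $y$.

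I expect the main obstacle to be Step A: choosing the constants so that the ``backtrack'' along an almost-shortest path genuinely lands inside a ball of strictly smaller radius on which compactness is already available, i.e.\ correctly exploiting the length hypothesis to approximate points of $\bar B(x_0,\rho)$ by points of $\bar B(x_0,\rho-\delta)$. Once that total-boundedness estimate is in hand, the positivity of $\rho$, Step B, and the Arzel\`a--Ascoli argument for (2) are all routine.
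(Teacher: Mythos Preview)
The paper does not give its own proof of this statement: it is quoted verbatim as \cite[Proposition~3.7]{B-H} and used as background, with no proof environment following it. So there is nothing in the paper to compare your argument against.

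That said, your proposal is the standard Hopf--Rinow--Cohn--Vossen proof (essentially the one in Bridson--Haefliger), and the outline is correct. The constants in Step~A work as written: with $L<d(x_0,y)+\varepsilon/3\le\rho+\varepsilon/3$ and $w$ at arclength $L-2\varepsilon/3$ you get $d(x_0,w)\le L-2\varepsilon/3<\rho-\varepsilon/3$ and $d(w,y)\le 2\varepsilon/3$, so the total-boundedness estimate goes through. Step~B and the Arzel\`a--Ascoli argument for (2) are routine, as you say. One small caveat in Step~B: the phrase ``the point at arclength $\rho$ along a near-shortest path'' is slightly loose, since the path length may be less than $\rho$ when $d(x_0,y)\le\rho$; you should either restrict to $\rho<d(x_0,y)\le\rho+\varepsilon$ (the other points are already covered) or repeat the $\max(0,\cdot)$ device from Step~A.
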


\begin{obs} In this work we are dealing with geodesic compact spaces. Notice that, using the results above, we may consider on any Peano continuum a metric for which the space is geodesic.
\end{obs}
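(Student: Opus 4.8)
The plan is simply to chain together the two results stated immediately above. Given a non-degenerate Peano continuum $(X,\tau)$, I would first invoke the Bing--Moise theorem to produce a metric $d$ on $X$ such that $(X,\tau)$ and $(X,d)$ are homeomorphic and $(X,d)$ is a length space. Since $(X,\tau)$ is compact and $(X,d)$ is homeomorphic to it, $(X,d)$ is a compact metric space; in particular it is complete (every compact metric space is complete) and it is locally compact (every compact Hausdorff space is locally compact).

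Next I would feed this into the Hopf--Rinow--Cohn--Vossen theorem, i.e.\ the Proposition quoted above from \cite{B-H}, applied to the complete, locally compact length space $(X,d)$: conclusion (2) yields directly that $(X,d)$ is a geodesic space, which is exactly what the Remark asserts. Conclusion (1), that every closed bounded subset of $X$ is compact, is an automatic bonus here since $X$ itself is compact, but it is worth recording because it is what guarantees that the closed metric balls appearing later in the construction of the semiflow are genuinely compact and hence legitimate elements of $2^X$.

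There is essentially no obstacle in this argument; the only points needing (entirely routine) verification are that a space homeomorphic to a compact space is compact, and that a compact metric space is both complete and locally compact. If anything, the point worth emphasizing in the write-up is not difficulty but the \emph{non-uniqueness} of the geodesic metric obtained: the construction leaves enormous freedom in the choice of $d$, and it is precisely this freedom that makes the topological robustness discussed in the introduction a metric rather than a topological invariant, as witnessed by the promised example of bilipschitz homeomorphic pairs with different robustness behaviour.
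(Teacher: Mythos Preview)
Your proposal is correct and is exactly the argument the paper has in mind: the Remark has no separate proof in the paper, it simply points to ``the results above,'' namely the Bing--Moise theorem followed by the Hopf--Rinow--Cohn--Vossen proposition, which is precisely the chain you spell out. Your additional comments on non-uniqueness of $d$ and on compactness of closed balls are accurate and helpful context, though they go beyond what the paper records at this point.
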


Notice that the existence of a geodesic path doesn't mean that it should be unique.

\begin{ejp} Consider the  graph from Figure \ref{ejp_1}
with the natural metric where every edge has length 1.
\end{ejp}

\begin{figure}[ht]
\includegraphics[scale=0.3]{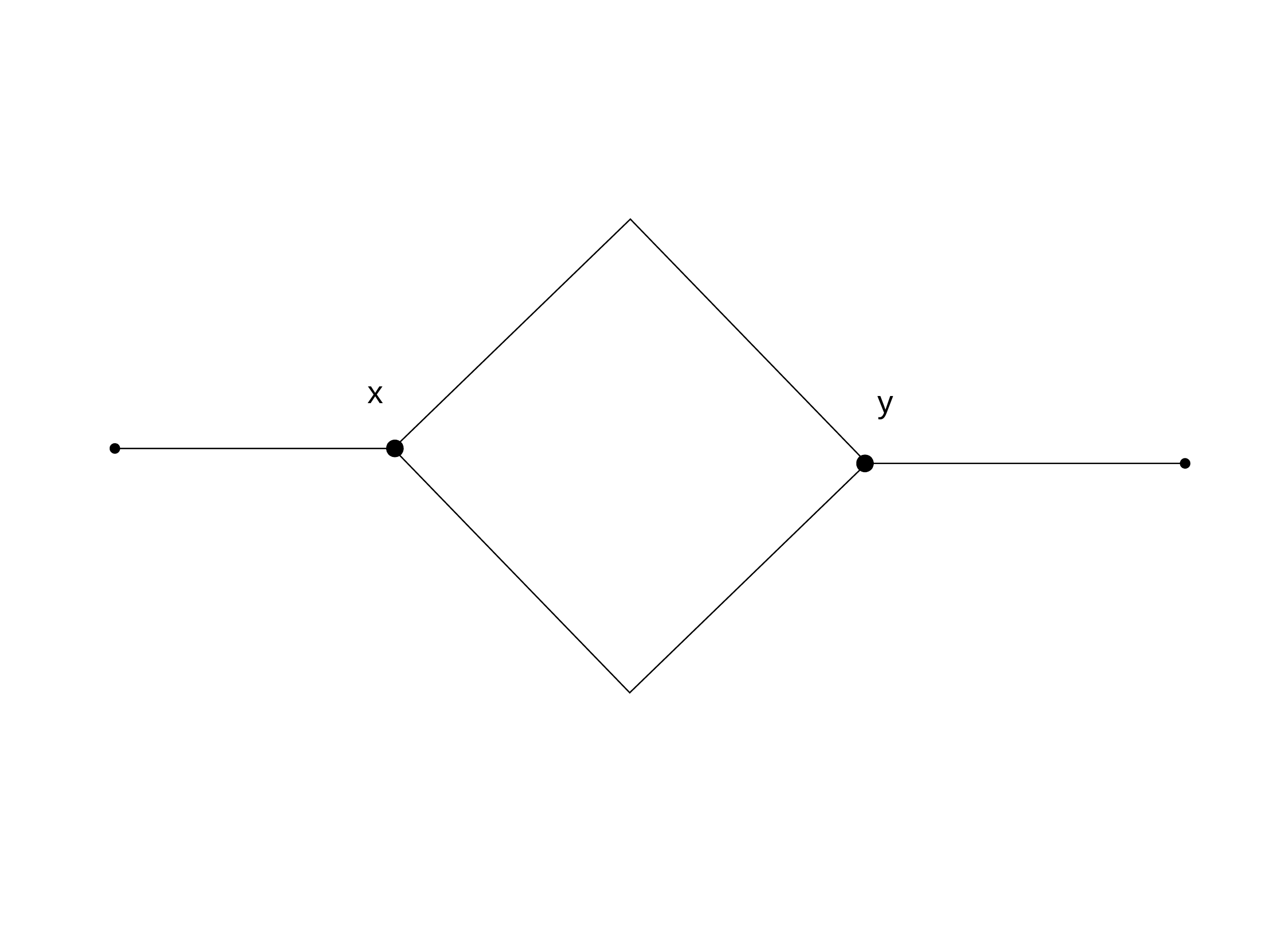}
\caption{The geodesic path joining two points need not be unique.}\label{ejp_1}
\end{figure}

As we can see, there are two geodesic paths (of length 2) joining $x$ to $y$.

For the next basic definitions and properties we follow the book from Bhatia and
Szegö, \cite{B-S}.

\begin{definicion} A \emph{dynamical system} on $X$ is the triplet $(X,\mathbb{R},\pi)$
where $\pi$ is a map from the product space $X\times \mathbb{R}$ into $X$ satisfying the following axioms:
\begin{itemize}\item[(i)] $\pi(x,0)=x \quad \forall x\in X$
\item[(ii)] $\pi(\pi(x,t),s)=\pi(x,t+s)$ for every $x\in X$ and
$t,s\in \mathbb{R}$. \item[(iii)] $\pi$ is continuous.
\end{itemize}
\end{definicion}

For any point $x\in X$, $\pi(\pi(x,t),-t)=x$. From this, it is readily seen the following result.

\begin{prop}\label{tray_hom} For every $t\in \mathbb{R}$, the map $\pi_t:X \to X$
defined by $\pi_t(x)=\pi(x,t)$ is a homeomorphism of $X$ in itself.\hfill$\square$
\end{prop}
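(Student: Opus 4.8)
The plan is to produce an explicit two‑sided inverse for $\pi_t$ directly from the group‑law axiom, and then to observe that continuity on both sides is inherited from the continuity of $\pi$. No deep ingredient is needed; the whole argument is a bookkeeping exercise with the three axioms in the definition of a dynamical system.

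First I would record that $\pi_t$ is continuous. Writing $j_t\colon X\to X\times\mathbb{R}$ for the embedding $x\mapsto(x,t)$, we have $\pi_t=\pi\circ j_t$; here $j_t$ is continuous and $\pi$ is continuous by axiom (iii), so the composite $\pi_t$ is continuous.

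Next, the candidate inverse is $\pi_{-t}$. Applying axiom (ii) to collapse the time parameters and then axiom (i), for every $x\in X$ one obtains
$$\pi_{-t}\bigl(\pi_t(x)\bigr)=\pi\bigl(\pi(x,t),-t\bigr)=\pi\bigl(x,t+(-t)\bigr)=\pi(x,0)=x,$$
so $\pi_{-t}\circ\pi_t=\mathrm{id}_X$; the identical computation with the roles of $t$ and $-t$ interchanged gives $\pi_t\circ\pi_{-t}=\mathrm{id}_X$. Hence $\pi_t$ is a bijection with $(\pi_t)^{-1}=\pi_{-t}$, and the first paragraph, applied with $-t$ in place of $t$, shows that this inverse is again continuous. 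A continuous bijection with continuous inverse is a homeomorphism, which is the assertion.

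Since the statement is elementary there is no real obstacle. The only point worth flagging is that continuity of the inverse is not automatic from continuity of $\pi_t$ and must be checked separately; here it costs nothing because $(\pi_t)^{-1}$ is itself one of the time‑maps of the same system, so the same reasoning that proved $\pi_t$ continuous applies verbatim.
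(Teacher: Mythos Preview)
Your proof is correct and follows exactly the approach the paper intends: the paper merely records the identity $\pi(\pi(x,t),-t)=x$ and declares the proposition readily seen, which is precisely the computation you spell out together with the observation that $\pi_{-t}$ furnishes the continuous inverse. There is nothing to add.
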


Let $\Lambda^+(x):=\{y\in X \ | \ \mbox{ there is a sequence }
\{t_n\} \mbox{ in } \mathbb{R} \mbox{ with } t_n\to +\infty \mbox{
and } \pi(x,t_n) \to y\}$.

$A(M):=\{x\in X \ | \ \Lambda^+(x)\neq \emptyset \mbox{ and }
\Lambda^+(x)\cap M \neq \emptyset\}$.

\begin{definicion} A set $M$ is \emph{positively invariant}
if for every $x\in M$ and every $t>0$, $\pi(x,t)\in M$.
\end{definicion}

\begin{definicion} A set $M$ is said to be \emph{stable}
if every neighbourhood $U$ of $M$ contains a positively invariant neighbourhood
$V$ of $M$.
\end{definicion}

\begin{definicion} A set $M$ is an \emph{attractor} if $A(M)$
is a neighbougood of $M$.
\end{definicion}

\begin{definicion} A set $M$ is said to be \emph{asymptotically stable} if it is stable and an attractor.
\end{definicion}

\paragraph{\textbf{Lyapunov functions}}

\begin{teorema}\cite[Chapter V, Theorem 2.2]{B-S} A compact $M\subset X$ is asymptotically stable if and only if there exist a real valued continuous function, $\Phi$, defined in a neighbourhood $N$ of $M$ such that:

\begin{itemize}\item[(i)] $\Phi(x)=0$ if $x \in M$ and $\Phi(x)>0$ if $x \not \in M$;
\item[(ii)] $\Phi(x,t)<\Phi(x)$ for $x\not \in M$, $t>0$ and
$x[0,t]\subset N$.
\end{itemize}
\end{teorema}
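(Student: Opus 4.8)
The plan is to prove the two implications separately, in the spirit of the classical converse Lyapunov theory.

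\emph{Sufficiency.} Suppose a function $\Phi$ as in (i)--(ii) exists on a neighbourhood $N$ of $M$; we show $M$ is stable and an attractor. For stability, fix an open neighbourhood $U$ of $M$ with $\overline U\subset N$ and $\overline U$ compact, and argue by contradiction: if no positively invariant neighbourhood of $M$ is contained in $U$, there are $x_n$ with $d(x_n,M)\to 0$ and first exit times $t_n>0$ such that $\pi(x_n,s)\in\overline U$ for $s\in[0,t_n]$ while $\pi(x_n,t_n)\in\partial U$. Passing to a subsequence, $\pi(x_n,t_n)\to y\notin M$, so $\Phi(y)>0$; but the trajectory $x_n[0,t_n]$ stays in $N$, so (ii) gives $\Phi(\pi(x_n,t_n))<\Phi(x_n)$, and continuity of $\Phi$ together with $\Phi|_M=0$ forces $\Phi(x_n)\to 0$, hence $\Phi(y)=0$, a contradiction. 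Thus $M$ is stable, so there is a positively invariant neighbourhood $V$ of $M$ with $\overline V\subset N$ compact. For $x\in V$ the function $t\mapsto\Phi(\pi(x,t))$ is nonincreasing by (ii) and bounded below, hence tends to some $\ell\ge 0$; if $\ell>0$ the orbit stays in the compact set $\{z\in\overline V:\Phi(z)\ge\ell\}$, which avoids $M$, so it has an $\omega$-limit point $y\notin M$, and (ii) gives $\Phi(\pi(y,1))<\Phi(y)=\ell$, while continuity of the flow yields $\Phi(\pi(x,t_n+1))\to\Phi(\pi(y,1))<\ell$ along a sequence $t_n\to\infty$, contradicting that $\ell$ is the limit. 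Hence $\ell=0$, so $\Phi(\pi(x,t))\to 0$, and since $\Phi$ is continuous and vanishes exactly on the compact set $M$ this gives $d(\pi(x,t),M)\to 0$, i.e.\ $x\in A(M)$. So $A(M)\supset V$ is a neighbourhood of $M$ and $M$ is asymptotically stable.

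\emph{Necessity.} Suppose $M$ is asymptotically stable. One first notes that there is an open, positively invariant neighbourhood $N$ of $M$ with $\overline N$ compact on which $\pi(x,t)\to M$ as $t\to\infty$ for every $x\in N$: if an orbit has an $\omega$-limit point in $M$, stability drives the whole $\omega$-limit set into $M$. Define $v:N\to[0,\infty)$ by $v(x)=\sup_{t\ge 0}d(\pi(x,t),M)$. The key lemma is that $v$ is continuous: lower semicontinuity is immediate from continuity of the flow, and upper semicontinuity uses stability --- given $\varepsilon>0$ and $x\in N$, choose a positively invariant neighbourhood of $M$ inside $B(M,\varepsilon)$ and a time $T$ after which the orbit of $x$, hence (by continuity of $\pi_T$) of all nearby points, has entered it, so the supremum reduces to the compact interval $[0,T]$, where it depends continuously on the initial point. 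Moreover $v(x)\ge d(x,M)$, so $v^{-1}(0)=M$ and $v>0$ off $M$, and $v$ is nonincreasing along orbits. To upgrade to strict decrease, set
\[
\Phi(x)=\int_0^\infty e^{-t}\,v(\pi(x,t))\,dt ,
\]
which is continuous by dominated convergence, vanishes exactly on $M$, and is positive off $M$. Since $v$ is nonincreasing along the orbit of $x$ and $v(\pi(x,\tau))\to 0$, for $t>0$ we have $\Phi(\pi(x,t))=e^{t}\int_t^\infty e^{-\tau}v(\pi(x,\tau))\,d\tau\le v(\pi(x,t))$, with equality only if $v\equiv 0$ on $[t,\infty)$, i.e.\ only if $\pi(x,t)\in M$; splitting $\Phi(x)=\int_0^t e^{-\tau}v(\pi(x,\tau))\,d\tau+\int_t^\infty e^{-\tau}v(\pi(x,\tau))\,d\tau$ and using $v(\pi(x,\tau))\ge v(\pi(x,t))$ for $\tau\le t$ yields $\Phi(\pi(x,t))<\Phi(x)$ whenever $x\notin M$ (so that $\pi(x,t)\notin M$, $M$ being invariant), which is condition (ii) with this $N$.

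\emph{Main obstacle.} The delicate step is the continuity of $v$ --- specifically its upper semicontinuity --- in the necessity direction; this is exactly where one must use the \emph{stability} of $M$, not merely that it attracts, via the fact that orbits starting near $M$ stay near $M$, which lets one truncate the defining supremum to a fixed finite horizon locally. Everything else is routine: the two contradiction arguments in the sufficiency part are standard compactness-and-continuity manipulations, and the passage from the nonincreasing $v$ to the strictly decreasing $\Phi$ is a short integral estimate.
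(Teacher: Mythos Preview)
The paper does not prove this theorem at all: it is quoted verbatim as \cite[Chapter V, Theorem 2.2]{B-S} in the preliminaries section, with no accompanying argument, and is used only as background to justify calling certain functions Lyapunov functions later on. There is therefore no ``paper's own proof'' to compare your attempt against.

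That said, your sketch follows the classical converse Lyapunov programme and is essentially the argument one finds in Bhatia--Szeg\"o. A couple of places deserve a second look. In the necessity direction, your claim that $x\notin M$ implies $\pi(x,t)\notin M$ ``$M$ being invariant'' needs negative invariance of $M$, which is not part of the hypothesis; you should instead argue directly from the inequalities: equality $\Phi(\pi(x,t))=\Phi(x)$ forces both $v(\pi(x,\tau))\equiv v(\pi(x,t))$ on $[0,t]$ and $\Phi(\pi(x,t))=v(\pi(x,t))$, and the latter together with $v(\pi(x,\tau))\to 0$ gives $v(\pi(x,t))=0$, hence $v(x)=0$ and $x\in M$. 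Also, in the sufficiency/stability step you should make explicit why such a compact $\overline U\subset N$ exists (local compactness of $X$ is implicitly used). Otherwise the outline is sound.
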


\begin{definicion} This function $\Phi$ is a \emph{Lyapunov function} on $N$ for $\pi$.
\end{definicion}

\paragraph{\textbf{Semidynamical systems}}$\\$

Let us denote by $\mathbb{R}^+$ the interval $[0,\infty)$ in the real line.

\begin{definicion} A semidynamical system on $X$ is a triplet $(X,\mathbb{R}^+,\pi)$
where $\pi$ is a map from the product space $X\times \mathbb{R}^+$ into $X$
satisfying the following axioms:
\begin{itemize}\item[(i)] $\pi(x,0)=x \quad \forall x\in X$
\item[(ii)] $\pi(\pi(x,t),s)=\pi(x,t+s)$ for every $x\in X$ and
$t,s\in \mathbb{R}^+$. \item[(iii)] $\pi$ is continuous.
\end{itemize}
\end{definicion}

\begin{nota} Since the action of $\mathbb{R}^+$ is not reversible as in the case of dynamical systems, the behavior of the trajectories, $\{\pi_x(t):t\in \mathbb{R}^+\}$, in the semiflow is substantially different. Thus, contrary to the case shown in \ref{tray_hom}, the map $\pi_t:X
\to X$ defined by $\pi_t(x)=\pi(x,t)$ need not be a homeomorphism. Nevertheless, the definitions above referred to properties when $t\to +\infty$ as stable, asymptotically stable or attractor, work as well for semidynamical systems.
\end{nota}

\begin{ejp} Consider $X=[0,1]$ and $\pi(x,t)=\min\{x+t,1\}$.
Clearly, this is a semidynamical system but for $t\geq 1$,
$\pi(x,t)=1 \ \forall x\in X$. In fact, 1 is an asymptotically stable set for this semiflow.
\end{ejp}

\section{Definition and basic properties of the semiflow}

The basic conceps used below can be found in \cite{N2}.

Let $(X,d)$ be a compact length space. Since $X$ is compact, the closed subsets are compact and $2^X=\{A\subset X \ |
\ A \mbox{ nonempty }$ $\mbox{and compact}\}$. As we mentioned in the introduction,
$2^X_H$ represents $2^X$ with the Hausdorff metric.

Consider the map $\pi:2_H^X \times [0,\infty)
\to 2_H^X$ such that for any compact set $A$ and any $t\geq0$,
$\pi(A,t):=B^c(A,t)=\{x\in X : d(x,A)\leq t \}$, this is, the
generalized closed ball in $X$ about $A$ of radius $t$. We understand that $B^c(A,0)=A$.
\begin{prop}
The triplet $(2_H^X,\mathbb{R_+},\pi)$ defines a semidynamical system.\hfill$\square$
\end{prop}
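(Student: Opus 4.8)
The plan is first to check that $\pi$ is well-defined, i.e.\ that $B^{c}(A,t)\in 2^{X}$ for every $A\in 2^{X}$ and $t\geq 0$, and then to verify the three axioms of a semidynamical system. For well-definedness: the function $x\mapsto d(x,A)$ is continuous on $X$, so $B^{c}(A,t)$ is closed; it is nonempty since $A\subset B^{c}(A,t)$; and a closed subset of the compact space $X$ is compact. Axiom (i) is immediate: $B^{c}(A,0)=\{x\in X:d(x,A)\le 0\}=\overline{A}=A$ because $A$ is closed (this is exactly the stated convention).

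For the semigroup property (ii) one must show $B^{c}(B^{c}(A,t),s)=B^{c}(A,t+s)$, and I would split this into two inclusions. The inclusion ``$\subseteq$'' is just the triangle inequality: if $d(x,B^{c}(A,t))\le s$, choose $y\in B^{c}(A,t)$ with $d(x,y)=d(x,B^{c}(A,t))\le s$ (the infimum is attained since $B^{c}(A,t)$ is compact), and then $d(x,A)\le d(x,y)+d(y,A)\le s+t$. For ``$\supseteq$'' I would use that a compact length space is geodesic (the Hopf--Rinow--Cohn-Vossen statement recorded in the preliminaries): given $x$ with $d(x,A)\le t+s$, pick $a\in A$ realizing $d(x,a)=d(x,A)$; if $d(x,a)\le t$ then $x\in B^{c}(A,t)$ and we are done, and otherwise travel along a geodesic $c\colon[0,d(x,a)]\to X$ from $x$ to $a$ to the point $y=c\bigl(d(x,a)-t\bigr)$, which satisfies $d(y,a)=t$, hence $y\in B^{c}(A,t)$, while $d(x,y)=d(x,a)-t\le s$; thus $x\in B^{c}(B^{c}(A,t),s)$.

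For continuity (iii) I would prove the sharper quantitative bound
\[
d_{H}\bigl(B^{c}(A,t),\,B^{c}(A',t')\bigr)\le d_{H}(A,A')+|t-t'|,
\]
from which joint continuity of $\pi$ on $2^{X}_{H}\times[0,\infty)$ follows at once (in fact $\pi$ is $1$-Lipschitz for the sum metric). By symmetry it suffices to bound $d\bigl(x,B^{c}(A',t')\bigr)$ for $x\in B^{c}(A,t)$; set $\varepsilon:=d_{H}(A,A')+|t-t'|$. If $d(x,A')\le t'$ this distance is $0\le\varepsilon$. Otherwise choose $a'\in A'$ with $d(x,a')=d(x,A')\le d(x,A)+d_{H}(A,A')\le t+d_{H}(A,A')$, and, moving along a geodesic from $x$ to $a'$, reach the point $y$ with $d(y,a')=t'$ (possible since $d(x,a')>t'$); then $y\in B^{c}(A',t')$ and $d(x,y)=d(x,a')-t'\le t+d_{H}(A,A')-t'\le\varepsilon$, the last step being a one-line check on the sign of $t-t'$.

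The only genuinely non-routine ingredients are the ``$\supseteq$'' inclusion in (ii) and the companion step in (iii): both depend on the \emph{existence} of geodesics, which is precisely where compactness (equivalently, completeness together with local compactness) of the length space enters; in a non-geodesic length space one can only approximate geodesics, and these inclusions may genuinely fail. Everything else reduces to the triangle inequality and elementary point-set properties of $2^{X}_{H}$, so I expect no further obstacle.
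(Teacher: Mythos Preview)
Your proof is correct and complete. The paper itself gives no argument at all---it simply points to Nadler's article \cite{N3} and marks the statement with a $\square$---so you have supplied the details that the authors omit. Your decomposition (well-definedness, the two axioms, then the Lipschitz bound for continuity) is the natural one, and your use of geodesics for the nontrivial inclusion in (ii) and the analogous step in (iii) is exactly the point where the length-space hypothesis does the work; your closing remark that these inclusions can genuinely fail without geodesics is apt. There is nothing to correct.
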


The proof  can be found in \cite{N3}
although with  a different language.

Note that when $X$ is a length space any generalized closed ball
is the closure of the generalized open ball,
$B^c(A,\varepsilon)=\bar{B}(A,\varepsilon)$. Let us refer to $\partial
\bar{B}(A,\varepsilon)=\partial B(A,\varepsilon)=S(A,\varepsilon)=\{z\in X
\ | \ d(z,A)=\varepsilon \}$ as the \emph{border} of the ball and its points as \emph{border points}.
From now on, we will denote the closed
ball as $\bar{B}(A,\varepsilon)$.

\begin{obs}\label{atractor} Since $X$ is a compact metric space, for any $A\subset
X$ there exists some $t_A$ such that for every $t\geq t_A$,
$\bar{B}(A,t)=X$ and therefore, $\pi(A,t)=\{X\}\in 2^X$.
\end{obs}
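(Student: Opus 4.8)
The plan is to use compactness of $(X,d)$ precisely where it is needed, namely to guarantee that the function measuring distance to $A$ is bounded. Fix a nonempty compact $A\subset X$ and consider the map $f_A\colon X\to[0,\infty)$ given by $f_A(x)=d(x,A)$. This map is $1$-Lipschitz, hence continuous, so by compactness of $X$ it attains a maximum; set $t_A:=\max_{x\in X}d(x,A)$. Equivalently, one may simply take $t_A=\operatorname{diam}(X,d)$, which is finite because $X$ is compact: fixing any $a\in A$ gives $d(x,A)\le d(x,a)\le\operatorname{diam}(X,d)$ for all $x\in X$.

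Now let $t\ge t_A$. For every $x\in X$ we have $d(x,A)\le t_A\le t$, so $x\in\{z\in X:d(z,A)\le t\}=B^c(A,t)=\bar B(A,t)$. Since the reverse inclusion $\bar B(A,t)\subset X$ is automatic, we conclude $\bar B(A,t)=X$, and therefore $\pi(A,t)=B^c(A,t)=X$, viewed as the element $\{X\}$ of the hyperspace $2^X$. This holds for all $t\ge t_A$, as claimed.

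There is no genuine obstacle here; the only point worth recording is that compactness is exactly the hypothesis that makes $t_A<\infty$, and the conclusion would fail in a general (non-compact) length space where $\sup_{x\in X}d(x,A)$ can be infinite. It is also worth noting that $t_A\le\operatorname{diam}(X,d)$ always, so the extinction of any orbit occurs no later than time $\operatorname{diam}(X,d)$, in agreement with the discussion in the Introduction; when $A=\{x\}$ is a point, $t_A$ is precisely the eccentricity of $x$ in $(X,d)$.
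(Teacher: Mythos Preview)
Your proof is correct and is exactly the natural argument; the paper states this as a remark without proof, treating it as immediate from compactness, and your write-up simply makes explicit the one-line reason (continuity of $x\mapsto d(x,A)$ on a compact space, or the bound by $\operatorname{diam}(X)$).
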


Let us state a few basic properties about the hyperspace in relation to this map.

\begin{lema}\label{increasing} Let $X$ be a compact connected length space, $A\subset X$ and $0<\varepsilon_0<\varepsilon_1$
such that $\bar{B}(A,\varepsilon_0)\neq X$. Then
$\bar{B}(A,\varepsilon_0)\subsetneq \bar{B}(A,\varepsilon_1)$.
\end{lema}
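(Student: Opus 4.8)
The plan is to get the non-strict inclusion $\bar{B}(A,\varepsilon_0)\subseteq\bar{B}(A,\varepsilon_1)$ for free from the definition (if $d(x,A)\le\varepsilon_0$ then $d(x,A)\le\varepsilon_0<\varepsilon_1$), so that the whole content reduces to producing one witness of strictness: a point $w\in X$ with $\varepsilon_0<d(w,A)\le\varepsilon_1$, which then lies in $\bar{B}(A,\varepsilon_1)\setminus\bar{B}(A,\varepsilon_0)$.

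To find such a $w$, first use the hypothesis $\bar{B}(A,\varepsilon_0)\neq X$ to pick $z\in X$ with $d(z,A)>\varepsilon_0$, and set $r:=d(z,A)$. Since $X$ is a compact connected length space, the Hopf--Rinow-type Proposition recorded in the preliminaries applies: $X$ is geodesic and of finite diameter, so $r<\infty$; moreover, $A$ being closed (compact) in $X$, the value $r$ is attained at some $a\in A$, and there is a geodesic path $c:[0,r]\to X$ with $c(0)=a$ and $c(r)=z$, so that $d(c(t),c(t'))=|t-t'|$ for all $t,t'\in[0,r]$.

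Now put $t_*:=\min\{\varepsilon_1,r\}$; since $r>\varepsilon_0$ and $\varepsilon_1>\varepsilon_0$ we have $\varepsilon_0<t_*\le\varepsilon_1$. The key claim is that $d(c(t_*),A)=t_*$. The upper bound is immediate, $d(c(t_*),A)\le d(c(t_*),a)=t_*$. For the lower bound, for every $a'\in A$ the triangle inequality gives $d(c(t_*),a')\ge d(z,a')-d(z,c(t_*))\ge d(z,A)-d(z,c(t_*))=r-(r-t_*)=t_*$, using $d(z,c(t_*))=|r-t_*|=r-t_*$ along the geodesic; taking the infimum over $a'\in A$ yields $d(c(t_*),A)\ge t_*$. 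Hence $w:=c(t_*)$ satisfies $\varepsilon_0<d(w,A)=t_*\le\varepsilon_1$, establishing the strict inclusion.

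I do not expect a serious obstacle; the two points needing care are the appeal to Hopf--Rinow (to obtain an actual geodesic rather than mere near-geodesics, and to guarantee $r<\infty$ — this is where connectedness is genuinely used), and the reverse inequality $d(c(t_*),A)\ge t_*$, which is where the hypothesis $\bar{B}(A,\varepsilon_0)\neq X$ enters. If one wished to avoid invoking the Proposition, one could instead take paths from $a$ to $z$ of length $<r+\delta$ and run a compactness/limit argument, but using the result already stated in the preliminaries is cleaner.
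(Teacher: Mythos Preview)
Your argument is correct, but the paper takes a shorter and more elementary route. Rather than constructing a witness point, the paper argues by contradiction: if $\bar{B}(A,\varepsilon_0)=\bar{B}(A,\varepsilon_1)$, then for $\delta=\tfrac{1}{2}(\varepsilon_0+\varepsilon_1)$ one has the chain $\bar{B}(A,\varepsilon_0)\subset B(A,\delta)\subset \bar{B}(A,\delta)\subset \bar{B}(A,\varepsilon_1)$, so all four sets coincide; hence $\bar{B}(A,\varepsilon_0)$ is simultaneously open and closed, nonempty, and proper, contradicting connectedness. Notice that this argument never invokes the length structure or Hopf--Rinow at all --- connectedness alone does the work. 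Your approach, by contrast, genuinely uses the geodesic structure (via the Proposition in the preliminaries) and is more constructive: it actually exhibits a point at a prescribed distance from $A$, and in fact your computation shows $d(c(t),A)=t$ for every $t\in[0,r]$, which is slightly more than the lemma asks for. One small remark: the lemma as stated does not assume $A$ is closed, and the paper's proof does not need it; your appeal to compactness of $A$ to attain the infimum $r$ is harmless since $d(\cdot,A)=d(\cdot,\bar{A})$ and $\bar{B}(A,\varepsilon)=\bar{B}(\bar{A},\varepsilon)$, but it is worth noting.
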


\begin{proof} Otherwise, let us suppose that $\bar{B}(A,\varepsilon_0) =
\bar{B}(A,\varepsilon_1)$. Let
$\delta=\frac{1}{2}(\varepsilon_0+\varepsilon_1)$,
$\bar{B}(A,\varepsilon_0)\subset B(A,\delta)\subset
\bar{B}(A,\delta)\subset \bar{B}(A,\varepsilon_1)$ and all those balls coincide. Therefore, there is a proper subspace which is open and closed. This contradicts the fact that $X$ is connected.
\end{proof}

\begin{prop} For every $\varepsilon < diam(X), \ P(\varepsilon,X):=\{A\subset X \ | \ A
\mbox{ closed and } \\ diam(A)\geq \varepsilon\}$ is a neighborhood
of $\{X\}$ in $2^X_H$.
\end{prop}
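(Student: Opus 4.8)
The plan is to produce an explicit Hausdorff-ball around $\{X\}$ that is entirely contained in $P(\varepsilon,X)$. Write $D=diam(X)$. Since $X$ is compact, there exist points $x,y\in X$ with $d(x,y)=D$, and by hypothesis $\varepsilon<D$, so the number $\delta:=\tfrac12(D-\varepsilon)$ is strictly positive. I claim that the open ball $B_{d_H}(\{X\},\delta)$ in $2^X_H$ is contained in $P(\varepsilon,X)$.

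To see this, I would take an arbitrary $A\in 2^X$ with $d_H(A,X)<\delta$. Unwinding the definition of $d_H$, the inequality $d_H(A,X)<\delta$ forces in particular $\sup_{z\in X}d(z,A)<\delta$, i.e. $X\subset B(A,\delta)$. Applying this to the diametral points, there are $a,b\in A$ with $d(x,a)<\delta$ and $d(y,b)<\delta$. The triangle inequality then gives
$$d(a,b)\ \geq\ d(x,y)-d(x,a)-d(y,b)\ >\ D-2\delta\ =\ \varepsilon,$$
so $diam(A)\geq d(a,b)>\varepsilon$, and hence $A\in P(\varepsilon,X)$.

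Finally, since $diam(X)=D>\varepsilon$ we have $\{X\}\in P(\varepsilon,X)$, and $B_{d_H}(\{X\},\delta)$ is an open subset of $2^X_H$ containing $\{X\}$ and contained in $P(\varepsilon,X)$; this is precisely the statement that $P(\varepsilon,X)$ is a neighborhood of $\{X\}$. There is no genuine obstacle here: the only point requiring a little care is to use the correct half of the Hausdorff inequality, namely $X\subset B(A,\delta)$ (the one saying that $A$ is $\delta$-dense in $X$), since it is this inclusion — and not $A\subset B(X,\delta)$, which is automatic — that forces $A$ to contain an almost-diametral pair of points.
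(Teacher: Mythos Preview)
Your proof is correct and follows essentially the same approach as the paper: pick a diametral pair $x,y$ in $X$, take $\delta$ of order $\tfrac{1}{2}(diam(X)-\varepsilon)$, and use the $\delta$-density of any $A$ with $d_H(A,X)<\delta$ to find points of $A$ close to $x$ and $y$, forcing $diam(A)>\varepsilon$. The paper's write-up is terser but the argument is the same; your remark about which half of the Hausdorff inequality is actually doing the work is a nice clarification.
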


\begin{proof} Let $\delta < \frac{diam(X)-\varepsilon}{2}$.
Consider any closed subset $A\subset X$ such that
$d_H(A,X)<\delta$. Since
$X$ is compact there are two points $x,y\in X$ such that
$d(x,y)=diam(X)$. Then,
$d(x,A),d(y,A)\leq \delta$ implies that $diam(A)\geq diam(X)-2\delta>\varepsilon$ and $A\in P(\varepsilon,X)$.
\end{proof}

\begin{prop} $\forall \varepsilon < diam(X)$
there exists a strong deformation retraction from $2^X_H$ onto
$P(\varepsilon,X)$.
\end{prop}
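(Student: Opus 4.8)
The plan is to build the deformation retraction from the semiflow itself. Fix $\varepsilon < \operatorname{diam}(X)$, and as in the preceding proposition pick $\delta < \frac{\operatorname{diam}(X)-\varepsilon}{2}$; the key observation is that $P(\varepsilon,X)$ already contains the whole orbit of every closed set after a bounded time. Indeed, for any $A\in 2^X$ and $t\ge 0$ we have $\bar B(A,t)\supset \bar B(a_0,t)$ for any $a_0\in A$, and a ball of radius $t$ about a point has diameter at least $t$ when $\bar B(a_0,t)\ne X$ (using connectedness, cf. Lemma \ref{increasing}), while if $\bar B(a_0,t)=X$ then its diameter is $\operatorname{diam}(X)>\varepsilon$; hence $\pi(A,t)\in P(\varepsilon,X)$ as soon as $t\ge\varepsilon$. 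So $P(\varepsilon,X)$ is a positively invariant neighbourhood that absorbs all orbits in time $\le\varepsilon$.

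The natural candidate for the retraction is therefore $H:2^X_H\times[0,1]\to 2^X_H$, $H(A,s)=\pi(A,\varepsilon s)=\bar B(A,\varepsilon s)$. Then $H(A,0)=A$, $H(A,1)=\bar B(A,\varepsilon)\in P(\varepsilon,X)$, and for $A\in P(\varepsilon,X)$ we need $H(A,s)\in P(\varepsilon,X)$ for all $s$ — which holds since $\operatorname{diam}(\bar B(A,t))\ge\operatorname{diam}(A)\ge\varepsilon$, so $P(\varepsilon,X)$ is positively invariant and $H(A,s)\in P(\varepsilon,X)$. This gives a deformation retraction; continuity of $H$ is exactly continuity of $\pi$, which is part of the semidynamical system axioms. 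The only defect is that $H$ is not stationary on $P(\varepsilon,X)$: it moves points of $P(\varepsilon,X)$, so it is only a deformation retraction, not a strong one.

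To upgrade it to a \emph{strong} deformation retraction I would reparametrize so that each set waits until it is in $P(\varepsilon,X)$ and then stops. Define $\tau(A)=\inf\{t\ge 0 : \bar B(A,t)\in P(\varepsilon,X)\}=\inf\{t\ge 0:\operatorname{diam}\bar B(A,t)\ge\varepsilon\}$; by the first paragraph $\tau(A)\le\varepsilon$ for all $A$, and $\tau(A)=0$ precisely when $A\in P(\varepsilon,X)$. Rescaling, set $H(A,s)=\pi\big(A,\,s\cdot\varepsilon\big)$ if we instead want the cleaner choice $H(A,s)=\pi(A,\min\{s\varepsilon,\tau(A)\}\cdot\frac{\varepsilon}{\varepsilon})$ — more transparently, $H(A,s)=\pi(A,\min\{s\varepsilon,\tau(A)\})$. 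Then $H(A,0)=A$, $H(A,1)=\bar B(A,\tau(A))\in P(\varepsilon,X)$ because $\tau(A)\le\varepsilon=1\cdot\varepsilon$ so the min equals $\tau(A)$, and crucially $H(A,s)=A$ for all $s$ whenever $A\in P(\varepsilon,X)$ since then $\tau(A)=0$. It remains to check that $\bar B(A,\tau(A))$ genuinely lies in $P(\varepsilon,X)$ (i.e. the infimum is attained), which follows from continuity in $t$ of $t\mapsto\operatorname{diam}\bar B(A,t)$ together with the monotonicity in Lemma \ref{increasing}, and that $A\mapsto\tau(A)$ is continuous on $2^X_H$.

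The main obstacle is precisely the continuity of $\tau$ and hence of the reparametrized $H$. Upper semicontinuity is easy from monotonicity of balls, but lower semicontinuity can fail at sets $A$ where $\operatorname{diam}\bar B(A,t)$ is locally constant equal to $\varepsilon$ on an interval of $t$; near such $A$ nearby sets may reach diameter $\varepsilon$ slightly earlier or later. I would handle this by choosing the stopping level strictly above $\varepsilon$: replace the target by $P(\varepsilon',X)$ for some $\varepsilon<\varepsilon'<\operatorname{diam}(X)$ in the definition of $\tau$ while still testing membership in $P(\varepsilon,X)$, or — cleaner — simply accept the non-strong version $H(A,s)=\pi(A,s\varepsilon)$ and then note that $P(\varepsilon,X)$ deformation retracts within itself (via $\pi$ again, composed) so that the composite is a strong deformation retraction of $2^X_H$ onto $P(\varepsilon,X)$. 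Either way the heart of the matter is that $\pi$ pushes everything into $P(\varepsilon,X)$ in uniformly bounded time while fixing $\{X\}$, and continuity of $\pi$ does the rest.
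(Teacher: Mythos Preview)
Your construction is exactly the paper's: define $\tau(A)=\inf\{t\ge 0:\operatorname{diam}\bar B(A,t)\ge\varepsilon\}$ (their $t_A$) and set $H(A,s)=\pi(A,\min\{s\cdot c,\tau(A)\})$ for a constant $c$ with $\tau(A)\le c$ for all $A$ (you take $c=\varepsilon$, they take $c=\operatorname{diam}(X)$; either works). The only divergence is that you then talk yourself out of it.

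Your worry about the continuity of $\tau$ is unfounded. The point is that $\tau$ is $1$-Lipschitz, not merely continuous: if $d_H(A,A')=\delta$ then $\bar B(A,t)\subset \bar B(A',t+\delta)$, hence $\operatorname{diam}\bar B(A,t)\ge\varepsilon$ forces $\operatorname{diam}\bar B(A',t+\delta)\ge\varepsilon$, so $\tau(A')\le\tau(A)+\delta$; by symmetry $|\tau(A)-\tau(A')|\le d_H(A,A')$. A flat stretch $t\mapsto\operatorname{diam}\bar B(A,t)\equiv\varepsilon$ is therefore harmless: nearby sets may reach diameter $\varepsilon$ earlier or later, but only by at most $d_H(A,A')$. (The paper asserts continuity of $t_A$ in one line ``because of the continuity of the diameter function''; this Lipschitz estimate is the actual content.) With that, $H(A,s)=\pi(A,\min\{s\varepsilon,\tau(A)\})$ is continuous, fixes $P(\varepsilon,X)$ pointwise, and lands in $P(\varepsilon,X)$ at $s=1$ since the infimum is attained.

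By contrast, your two proposed workarounds do not give what you want. Stopping at $P(\varepsilon',X)$ for $\varepsilon'>\varepsilon$ moves points of $P(\varepsilon,X)\setminus P(\varepsilon',X)$, so you lose strongness on $P(\varepsilon,X)$. And composing the non-strong deformation retraction with a further deformation of $P(\varepsilon,X)$ ``within itself'' does not in general produce a strong deformation retraction onto $P(\varepsilon,X)$. Drop the workarounds and keep your $\tau$.
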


\begin{proof} For every closed subset $A\subset X$ there exists some $t_A\geq 0$ such that $t_A:=\inf\{t \ | \
diam(\bar{B}(A,t))\geq \varepsilon\}$. If $diam(A)\geq
\varepsilon$ consider $t_A=0$. The assignment $A\mapsto t_A$ defines a continuous real function on $2^X_H$ because of the continuity of the diameter function. Let us define the homotopy $G: 2^X_H \times
I \to 2^X_H$ as follows: $G(A,t)= \bar{B}(A,\min\{t\cdot
diam(X),t_A\})=\pi(A,\min\{t\cdot
diam(X),t_A\})$. $G$ is continuous, $G_0$ is the identity,
$G_1(2^X_H)\subset P(\varepsilon,X)$ and $G_t|_{P(\varepsilon,X)}$
is the identity.
\end{proof}

Suppose always that $(X,d)$ is a non-degenerate Peano continuum with a geodesic metric. It is clear that the points in the trajectories of the semiflow $\pi$,  going from single points
$\{x\}\in 2^X$ to the whole space $\{X\}\in 2^X$, are always closed balls centered at points. Hence, if we restrict $\pi$ to the subspace $\mathcal{B}\subset 2^X$ of all closed balls centered at points of $X$ we still have a semiflow.

\begin{prop} The subspace $\mathcal{B}$ of $2^X$ (or $C(X)$) is positively invariant for the semiflow $\pi$ and it is
 closed in $2^X_H$ (or $C(X)$) with the Hausdorff metric. Moreover, it is contractible and the canonical copy, considered as the subset of closed balls of radius zero, is a Z-set inside $\mathcal{B}$  in the sense that the identity in $\mathcal{B}$  is uniformly approximated by maps missing $X$.
\end{prop}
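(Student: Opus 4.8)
The strategy is to run every homotopy through the semiflow $\pi$ itself, which is a genuine self-map of $2^X$, rather than through a parametrization by centres: a closed ball need not determine its centre (this is exactly the topological robustness issue raised in the introduction), so any construction that ``moves centres'' would be ill-posed on $\mathcal B$. The one geometric input needed is the identity
\[\bar B(\bar B(x,r),t)=\bar B(x,r+t)\qquad(x\in X,\ r,t\geq 0).\]
The inclusion ``$\subseteq$'' is just the triangle inequality together with compactness of $\bar B(x,r)$ (pick $w\in\bar B(x,r)$ with $d(z,w)=d(z,\bar B(x,r))\leq t$, so $d(z,x)\leq r+t$); for ``$\supseteq$'', if $r<d(z,x)\leq r+t$ take the point $w$ at distance $r$ from $x$ along a geodesic segment $[x,z]$, so $w\in\bar B(x,r)$ and $d(z,w)=d(z,x)-r\leq t$. (This is the only place the geodesic hypothesis, rather than mere topology, is genuinely used.) In particular $\pi(\bar B(x,r),t)=\bar B(x,r+t)\in\mathcal B$, so $\mathcal B$ is positively invariant; the same identity gives $d(z,\bar B(x,r))=\max\{d(z,x)-r,0\}$.

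For closedness, consider $\beta\colon X\times[0,diam(X)]\to 2^X_H$, $\beta(x,r)=\bar B(x,r)$. From $\bar B(x',r')\subseteq\bar B(x,r'+d(x,x'))$ and the distance formula above one gets $d_H(\bar B(x,r),\bar B(x',r'))\leq d(x,x')+|r-r'|$, so $\beta$ is continuous. Since $\bar B(x,r)=X$ whenever $r\geq diam(X)$, we have $\mathcal B=\beta\big(X\times[0,diam(X)]\big)$, a continuous image of a compact space, hence compact and so closed in $2^X_H$. Each $\bar B(x,r)$ is connected (a geodesic from $x$ to any of its points stays inside it), so in fact $\mathcal B\subseteq C(X)$, which is itself closed in $2^X_H$; thus $\mathcal B$ is also closed in $C(X)$.

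Contractibility: define $H\colon\mathcal B\times[0,1]\to\mathcal B$ by $H(A,s)=\pi\big(A,s\cdot diam(X)\big)$. By positive invariance $H$ lands in $\mathcal B$, it is continuous (composition of continuous maps), $H(\cdot,0)=\mathrm{id}_{\mathcal B}$, and $H(A,1)=\bar B(A,diam(X))=X$ for every nonempty $A$, so $\mathcal B$ contracts to $\{X\}\in\mathcal B$. For the $Z$-set statement, let $X_0=\{\{x\}:x\in X\}\subseteq\mathcal B$ be the canonical copy; it is the image of $X$ under the isometric embedding $x\mapsto\{x\}$, hence closed in $\mathcal B$. For $\delta>0$ put $f_\delta:=\pi(\cdot,\delta)|_{\mathcal B}\colon\mathcal B\to\mathcal B$, continuous by positive invariance, with $d_H(f_\delta(A),A)\leq\delta$ for every $A$; thus $\{f_\delta\}_{\delta>0}$ approximates $\mathrm{id}_{\mathcal B}$ uniformly. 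Finally $f_\delta(\mathcal B)\cap X_0=\emptyset$: if $\bar B(A,\delta)=\{y\}$ then $A\subseteq\{y\}$, hence $A=\{y\}$, and $\bar B(\{y\},\delta)=\{y\}$ forces $B(y,\delta)=\{y\}$ to be open, contradicting that $X$ is connected with more than one point (alternatively this is immediate from Lemma \ref{increasing}). Hence $X_0$ is a $Z$-set in $\mathcal B$.

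The only point requiring care is the one flagged at the outset: because $\beta$ is not injective, the contraction and the approximating maps must be defined as restrictions of $\pi$ --- which is already a well-defined function on $2^X$ --- and not via centres. Everything else is a routine consequence of the ball identity of the first paragraph and the continuity of $\pi$ established earlier.
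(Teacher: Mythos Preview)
Your proof is correct and follows essentially the same route as the paper: positive invariance via the ball identity $\bar B(\bar B(x,r),t)=\bar B(x,r+t)$, closedness from $\mathcal B=\pi(X\times[0,\operatorname{diam}(X)])$ being a continuous image of a compact set, contractibility via the semiflow itself, and the $Z$-set property via the maps $\pi(\cdot,\delta)$ (the paper uses the discrete family $\pi_n=\pi(\cdot,1/n)$). You supply considerably more detail than the paper---in particular an explicit proof of the ball identity, the Lipschitz estimate for $\beta$, and the connectedness argument showing $f_\delta$ misses $X_0$---and your explicit warning that the homotopies must be built from $\pi$ rather than from centres is a welcome clarification of a point the paper leaves implicit.
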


\begin{proof} Given a  ball $\bar{B}(x,\varepsilon)$ an a non-negative  real number $t$, we have
\[\pi(\bar{B}(x,\varepsilon), t)= \pi(x,\varepsilon + t)=\bar{B}(x,\varepsilon+t).\] Since $\mathcal{B}=\pi(X\times [0, diam(X)])$, then it is compact.
Moreover $\pi:\mathcal{B}\times [0, diam(X)]\longrightarrow \mathcal{B}$ defines an strong deformation retraction from $\mathcal{B}$ to the point
$\{X\}\in 2^X$ representing the whole space. The sequence of maps $\pi_{n}:\mathcal{B}\longrightarrow \mathcal{B}$, defined by $\pi_{n}(B)=\pi(B, \frac{1}{n})$ converges uniformly to the identity. Moreover $\pi_{n}(\mathcal{B})\bigcap X=\emptyset$.
\end{proof}

Thus, the semiflow in $\mathcal{B}$ takes the isometric copy of the original metric space (given by the single points with the Hausdorff metric in the Hyperspace) to the point $\{X\}$ as we saw in \ref{atractor}.

For obvious reasons we call $\mathcal{B}$ the {\it dynamical cone} of $(X,d)$.

Further questions the space $\mathcal{B}$ naturally arise. For example, its topological dimension, local properties, when is it an absolute retract?, etc. We are not going to follow this line herein. We  will probably do it in a future work.

\paragraph{\bf{Homological model for the semiflow}}$\\$

We use the book of Hatcher, \cite{Hat}, for undefined concepts and
notations related to homology. Let us denote by $X_{0}$ the
canonical copy of $X$ inside $2^X_{H}$ and by $[X_{0},X_{t}]$ the
set $\pi(X_{0}\times[0,t])$. Note that
\[[X_{0},X_{t}]=\{A\in 2^X \, | \, \exists  \ \varepsilon \in [0,t],  x\in X_{0}   \text{ with}   \   A= \bar{B}(x,\varepsilon)\}.\]
The following is clear
\begin{prop}
The semiflow $\pi$ induces a strong deformation retraction from $[X_{0},X_{t}]$ onto $X_{t}$. Concretely
\[G:[X_{0},X_{t}]\times [0,1]\longrightarrow [X_{0},X_{t}] \ \text{defined by}\ G(\bar{B}(x,\varepsilon), s)=\bar{B}(x,(1-s)\varepsilon +st)\] is a strong deformation retraction onto $X_{t}$.
\end{prop}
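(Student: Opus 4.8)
The plan is to check, for the explicit homotopy $G$, the four conditions defining a strong deformation retraction of $[X_{0},X_{t}]$ onto $X_{t}$: that $G$ is a well-defined continuous map $[X_{0},X_{t}]\times[0,1]\to[X_{0},X_{t}]$; that $G(\,\cdot\,,0)$ is the identity; that $G(\,\cdot\,,1)$ has image in $X_{t}$; and that $G(A,s)=A$ for every $A\in X_{t}$ and every $s\in[0,1]$. Much of this is read off the formula. If $\varepsilon\in[0,t]$ and $s\in[0,1]$, then $(1-s)\varepsilon+st$ is a convex combination of $\varepsilon$ and $t$, hence again lies in $[0,t]$; so $\bar B\bigl(x,(1-s)\varepsilon+st\bigr)$ is a closed ball centered at a point of radius at most $t$, i.e.\ an element of $[X_{0},X_{t}]$, while $s=0$ returns $\bar B(x,\varepsilon)$ and $s=1$ returns $\bar B(x,t)\in X_{t}$. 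For continuity, the map $(x,r)\mapsto\bar B(x,r)=\pi(\{x\},r)$ is continuous from $X\times[0,\infty)$ into $2^{X}_{H}$, since $x\mapsto\{x\}$ isometrically embeds $X$ into $2^{X}_{H}$ and $\pi$ is continuous; hence $G$ is continuous as soon as it is a genuine function. I would make this precise through the continuous surjection $\beta\colon X\times[0,t]\to[X_{0},X_{t}]$, $\beta(x,\varepsilon)=\bar B(x,\varepsilon)$, a quotient map since its source is compact and its target metric, together with the evidently continuous lift $\widehat G\bigl((x,\varepsilon),s\bigr)=\bigl(x,(1-s)\varepsilon+st\bigr)$ on $X\times[0,t]$: if $\widehat G$ descends to a map on $[X_{0},X_{t}]\times[0,1]$, that map is $G$ and is automatically continuous.

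The step I expect to be the real obstacle is precisely this descent, that is, the well-definedness of $G$ as a function of the \emph{set} $A$. One and the same ball may admit several presentations $\bar B(x,\varepsilon)=\bar B(y,\delta)$ with different centers and even different radii --- already in $X=[0,1]$ one has $\bar B(0,\tfrac{1}{2})=\bar B(\tfrac{1}{4},\tfrac{1}{4})$ --- so one must verify that $\bar B(x,\varepsilon)=\bar B(y,\delta)$ forces $\bar B\bigl(x,(1-s)\varepsilon+st\bigr)=\bar B\bigl(y,(1-s)\delta+st\bigr)$ for all $s$; the same ambiguity also touches the requirement that $G$ fix $X_{t}$ pointwise, since a ball of radius $t$ may simultaneously be a ball of smaller radius. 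My proposal is to substitute for the formula a choice-free description of the same motion: put $\varepsilon(A):=\min\{r\ge 0:\ A=\bar B(z,r)\ \text{for some }z\in X\}$ (the minimum is attained, as $\beta^{-1}(A)$ is compact), and rephrase the outward evolution of $A$ directly in terms of the semiflow, using the composition law $\pi(\bar B(z,\varepsilon),r)=\bar B(z,\varepsilon+r)$ (immediate from $\pi(\pi(A,t),s)=\pi(A,t+s)$) and Lemma~\ref{increasing} (the balls grow strictly until $X$ is absorbed), re-timed so that $s=1$ corresponds to radius $t$. With such a reformulation the behaviour at $s=0$ and $s=1$ and the fixing of $X_{t}$ all follow from the flow identities, and continuity reduces to the continuity of $A\mapsto\varepsilon(A)$ on $[X_{0},X_{t}]$ --- which is where the length-space hypothesis is genuinely needed, and which I regard as the technical heart of the argument.

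Should $A\mapsto\varepsilon(A)$ fail to be continuous in full generality, the statement is best read with the tacit proviso that closed balls of radius at most $t$ determine their center and radius (a robustness-type condition on $(X,d)$, in the spirit of Section~4); under it $\beta$ is injective, $\widehat G$ descends at once, and the verification of the first paragraph applies verbatim, so that the deformation retraction is literally the printed $G$.
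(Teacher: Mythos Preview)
The paper offers no proof here beyond the sentence ``The following is clear'', so there is nothing to compare your argument against --- and in fact you have done substantially more work than the authors. Your central worry is correct and is not a mere technicality: the displayed $G$ is genuinely ill-defined as a map on $[X_0,X_t]$. In your own example $X=[0,1]$ with $t=\tfrac12$, the set $A=[0,\tfrac12]$ has the two presentations $\bar B(0,\tfrac12)=\bar B(\tfrac14,\tfrac14)$, both with radius $\le t$, and at $s=1$ the formula returns $\bar B(0,\tfrac12)=[0,\tfrac12]$ from the first presentation and $\bar B(\tfrac14,\tfrac12)=[0,\tfrac34]$ from the second. So the proposition, read literally with that formula, is false; what you have uncovered is a gap in the paper rather than in your own reasoning.

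Your proposed repair via the \emph{minimal} radius $\varepsilon(A)$ does not rescue the ``fixes $X_t$'' clause, as you already anticipate: the same $A=[0,\tfrac12]$ lies in $X_{1/2}$ yet has $\varepsilon(A)=\tfrac14<t$, so any homotopy governed by $\varepsilon(A)$ will move it. Switching to the \emph{maximal} admissible radius $\varepsilon^*(A)=\max\{r\le t:A=\bar B(z,r)\ \text{for some }z\}$ cures that issue (now $A\in X_t$ forces $\varepsilon^*(A)=t$) and makes $G(A,s)=\pi\bigl(A,\,s\,(t-\varepsilon^*(A))\bigr)$ a well-defined function with the right behaviour at $s=0,1$; however $\varepsilon^*$ is only upper semicontinuous in general --- with $A_n=[\tfrac1n,\tfrac12]\to A$ one computes $\varepsilon^*(A_n)=\tfrac14-\tfrac1{2n}\to\tfrac14\ne\tfrac12=\varepsilon^*(A)$ --- so continuity remains the obstacle. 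Your final reading, that the printed formula is literally correct only under a robustness-type hypothesis making $\beta\colon X\times[0,t]\to[X_0,X_t]$ injective, is the honest conclusion; under that hypothesis the verification in your first paragraph is already a complete proof.
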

 So, the singular relative homology groups $H_{n}([X_{0},X_{t}],X_{0})$ measure the difference between the homology of the semiflow at time $t$, because $H_{n}([X_{0},X_{t}])$ is isomorphic to $H_{n}(X_{t})$,  and  the homology of the initial condition $X_{0}$.

 Given a compact geodesic space $(X,d)$ and for every natural number $n$ we can define a transformation
 \[H_{n}:[0, diam(X)]\longrightarrow AbGroups \ \text{defined by} \ H_{n}(t)=H_{n}([X_{0},X_{t}],X_{0}) \] which transform  non-negative real numbers into abelian groups. The long exact sequence for relative singular homology and the fact that $[X_{0},X_{diam(X)}]= \mathcal{B}$ is contractible,   allow us to calculate the values at the extremes of the interval.
 \begin{prop}
 For any  compact geodesic metric space $(X,d)$ and for every natural number $n$ we have:
 $H_{n}(0)$ is the trivial group and  $H_{n}(diam(X))\equiv \widetilde{H}_{n-1}(X).$
 \end{prop}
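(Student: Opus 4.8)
The plan is to extract the two assertions from the long exact sequence of the pair $([X_0,X_t],X_0)$ specialized at $t=0$ and $t=\mathrm{diam}(X)$, using the facts already established in the excerpt. First, for $t=0$ we have $[X_0,X_0]=X_0$, so the pair is $(X_0,X_0)$; all relative singular homology groups of a space with itself vanish, hence $H_n(0)=H_n([X_0,X_0],X_0)=0$ for every $n$. This is immediate and needs no further argument.

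For $t=\mathrm{diam}(X)$ I would use that $[X_0,X_{\mathrm{diam}(X)}]=\mathcal{B}$, the dynamical cone, which by the Proposition on $\mathcal{B}$ is contractible; therefore $\widetilde{H}_k(\mathcal{B})=0$ for all $k$. Now write the long exact sequence of the pair $(\mathcal{B},X_0)$ in reduced homology:
\[
\cdots \longrightarrow \widetilde{H}_n(X_0)\longrightarrow \widetilde{H}_n(\mathcal{B})\longrightarrow H_n(\mathcal{B},X_0)\longrightarrow \widetilde{H}_{n-1}(X_0)\longrightarrow \widetilde{H}_{n-1}(\mathcal{B})\longrightarrow\cdots
\]
Since $\widetilde{H}_n(\mathcal{B})=\widetilde{H}_{n-1}(\mathcal{B})=0$, exactness forces the connecting homomorphism $H_n(\mathcal{B},X_0)\to \widetilde{H}_{n-1}(X_0)$ to be an isomorphism. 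Finally, $X_0$ is by definition the canonical copy of $X$ inside $2^X_H$, which is isometric (hence homeomorphic) to $(X,d)$, so $\widetilde{H}_{n-1}(X_0)\cong \widetilde{H}_{n-1}(X)$, and we conclude $H_n(\mathrm{diam}(X))=H_n(\mathcal{B},X_0)\cong \widetilde{H}_{n-1}(X)$.

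The only point requiring a moment of care — and the closest thing to an obstacle — is making sure the identification $[X_0,X_{\mathrm{diam}(X)}]=\mathcal{B}$ is legitimate: one needs that every closed ball $\bar B(x,\varepsilon)$ with $x\in X$ arises with some radius $\varepsilon\le\mathrm{diam}(X)$, which holds because for $\varepsilon\ge\mathrm{diam}(X)$ one has $\bar B(x,\varepsilon)=X$ already (Remark \ref{atractor}), so enlarging the radius beyond $\mathrm{diam}(X)$ adds nothing new; thus $\pi(X_0\times[0,\mathrm{diam}(X)])$ indeed exhausts $\mathcal{B}$. With that observed, the argument is a routine diagram chase in the long exact sequence, so I would keep the written proof short.
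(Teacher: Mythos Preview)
Your proof is correct and follows exactly the approach indicated in the paper: the long exact sequence of the pair together with the contractibility of $\mathcal{B}=[X_0,X_{\mathrm{diam}(X)}]$. The paper only sketches this in the sentence preceding the proposition, so your write-up is a faithful expansion of that sketch.
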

 Once we  have  the definition of the dynamical cone of a compact
 geodesic metric   space $(X,d)$,  we can define the {\it dynamical suspension} of
 $(X,d)$ as the space obtained from the dynamical cone
 $\mathcal{B}$ by collapsing the canonical copy $X_{0}$ of $X$ in
 $\mathcal{B}$ to a point. The result bellow detects a similarity
 of behavior between the dynamical and the usual suspension. The
 proof relies on the fact that when $(X,d)$ is a topologically robust compact metric
 geodesic space (see Definition \ref{Def: robust}) then  $(\mathcal{B}, X_{0})$ is a good pair in
 the sense of \cite{Hat}.
 \begin{prop}
 Suppose that $(X,d)$ is a topologically robust compact metric
 geodesic space and denote by $\mathcal{S}$ the dynamical
 suspension of $(X,d)$. Then $\widetilde{H}_{n}(\mathcal{S})
 \equiv \widetilde{H}_{n-1}(X).$
 \end{prop}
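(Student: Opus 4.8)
The plan is to reduce the statement to the standard computation of the reduced homology of a suspension, via the quotient long exact sequence for a good pair. First I would invoke the previous proposition, which gives that the pair $(\mathcal{B}, X_0)$ is a good pair when $(X,d)$ is topologically robust; this is the hypothesis that makes $X_0$ a neighborhood deformation retract in $\mathcal{B}$, which in turn is precisely what is needed to identify $\widetilde{H}_n(\mathcal{B}/X_0)$ with $H_n(\mathcal{B}, X_0)$. Since $\mathcal{S}$ is by definition $\mathcal{B}/X_0$, this already gives $\widetilde{H}_n(\mathcal{S}) \equiv H_n(\mathcal{B}, X_0)$.

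Next I would feed this into the long exact sequence of the pair $(\mathcal{B}, X_0)$. The earlier proposition records that $[X_0, X_{diam(X)}] = \mathcal{B}$ is contractible, so $\widetilde{H}_n(\mathcal{B}) = 0$ for all $n$. The relevant portion of the long exact sequence
\[
\cdots \longrightarrow \widetilde{H}_n(\mathcal{B}) \longrightarrow H_n(\mathcal{B}, X_0) \xrightarrow{\ \partial\ } \widetilde{H}_{n-1}(X_0) \longrightarrow \widetilde{H}_{n-1}(\mathcal{B}) \longrightarrow \cdots
\]
then forces the connecting homomorphism $\partial$ to be an isomorphism $H_n(\mathcal{B}, X_0) \xrightarrow{\ \cong\ } \widetilde{H}_{n-1}(X_0)$. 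Combining this with the identification from the first step and with the fact that $X_0$ is an isometric (hence homeomorphic) copy of $X$, we obtain $\widetilde{H}_n(\mathcal{S}) \equiv \widetilde{H}_{n-1}(X)$, as desired. This is essentially the same argument that computes $\widetilde{H}_n(\Sigma X) \cong \widetilde{H}_{n-1}(X)$ for the ordinary suspension, which is why the statement is phrased as detecting a similarity of behavior between the two constructions.

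The only nontrivial input is the good-pair property of $(\mathcal{B}, X_0)$, and this is where topological robustness is genuinely used: without it, $X_0$ need not admit a neighborhood in $\mathcal{B}$ that deformation retracts onto it (the closed balls of small radius could fail to determine their centers, so nearby balls of positive radius cannot be pushed cleanly back to $X_0$), and the excision/quotient identification $H_n(\mathcal{B},X_0) \cong \widetilde{H}_n(\mathcal{B}/X_0)$ may fail. So the main obstacle — already dispatched in the preceding proposition — is the construction of that neighborhood deformation retraction; granting it, the homological bookkeeping above is routine. One should also note in passing that the statement is stated for reduced homology, so the low-dimensional case $n = 0$ (and the normalization $\widetilde{H}_{-1} = 0$) is automatically consistent with $\mathcal{S}$ being path-connected.
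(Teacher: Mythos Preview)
Your proposal is correct and follows exactly the approach the paper indicates: the paper's proof is only a one-line sketch stating that it ``relies on the fact that when $(X,d)$ is a topologically robust compact metric geodesic space then $(\mathcal{B}, X_{0})$ is a good pair in the sense of [Hat],'' and you have correctly filled in the standard homological bookkeeping (quotient identification for good pairs plus the long exact sequence with $\mathcal{B}$ contractible). One small caveat: the good-pair property is not established in a preceding proposition but is simply asserted in the text introducing this result, so your phrase ``already dispatched in the preceding proposition'' slightly overstates what the paper actually proves.
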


\paragraph{\bf{Order arcs}}$\\$

Next, let us focus on the trajectories, $\{\pi_A(t):t\in \mathbb{R}^+\}$, for any $A\in 2^X$.

We will see that from every point in the hyperspace, the trajectory on the semiflow is what is called an order arc from the initial point $A$ to $\{X\}$.

Also, this trajectories are geodesic paths with the Hausdorff metric. This means that we can see the semiflow in the hyperspace as a minimal energy flow in which each point is sent to the global attractor through a minimal path.

We extract the following definitions from \cite{I-N}

\begin{definicion} A collection $\mathcal{N}$ of sets is a
\emph{nest} provided that for any $N_1,N_2\in \mathcal{N}$,
$N_1\subset N_2$ or $N_2\subset N_1$.
\end{definicion}

\begin{definicion} Let $X$ be a compactum, and let $\mathcal{H}\subset
2^X$. An \emph{order arc in $\mathcal{H}$} is an arc, $\alpha$, in
$\mathcal{H}$ such that $\alpha$ is a nest.
\end{definicion}

\begin{definicion} A \emph{nest from $A_0$ to $A_1$} is a nest,
$\mathcal{N}$ such that $A_0,A_1\in \mathcal{N}$ and $A_0\subset N
\subset A_1$ for all $N\in \mathcal{N}$.
\end{definicion}

For any $A \in X$, with $X$ a compact length space (and therefore
connected), $N_A=\{\bar{B}(x,\varepsilon)\, | \, \varepsilon\geq 0 \}$ is an
order arc and, in particular, a nest from $\{A\}$ to $\{X\}$.

\begin{prop} The trajectories of the semiflow are geodesic paths
in the hyperspace with the Hausdorff metric.
\end{prop}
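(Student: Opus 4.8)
The plan is to fix $A \in 2^X$ and show that the trajectory $t \mapsto \pi(A,t) = \bar B(A,t)$, defined on $[0, t_A]$ where $t_A$ is the first time the ball equals $X$, is a geodesic path in $(2^X_H, d_H)$ after the trivial affine reparametrization that turns the domain into $[0, d_H(A,X)]$. By Definition \ref{camin_geod} it suffices to prove that $d_H(\bar B(A,s), \bar B(A,t)) = |t - s|$ for all $0 \le s \le t$ in the relevant range; the general case reduces to $s = 0$ by the nesting property, since for $s \le t$ we have $\bar B(A,t) = \bar B(\bar B(A,s), t-s)$ by the semiflow identity $\pi(\pi(A,s),t-s) = \pi(A,t)$. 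So the whole statement comes down to the single estimate
\[
d_H\bigl(A, \bar B(A,r)\bigr) = r \qquad \text{for } 0 \le r \le \operatorname{diam}(X) \text{ with } \bar B(A,r) \ne X \text{ allowed at the endpoint.}
\]

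First I would prove the easy inequality $d_H(A, \bar B(A,r)) \le r$: since $A \subset \bar B(A,r)$ trivially, and every $z \in \bar B(A,r)$ satisfies $d(z,A) \le r$ by definition, both one-sided Hausdorff distances are at most $r$, hence so is $d_H$. Next, the reverse inequality $d_H(A, \bar B(A,r)) \ge r$ requires producing a point $z \in \bar B(A,r)$ with $d(z,A) = r$ (or arbitrarily close to $r$); then $\sup_{w \in \bar B(A,r)} d(w,A) \ge d(z,A) = r$ forces $d_H \ge r$. Here is where the geodesic hypothesis on $(X,d)$ enters and does the real work: pick any $a \in A$, pick any point $y \in X$ with $d(y,A) = d(y,a)$ maximal — if $\operatorname{diam}(X)$ is large enough relative to $r$ this $d(y,A)$ is $\ge r$, otherwise one argues directly — and since $X$ is geodesic there is a geodesic segment $[a,y]$; the point $z$ on this segment at arc-length distance exactly $r$ from $a$ satisfies $d(z,a) = r$, hence $d(z,A) \le r$, so $z \in \bar B(A,r)$. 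The slightly delicate point is showing $d(z,A) \ge r$, i.e. that moving along the geodesic away from $a$ does not accidentally come close to another point of $A$; this uses that along a geodesic $[a,y]$ realizing $d(y,A)$, the function $t \mapsto d(c(t), A)$ is non-decreasing — equivalently, if $d(z,A) < r$ there would be $a' \in A$ with $d(z,a') < r$, and concatenating $[a',z]$ with the tail $[z,y]$ would give a path from $A$ to $y$ shorter than $d(y,A)$, a contradiction, once $y$ is chosen as a point of maximal distance to $A$ and $r \le d(y,A)$. If $r > d(y,A)$ for every available $y$, then $\bar B(A,r) = X$ and $r \ge \operatorname{diam}$-type bounds handle the endpoint separately (and the reparametrized domain length is exactly $d_H(A,X)$ by the case $r = $ that value).

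The main obstacle I anticipate is precisely the monotonicity/contradiction argument in the previous paragraph: one must be careful that such a maximizing $y$ exists (it does, by compactness of $X$ and continuity of $x \mapsto d(x,A)$) and that the geodesic realizing the distance from that $y$ to $A$ can be used to fish out an intermediate point at every radius $r \le d(y,A)$. Once the identity $d_H(A,\bar B(A,r)) = r$ is in hand for all such $r$, combining it with the semiflow composition law gives $d_H(\pi(A,s),\pi(A,t)) = t - s$ for $0 \le s \le t \le d_H(A,X)$, which is exactly the statement that the reparametrized trajectory is a geodesic path in $2^X_H$; continuity of $\pi$ (axiom (iii) of a semidynamical system) ensures it is a genuine path, completing the proof.
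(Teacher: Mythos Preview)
Your proof is correct. The approach, however, differs from the paper's in an instructive way.

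You establish the key identity $d_H(A,\bar B(A,r))=r$ by an explicit geodesic construction in $X$: pick $y$ realizing the maximum distance to $A$, take a geodesic from its nearest point $a\in A$ to $y$, and locate the intermediate point at arclength $r$. The contradiction argument you give for $d(z,A)\ge r$ is clean and correct.

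The paper instead measures from the other end of the trajectory: it asserts $d_H(\bar B(A,t),X)=t_A-t$. This follows almost tautologically once one observes that for any closed $C\subset X$ one has $d_H(C,X)=\sup_{x\in X}d(x,C)=\inf\{r:\bar B(C,r)=X\}=t_C$, and then the semiflow identity $\bar B(\bar B(A,t),r)=\bar B(A,t+r)$ gives $t_{\bar B(A,t)}=t_A-t$. From this, the two-point estimate $d_H(\alpha_A(s),\alpha_A(t))=t-s$ follows by the reverse triangle inequality (lower bound) together with the easy $d_H(C,\bar B(C,r))\le r$ (upper bound).

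So both arguments are valid; the paper's is shorter because it lets the semiflow axiom do all the work that the length structure is needed for, whereas you invoke the semiflow axiom for the reduction step and then separately appeal to geodesics in $X$ for the lower bound. Your route is more self-contained and makes the role of the geodesic hypothesis explicit; the paper's route is slicker but leaves the reader to unpack why $d_H(\alpha_A(t),X)=t_A-t$ holds.
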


\begin{proof} In general, $\forall A \in \mathcal{H}$,
$N_A=\{\bar{B}(A,\varepsilon) \, | \, \varepsilon\geq 0 \}$, the generalized
closed balls, defines an order arc. Let
$t_A:=inf_{t\geq0}\{\bar{B}(A,t)=X\}$. Then the path
$\alpha_A:[0,t_A]\to \mathcal{H}$ with $\alpha_A(t)=\bar{B}(A,t)$
is a geodesic path from $A$ to $X$ where $d_H(\alpha_A(t),X)=t_A-t
\ \forall A\in \mathcal{H}$ and $\forall \, 0\leq t\leq t_A$. Thus,
the trajectories in the semiflow are geodesic paths in the
hyperspace with the Hausdorff metric.
\end{proof}
\paragraph{\bf{Lyapunov functions}}

Since we have a semiflow with a global asymptotically  stable attractor, it is natural to define a Lyapunov function for  it.

Clearly, $\{X\}\in \mathcal{H}$ is an asymptotically stable equilibrium point  for the semiflow.

Following \cite{B-S}, the first step to define a Lyapunov function is to define the map $\Phi(A)=d_H(A,X)$ and then, with some technical work, make it decreasing in the orbits. In this case, it is trivial to check that $\Phi$ is already a Lyapunov function and we can avoid the rest of the construction
which does not provide any benefits.

Consider $\Phi$, as above, such that $\Phi(A)=d_H(A,X)$. This function, restricted to the isometric copy of $X$ in $2^X_H$ allows us to define a function that, in some sense, plays the role of a potential on the hyperspace:
\[\Phi|_X:X \to [m,M] \mbox{ where } \Phi(x)=d_H(x,X)=max\{d(x,y) \ |
\ y\in X)\}.\]

Note that $M=diam(X)$ and $m\geq M/2$.

This function yields a decomposition of the space in equipotential subspaces $\{\Phi^{-1}(t) \, | \ t\in [m,M] \}$.

\begin{obs} For any isometry $f:X\to X$ and any
$x\in X, \ \Phi(x)=\Phi(f(x))$.
\end{obs}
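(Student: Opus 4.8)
The plan is to unwind the definition of $\Phi$ and exploit the two defining features of an isometry: it preserves distances and it maps $X$ onto itself. Recall that $\Phi(x)=d_H(\{x\},X)=\max\{d(x,y)\mid y\in X\}$, and that this maximum is attained because $X$ is compact and $y\mapsto d(x,y)$ is continuous; in particular $\Phi$ is a well-defined real-valued function on $X$, which is all we need to make sense of the claimed equality.

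First I would fix an isometry $f:X\to X$ and a point $x\in X$ and write out the definition at $f(x)$:
\[\Phi(f(x))=\max\{d(f(x),z)\mid z\in X\}.\]
Since $f$ maps $X$ onto $X$, every $z\in X$ has the form $z=f(y)$ for some $y\in X$, so the set $\{d(f(x),z)\mid z\in X\}$ coincides with $\{d(f(x),f(y))\mid y\in X\}$. Applying the distance-preserving property $d(f(x),f(y))=d(x,y)$ then gives $\Phi(f(x))=\max\{d(x,y)\mid y\in X\}=\Phi(x)$, as desired.

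The only point that deserves a word of care is the surjectivity of $f$ used in the reindexing step. If one takes \emph{isometry} to mean a distance-preserving bijection, there is nothing to check. If one only assumes $f$ is distance-preserving, surjectivity is still automatic here because $X$ is compact (a classical fact: an injective distance-preserving self-map of a compact metric space is onto), so it may be invoked directly. I do not anticipate any genuine obstacle — the statement is essentially a one-line consequence of the definitions — and I expect the remark is included mainly to record that the potential $\Phi$, and hence the decomposition $\{\Phi^{-1}(t)\mid t\in[m,M]\}$ into equipotential subspaces, is an isometry invariant of the length space $(X,d)$.
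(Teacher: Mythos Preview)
Your proof is correct. The paper states this as a remark without proof, treating it as immediate from the definitions; your argument is exactly the natural verification one would supply, so there is nothing to compare.
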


Let us denote as \emph{centers} the points where this function takes the minimum value, $\Phi^{-1}(m)$, and as \emph{extrema} the points where it takes the maximum value, $\Phi^{-1}(M)$.

\begin{ejp} \begin{picture}(1,1)(-50,40)
    \put(0,0){\line(1,0){100}}
    \put(50,0){\circle*{4}}
    \put(0,0){\circle*{4}}
    \put(100,0){\circle*{4}}
    \put(45,10){$\Phi^{-1}(m)$}
\end{picture}
\end{ejp}

\vspace{2cm}

Here, the center is the middle of the segment while the extrema are the end points of the interval.

\begin{ejp} Consider the circle: $X=\{e^{2\pi i \cdot x} \ | \ 0\leq x <1\}\subset \mathbb{C}$
with the length metric.
\end{ejp}

Clearlly, $\Phi(x)=\pi \ \forall x\in X$.

\begin{ejp} Let $Q=[0,\frac{1}{n}]^\mathbb{N}$, the  Hilbert cube, with the $l_2$ metric.
\end{ejp}

Then, \[M= \sum_{n\in \mathbb{N}} \frac{1}{n^2} \ \mbox{ and } \
\Phi^{-1} (M)=\{(x_n) \ | \ x_n=\{0,\frac{1}{n}\} \ \forall n\}.\]
while \[m= \sum_{n\in \mathbb{N}} \frac{1}{4n^2} \ \mbox{ and
} \ \Phi^{-1} (m)=\Big{(}\frac{1}{2n}\Big{)} .\]

\section{Topological robustness of length spaces: examples and counterexamples}

Let us define $p_\varepsilon: X \to 2^X_H$ such that
$p_\varepsilon(x):=\bar{B}(x,\varepsilon) \ \forall x\in X $. We already know that at level 0 we have an isometric copy of the space and that there is a level, $\varepsilon_0$, such that $p_\varepsilon(X)$ is a single point $\forall \, \varepsilon \geq \varepsilon_0$.
The problem is to understand how are these projections $p_\varepsilon(X)$. The difficulty to give general results comes from the fact that, even for easy examples, the projection might be more complex (topologically or even homotopically) than the original metric space.

\begin{ejp} \label{horm} Consider the graph in Figure \ref{fig.1} with the geodesic metric and every edge of length 1.
\end{ejp}

\begin{figure}[ht]
\includegraphics[scale=0.5]{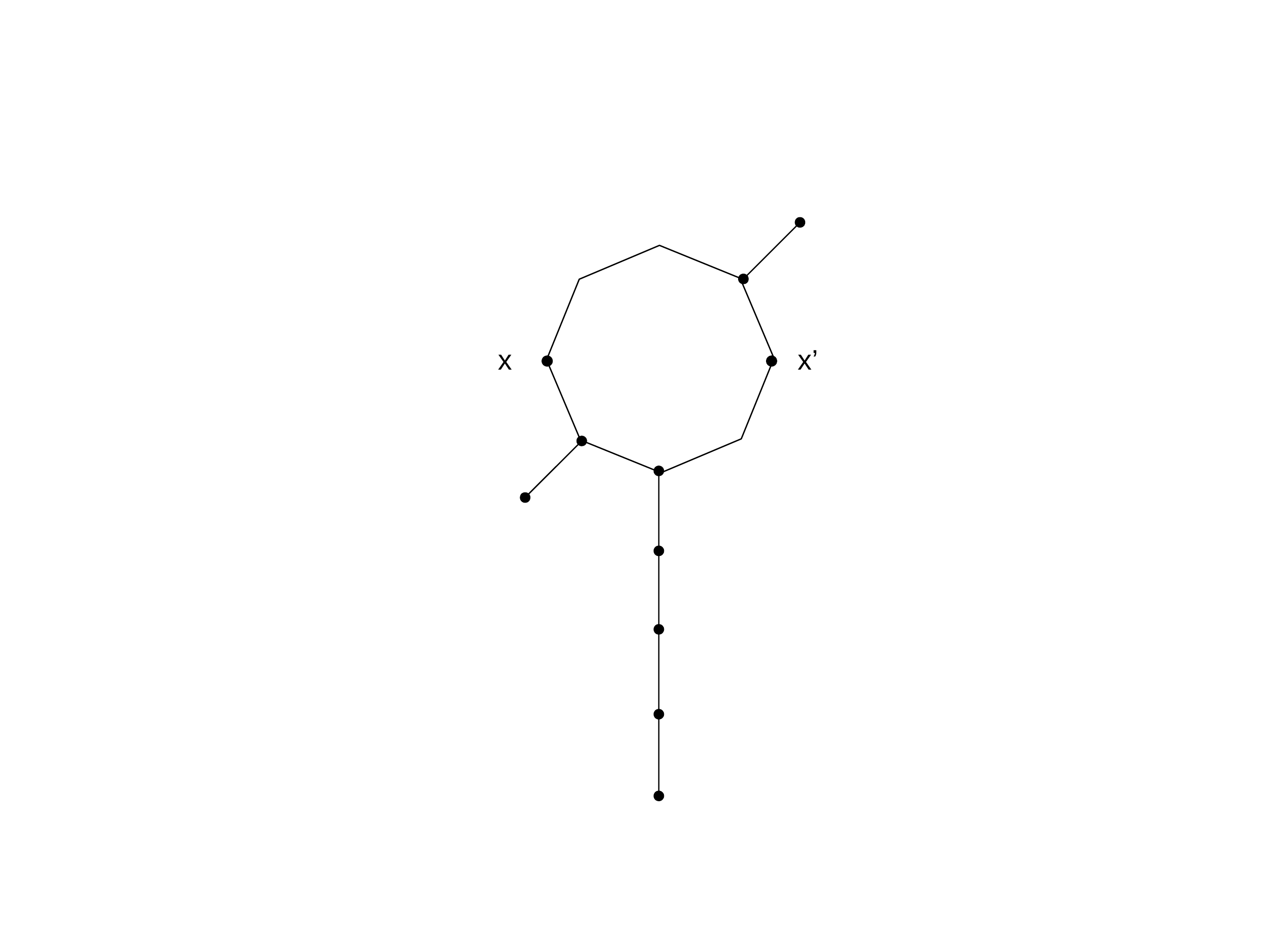}
\caption{The projection need not be homotopically dominated by the original space.}\label{fig.1}
\end{figure}

Considering $\varepsilon=4$, the balls about $x$ and $x'$
coincide and the projection is something homeomorphic to Figure \ref{fig.2}:

\begin{figure}[ht]
\includegraphics[scale=0.5]{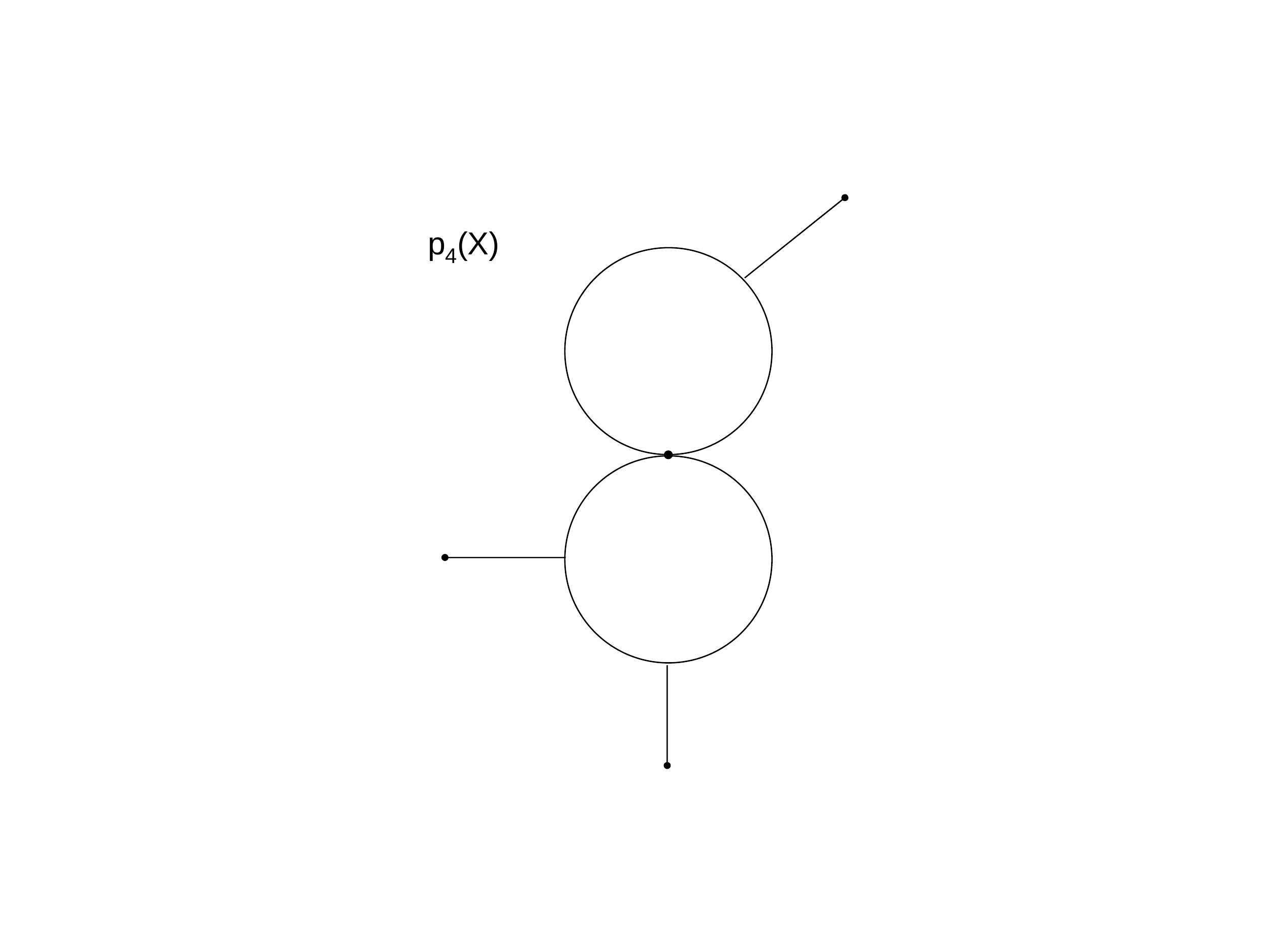}
\caption{Projection of the graph from Figure \ref{fig.1} for $\varepsilon=4$.}\label{fig.2}
\end{figure}

\begin{definicion}\label{Def: robust} A compact length space $(X,d)$ is \emph{topologically robust} if there is some $\varepsilon_0$ such that
$\forall \varepsilon\leq \varepsilon_0$, $p_\varepsilon$ is a topological embedding.
\end{definicion}

Our first aim is finding conditions on
$(X,d)$ to assure that it is topologically robust. As we saw in the introduction, this means that the semiflow $\pi$ keeps the topological type of $X$ for some time $\varepsilon_0$. It is clear that if for some $\varepsilon'$ $p_{\varepsilon'}: X \to 2^X_H$ is a topological embedding then the same holds for $0\leq\varepsilon\leq \varepsilon'$.

Hyperspaces of Peano continua are Hilbert cubes (see \cite{C-S}). This implies that there exists an embedding of  $X$ as $Z$-set in the Hilbert cube in such a way that there are homeomorphic copies of $X$ in its complement and as close to $X$ as we want if we consider the
hyperspace $2^{2^X}$ with the Hausdorff metric $d_{H^2}$.

\begin{obs} $d_{H^2}(X,p_\varepsilon (X))=\varepsilon$.
\end{obs}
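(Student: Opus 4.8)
The plan is to prove the two inequalities $d_{H^2}(X,p_\varepsilon(X))\le\varepsilon$ and $d_{H^2}(X,p_\varepsilon(X))\ge\varepsilon$ separately, where $X$ denotes the canonical copy $X_{0}=\{\{x\}:x\in X\}\subset 2^X_H$ and $p_\varepsilon(X)=\{\bar B(x,\varepsilon):x\in X\}\subset 2^X_H$, both regarded as points of $2^{2^X}$ with its Hausdorff metric $d_{H^2}$. This is meant for $0\le\varepsilon\le diam(X)$, the only interesting range: if $\varepsilon>diam(X)$ then $\bar B(x,\varepsilon)=X$ for every $x$, so $p_\varepsilon(X)=\{X\}$ and $d_{H^2}(X,\{X\})=\sup_{x\in X}\Phi(x)=diam(X)<\varepsilon$, so the restriction $\varepsilon\le diam(X)$ is implicit.

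The upper bound is immediate. For each $x\in X$ the singleton $\{x\}$ lies in $\bar B(x,\varepsilon)$ and every point of $\bar B(x,\varepsilon)$ is within $\varepsilon$ of $x$, so $d_H(\{x\},\bar B(x,\varepsilon))\le\varepsilon$. Since $\bar B(x,\varepsilon)=p_\varepsilon(x)\in p_\varepsilon(X)$ and $\{x\}\in X_{0}$, this gives both $d_H(\{x\},p_\varepsilon(X))\le\varepsilon$ and $d_H(\bar B(x,\varepsilon),X_{0})\le\varepsilon$; taking suprema over $x$ in the two terms defining $d_{H^2}$ yields $d_{H^2}(X,p_\varepsilon(X))\le\varepsilon$.

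For the lower bound --- the real content --- I would use a diameter-realising pair. By compactness pick $x,x'\in X$ with $d(x,x')=diam(X)\ge\varepsilon$, and show that $d_H(\{x\},\bar B(y,\varepsilon))\ge\varepsilon$ for \emph{every} $y\in X$; this gives $d_H(\{x\},p_\varepsilon(X))=\inf_{y}d_H(\{x\},\bar B(y,\varepsilon))\ge\varepsilon$, hence $d_{H^2}(X,p_\varepsilon(X))\ge\varepsilon$ and, with the upper bound, equality. To prove the claim, suppose $d_H(\{x\},\bar B(y,\varepsilon))<\varepsilon$ for some $y$; then $\sup_{z\in\bar B(y,\varepsilon)}d(z,x)<\varepsilon$, i.e. $\bar B(y,\varepsilon)\subset\{z:d(z,x)<\varepsilon\}$. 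If $d(y,x')\le\varepsilon$ then $x'\in\bar B(y,\varepsilon)$, so $d(x,x')<\varepsilon\le diam(X)=d(x,x')$, absurd. Otherwise $d(y,x')>\varepsilon$; since $X$ is geodesic, take a geodesic from $y$ to $x'$ and let $p$ be its point at distance $\varepsilon$ from $y$, so $p\in\bar B(y,\varepsilon)$ and hence $d(p,x)<\varepsilon$, while $d(p,x')=d(y,x')-\varepsilon$. Then $d(x,x')\le d(x,p)+d(p,x')<\varepsilon+\big(d(y,x')-\varepsilon\big)=d(y,x')\le diam(X)=d(x,x')$, again absurd.

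The subtle point, and the reason the argument needs care, is the choice of base point: the equality $d_H(\{x\},p_\varepsilon(X))=\varepsilon$ is genuinely false for a general $x$ (for instance in $([0,1],|\cdot|)$ with $\varepsilon=\tfrac12$ one has $d_H(\{\tfrac15\},p_{1/2}(X))=\tfrac3{10}$), so one must use an endpoint of a diameter, the second point $x'$ serving precisely to keep every closed ball $\bar B(y,\varepsilon)$ from being swallowed by the open ball $B(x,\varepsilon)$. The same lower bound also follows from the triangle inequality in $2^X_H$: a short computation gives $d_H(X,\bar B(y,\varepsilon))=\max\{0,\Phi(y)-\varepsilon\}$, whence $\sup_{y}d_H(\bar B(y,\varepsilon),X)=\max\{0,diam(X)-\varepsilon\}$, and for $x$ with $\Phi(x)=diam(X)$ one obtains $d_H(\{x\},p_\varepsilon(X))\ge d_H(\{x\},X)-\sup_{y}d_H(\bar B(y,\varepsilon),X)=\varepsilon$. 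I do not expect any obstacle beyond identifying this choice of point; the rest is the routine bookkeeping above.
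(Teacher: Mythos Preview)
Your proof is correct. The paper states this as a bare remark without proof, so there is nothing to compare against; your argument supplies exactly the details one would expect. The upper bound is immediate from $d_H(\{x\},\bar B(x,\varepsilon))\le\varepsilon$, and your lower bound via a diameter-realising pair $x,x'$ is clean and correct: the key observation that $d_H(\{x\},\bar B(y,\varepsilon))=\sup_{z\in\bar B(y,\varepsilon)}d(z,x)$, together with the geodesic point $p$ on $[y,x']$ at distance $\varepsilon$ from $y$, forces the contradiction $d(x,x')<d(x,x')$ in both cases. Your side remark that a generic $x$ does \emph{not} work is also right and worth keeping.
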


Not every compact connected length space is topologically robust.

\begin{contejp} Consider in the real plane the space: $$X=\{(x,0)\, | \, 0\leq x\leq 1\}
\cup \Big{\{} \underset{n\in \mathbb{N}}{\cup}
\{(\frac{1}{2^n},y) \, | \, 0\leq y\leq \frac{1}{2^n}\}\Big{\}}$$ with the
natural length metric. See Figure \ref{fig.3}.
 \end{contejp}

\begin{figure}[ht]
\includegraphics[scale=0.5]{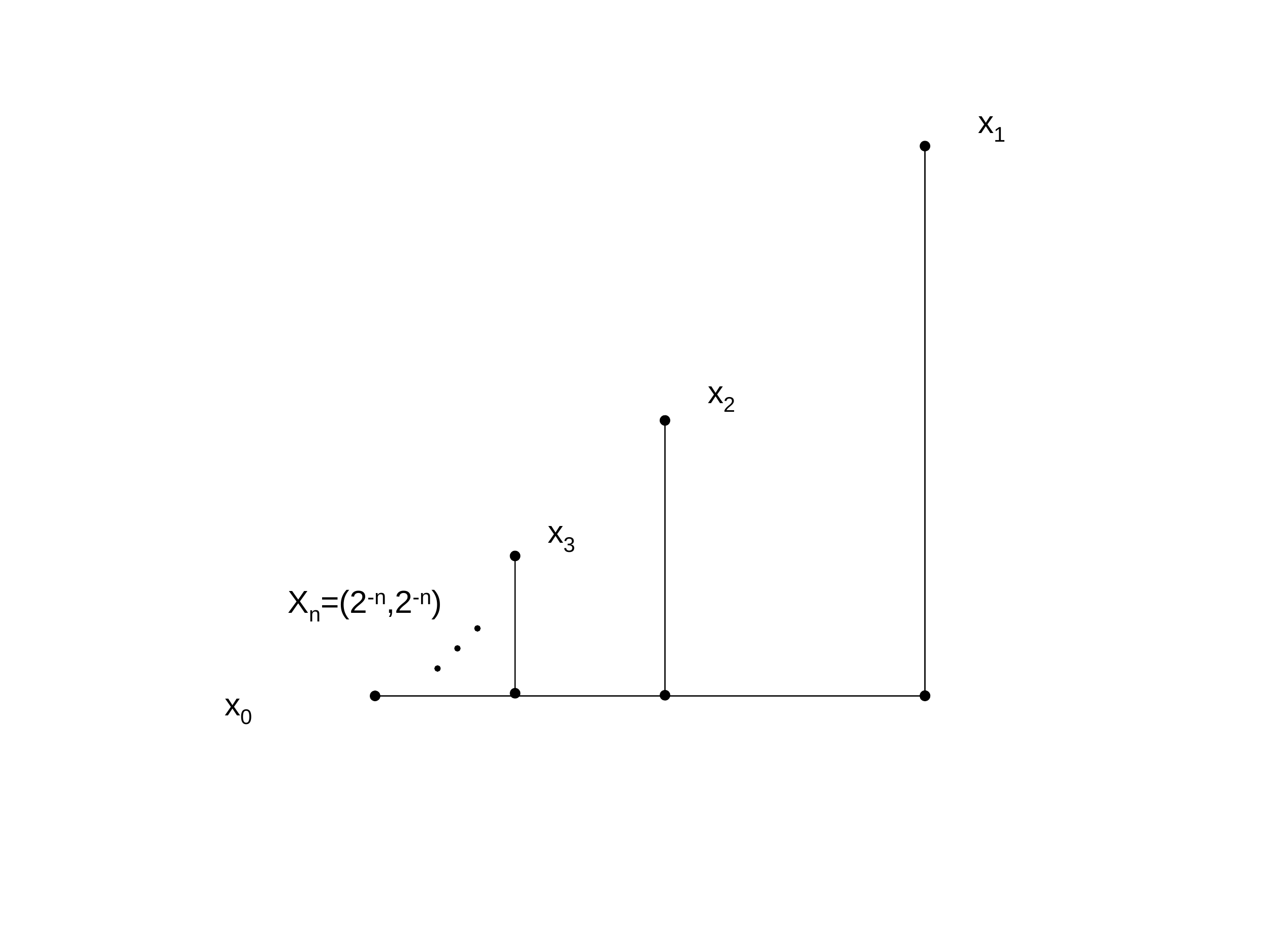}
\caption{Compact length space which is not topologically robust.}\label{fig.3}
\end{figure}

This is a compact connected length space.
Then for every $\varepsilon>0$ let $n_0$ be such that
$\frac{1}{2^{n_0}}<\frac{\varepsilon}{2}$ and consider the points
$x=(0,0)$ and $y=(\frac{1}{2^{n_0}},\frac{1}{2^{n_0}})$. Clearly
$\bar{B}(x,\varepsilon)=\bar{B}(y,\varepsilon)$ and $p_\varepsilon$ fails
to be injective.

In fact, for every $\varepsilon>0$, $p_\varepsilon(X)$ is not homeomorphic to $X$.
For every $0<\varepsilon<diam(X)$ and $\forall n$ such that
$\frac{2}{2^n}\leq \varepsilon$ the geodesic segment
$[(2^{-n},2^{-n}),(2^{-n},0)]$ is identified with the geodesic segment
$[(0,0),(2^{-n},0)]$, and it is readily seen that $p_\varepsilon(X)$ is a finite tree.

Notice that, in this example, $M=2$, $m=1$, its extrema are the points 
$\Phi^{-1}(2)=\cup_{n\in \mathbb{N}\cup 0}\{(2^{-n},2^{-n})\}\cup \{0,0\}$ and its center is the point $\Phi^{-1}(1)=(1,0)$.

\vspace{0.5cm}

We take the following definition from \cite{B-H}. See Definition 3.26 and, explicitly on page 119.

\begin{definicion} Given $r>0$, a metric space (X,d) is said to be
\emph{r-uniquely geodesic} if for every pair of points $x,y$ with
$d(x,y)<r$ there is a unique geodesic segment joining $x$ to $y$.
\end{definicion}

\begin{definicion} We define X to be \textbf{r-perfectly geodesic} if it is r-uniquely
geodesic and for any three points $x_1,x_2,x_3$ with
$d(x_i,x_j)<r$ if the geodesic segments $[x_1,x_2]$ and $[x_1,x_3]$ have a
common non-trivial interval then one of the geodesic segments is contained in the other.
\end{definicion}

\begin{nota} When this property holds, for any $x_1,x_2,x_3$ with
$d(x_i,x_j)<r$, $[x_1,x_2] \cap [x_1,x_3]$ equals $\{x_1\},
[x_1,x_2]$ or $[x_1,x_3]$.
\end{nota}

\begin{nota} \label{quotient} If $X$ is a length space, $p_\varepsilon: X \to
p_\varepsilon(X)$ is continuous. Since $X$ is compact and $2^X_H$ is
Hausdorff (and then, so it is $p_\varepsilon(X)$ for any $\varepsilon$),
$p_\varepsilon: X \to p_\varepsilon(X)$ is a quotient map and
$p_\varepsilon (X)$ is homeomorphic to the quotient space $X/_\sim$
where two points are related $x\sim y$ if and only if
$p_\varepsilon(x)=p_\varepsilon(y)$. Thus, it is trivial that when
$p_\varepsilon$ is injective, $X$ is homeomorphic to $p_\varepsilon(X)$.
\end{nota}

\begin{teorema}\label{flujo homeom.geod} If $(X,d)$ is a r-perfectly geodesic compact length space,
then $(X,d)$ is topologically robust.
\end{teorema}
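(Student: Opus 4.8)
The plan is to exhibit $\varepsilon_0>0$ such that $p_\varepsilon$ is injective for every $\varepsilon\le\varepsilon_0$; by Remark \ref{quotient} each such $p_\varepsilon$ is then a topological embedding onto $p_\varepsilon(X)$, and, as observed after Definition \ref{Def: robust}, this propagates to all smaller radii, so $(X,d)$ is topologically robust. I would take $\varepsilon_0<\min\{\,m,\ r/3\,\}$, where $m=\min_{z\in X}\Phi(z)\ge \mathrm{diam}(X)/2>0$: the bound $\varepsilon_0<m$ guarantees $\bar B(z,\varepsilon)\subsetneq X$ for every $z\in X$ and every $\varepsilon\le\varepsilon_0$, so (as $X$ is connected) each such ball has non-empty topological boundary, while $\varepsilon_0<r/3$ keeps all points appearing in the estimates below within mutual distance $<r$, where $r$-unique and $r$-perfect geodesicity can be used freely.

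Now argue by contradiction: let $\varepsilon\le\varepsilon_0$, $x\neq y$ and $B:=\bar B(x,\varepsilon)=\bar B(y,\varepsilon)$, so $B\subsetneq X$, $\delta:=d(x,y)\in(0,\varepsilon]$, and $[x,y]$ is the unique geodesic. Since $B$ is at once the closed $\varepsilon$-ball about $x$ and about $y$, its topological boundary satisfies $\varnothing\neq\partial B\subseteq S(x,\varepsilon)\cap S(y,\varepsilon)$ (a boundary point is a limit of points outside $B$, hence at distance $\ge\varepsilon$ and $\le\varepsilon$ from each center). Hence it would suffice to produce one point $p$ with $d(x,p)=\varepsilon<d(y,p)$ (or the symmetric one), for then $p\in\bar B(x,\varepsilon)\setminus\bar B(y,\varepsilon)$, contradicting $\bar B(x,\varepsilon)=\bar B(y,\varepsilon)$. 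To build such a $p$, note that $r$-perfect geodesicity makes geodesic prolongations unique (two prolongations of $[y,x]$ past $x$ are geodesics sharing the non-trivial segment $[y,x]$, hence nested, hence equal if of equal length), so $[x,y]$ lies in a well-defined maximal geodesic $\widetilde\gamma$ at the scale $<r$ that concerns us. If $\widetilde\gamma$ can be prolonged past $x$, or past $y$, by an amount $>\varepsilon-\delta$, then prolonging up to arclength $\varepsilon$ beyond that endpoint (or to the end of the prolongation if it is shorter) yields the desired $p$: its distance to the near center is $\le\varepsilon$ and to the far one is $>\varepsilon$. So we may assume the maximal prolongations of $\widetilde\gamma$ past $x$ and past $y$ have lengths $a,b\le\varepsilon-\delta$; then $\widetilde\gamma$ is a genuinely maximal geodesic, of total length $a+\delta+b<3\varepsilon<r$, with free endpoints $p_-,p_+$, both in the interior of $B$.

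It remains to rule out this last configuration — the technical heart, and the place where $r$-perfect geodesicity, not merely $r$-unique geodesicity, is essential (in particular it excludes local branching of geodesics, so near $p_-$ and $p_+$ the space behaves like an arc-end). I would feed a boundary point $q\in\partial B$ into $r$-perfect geodesicity: since $d(x,q)=d(y,q)=\varepsilon$, the equal-length geodesics $[x,q],[y,q]$ cannot be nested without forcing $x=y$, so $[x,q]\cap[y,q]=\{q\}$; and comparing $[x,q]$ with $\widetilde\gamma$ at $x$ (the relevant lengths being $a<\varepsilon$ and $\delta+b\le\varepsilon$), any nesting would either prolong $\widetilde\gamma$ past a free end — impossible by maximality — or violate a length inequality, so $[x,q]$ meets $\widetilde\gamma$ only at $x$, and likewise at $y$. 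Tracking the geodesics $[x,q]$, $[y,q]$, $\widetilde\gamma$, and the geodesic joining the midpoint of $[x,y]$ to $q$, and using that $p_-,p_+$ admit no further prolongation together with the nestedness of overlaps throughout, one is driven either to $x=y$ outright or to the existence of a point of $\bar B(x,\varepsilon)$ at distance $\varepsilon$ from $x$ and more than $\varepsilon$ from $y$ — a contradiction in both cases. I expect this final case to be the main obstacle; the steps before it are routine once $\varepsilon_0$ is fixed small enough.
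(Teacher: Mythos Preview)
Your argument is incomplete, and the gap is precisely the ``final case'' you yourself flag as the main obstacle. Up to that point your reductions are fine: choosing $\varepsilon_0<r/3$, reducing to injectivity via Remark~\ref{quotient}, and observing that $r$-perfect geodesicity forces prolongations of $[y,x]$ past $x$ to be unique are all correct. But the case where the maximal extension $\widetilde\gamma$ of $[x,y]$ has both ends short (lengths $a,b\le\varepsilon-\delta$) genuinely occurs and you do not resolve it. For a concrete instance, take $X$ to be the closed unit disk with the Euclidean (hence length) metric, which is $r$-perfectly geodesic for every $r$, and let $x,y$ be two nearby points on the boundary circle: the chord $[x,y]$ cannot be extended at all ($a=b=0$), so your argument lands in the residual case for every admissible $\varepsilon$. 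Your proposed endgame --- tracking $[x,q]$, $[y,q]$, $\widetilde\gamma$ and $[m,q]$ --- is only a sketch, and I do not see how nestedness of overlaps alone forces either $x=y$ or a separating point; the separating point that actually works in the disk (e.g.\ a point at distance $\varepsilon$ from $x$ along a geodesic \emph{transverse} to $\widetilde\gamma$) is not produced by any of the objects you have assembled.

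The paper sidesteps this difficulty entirely by not trying to extend $[x,y]$. Instead it first shrinks $r$ (harmless, since $r$-perfectly geodesic for $r$ implies the same for any $r'<r$) so that $r\ll\mathrm{diam}(X)$, picks any point $z$ with $d(z,x),d(z,y)>r$, and lets $z_0$ be the point on the geodesic $[z,x]$ at distance $r/2$ from $x$. Then $\varepsilon\le d(z_0,x),d(z_0,y)<r$, and one applies $r$-perfect geodesicity at the \emph{external} pivot $z_0$ to the two geodesics $[z_0,x]$ and $[z_0,y]$. In the nested case $x\in[z_0,y]$, and the point of $[z_0,y]$ at distance $\varepsilon$ from $x$ on the $z_0$-side separates the balls; in the disjoint case the point $z_1\in[z_0,x]$ with $d(z_1,x)=\varepsilon$ does the job, and $d(z_1,y)=\varepsilon$ is ruled out because it would force $[z_0,z_1]\subset[z_0,y]$, contradicting disjointness. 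The point is that the geodesic $[z,x]$ to a far-away $z$ always has length $>r/2$, so $z_0$ always exists --- no extendability hypothesis on $[x,y]$ is needed. If you want to rescue your approach, replace the attempt to prolong $[x,y]$ by this choice of an external reference point; your residual case then simply does not arise.
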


\begin{proof} As we saw in Remark \ref{quotient}, it suffices to
check the injectivity. So, let us see that there exists some
$\varepsilon_0>0$ such that for every pair of points $x,y$ of $X$,
$\bar{B}(x,\varepsilon) \neq \bar{B}(y,\varepsilon) \ \forall \varepsilon
\leq \varepsilon_0$.

Let $\varepsilon:=\frac{r}{4}$ and assume $r<<diam(X)$. If $d(x,y)>\varepsilon$
it is trivial, so let us suppose
$d(x,y)\leq \varepsilon \leq \varepsilon_0$. Consider any point $z\in X$ such that
$d(z,x),d(z,y)>r$ (remember that $r<<diam(X)$) and, since $X$ is a
length space, let $z_0$ be the point in the geodesic segment $[z,x]$ such that
$d(z_0,x)=\frac{r}{2}$. Clearly, $\varepsilon \leq
d(z_0,x),d(z_0,y)<r$. Now the geodesic segments $[z_0,x]$ and $[z_0,y]$
are unique and $[z_0,x]\cap [z_0,y]$ may be $[z_0,x]$, $[z_0,y]$
or $\{z_0\}$.

Case 1: if $[z_0,x]\cap [z_0,y]$ is $[z_0,x]$. (If this
intersection was equal to $[z_0,y]$ it would be analogous). The
geodesic segment $[z_0,y]$ is isometric to a subinterval of the real
line $[0,d(z_0,y)]$; let us denote such isometry from the
subinterval to the geodesic as $f_{[z_0,y]}$. In this case, there
exists some $t\in [0,d(z_0,y)]$ such that $f_{[z_0,y]}(t)=x$ and
since $d(z_0,x)\geq \varepsilon$ then clearly $t\geq \varepsilon$.

\begin{picture}(1,1)(-100,50)
    \put(0,0){\line(1,0){90}}
    \put(0,0){\circle*{4}}
    \put(-20,0){$z_0$}
    \put(90,0){\circle*{4}}
    \put(90,-10){$y$}
    \put(70,0){\circle*{4}}
    \put(65,-10){$x$}
    \put(40,0){\circle*{3}}
    \put(10,10){$f_{[z_0,y]}(t-\varepsilon)$}
\end{picture}

\vspace{3cm}

Clearly, the point $f_{[z_0,y]}(t-\varepsilon)$ is in
$\bar{B}(x,\varepsilon)$ but it is not in $\bar{B}(y,\varepsilon)$ since
the distance to $y$ through this geodesic is obviously $\varepsilon +
d(x,y)$ and this distance is less than $r$ which means that this
is the unique geodesic from this point to $y$.

Case 2: if $[z_0,x]\cap [z_0,y]$ is $\{z_0\}$. Assume
$d(z_0,x)\leq d(z_0,y)$ and consider $z_1$ the point in the
geodesic segment $[z_0,x]$ such that $d(z_1,x)=\varepsilon$. Clearly,
$d(z_1,y)\geq d(z_0,y)- d(z_0,z_1)\geq d(z_0,x)- d(z_0,z_1)=
\varepsilon$. If $d(z_1,y)\neq \varepsilon$ then $z_1 \in
\bar{B}(x,\varepsilon)\backslash \bar{B}(y,\varepsilon)$ and hence
$\bar{B}(x,\varepsilon) \neq \bar{B}(x,\varepsilon)$.

Suppose then that $d(z_1,y)=\varepsilon$. Hemce, $[z_0,z_1]\cup
[z_1,y]$ has the same length of $[z_0,x]$ and, since $d(z_0,y)\geq
d(z_0,x)$, it defines a geodesic from $z_0$ to $y$ whose
intersection with $[z_0,x]$ is $[z_0,z_1]$ and this contradicts
the fact that there is a unique geodesic from $z_0$ to $y$ which
must be the one which intersected $[z_0,x]$ just in $\{z_0\}$.
\end{proof}

Given a connected Riemannian manifold, there is a natural length metric induced by the length piecewise continuously differentiable paths. See \cite[I.3]{B-H}.

\begin{cor}\label{riemann} If $(X,d)$ is a compact connected Riemannian manifold with its natural length metric, then $(X,d)$ is topologically robust.
\end{cor}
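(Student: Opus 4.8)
The plan is to deduce Corollary \ref{riemann} from Theorem \ref{flujo homeom.geod} by showing that a compact connected Riemannian manifold $(X,d)$, with its natural length metric, is $r$-perfectly geodesic for a suitable $r>0$; since such an $X$ is complete and locally compact, the Hopf--Rinow theorem already makes it a compact geodesic (hence length) space, so Theorem \ref{flujo homeom.geod} then yields the conclusion. I would take $r$ to be any positive number strictly below the injectivity radius $\mathrm{inj}(X)=\inf_{x\in X}i_x$, which is positive because $X$ is compact (a standard fact of Riemannian geometry; see, e.g., \cite[I.3]{B-H}). This makes $(X,d)$ $r$-uniquely geodesic: if $d(x,y)<r$, a geodesic segment joining $x$ to $y$ is the image of a geodesic path $c\colon[0,d(x,y)]\to X$, which realizes $d(x,y)$ as its length and is therefore a minimizing Riemannian geodesic; hence $c(t)=\exp_x(tv)$ for some $v\in T_xX$ with $|v|=1$ and $\exp_x(d(x,y)\,v)=y$, and such a $v$ is unique because $\exp_x$ is injective on the open ball of radius $i_x\ge\mathrm{inj}(X)>d(x,y)$ in $T_xX$.

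For the second requirement, let $x_1,x_2,x_3$ have pairwise distances $<r$, and let $\gamma_1\colon[0,\ell_1]\to X$, $\gamma_2\colon[0,\ell_2]\to X$ be the unit-speed geodesic paths parametrizing $[x_1,x_2]$ and $[x_1,x_3]$ from the common endpoint $x_1$, so that $\ell_1=d(x_1,x_2)$, $\ell_2=d(x_1,x_3)$, and $d(x_1,\gamma_1(t))=t$, $d(x_1,\gamma_2(t))=t$ on their respective domains. Assume $[x_1,x_2]$ and $[x_1,x_3]$ share a non-degenerate subarc $A$ and choose $p\in A$ with $p\neq x_1$; then $0<s:=d(x_1,p)\le\min\{\ell_1,\ell_2\}<r$ and $p$ lies on both segments. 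Now $\gamma_1|_{[0,s]}$ and $\gamma_2|_{[0,s]}$ are geodesic paths from $x_1$ to $p$, hence by $r$-unique geodesy have the same image; being unit-speed and based at $x_1$, they coincide as maps, so $\gamma_1=\gamma_2$ on $[0,s]$. Then $\gamma_1$ and $\gamma_2$ share value and derivative at any interior point of $[0,s]$, so uniqueness of Riemannian geodesics with prescribed initial position and velocity forces $\gamma_1\equiv\gamma_2$ on $[0,\min\{\ell_1,\ell_2\}]$; thus one of $[x_1,x_2]$, $[x_1,x_3]$ contains the other, $(X,d)$ is $r$-perfectly geodesic, and Theorem \ref{flujo homeom.geod} applies.

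The point deserving care is the interplay between the two notions of ``geodesic'' involved: one must observe that a geodesic path in the metric sense of Definition \ref{camin_geod} is automatically a minimizing Riemannian geodesic (being a shortest curve between its endpoints), which is what licenses the use of the exponential map, the injectivity radius, and the ODE-uniqueness of geodesics. With that bridge in hand, the Riemannian inputs are all classical, and the remainder is routine bookkeeping together with the appeal to Theorem \ref{flujo homeom.geod}.
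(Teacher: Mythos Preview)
Your proof is correct and follows the same strategy as the paper: verify that a compact Riemannian manifold is $r$-perfectly geodesic for some $r>0$ and invoke Theorem \ref{flujo homeom.geod}. The paper's argument is terser---it cites the existence of convex normal balls and uses compactness (via a Lebesgue-number argument) to obtain a uniform $r$---whereas you work with the injectivity radius and spell out the non-branching step via ODE uniqueness; these are minor variations of the same Riemannian input.
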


\begin{proof} It is a basic result on Riemannian geometry that
every point of a Riemannian manifold lives at the center of a
convex ball such that any two points in that ball are joined by a
unique geodesic segment contained in the ball. Since it is
compact, through the Lebesgue number we can find a global radius
$r$.
\end{proof}

The following corolary follows from the fact that geodesics in a space of curvature $\geq k$ (with $k$ an
arbitrary real number) do not branch. The definitions can be found in \cite{Bu-Bu} where this statement is left as an exercise, see 10.1.2.

\begin{cor}\label{curvat} If $(X,d)$ is a locally uniquely geodesic compact length space
of curvature bounded bellow, then $(X,d)$ is topologically robust.
\end{cor}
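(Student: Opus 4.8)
The plan is to deduce the corollary from Theorem~\ref{flujo homeom.geod} by showing that the stated hypotheses force $(X,d)$ to be $r$-perfectly geodesic for a suitable $r>0$.

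First I would produce the radius. Since $X$ is locally uniquely geodesic, every point $x$ has a radius $\varepsilon_x>0$ such that any two points of $B(x,\varepsilon_x)$ are joined by a unique geodesic in $X$; covering the compact space $X$ by finitely many balls $B(x_i,\varepsilon_{x_i}/2)$ and taking $r_1=\min_i \varepsilon_{x_i}/2$ shows that $(X,d)$ is $r_1$-uniquely geodesic. Because "curvature $\ge k$" is a local condition, I would also fix $r_2>0$ so that every ball of radius $r_2$ lies in a region where the comparison inequalities defining the lower curvature bound are available, and set $r=\min\{r_1,r_2\}$; since being $r$-perfectly geodesic persists under shrinking $r$, we may in addition assume $r\ll \mathrm{diam}(X)$ as in Theorem~\ref{flujo homeom.geod}. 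With this choice $(X,d)$ is $r$-uniquely geodesic, so only the branching clause of the definition of $r$-perfectly geodesic remains to be verified.

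Next I would invoke the non-branching property of length spaces of curvature bounded below (\cite{Bu-Bu}, Exercise~10.1.2): within radius $r$, two geodesics issuing from a common point and agreeing along a non-trivial initial subsegment coincide on the whole of their common parameter interval; the usual proof builds a curvature-$\ge k$ comparison triangle from a point just before the putative branch point and the two points just after it, which is exactly why the local bound suffices once $r$ is small. Granting this, take $x_1,x_2,x_3$ with $d(x_i,x_j)<r$ for all $i,j$ and suppose the (now unique) geodesic segments $[x_1,x_2]$ and $[x_1,x_3]$ share a non-trivial interval $J$ with endpoints $p,q$. Parametrising $[x_1,x_2]$ from $x_1$, say $p$ precedes $q$. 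The subsegment of $[x_1,x_2]$ from $x_1$ to $p$ and the subsegment of $[x_1,x_3]$ from $x_1$ to $p$ are both geodesics of length $d(x_1,p)\le d(x_1,x_2)<r$, hence equal by $r$-unique geodesicity. This also rules out the reversed ordering of $J$ along $[x_1,x_3]$: if $q$ preceded $p$ there, the unique geodesic $[x_1,p]$ would contain $q$, contradicting that $q$ lies strictly beyond $p$ on $[x_1,x_2]$. Concatenating with $J$, the segments $[x_1,x_2]$ and $[x_1,x_3]$ therefore agree from $x_1$ up to $q$, and $d(x_1,q)>0$; non-branching then forces them to coincide along the shorter one, i.e. one is contained in the other. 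Thus $(X,d)$ is $r$-perfectly geodesic, and Theorem~\ref{flujo homeom.geod} yields topological robustness.

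The main obstacle I anticipate is the bookkeeping in the last step: one must track the position of $J$ relative to $x_1$, exclude the possibility that the two segments traverse $J$ in opposite directions, and keep all relevant parameters below $r$ so that the uniqueness and comparison hypotheses genuinely apply. Proving the non-branching lemma from the definition of curvature bounded below, rather than citing it, would be the other technical point; since the excerpt allows us to cite \cite{Bu-Bu}, it can be treated as a black box.
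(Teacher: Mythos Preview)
Your proposal is correct and follows the same route as the paper: the paper's argument is a one-liner citing the non-branching of geodesics in spaces of curvature bounded below (\cite{Bu-Bu}, Exercise~10.1.2) to conclude that the hypotheses yield an $r$-perfectly geodesic space, and then invoking Theorem~\ref{flujo homeom.geod}. You have simply written out in full the compactness step producing a uniform $r$ and the verification that non-branching plus $r$-unique geodesicity gives $r$-perfect geodesicity, details the paper leaves implicit.
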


This condition on the geodesics is sufficient but it is not
necessary. There are important groups of spaces for which this map
would be an embedding and they are not necessarily r-perfectly
geodesic, for example trees or
finite polyhedra with a length metric. These will be further
referred as {\it polyhedral spaces}. Let us begin by endowing an n-simplex $\Delta^n$ with vertices
$x_0,\cdots , x_n$ with a length metric. Let us consider it as a subspace of
$\mathbb{R}^{n+1}$ with $x_{i-1}=\varepsilon_0 e_i$
where $e_1,\cdots , e_{n+1}$ is the canonical basis of
$\mathbb{R}^{n+1}$. Therefore, we will say that the n-simplex is endowed the euclidean metric. Note that if the vertices are $x_{i-1}=\varepsilon_0 e_i$, the edges have length $\sqrt{2} \cdot
\varepsilon_0$.

\begin{lema} Let $\Delta^n$ be a n-simplex endowed with the euclidean
metric $d$, with vertices $x_0,\cdots , x_n$ and suppose length
$\varepsilon$ for the edges. For every pair of points given in
barycentric coordinates $x:=\lambda_0 x_0 + \cdots + \lambda_n
x_n$ and $x':=\lambda'_0 x_0 + \cdots + \lambda'_n x_n$ the
distance between those points is given by the formula \[d(x,x'):=
\frac{\varepsilon}{\sqrt{2}} \sqrt{\sum_{i=0}^n (\lambda_i -
\lambda'_i)^2}\]
\end{lema}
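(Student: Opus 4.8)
The plan is to identify the length metric on $\Delta^n$ with the restriction of the ambient Euclidean metric of $\mathbb{R}^{n+1}$ and then read off the formula by a direct computation. First I would fix coordinates: under the embedding $x_{i-1}=\varepsilon_0 e_i$, the distance between two distinct vertices is $\|\varepsilon_0 e_i-\varepsilon_0 e_j\|=\varepsilon_0\sqrt{2}$, so the hypothesis that the edges have length $\varepsilon$ forces $\varepsilon_0=\varepsilon/\sqrt{2}$. A point with barycentric coordinates $x=\lambda_0x_0+\cdots+\lambda_nx_n$ (where $\lambda_i\geq 0$ and $\sum_i\lambda_i=1$) is then, as a point of $\mathbb{R}^{n+1}$, nothing but $\sum_{i=0}^n\lambda_i\,\varepsilon_0 e_{i+1}=\varepsilon_0(\lambda_0,\dots,\lambda_n)$.

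The key step is the observation that the length metric of $\Delta^n$ agrees with the metric induced from $\mathbb{R}^{n+1}$. Indeed, $\Delta^n$ is convex, so the straight segment joining any two of its points stays inside $\Delta^n$; this segment is an admissible path whose length equals the Euclidean distance between its endpoints, and no path contained in $\Delta^n\subset\mathbb{R}^{n+1}$ can be shorter. Hence for $x,x'\in\Delta^n$ the length distance $d(x,x')$ equals $\|x-x'\|$ computed in $\mathbb{R}^{n+1}$.

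Combining the two steps,
\[
d(x,x')=\|x-x'\|=\big\|\varepsilon_0(\lambda_0-\lambda_0',\dots,\lambda_n-\lambda_n')\big\|
=\varepsilon_0\sqrt{\sum_{i=0}^n(\lambda_i-\lambda_i')^2}
=\frac{\varepsilon}{\sqrt{2}}\sqrt{\sum_{i=0}^n(\lambda_i-\lambda_i')^2},
\]
which is exactly the claimed formula. I do not expect a genuine obstacle here: the only point that deserves an explicit word is the identification of the intrinsic length metric with the ambient Euclidean one, and that rests solely on the convexity of the simplex.
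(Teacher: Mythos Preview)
Your proof is correct and follows essentially the same approach as the paper: fix $\varepsilon_0=\varepsilon/\sqrt{2}$ so that the embedded simplex has edge length $\varepsilon$, and then read off the formula as the ambient Euclidean distance in $\mathbb{R}^{n+1}$. You actually supply more detail than the paper's two-line argument, in particular the convexity justification that the intrinsic length metric on $\Delta^n$ agrees with the restricted Euclidean metric; the paper simply takes the ``euclidean metric'' as the given metric and computes.
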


\begin{proof} In order to get length $\varepsilon$ on the edges, since the euclidean distance
between two vertices is $\sqrt{2} \cdot \varepsilon_0$, it suffices to take
$\varepsilon_0=\frac{\varepsilon}{\sqrt{2}}$ and measure the euclidean distance in the n-simplex as a subset of
$\mathbb{R}^{n+1}$.
\end{proof}

Consider any finite simplicial complex $K$. If we consider the
geometric realization $|K|$, this finite polyhedron can be metrized
with a length metric $d$ in a natural way. Set each simplex
isometric to a euclidean one and assume length $\sqrt{2}$ on the
edges for simplicity. Now for any two points $x,y$ in $|K|$,
$d(x,y)$ will be defined as the greatest lower bound of the length
of PL paths joining them. %See [GHV].
(It is immediate to see that
if $K$ is a finite simplicial complex this is a metric and the
metric topology is the same of $|K|$).

Thus, $d$ will be referred to as a \textbf{polyhedral metric} and $|K|$ endowed with the metric $d$, $|K|_d$, as a
\textbf{finite polyhedral space}.

\begin{nota}\label{coord} If we have a finite polyhedron K with vertices $x_0,\cdots,
x_n$, for any point $x \in K$ we can represent it in barycentric
coordinates as $\sum_{i=0}^n \lambda_i x_i$ where if $x$ belongs
to a simplex with vertices $x_0,\cdots ,x_k$ then $\lambda_i=0 \
\forall  i\neq 0,\cdots k$. The distance between two points  $\sum_{i=0}^n \lambda_i x_i$ and
$\sum_{i=0}^n \lambda'_i x_i$ in the
same simplex, measured in the euclidean metric of that simplex, is
then $\sqrt{\sum_{i=0}^n (\lambda_i-\lambda'_i)^2}$.
\end{nota}

\begin{lema}\label{mismo simplex} For any two points $x,x' \in \Delta$ with $\Delta$ any
simplex of $K$, the distance $d(x,x')$ in $|K|_d$ is the distance
in $\Delta$ when considered as isometric to an euclidean simplex
of diameter $\sqrt{2}$.
\end{lema}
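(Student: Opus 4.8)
The plan is to embed the whole polyhedron $|K|_d$ isometrically, simplex by simplex, into a single Euclidean space, where straight segments are globally length‑minimizing, and then read off the claim there.

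Let $x_0,\dots,x_N$ be all the vertices of $K$ and let $e_0,\dots,e_N$ be the canonical basis of $\mathbb{R}^{N+1}$. Define $\iota\colon |K|\to\mathbb{R}^{N+1}$ by sending a point $p$, written in barycentric coordinates $p=\sum_i\lambda_i x_i$ (with $\lambda_i=0$ for every vertex not in the carrier simplex of $p$), to $\iota(p)=\sum_i\lambda_i e_i$. First I would check that $\iota$ is well defined and injective: the barycentric coordinates of a point are intrinsic, two simplices of $K$ meet in a common face on which the two descriptions agree, and the support of $(\lambda_i)$ recovers the carrier simplex and hence the point. Then, by Remark \ref{coord} together with the normalization that edges have length $\sqrt2$, the restriction of $\iota$ to each simplex $\Delta'$ of $K$ is an isometry of $(\Delta',d)$ onto the face $\operatorname{conv}\{e_i:x_i\in\Delta'\}$ of $\operatorname{conv}\{e_0,\dots,e_N\}$, with the metric induced from $\mathbb{R}^{N+1}$; in particular that face is a Euclidean simplex of diameter $\sqrt2$.

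Next, take any PL path $\gamma$ in $|K|$ from $x$ to $x'$ (these are exactly the paths entering the infimum defining $d(x,x')$). Subdividing at the finitely many crossings of simplex boundaries, $\gamma$ is a concatenation of straight segments, each lying in one simplex, and by the definition of the polyhedral metric $l(\gamma)$ is the sum of the Euclidean lengths of these segments measured inside their respective simplices. Since $\iota$ is a simplexwise isometry, $\iota\circ\gamma$ is a PL path in $\mathbb{R}^{N+1}$ with $l(\iota\circ\gamma)=l(\gamma)$, whence $l(\gamma)\ge\|\iota(x)-\iota(x')\|$ because straight segments minimize length in $\mathbb{R}^{N+1}$. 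If $x,x'\in\Delta$ then $\|\iota(x)-\iota(x')\|$ is precisely the distance $d_\Delta(x,x')$ between $x$ and $x'$ in $\Delta$ viewed as a Euclidean simplex of diameter $\sqrt2$. Taking the infimum over $\gamma$ gives $d(x,x')\ge d_\Delta(x,x')$, while the straight segment $[x,x']\subset\Delta$ is itself a PL path in $|K|$ of length $d_\Delta(x,x')$, so $d(x,x')\le d_\Delta(x,x')$. Hence $d(x,x')=d_\Delta(x,x')$.

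The delicate step, and the one I would write out in full, is the construction of $\iota$: that the barycentric description of a point is independent of which simplex of $K$ is used to carry it, and that the edge normalization $\sqrt2$ is exactly the one making $\iota$ distance‑preserving on each simplex onto a face of $\operatorname{conv}\{e_0,\dots,e_N\}$. Once $\iota$ is available, the rest reduces to the trivial Euclidean fact that segments are shortest, together with the bookkeeping already built into the definition of the polyhedral metric — only PL paths are used and $K$ is finite, so each competing path splits into finitely many one‑simplex segments.
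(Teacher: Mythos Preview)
Your proof is correct and is essentially the same argument as the paper's: the paper also writes each point in barycentric coordinates with respect to \emph{all} vertices of $K$ (Remark~\ref{coord}), computes the length of a PL path as a sum of terms $\sqrt{\sum_i(\beta_i^{j-1}-\beta_i^j)^2}$, and applies Minkowski's inequality to bound this below by $\sqrt{\sum_i(\lambda_i-\lambda'_i)^2}$. Your embedding $\iota$ into $\mathbb{R}^{N+1}$ simply makes explicit the ambient space in which that Minkowski inequality is the ordinary triangle inequality, so the two proofs differ only in phrasing.
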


\begin{proof} Consider the points in barycentric coordinates
$x:=\lambda_0 x_0 + \cdots + \lambda_n x_n$ and $x':=\lambda'_0
x_0 + \cdots + \lambda'_n x_n$ with $x_0,\cdots ,x_n$ all the
vertices of $K$ as we saw in remark \ref{coord}. The euclidean
distance in the simplex is $d_0=\sqrt{\sum_{i=0}^n (\lambda_i -
\lambda'_i)^2}$. Consider now a PL path joining $x$ and $x'$ which
is a finite union of linear paths joining $x:=y_0$ to $y_1$, $y_1$
to $y_2$, $\cdots y_{k-1}$ to $y_k:=x'$ where $y_{j-1},y_j$ belong
to the same simplex $\forall j=1,k$. Let us denote
$\beta_1^j,\cdots ,\beta_n^j $ the barycentric coordinates of
$y_j$ (Note that $\beta_i^0=\lambda_i$ and
$\beta_i^k=\lambda'_i$). Then, the length of this path may be
computed as

\[l_0:=\sqrt{
\sum_{i=0}^n(\beta_i^0-\beta_i^1)^2}+\sqrt{
\sum_{i=0}^n(\beta_i^1-\beta_i^2)^2}+ \cdots +\sqrt{
\sum_{i=0}^n(\beta_i^{k-1}-\beta_i^k)^2}\]

and by Minkowski's
inequality,

\[l_0\geq
\sqrt{\sum_{i=0}^n(\beta_i^0-\beta_i^k)^2}=d_0.\]

Then $d_0$ is a
lower bound of the length of these paths finishing the
proof.
\end{proof}

\begin{teorema}\label{flujo homeom.pol} Let $|K|_d$ be a finite polyhedral space. Then $|K|_d$ is topologically robust.
\end{teorema}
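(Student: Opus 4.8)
The plan is to produce, depending only on $K$, a uniform $\varepsilon_0>0$ such that the map $p_{\varepsilon}$ is injective for every $\varepsilon\le\varepsilon_0$. By Remark \ref{quotient} each such $p_{\varepsilon}$ is then a topological embedding, and since (as noted right after Definition \ref{Def: robust}) embedding at some radius persists for all smaller radii, this is exactly topological robustness. So the whole task reduces to separating the closed balls $\bar B(x,\varepsilon)$ and $\bar B(y,\varepsilon)$ for $x\neq y$ and $\varepsilon\le\varepsilon_0$.

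The geometric ingredient I would use is that a finite piecewise-Euclidean polyhedron is locally a Euclidean cone, \emph{uniformly}: there is $\rho>0$, depending only on $K$, such that for every $p\in|K|$ the ball $B(p,\rho)$, with the metric restricted from $|K|_{d}$, is isometric to the $\rho$-ball about the cone point $o_{p}$ of the Euclidean cone $C(L_{p})$ over the link $L_{p}$ of the carrier simplex of $p$, the isometry carrying $p$ to $o_{p}$; and moreover shortest paths between points of $B(p,\tfrac{2}{3}\rho)$ do not leave $B(p,\rho)$. This follows from the description of the intrinsic metric of a piecewise-Euclidean complex in \cite[Ch.~I.7]{B-H} (inside each simplex one is just using Lemma \ref{mismo simplex}), combined with a Lebesgue-number argument over the finitely many closed stars of $K$. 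Here $L_{p}$ is a nonempty compact metric space, since no point of the non-degenerate Peano continuum $|K|$ is isolated; the distance from $o_{p}$ to a point of radial coordinate $r\le\rho$ equals $r$; and two points of $C(L_{p})$ with the same link coordinate and radial coordinates $r<r'\le\rho$ are joined by the radial segment, of length $r'-r$, which is a shortest path.

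Now set $\varepsilon_0:=\rho/3$, fix $\varepsilon\le\varepsilon_0$ and distinct $x,y\in|K|$. If $d(x,y)>\varepsilon$ then $x$ itself lies in $\bar B(x,\varepsilon)\setminus\bar B(y,\varepsilon)$, so $p_{\varepsilon}(x)\neq p_{\varepsilon}(y)$. Otherwise put $a:=d(x,y)\in(0,\varepsilon]$; since $a<\rho$, identify $B(x,\rho)$ with the $\rho$-ball of $C(L_{x})$ so that $x\leftrightarrow o_{x}$ and $y\leftrightarrow(a,\xi)$ for some $\xi\in L_{x}$, and let $z\in B(x,\rho)$ be the point corresponding to $(a+\varepsilon,\xi)$, which is legitimate because $a+\varepsilon\le\tfrac{2}{3}\rho<\rho$. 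Since the relevant shortest paths stay inside $B(x,\rho)$, the ambient distances coincide with the cone distances: $d(y,z)=\varepsilon$ and $d(x,z)=a+\varepsilon$. Hence $z\in\bar B(y,\varepsilon)$, while $d(x,z)=a+\varepsilon>\varepsilon$ gives $z\notin\bar B(x,\varepsilon)$; so the two balls differ and $p_{\varepsilon}$ is injective, completing the proof.

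The only real obstacle is the first step: establishing the uniform cone radius $\rho$ together with the attendant local isometry and the fact that small geodesics do not escape. Once small balls about arbitrary points are known to be metric cones vertexed at the point, the separation is immediate, as it just pushes a witness point radially outward through $y$, away from $x$. It is worth stressing that the argument uses neither uniqueness nor non-branching of geodesics, which is precisely why it covers polyhedral spaces (and trees) that are not $r$-perfectly geodesic in the sense of Theorem \ref{flujo homeom.geod}: the cone vertex $x$ is allowed to be a genuine branch point, and that is harmless here.
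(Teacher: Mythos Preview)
Your argument rests on the claim that there is a uniform $\rho>0$ for which every ball $B(p,\rho)$ in $|K|_d$ is isometric to the $\rho$-ball about the cone point of $C(L_p)$. This is false. The cone radius at $p$ is bounded above by the distance from $p$ to the union of closed simplices not containing the carrier of $p$, and that quantity is not bounded away from zero: when $p$ lies in the interior of a top-dimensional simplex and drifts toward one of its faces, it tends to $0$. Concretely, let $K$ consist of three $2$-simplices sharing a common edge $e$, and take $p$ in the interior of one of them at distance $\delta$ from $e$. The space of directions at $p$ is a round $S^1$, so $C(L_p)$ is the Euclidean plane; but for every $r>\delta$ the ball $B(p,r)$ branches along $e$ into all three sheets and is not even a topological disk. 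Since $\delta$ is arbitrary, no uniform $\rho$ exists. The local cone description in \cite[Ch.~I.7]{B-H} gives only a pointwise $\varepsilon(x)$ with this property, and the Lebesgue-number step you invoke shows at best that small balls lie inside some closed vertex star, not that they are cones about their own centres.

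Nor is the gap repaired by weakening the cone hypothesis to mere geodesic extension along $[x,y]$. In a single closed $2$-simplex take $x$ and $y$ close to a vertex $A$, placed symmetrically about the bisector at $A$ so that $[x,y]$ is orthogonal to that bisector; then both prolongations of $[x,y]$ meet $\partial\Delta$ after running only a fixed fraction of $d(x,y)$, so for $\varepsilon=d(x,y)$ your witness $z$ at radial distance $a+\varepsilon$ from $x$ does not exist in either direction. The paper's proof avoids local witnesses altogether: it selects a vertex $x_0$ of $K$ lying in the carrier of one of $x,x'$ but not the other, shows via an explicit barycentric-coordinate computation together with Minkowski's inequality that $d(x_0,x)<d(x_0,x')$, and then takes the separating point on the segment $[x_0,x]$ at distance $\varepsilon$ from $x$. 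Because $x_0$ is drawn from the fixed finite vertex set of $K$, no uniform local cone structure is required.
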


\begin{proof} Let $n:=max\{dim(\Delta_i) \ | \ \Delta_i \in K\}$ and
$\varepsilon< \frac{1}{n}$.

As we mentioned before, it suffices to check that it is injective. Consider $x,x'$ any two points in $|K|_d$. If
$d(x,x')>\varepsilon$ then obviously $\bar{B}(x,\varepsilon) \neq
\bar{B}(x',\varepsilon)$. If $d(x,x')\leq \varepsilon$ and there is some
simplex $\Delta_i$ such that $x,x'\in \Delta_i$, then restricting
to this simplex (where the restricted metric is the euclidean one)
it is clear that the closed balls do not coincide.

Then, the remaining case is when the points $x,x'$ are in different
simplices $\Delta, \Delta'$ and $d(x,x')<\varepsilon$.

Suppose that these points are, in barycentric coordinates, $x=\lambda_0 x_0 +
\cdots \lambda_n x_n$ and $x'=\lambda'_0 x_0 + \cdots \lambda'_n
x_n$, in the representation we saw at Remark \ref{coord}. Let us
assume that $\sum_{i=0}^n (\lambda_i)^2\leq \sum_{i=0}^n
(\lambda'_i)^2$. We can choose a vertex (there is no loss of generality if we consider it $x_0$) such that $x_0\in \Delta \backslash \Delta'$ with
$\lambda_0>0$. (This can be done because otherwise $x$ would be in
$\Delta'$).

Now we claim that $d(x_0,x)<d(x_0,x')$. Clearly
\[d(x_0,x)=\sqrt{(1-\lambda_0)^2 + \sum_{i=1}^n \lambda_i^2}.\]

To compute $d(x_0,x')$ consider as in lemma \ref{mismo simplex} any
sequence of points $x_0:=y_0,y_1,\cdots ,y_k:=x'$ with
$y_{j-1},y_j$ in the same simplex and $\beta_1^j,\cdots ,\beta_n^j
$ the barycentric coordinates of $y_j$ ($\beta_0^0=1$,
$\beta_i^0=0 \ \forall \, i\neq 0$ and $\beta_i^k=\lambda'_i$).

The length of this PL-path is
\[l_0=\sqrt{\sum_{i=0}^n(\beta_i^0-\beta_i^1)^2}+\sqrt{
\sum_{i=0}^n(\beta_i^1-\beta_i^2)^2}+ \cdots +\sqrt{
\sum_{i=0}^n(\beta_i^{k-1}-\beta_i^k)^2}\]
and by Minkowski's
inequality, \[l_0\geq
\sqrt{\sum_{i=0}^n(\beta_i^0-\beta_i^k)^2}=\sqrt{(1-\lambda'_0)^2
+ \sum_{i=1}^n (\lambda'_i)^2}\] but $x_0 \not \in \Delta'$ and
therefore $\lambda'_0=0$.

Finally, the assumption that $\sum_{i=0}^n (\lambda_i)^2\leq
\sum_{i=0}^n (\lambda'_i)^2$ (together with $\lambda_0>0$ and $\lambda'_0=0$) implies
that $d(x_0,x)^2=(1-\lambda_0)^2 + \sum_{i=1}^n (\lambda_i)^2 < 1+
\sum_{i=0}^n (\lambda_i)^2 - \lambda_0 \leq 1+ \sum_{i=0}^n
(\lambda'_i)^2 - \lambda_0 \leq (l_0)^2 - \lambda_0$. Thus
\[\sqrt{d(x_0,x)^2+\lambda_0}<l_0 \mbox{ and }
d(x_0,x)<\sqrt{d(x_0,x)^2+\lambda_0}\leq d(x_0,x').\] %which is the
%greatest lower bound of the lengths of the PL-paths.

Then, let $z_0$ be the point in the geodesic segment $[x_0,x]$  such that
$d(z_0,x)=\varepsilon$. Since $d(x_0,x)< d(x_0,x')$,
$d(z_0,x)<d(z_0,x')$ and hence $z_0\not \in \bar{B}(x',\varepsilon)$. Therefore, the closed balls do not coincide.
\end{proof}

\begin{quest} Is it true that for every 2-dimensional connected Riemannian manifold with its usual length metric any two balls with the same radius and different center coincide if and only if they are the whole space?
\end{quest}

\begin{quote} If this were true, then the dynamical cone of any Riemannian manifold would be exactly the topological cone.
\end{quote}

Nevertheless, being topologically robust is a strongly
geometric geometric condition. We can give an example of a compact length space
$(X,d)$ such that considering an equivalent metric up to
bi-lipschitz homeomorphism $(X,d')$ the condition holds for $(X,d)$ and not for
$(X,d')$.
\begin{ejp} Consider $X=\{(0,y)\, | \, 0\leq y \leq 1\}\cup
\{(x,0)\, | \, 0\leq x \leq 1\} \cup \{(x,y)\, | \, 0\leq x \leq y= 2^{-i},
i>0\}\cup \{(x,y)\, | \, 0\leq y \leq x = 2^{-i}, i>0\}$. See Figure \ref{fig.4}
\end{ejp}

\begin{figure}[ht]
\includegraphics[scale=0.5]{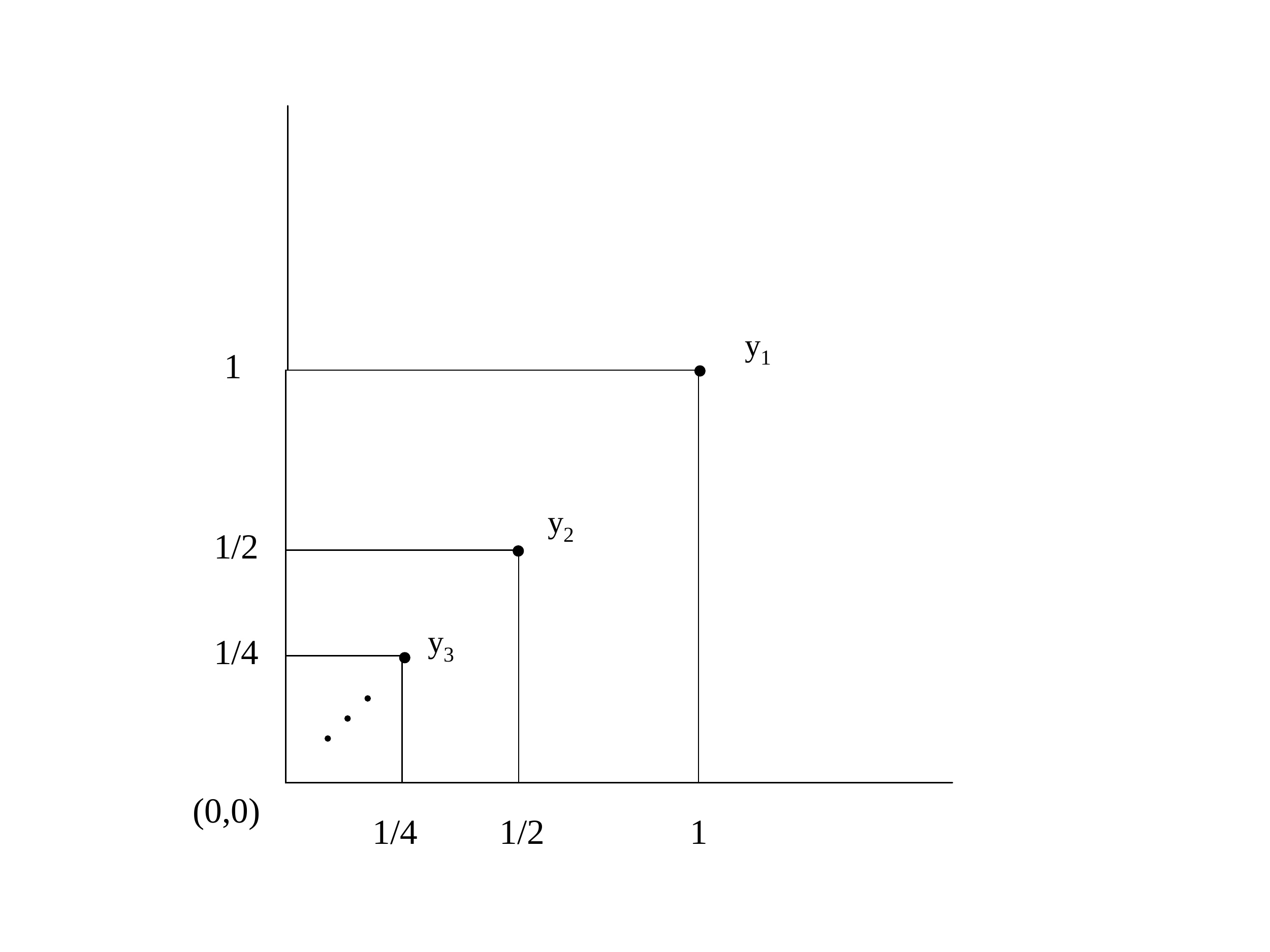}
\caption{The space is not topologically robust.}\label{fig.4}
\end{figure}

In $X$ with the natural length metric the balls centered at
$x=(0,0)$ and $y_i=(2^{-i},2^{-i})$ and radius $\varepsilon_i=2\cdot
2^{-i}$ coincide. Consider the space $X'=\{(0,y)\, | \, 0\leq y \leq
1\}\cup \{(x,0)\, | \, 0\leq x \leq 1\} \cup (\cup_{i>0} \{(x,y)\, | \,  0\leq
x,y \mbox{ and } x+y=2^{-i}\})$ with the natural length metric. See Figure \ref{fig.5}.

\begin{figure}[ht]
\includegraphics[scale=0.5]{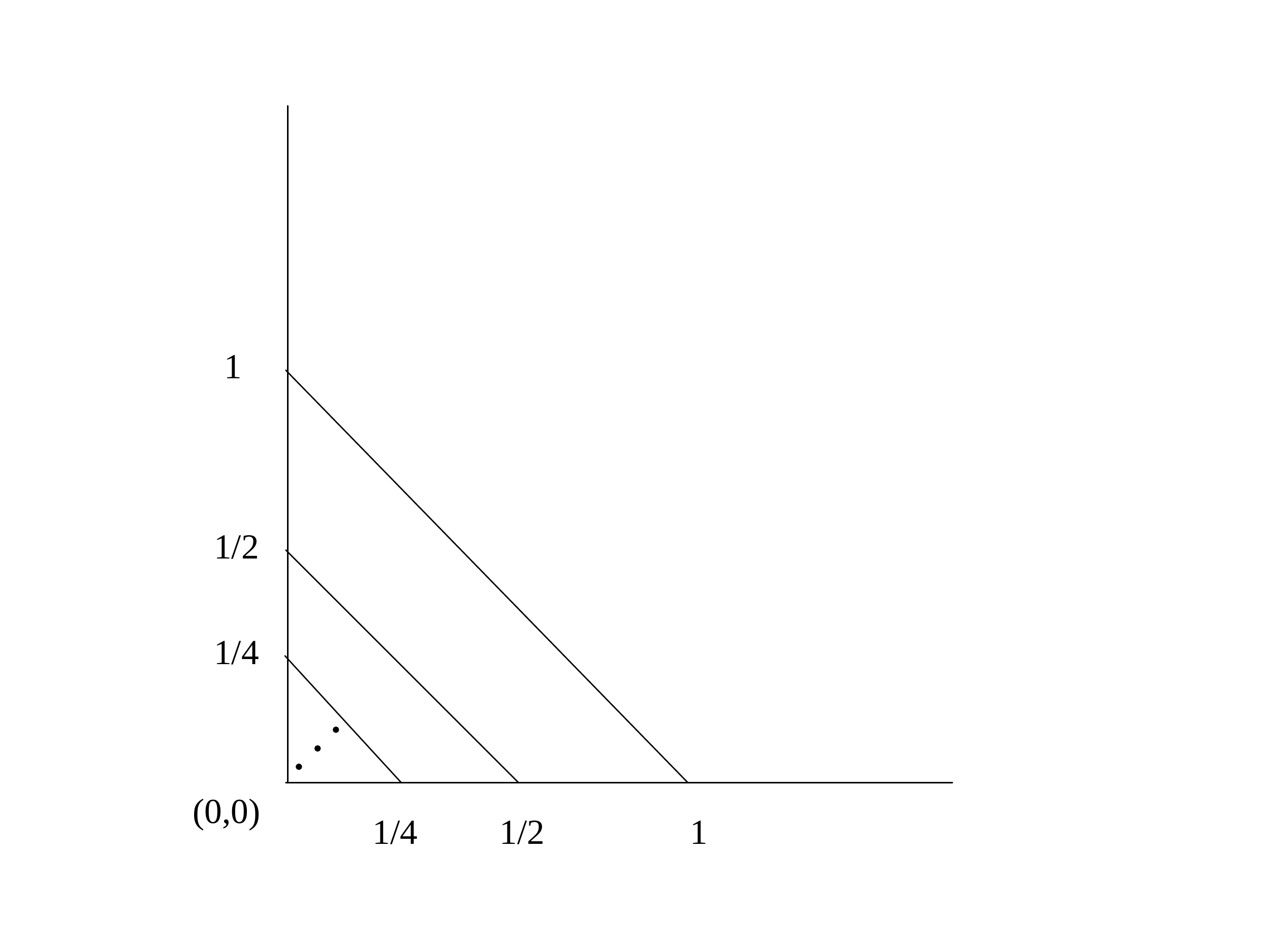}
\caption{The space is topologically robust.}\label{fig.5}
\end{figure}

Note that $X'$ can be obtained from $X$ up to isometry, just by
the following change in the distance: for any pair of points
$z,z'$ in $\{(x,y)|0\leq x \leq y= 2^{-i}\}\cup \{(x,y)|0\leq y
\leq x = 2^{-i}\}$ for some $i>0$,
$d'(z,z')=\frac{d(z,z')}{\sqrt{2}}$. Now in $(X,d')$, for any
$\varepsilon\leq \frac{1}{2}$ it may be easily checked that two balls
of radius $\varepsilon$ and different centers do not coincide.

\section{The semiflow for finite metric graphs.}

By a graph, we mean a 1-dimensional simplicial complex. 

A \emph{finite metric graph} is a connected finite graph endowed with the usual length metric where every edge has length 1. In this section we prove that if $X$ is a finite metric graph, then being a finite graph is a positively invariant property. This is, if $X$ is a finite metric graph, then $p_\varepsilon(X)$ is (topologically) a finite graph for every $\varepsilon>0$. Moreover, the graph goes through a finite number of topological types before colapsing to a point.

The underlying idea is that, in order to study geometric properties of the initial metric space, this tool preserves more information that other alternatives as Whitney levels which need not be even 1-dimensional. See \cite{I-N} and \cite{N2} for further information about Whitney levels.

\vspace{0.5cm}

\paragraph{\textbf{Finite trees}}$\\$

By a tree, we refer to a 1-dimensional simply connected simplicial complex. A \emph{rooted tree} $(T,v)$ consists of a tree $T$ and a point $v\in T$, called  the \emph{root}. If $c$ is any point of the rooted tree
$(T,v)$, the \emph{subtree of $(T,v)$ determined by c} is
\[T_c=\{x\in T \, | \ c\in [v,x]\}.\]
%First, let us see that in the case of finite trees, each level of the semiflow is homeomorphic to a finite tree.

\begin{lema} Let $x,y$ be two points in a finite tree $T$
such that $p_\varepsilon(x)=p_\varepsilon(y)$. Then, for any pair of points $z,z'$ in the geodesic segment
$[x,y]$ with $z=tx + (1-t)y$ and
$z'=(1-t)x + ty$ for any $t$,  $p_\varepsilon(z)=p_\varepsilon(z')$.
\end{lema}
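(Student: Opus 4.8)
The plan is to reduce the statement to two facts and then combine them by pure bookkeeping. Write $L=d(x,y)$ and, for $c\in[0,L]$, let $p_c$ be the (unique, since $T$ is a tree) point of the geodesic $[x,y]$ at distance $c$ from $x$. Writing $z=tx+(1-t)y$ and $z'=(1-t)x+ty$ with $t\in[0,1]$, one has $z=p_a$ and $z'=p_{L-a}$ where $a=(1-t)L\in[0,L]$; so it suffices to show $\bar{B}(p_a,\varepsilon)=\bar{B}(p_{L-a},\varepsilon)$.

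\emph{Fact 1 (growing balls).} For every $c\ge 0$, $\bar{B}(x,\varepsilon+c)=\bar{B}(y,\varepsilon+c)$. This follows from the semiflow identity $\pi(\bar{B}(w,\varepsilon),c)=\bar{B}(w,\varepsilon+c)$ already recorded in the excerpt, applied to $\bar{B}(x,\varepsilon)=\bar{B}(y,\varepsilon)$; or directly: if $d(x,u)\le \varepsilon+c$, pick the point $v$ on the geodesic $[x,u]$ with $d(x,v)=\min(\varepsilon,d(x,u))$, so $v\in\bar{B}(x,\varepsilon)=\bar{B}(y,\varepsilon)$ and $d(v,u)=d(x,u)-d(x,v)\le c$, whence $d(y,u)\le d(y,v)+d(v,u)\le\varepsilon+c$; exchanging the roles of $x$ and $y$ gives the equality. (This step uses only that $T$ is geodesic.)

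\emph{Fact 2 (balls around a point of $[x,y]$, in a tree).} For every $w\in T$ and $c\in[0,L]$,
\[ d(w,p_c)=\max\bigl(d(w,x)-c,\ d(w,y)-(L-c)\bigr),\qquad\text{hence}\qquad \bar{B}(p_c,\varepsilon)=\bar{B}(x,\varepsilon+c)\cap\bar{B}(y,\varepsilon+L-c). \]
Indeed, let $q$ be the point of $[x,y]$ nearest to $w$; since $T$ is a tree, the geodesic from $w$ to any point of $[x,y]$ passes through $q$, so with $\eta=d(w,q)$ and $\sigma=d(x,q)$ we get $d(w,x)=\eta+\sigma$, $d(w,y)=\eta+L-\sigma$ and $d(w,p_c)=\eta+|\sigma-c|=\max(\eta+\sigma-c,\ \eta-\sigma+c)$, which is the displayed maximum; and $\eta+|\sigma-c|\le\varepsilon$ holds iff $\eta+\sigma\le\varepsilon+c$ and $\eta+L-\sigma\le\varepsilon+L-c$, i.e. iff $w\in\bar{B}(x,\varepsilon+c)\cap\bar{B}(y,\varepsilon+L-c)$.

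Now combine: by Fact 2, $\bar{B}(z,\varepsilon)=\bar{B}(x,\varepsilon+a)\cap\bar{B}(y,\varepsilon+L-a)$ and $\bar{B}(z',\varepsilon)=\bar{B}(x,\varepsilon+L-a)\cap\bar{B}(y,\varepsilon+a)$, and by Fact 1 (with $c=a\ge0$ and $c=L-a\ge0$) we may replace $\bar{B}(y,\varepsilon+a)$ by $\bar{B}(x,\varepsilon+a)$ and $\bar{B}(y,\varepsilon+L-a)$ by $\bar{B}(x,\varepsilon+L-a)$. Both right-hand sides then become $\bar{B}(x,\varepsilon+a)\cap\bar{B}(x,\varepsilon+L-a)=\bar{B}\bigl(x,\varepsilon+\min(a,L-a)\bigr)$, so $p_\varepsilon(z)=\bar{B}(z,\varepsilon)=\bar{B}(z',\varepsilon)=p_\varepsilon(z')$. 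The only genuinely tree-theoretic ingredient is the ``geodesic through the nearest point of $[x,y]$'' statement in Fact 2 — trivial for a finite simplicial tree (no cycles) but exactly what breaks down in a general geodesic space — so I expect that to be the point requiring a careful word in a fully written proof; everything else is formal and works in any compact geodesic space.
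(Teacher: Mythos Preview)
Your proof is correct and takes a genuinely different route from the paper's. The paper roots the tree at the midpoint $v$ of $[x,y]$, works with the subtrees $T_z=\{p:z\in[v,p]\}$, and shows by a contradiction argument that $T_z\subset\bar B(z',\varepsilon)$ and $T_{z'}\subset\bar B(z,\varepsilon)$ for $z\in[x,v]$, $z'\in[v,y]$, then infers the conclusion from these containments. Your argument instead produces the explicit formula $\bar B(p_c,\varepsilon)=\bar B(x,\varepsilon+c)\cap\bar B(y,\varepsilon+L-c)$, valid in any tree via the nearest-point projection to $[x,y]$, and combines it with the semiflow identity (your Fact~1) to get directly $\bar B(p_c,\varepsilon)=\bar B\bigl(x,\varepsilon+\min(c,L-c)\bigr)$. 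This is cleaner: it avoids the case analysis and the ``assume $\delta$ small'' step, it makes the symmetry $c\leftrightarrow L-c$ transparent, and as a bonus it immediately yields the next two lemmas in the paper (that $\{z:p_\varepsilon(z)=T\}\cap[x,y]$ is a subsegment containing the midpoint, and that no cycles are created). The paper's subtree picture, on the other hand, is closer to the language used later for graphs and makes the role of the midpoint more geometric. Your isolation of Fact~1 as the only step needing the hypothesis $p_\varepsilon(x)=p_\varepsilon(y)$, and Fact~2 as the only step needing the tree structure, is a nice separation of concerns.
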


\begin{proof} Consider $v\in [x,y]$ the middle point of the
geodesic, $d(x,v)=d(v,y)=\frac{1}{2}d(x,y)$. Let $(T,v)$ be the
rooted tree and $T_x,T_y$ the corresponding subtrees. Since $\bar{B}(x,\varepsilon)=\bar{B}(y,\varepsilon)$
it is easy to see that $T_x \subset \bar{B}(y,\varepsilon)$ and $T_y
\subset \bar{B}(x,\varepsilon)$. In fact, $\forall  z \in [x,v], z
\neq v \quad T_z \subset \bar{B}(y,\varepsilon)$ and conversely
$\forall  z \in [v,y], z \neq v \quad T_z \subset
\bar{B}(x,\varepsilon)$. Else, suppose $z \in [x,v], z\neq v$ with
$T_z \not \subset \bar{B}(y,\varepsilon)$. Then, there is a point
$p\in T_z$ such that $d(p,y)=\varepsilon + \delta$ and, by the
properties of the length metric, we can assume this $\delta>0$ to be 
smaller than $d(z,v)$. If this is so, then $d(p,x)\leq
d(p,z)+d(z,x) < d(p,z) + d(z,y) - 2\delta = \varepsilon -\delta$ and
hence, $p \in \bar{B}(x,\varepsilon)$ which contradicts the fact that
the balls centered in $x$ and $y$ coincide.

Moreover, $\forall  z \in [x,v], z \neq v$ and $\forall  z' \in
[v,y], z \neq v \quad T_z \subset \bar{B}(z',\varepsilon)$ and
$T_{z'} \subset \bar{B}(z,\varepsilon)$ with the same argument since
$d(z,z')<d(z,y),d(x,z')$. Thus, if both points $z,z'$ are at the
same distance to the root $v$, then their balls necessarily coincide.
\end{proof}

Also, the following lemmas are clear from the proof.

\begin{lema} Consider any pair of points in a finite tree, $x,y \in T$,
such that $p_\varepsilon(x)=T=p_\varepsilon(y)$. Then, $p_\varepsilon(z)=T$ for every $z\in [x,y]$. In particular,
the set $T_\varepsilon':=\{x\in T \, | \, p_\varepsilon(x)=T \}$ is a (possibly empty) subtree. \hfill$\square$ 
\end{lema}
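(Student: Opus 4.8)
Consider any pair of points $x,y \in T$ with $p_\varepsilon(x) = T = p_\varepsilon(y)$; then $p_\varepsilon(z) = T$ for every $z \in [x,y]$, and consequently $T'_\varepsilon = \{x \in T \mid p_\varepsilon(x) = T\}$ is a (possibly empty) subtree.

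The plan is to read off the first assertion directly from the argument in the previous lemma, and then deduce the ``subtree'' conclusion from connectedness and the geodesic-convexity of $T'_\varepsilon$. First I would note that since $p_\varepsilon(x) = p_\varepsilon(y)$, the previous lemma applies: taking $v \in [x,y]$ to be the midpoint and $(T,v)$ the rooted tree, we have shown that for any $z \in [x,v]$ with $z \neq v$ and any $z' \in [v,y]$ with $z' \neq v$ equidistant from $v$, the balls $\bar B(z,\varepsilon)$ and $\bar B(z',\varepsilon)$ coincide, and moreover $T_z \subset \bar B(z',\varepsilon)$, $T_{z'} \subset \bar B(z,\varepsilon)$. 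Now I want to upgrade ``coincide'' to ``equal $T$''. Fix any $z \in [x,y]$. For a point $z$ on the $x$-side of $v$, the bound $\bar B(z,\varepsilon) \supset \bar B(x,\varepsilon) \cup T_{z'}$ where $z'$ is its mirror on the $y$-side (plus the portion of $T$ between them) should exhaust $T$: indeed $\bar B(x,\varepsilon) = T$ already covers $T_x$ and everything ``below'' $v$ on the $y$-side down to $y$, while moving the center from $x$ toward $v$ only enlarges the ball on the $y$-side and can lose at most the far tips of $T_x$, which are recaptured because $d(z,\cdot) \leq d(x,\cdot)$ fails only... — more cleanly: since $\bar B(x,\varepsilon) = T$ and $\bar B(y,\varepsilon) = T$, every point $p \in T$ satisfies $d(p,x) \leq \varepsilon$ and $d(p,y) \leq \varepsilon$, so for $z = tx+(1-t)y$ on the geodesic, $d(p,z) \leq \max\{d(p,x), d(p,y)\} \leq \varepsilon$ whenever the geodesic $[x,y]$ passes through a point lying on the geodesic from $p$ to one of $x,y$; the tree structure guarantees $[p,x] \cup [p,y] \supset [x,y]$, so $z \in [p,x]$ or $z \in [p,y]$, giving $d(p,z) \leq d(p,x) \leq \varepsilon$ or $d(p,z) \leq d(p,y) \leq \varepsilon$. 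Hence $p \in \bar B(z,\varepsilon)$, and since $p$ was arbitrary, $\bar B(z,\varepsilon) = T$, i.e. $p_\varepsilon(z) = T$.

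With the first assertion in hand, the subtree claim is immediate: $T'_\varepsilon$ is closed (as $p_\varepsilon$ is continuous and $\{T\}$ is closed in $2^T_H$), and what we just proved says precisely that $T'_\varepsilon$ is geodesically convex — if $x, y \in T'_\varepsilon$ then $[x,y] \subset T'_\varepsilon$. A closed, geodesically convex subset of a finite tree is itself a subtree (it is connected, hence a subcontinuum of a dendrite, hence a subtree; if nonempty it inherits the simplicial structure after possibly subdividing). So $T'_\varepsilon$ is a (possibly empty) subtree, as claimed.

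The only step that requires a little care is the tree identity $[p,x] \cup [p,y] \supset [x,y]$, i.e. that the geodesic $[x,y]$ is covered by the two geodesics from $p$ to the endpoints; this is the standard fact that in a tree (a uniquely geodesic space with no cycles) the geodesic triangle on $p,x,y$ is a tripod, so $[x,y]$ meets the branch point and is contained in $[p,x] \cup [p,y]$. I expect this to be the main (and only genuine) obstacle, and it is minor — it follows from unique geodesics plus the absence of embedded circles in a simply connected $1$-complex. Everything else is bookkeeping on top of the preceding lemma.
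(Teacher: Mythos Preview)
Your proposal is correct. The paper gives no separate proof for this lemma (it is marked as immediate from the preceding lemma's argument), and your clean argument after ``more cleanly:'' --- using the tripod property that $[x,y]\subset [p,x]\cup[p,y]$ in a tree, so $d(p,z)\le\max\{d(p,x),d(p,y)\}\le\varepsilon$ --- is exactly the observation that makes it immediate, just stated directly rather than in the rooted-subtree language of the preceding proof. The subtree conclusion via closedness plus geodesic convexity is also fine. Your initial detour through the midpoint and subtrees $T_z$ is unnecessary (as you yourself note), but the final argument is correct and, if anything, more transparent than extracting it from the previous lemma's machinery.
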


\begin{lema} If two points $x,y$ are such that $p_\varepsilon(x)=p_\varepsilon(y)$
and the middle point $v$ of the geodesic has order two in the tree,
then $p_\varepsilon(z)=T$ for every $z\in [x,y]$.\hfill$\square$ 
\end{lema}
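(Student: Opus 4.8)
The plan is to upgrade the hypothesis $p_\varepsilon(x)=p_\varepsilon(y)$ to the apparently much stronger statement $\bar B(x,\varepsilon)=\bar B(y,\varepsilon)=T$, after which the assertion follows immediately from the preceding lemma (the one stating that $p_\varepsilon(x)=T=p_\varepsilon(y)$ forces $p_\varepsilon(z)=T$ for all $z\in[x,y]$). We may assume $x\neq y$, since otherwise there is no genuine midpoint and nothing to prove. Then the midpoint $v$ of $[x,y]$ satisfies $d(x,v)=d(v,y)=\tfrac12 d(x,y)>0$, so $v$ lies in the interior of $[x,y]$ and hence separates $x$ from $y$ in $T$. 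Because, moreover, $v$ has order two in the $1$-complex $T$, deleting $v$ leaves exactly two connected components; call them $U_x$ (containing $x$) and $U_y$ (containing $y$), so that $T=U_x\cup\{v\}\cup U_y$ is a disjoint union.

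First I would recall what the proof of the first of the three lemmas on finite trees above actually establishes: with the notation $T_w=\{p\in T\mid w\in[v,p]\}$ for the subtree of the rooted tree $(T,v)$ determined by $w$, one has $T_z\subset\bar B(y,\varepsilon)$ for every $z\in[x,v]$ with $z\neq v$, and symmetrically $T_{z'}\subset\bar B(x,\varepsilon)$ for every $z'\in[v,y]$ with $z'\neq v$. The key point — and precisely where the order-two hypothesis enters — is the identity $\bigcup_{z\in(v,x]}T_z=U_x$ (and symmetrically $\bigcup_{z'\in(v,y]}T_{z'}=U_y$). Indeed, since $v$ has order two it has a single edge emanating into $U_x$, so for any $p\in U_x$ with $p\neq v$ the geodesics $[v,p]$ and $[v,x]$ share a nondegenerate initial segment $[v,w]$; hence $p\in T_z$ for every $z\in(v,w]\subset(v,x]$, while conversely each $T_z$ with $z\in(v,x]$ is contained in $U_x$. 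Combining this with the facts just recalled gives $U_x\subset\bar B(y,\varepsilon)$ and $U_y\subset\bar B(x,\varepsilon)$.

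Next I would invoke the hypothesis $\bar B(x,\varepsilon)=\bar B(y,\varepsilon)$: it yields $U_x\cup U_y\subset\bar B(x,\varepsilon)$, and it also forces $d(x,y)\le\varepsilon$ (because $x\in\bar B(x,\varepsilon)=\bar B(y,\varepsilon)$), whence $d(x,v)=\tfrac12 d(x,y)\le\varepsilon$ and so $v\in\bar B(x,\varepsilon)$ as well. Therefore $T=U_x\cup\{v\}\cup U_y\subset\bar B(x,\varepsilon)$, that is, $p_\varepsilon(x)=p_\varepsilon(y)=T$. At this point the preceding lemma applies verbatim and delivers $p_\varepsilon(z)=T$ for all $z\in[x,y]$, as desired. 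If one prefers a self-contained ending, one can argue directly: for $z\in[x,y]$ and any $p\in T$, let $m$ be the point of $[x,y]$ nearest to $p$; since $T$ is a tree, $d(z,p)=d(z,m)+d(m,p)$, and $d(z,m)\le\max\{d(x,m),d(y,m)\}$ because $z$ and $m$ both lie on the segment $[x,y]$, so $d(z,p)\le\max\{d(x,p),d(y,p)\}\le\varepsilon$ and hence $\bar B(z,\varepsilon)=T$.

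I expect the one genuinely delicate step to be the identity $\bigcup_{z\in(v,x]}T_z=U_x$. Its failure is exactly the phenomenon occurring at a branch point of order $\ge 3$, where the branches at $v$ pointing neither toward $x$ nor toward $y$ escape the control of the first lemma; this is why the order-two hypothesis is indispensable, and one should argue the claim that $v$ "has a single edge emanating into $U_x$" carefully from $v$ having order two. Everything else is a routine bookkeeping of facts already established for finite metric trees.
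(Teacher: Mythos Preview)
Your proof is correct and follows essentially the approach the paper has in mind: the paper offers no explicit argument, merely flagging the lemma as ``clear from the proof'' of the first tree lemma, and your write-up makes that implication precise by using the containments $T_z\subset\bar B(y,\varepsilon)$ established there together with the order-two hypothesis to get $\bar B(x,\varepsilon)=\bar B(y,\varepsilon)=T$, then invoking the preceding lemma. Your identification of $\bigcup_{z\in(v,x]}T_z=U_x$ as the place where order two is used is exactly the point.
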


\begin{lema} There are no cycles in $p_\varepsilon(T)$.\hfill$\square$ 
\end{lema}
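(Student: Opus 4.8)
The engine of the proof is the folding Lemma above: whenever $x\neq y$ in the finite tree $T$ satisfy $p_\varepsilon(x)=p_\varepsilon(y)$, the map $p_\varepsilon$ folds the geodesic segment $[x,y]$ about its midpoint $v$, identifying each point of $[x,v]$ with its mirror image in $[v,y]$, and --- as the proof of that Lemma shows --- the subtrees $T_z$ hanging off $[x,v]$ are matched with the corresponding subtrees hanging off $[v,y]$. Two consequences I would isolate first: (i) $p_\varepsilon([x,y])=p_\varepsilon([x,v])$, so the image of a geodesic segment whose endpoints become identified is again an arc (or a point) and never a simple closed curve; and (ii) a whole neighbourhood of $[x,y]$ in $T$ is folded onto a neighbourhood of that arc, not merely the segment itself. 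Also note that $p_\varepsilon$ is a quotient map of the compactum $T$ (Remark \ref{quotient}), so its fibers are compact and $p_\varepsilon(T)$ is a Peano continuum; in a Peano continuum, having a cycle means containing a simple closed curve.

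The cleanest conceptual route I would try is to show that $p_\varepsilon\colon T\to p_\varepsilon(T)$ factors as a finite composition of elementary folds $T=T_0\to T_1\to\cdots\to T_k=p_\varepsilon(T)$, each $T_i$ a finite graph and each $T_i\to T_{i+1}$ a reflection-type fold of the kind produced by the folding Lemma. Since each fold identifies two subtrees sharing a single vertex --- which in a tree are otherwise separated by that one point --- no fold can close up a loop, so inductively every $T_i$, hence $p_\varepsilon(T)$, is again a tree; the process terminates because each fold strictly decreases the number of edges. The content of this route is precisely the factorisation: using the folding Lemma one produces, from any non-trivial identification, an edge-fold to perform, quotients by it, and repeats.

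Failing a clean factorisation, I would argue by contradiction. Suppose $p_\varepsilon(T)$ contains a simple closed curve $J$; choose two distinct points $P,Q$ on $J$ (so $J$ is the union of two arcs from $P$ to $Q$ meeting only at $P,Q$) and preimages $p\in p_\varepsilon^{-1}(P)$, $q\in p_\varepsilon^{-1}(Q)$ with $d_T(p,q)$ minimal. The core step is that $p_\varepsilon$ is injective on the geodesic $[p,q]$ of $T$: if $p_\varepsilon(u)=p_\varepsilon(w)$ with $u\neq w$ on $[p,q]$, say $u$ between $p$ and $w$, then $[w,q]$ lies in the subtree of $T$ cut off by $w$, and by the subtree-matching of the folding Lemma there is an isometric copy $[u,q^{*}]$ of $[w,q]$ in the mirror subtree with $p_\varepsilon([u,q^{*}])=p_\varepsilon([w,q])$, whence
\[d_T(p,q^{*})\leq d_T(p,u)+d_T(u,q^{*})=d_T(p,u)+d_T(w,q)<d_T(p,q),\]
contradicting minimality. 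Thus $p_\varepsilon([p,q])$ is an arc from $P$ to $Q$, and running the same folding/re-routing mechanism on any arc of $p_\varepsilon(T)$ joining $P$ to $Q$ shows all such arcs coincide, contradicting that $J$ supplies two distinct ones.

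The main obstacle is that the fibers of $p_\varepsilon$ need not be connected (a finite tree may satisfy $p_\varepsilon(x)=p_\varepsilon(y)$ without $p_\varepsilon$ being constant on $[x,y]$), so $p_\varepsilon$ is not monotone and one cannot simply invoke ``a monotone image of a dendrite is a dendrite''; the folding structure has to be used in full, since a cycle could \emph{a priori} be assembled from several identifications none of which alone folds a loop. Concretely, the delicate point in either route is the quantitative form of the subtree-matching in the folding Lemma --- that the two mirror subtrees carry, length for length and $p_\varepsilon$-image for $p_\varepsilon$-image, the same data --- because this is exactly what lets one perform the next elementary fold (first route) or re-route a path past an identified pair inside $T$ itself (second route). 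I expect that pinning down this matching, rather than anything in the global bookkeeping, is where the real work lies.
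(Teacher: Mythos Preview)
The paper gives no separate proof of this lemma: it is one of three statements declared ``clear from the proof'' of the folding lemma, and the actual work of showing $p_\varepsilon(T)$ is a tree is deferred to the Proposition that follows, whose argument is essentially your Route~1 --- collapse the subtree $T'_\varepsilon=p_\varepsilon^{-1}(\{T\})$ to a point, then observe (using the order-two-midpoint lemma) that the remaining identifications are finitely many segment-folds about branch points, each of which keeps the quotient a tree. So your first route is correct in spirit and is exactly what the paper does, only stated more carefully than the paper bothers to.

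Your Route~2, however, rests on a claim that the folding lemma does not deliver and that is in fact false. The lemma says only that \emph{symmetric points of the segment $[u,w]$} have the same $\varepsilon$-ball; its proof shows the containments $T_z\subset\bar B(z',\varepsilon)$, not any isometry between the subtrees $T_u$ and $T_w$. Those subtrees need not be isometric at all: attach an extra short twig to one side of a symmetric Y-shaped tree and you still have $\bar B(x,\varepsilon)=\bar B(y,\varepsilon)$ for the two short tips, yet the subtrees beyond $x$ and beyond $y$ differ. So there is no reason for the mirror point $q^{*}$ you need --- a point with $p_\varepsilon(q^{*})=p_\varepsilon(q)$ sitting in the subtree past $u$ --- to exist, and the re-routing inequality $d_T(p,q^{*})<d_T(p,q)$ cannot be set up. You flag this as the ``delicate point'', but it is not merely delicate: the matching you want is simply not there, and Route~2 as written does not close. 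Stick with Route~1; the decreasing-edge-count termination argument you give is the right way to make the paper's sketch rigorous.
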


\begin{prop} If $T$ is a finite tree, then $p_\varepsilon(T)$ is a finite
tree for every $\varepsilon$.
\end{prop}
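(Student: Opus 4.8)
The plan is to show that $p_\varepsilon(T)$ is a finite, connected, simply connected $1$-dimensional polyhedron, i.e., a finite tree. Connectedness is immediate since $p_\varepsilon$ is continuous and $T$ is connected (Remark \ref{quotient}), and the image is a compact metric space, hence a Peano continuum once we know it is locally connected. The two substantive points are: (a) $p_\varepsilon(T)$ carries the structure of a finite graph, and (b) it has no cycles, which is exactly the preceding Lemma. For (b) we already have the statement ``There are no cycles in $p_\varepsilon(T)$'', so granting that, the whole task reduces to (a): producing a finite simplicial (or CW) structure on $p_\varepsilon(T)$.

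For (a) I would argue as follows. By Remark \ref{quotient}, $p_\varepsilon(T)$ is the quotient $T/{\sim}$, where $x\sim y$ iff $\bar B(x,\varepsilon)=\bar B(y,\varepsilon)$. The key structural input is the first Lemma of this subsection: if $p_\varepsilon(x)=p_\varepsilon(y)$ then, writing $v$ for the midpoint of $[x,y]$ and rooting $T$ at $v$, the identification is ``symmetric about $v$'': for $z\in[x,v]$ and $z'\in[v,y]$ equidistant from $v$ one has $p_\varepsilon(z)=p_\varepsilon(z')$, and moreover the whole subtrees $T_z$ and $T_{z'}$ hang inside $\bar B(z',\varepsilon)$ and $\bar B(z,\varepsilon)$ respectively. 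Consequently each nontrivial identification is a ``folding'' of a pair of isometric arcs (or subtrees) emanating from a common branch point $v$. Since $T$ is a finite tree it has finitely many vertices of order $\neq 2$; every maximal identified pair of arcs must, by the symmetry lemma, be anchored at such a point (when the midpoint $v$ has order $2$, the third Lemma shows the identification propagates all the way and $p_\varepsilon$ collapses the arc to the single point $T$, which is absorbed into the subtree $T_\varepsilon'$ of the second Lemma). So one subdivides $T$ by adjoining all the finitely many endpoints of these maximal identified arcs to the vertex set; this yields a finite simplicial structure on $T$ in which $\sim$ identifies edges with edges and vertices with vertices in a simplicial way. The quotient of a finite simplicial complex by a simplicial equivalence relation is again a finite simplicial complex, so $p_\varepsilon(T)$ is a finite graph.

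Then I combine: $p_\varepsilon(T)$ is a finite graph by (a), it is connected (continuous image of a connected space), and it contains no cycle by the preceding Lemma; a connected finite graph without cycles is by definition a finite tree. This finishes the proof.

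The main obstacle is (a), and specifically making rigorous the claim that only \emph{finitely many} maximal identified pairs of arcs occur and that they can be made simultaneously simplicial after a single finite subdivision. The counterexample in Figure \ref{fig.3} shows that an \emph{infinite} tree can be folded in infinitely many places, so finiteness of $T$ genuinely must be used; the clean way to invoke it is that a finite tree has a finite set $V$ of vertices of order $\ne 2$, the identification of two arcs $[x,v]$, $[v,y]$ forces their common endpoint-after-extension to be a branch or leaf (else the symmetry lemma lets one extend the folding until it terminates at such a point or at $T_\varepsilon'$), and each element of $V$ can serve as such an anchor for only finitely many folds because it has finite degree. Handling the degenerate propagation case (midpoint of order $2$) via the second and third Lemmas, so that those arcs simply collapse into the subtree $T_\varepsilon'$ rather than creating new vertices, is the delicate bookkeeping; once that is in place the finite subdivision and the quotient argument are routine.
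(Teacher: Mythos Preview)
Your proposal is correct and follows essentially the same approach as the paper: collapse the subtree $T'_\varepsilon$ to a point, then observe that the remaining identifications are finitely many foldings of geodesic segments anchored at the finitely many vertices of order $\neq 2$, and invoke the no-cycles lemma. The paper's proof is much terser---it simply asserts that after collapsing $T'_\varepsilon$ one is ``identifying a finite number of geodesic segments without generating any cycle''---whereas you spell out the simplicial-subdivision bookkeeping more carefully; but the underlying argument is the same.
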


\begin{proof} For any $\varepsilon>0$ consider $T_\varepsilon'=\{x\in T \, | \, p_\varepsilon(x)=T \}$. As we mentioned before, $T_\varepsilon'$ is a subtree (it may be the empty set). We can
construct $p_\varepsilon(T)=T/_\sim$ (see \ref{quotient}) in two steps: first $T/_{T'_\varepsilon}$ identifying all
the points in $T'_\varepsilon$. This is obviously homeomorphic to a tree. The
second step is to identify every other pair of points with the same projection but, since there is a finite number of points of order $>2$
and the tree is locally finite, we are identifying a finite number
of geodesic segments without generating any cycle. Therefore, we obtain a
tree.
\end{proof}

\paragraph{\textbf{Finite graphs}}$\\$

In this paragraph the space $X$ will always be a non-degenerated finite metric graph.

Let us recall that $B^c(x,\varepsilon)=\bar{B}(x,\varepsilon)$ and
$\partial \bar{B}(x,\varepsilon)=\partial
B(x,\varepsilon)=S(x,\varepsilon)=\{z\in X \ | \ d(z,x)=\varepsilon \}$.

\begin{lema}\label{finite points}  For any point $x \in X$ and any
$\varepsilon>0$, $\partial B(x,\varepsilon)$ consists at most of a
finite number of points.
\end{lema}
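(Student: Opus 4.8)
The plan is to exploit the piecewise-linear structure of a finite metric graph together with the fact that geodesics in such a graph can only bend at vertices. First I would fix the point $x\in X$ and the radius $\varepsilon>0$, and decompose $X$ into its finitely many (closed) edges $e_1,\dots,e_m$, each isometric to $[0,1]$. It suffices to show that each intersection $S(x,\varepsilon)\cap e_j$ is finite, since a finite union of finite sets is finite. So I would reduce to analysing, on a single fixed edge $e_j$ parametrised by arclength as a map $\gamma\colon[0,1]\to X$, the function $f_j(s):=d(x,\gamma(s))$, and count the solutions of $f_j(s)=\varepsilon$.

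The key step is to understand $f_j$. For any point $p=\gamma(s)$ in the interior of the edge, every path from $x$ to $p$ must enter $e_j$ through one of its two endpoints (the vertices $\gamma(0)$ and $\gamma(1)$), so $d(x,p)=\min\{\,d(x,\gamma(0))+s,\ d(x,\gamma(1))+(1-s)\,\}$. Hence $f_j$ is the minimum of two affine functions of $s$ with slopes $+1$ and $-1$ respectively; it is therefore continuous, piecewise linear with at most one breakpoint, and each of its two affine pieces is strictly monotone. A strictly monotone affine function attains the value $\varepsilon$ at most once, so the equation $f_j(s)=\varepsilon$ has at most two solutions on $(0,1)$, plus possibly the two endpoints $s=0,1$ — in any case a finite set. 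Summing over the finitely many edges gives that $S(x,\varepsilon)=\partial B(x,\varepsilon)$ is finite.

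The one point that needs a little care — and which I expect to be the only real obstacle — is the justification of the formula $d(x,p)=\min\{d(x,\gamma(0))+s,\ d(x,\gamma(1))+(1-s)\}$ for interior points $p$ of the edge. This uses that $X$ carries the usual length metric in which each edge has length $1$, that a shortest path from $x$ to $p$ exists (by Proposition \ref{tray_hom}'s ambient Hopf--Rinow hypotheses, or simply because $X$ is a compact length space and hence geodesic), and that such a geodesic, restricted to the last stretch inside $e_j$, must be a monotone segment of the edge — otherwise it would backtrack and could be shortened. One should also handle the degenerate cases where $x$ itself lies on $e_j$ (then on that edge $f_j(s)=|s-s_0|$ for $x=\gamma(s_0)$, which is again piecewise linear with slopes $\pm1$ and at most two solutions) and where an endpoint of $e_j$ is not a vertex incident to any other edge (then one of the two terms in the minimum is simply unavailable, and $f_j$ is affine). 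In all cases $f_j^{-1}(\varepsilon)$ is finite, completing the argument.
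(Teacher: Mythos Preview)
Your argument is correct and rests on the same key observation as the paper's proof: for a point $p=\gamma(s)$ on an edge $[v,v']$ one has $d(x,p)=\min\{d(x,v)+s,\ d(x,v')+(1-s)\}$, so the distance function along each edge is piecewise linear with slopes $\pm 1$ and the level set $\{f_j=\varepsilon\}$ is finite. The paper derives the same formula but then proceeds by a case analysis on $\varepsilon$ relative to $d_0,d_1$ and introduces the fractional parts $d_0',d_1'$, concluding that every border point lies at distance $d_0'$ or $d_1'$ from some vertex; this extra bookkeeping is not needed for the lemma itself but sets up notation used repeatedly in the subsequent lemmas, whereas your piecewise-linear argument is more self-contained and gives the sharper bound of at most two border points per edge.
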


\begin{proof} Let $x\in |e|$ for some edge $e$ with vertices $v_0,v_1$ (if $x$ is
a vertex suppose it is $v_0$), and let $d_0=d(x,v_0)$ and
$d_1=d(x,v_1)=1-d_0$ (let us assume, without loss of generality, that
$d_0\leq d_1$, and $d_0=0$ if $x$ is a vertex). For every vertex
$v_i\in X$ the distance to $v_0$ is a positive integer $n_i$ and
the distance to $v_1$ is $m_i$ (obviously $|n_i-m_i|\leq 1$). Hence
the distance from $v_i$ to $x$ is either $d_0+n_i$ if $n_i\leq m_i$
or $d_1+m_i$ if $m_i<n_i$. For any point in any edge $y\in [v,v']$
the distance $d(x,y)$ is $min\{d(y,v)+d(v,x),d(y,v')+d(v'x)\}$.

If $\varepsilon\leq d_0$ then $\bar{B}(x,\varepsilon)$ is contained in the edge and
$\partial B(x,\varepsilon)$ consists of two points. If $d_0<\varepsilon \leq d_1\leq
1$, then the part of the edge between $x$ and $v_1$ contains one border
point and the rest of them are one at each edge adjacent to $v_0$.
If $\varepsilon>d_1\geq d_0$, let $d_0'=\varepsilon-d_0 -
[\varepsilon-d_0]$ (where $[t]$ is the integer part of $t$) and $d_1'=\varepsilon-d_1
- [\varepsilon-d_1]$. For every $z\in X$ such that $d(z,x)=\varepsilon$,
since there is a path realizing the distance in the graph, there
must be a vertex $w\in X$ such that $d(z,w)=d_0'$ if the geodesic segments contain $v_0$ or $d(z,w)=d_1'$ if the geodesic segments contain $v_1$.

Since the number of vertices and edges is finite, the number of
points at distance $d_0'$ or $d_1'$ from any vertex is finite and
so it is the number of points in $S(x,\varepsilon)=\partial
B(x,\varepsilon)$.
\end{proof}

Obviously, $\partial B(x,\varepsilon)\neq \emptyset$ if and only if
$\forall \varepsilon'<\varepsilon \quad \bar{B}(x,\varepsilon')\neq X$.

\begin{lema}\label{finite points 2} $\forall \varepsilon>0$ there is a
finite number of points $z_1,\cdots,z_n$ for which $\partial
\bar{B}(z_i,\varepsilon)$ contains a vertex or the middle point of an
edge.
\end{lema}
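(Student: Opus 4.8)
The plan is to reduce the statement to the previous lemma (Lemma~\ref{finite points}) by exploiting the symmetry of the metric. Let $P\subset X$ denote the finite set consisting of all vertices of the graph $X$ together with the midpoints of all its (finitely many) edges.

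First I would observe that a point $z\in X$ has the property that $\partial\bar B(z,\varepsilon)$ contains a vertex or the midpoint of an edge precisely when $d(z,p)=\varepsilon$ for some $p\in P$. Since $d$ is symmetric, $d(z,p)=\varepsilon$ is equivalent to $d(p,z)=\varepsilon$, i.e. to $z\in S(p,\varepsilon)=\partial B(p,\varepsilon)$. Hence the set of points $z$ we must control is exactly
\[
\bigcup_{p\in P}\partial B(p,\varepsilon).
\]
Now I invoke Lemma~\ref{finite points}: for every $p\in X$ and every $\varepsilon>0$ the border $\partial B(p,\varepsilon)$ consists of at most finitely many points. Applying this to each $p\in P$ and using that $P$ itself is finite, the displayed set is a finite union of finite sets, hence finite; listing its elements as $z_1,\dots,z_n$ finishes the argument.

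There is essentially no serious obstacle here beyond correctly identifying the finite set $P$ and citing the preceding lemma. The only point requiring a (trivial) remark is the use of the symmetry of the metric to pass from ``$\partial\bar B(z,\varepsilon)$ contains $p$'' to ``$z$ lies on $\partial B(p,\varepsilon)$'', which is what makes Lemma~\ref{finite points} directly applicable.
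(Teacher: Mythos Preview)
Your proof is correct and actually cleaner than the paper's. The paper argues directly: it fixes a point $x$ on an edge $[v_0,v_1]$ with $d_0=d(x,v_0)$, $d_1=d(x,v_1)$, observes that distances between vertices and midpoints are half-integers, and concludes that if some vertex or midpoint lies at distance $\varepsilon$ from $x$ then $\varepsilon-d_0$ or $\varepsilon-d_1$ must be a half-integer; this pins $x$ down to at most two positions per edge. In other words, the paper essentially redoes the computation of Lemma~\ref{finite points} in this special case.

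Your approach sidesteps that computation by invoking Lemma~\ref{finite points} itself via the symmetry $p\in S(z,\varepsilon)\Longleftrightarrow z\in S(p,\varepsilon)$. This is more conceptual and shorter; the paper's direct argument, on the other hand, gives slightly more explicit information (at most two such points per edge), which is in the spirit of the later bounds but is not actually needed for the lemma as stated.
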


\begin{proof} Let $x\in X$ and let us denote for $x\in
[v_0,v_1]$, $d_0=d(x,v_0) \leq d_1=d(x,v_1)=1-d_0$.

Suppose $\varepsilon>1$. For any point $x\in X$, let $z$ be a vertex or a middle point of an edge in the border of the ball of radius $\varepsilon$. Since the distance form $z$ to any vertex or middle point is a multiple of $\frac{1}{2}$, considering the geodesic segment from $x$ to $z$ (which
contains $v_0$ or $v_1$), then $|\varepsilon-d_0|$ or $|\varepsilon-d_1|$ is
a multiple of $\frac{1}{2}$. There are at most two points holding
this at each edge. Since the graph is finite, there is a finite
number of points for which $\partial \bar{B}(z_i,\varepsilon)$ contains a vertex or a middle point of an edge.

If $\varepsilon \leq 1$ it suffices to consider the points at distance
$\varepsilon$ or $|\varepsilon-\frac{1}{2}|$ from the vertices which is a finite set.\end{proof}

\begin{lema}\label{dist unica} For any pair of points $x,y \in X$
there exists some $\delta>0$ such that $\forall z \in B(x,\delta)\backslash\{x\},
\ d(z,y)\neq d(x,y)$. Moreover, we can choose $\delta>0$ such that
each connected component $C_i$ of $B(x,\delta)\backslash \{x\}$ is
contained in some edge and $\forall z\in C_i$ $d(z,y)=d(x,y) +
d(x,z)$ or $d(z,y)=d(x,y) - d(x,z)$.
\end{lema}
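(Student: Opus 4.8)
The plan is to work locally in the finite metric graph $X$, exploiting the piecewise-linear structure of the distance function $z \mapsto d(z,y)$ near $x$. The statement has two parts: first, that there is a punctured neighbourhood $B(x,\delta)\setminus\{x\}$ on which $d(z,y)\neq d(x,y)$; second, a strengthening that $\delta$ can be chosen so each connected component of $B(x,\delta)\setminus\{x\}$ lives inside a single edge and on it the distance to $y$ is affine with slope $\pm 1$ coming out of $x$.

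First I would recall from the analysis underlying Lemma \ref{finite points} that on any edge $e$, the function $d(\cdot,y)$ is piecewise affine with slope $\pm 1$: parametrizing $e$ by arclength, $d(z,y)=\min\{d(z,v)+d(v,y): v \text{ endpoint of } e\}$ for $z$ near an endpoint, and more generally $d(z,y)$ equals a minimum of finitely many functions of the form $(\text{arclength distance to some break point}) + (\text{constant})$. Hence $d(\cdot,y)$ restricted to each edge is a concave piecewise-linear function with all slopes in $\{+1,-1\}$. Now take $\delta>0$ small enough that (i) $B(x,\delta)$ meets only the edges containing $x$ (if $x$ is a vertex, the edges incident to $x$; if $x$ is interior to an edge, that one edge), (ii) $\delta$ is smaller than the distance from $x$ to any break point of $d(\cdot,y)$ on those edges other than $x$ itself, and (iii) $\delta < \tfrac12$ so each component of $B(x,\delta)\setminus\{x\}$ is a half-open subinterval of a single edge. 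With this choice, on each component $C_i$ the function $d(\cdot,y)$ is affine with slope $+1$ or $-1$ as one moves away from $x$, so $d(z,y)=d(x,y)\pm d(x,z)$ for $z\in C_i$, which is exactly the asserted form, and in particular $d(z,y)\neq d(x,y)$ for $z\neq x$ since $d(x,z)>0$.

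The one point needing a little care — and the main obstacle — is justifying that $d(\cdot,y)$ has no break point arbitrarily close to $x$ other than possibly at $x$ itself. This follows because break points of $d(\cdot,y)$ on an edge $e$ occur only at points $z$ where two distinct shortest routes to $y$ (going out of the two different endpoints of $e$, or differing combinatorially) have equal length; since $X$ is a \emph{finite} graph, the set of lengths of cyclically reduced edge-paths from endpoints of $e$ to $y$ is finite, so the break points on each edge form a finite set, and there are finitely many edges. Thus the break points of $d(\cdot,y)$ in all of $X$ form a finite set $F$; choosing $\delta < \operatorname{dist}(x, F\setminus\{x\})$ (interpreting this as $+\infty$ if $F\subseteq\{x\}$) together with the constraints (i) and (iii) above does the job. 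Handling separately the case $\varepsilon\le 1$ versus $\varepsilon>1$ as in Lemma \ref{finite points} is not needed here; a uniform argument via the finiteness of $F$ suffices.

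Finally I would observe that the ``moreover'' clause is genuinely a refinement: once $\delta$ is as above, each $C_i$ is contained in a single edge by (i) and (iii), and the slope-$\pm1$ behaviour gives $d(z,y) = d(x,y)+d(x,z)$ when the geodesic from $z$ to $y$ goes ``through'' $x$ (slope $+1$), and $d(z,y)=d(x,y)-d(x,z)$ when $z$ lies on a shortest route from $x$ to $y$ (slope $-1$); exactly one of these holds on each $C_i$. This makes $d(z,y)\neq d(x,y)$ immediate for all $z\in B(x,\delta)\setminus\{x\}$, completing the proof.
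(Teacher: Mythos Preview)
Your proof is correct and takes a genuinely different route from the paper's. The paper argues by explicit case analysis: it splits on whether $x$ is a vertex or interior to an edge, and for each direction out of $x$ determines whether some geodesic $[x,y]$ exits that way (giving slope $-1$) or not (giving slope $+1$), producing an explicit $\delta$ from quantities like $d_i + d(v_i,y) - d(x,y)$. Your argument instead abstracts to the observation that $z \mapsto d(z,y)$ is piecewise affine with slopes in $\{+1,-1\}$ on each edge and has only finitely many break points in all of $X$, then takes $\delta$ below the distance from $x$ to the nearest other break point. The paper's hands-on computation supplies a template reused when building the explicit $\delta$'s in Lemmas \ref{dist unica 2} and \ref{balls 2}; your approach is cleaner and makes the mechanism more transparent, at the cost of leaving $\delta$ implicit. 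One small slip: your claim that $d(\cdot,y)$ is \emph{concave} on each edge fails when $y$ lies in the interior of that edge (there the function is $|t-s|$, which is convex), but you never actually use concavity---only the finiteness of the break set---so the argument is unaffected.
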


\begin{proof} Let us divide the proof in two cases.

First when $x$ is not a vertex. Let $x\in |e|$ for some edge $e$
with vertices $v_0,v_1$ and $d_0=d(x,v_0)$, $d_1=d(x,v_1)=1-d_0$
with $d_0\leq d_1$. If $y\in [v_0,v_1]$ let
$\delta<\min\{d_0,d_1,d(x,y)\}$ and the result is obvious. So, let us assume that $y\not \in [v_0,v_1]$. If there is a
geodesic segment $[x,y]$ containing $v_i, \ i\in \{0,1\}$ and $z\in
(x,v_i)$ (the points in $|e|$ between $x$ and $v_i$) then
$d(z,y)=d(x,y)-d(x,z)$. If no geodesic segment  $[x,y]$
contains $v_i, \ i\in \{0,1\}$ it means that
$d_i+d(v_i,y)>\varepsilon$. Then, let $0<2\delta<d_i+d(v_i,y)-
\varepsilon$. It is immediate to see that $\forall z\in (x,v_i)$
such that $d(x,z)<\delta$, any geodesic segment $[z,y]$ still contains
the opposite vertex and $d(z,y)=d(x,z)+d(x,y)$.

If $x$ is a vertex of the graph, then let $w_1,\cdots, w_n$ all the adjacent
vertices. If $y\in [x,w_i]$ for some $i$ it suffices to take
$\delta<d(x,y)$. If $y \not \in [x,w_i]$ for every $i$, let
$w_1,\cdots,w_k$ those $w_i$ for which $d(w_i,y)=d(x,y)-1$. Then, for any $i>k$, either
$d(w_i,y)=d(x,y)$ or $d(w_i,y)=d(x,y)+1.$ Let $\delta<\frac{1}{2}$.

If $z\in
(x,w_i)$ with $i\leq k$ then $d(z,y)=d(x,y)-d(x,z)$. If $z\in
(x,w_i)$ with $i> k$ and $d(z,x)\leq \delta$, then any geodesic segment $[z,y]$ contains $x$ and some $w_i$ with $i\leq k$. Hence, $d(z,y)=d(x,y)+ d(x,z)$.
\end{proof}

Let $\mathcal{V}$ be the set of vertices in $X$ and $\mathcal{M}$ the
set of middle points of edges.

For every $\varepsilon>0$ let $A_\varepsilon:=\{x\in X \ | \ x\not
\in \mathcal{V}, x\not \in \mathcal{M}, \partial B(x,\varepsilon)\cap
\mathcal{V}=\emptyset,  \partial B(x,\varepsilon)\cap
\mathcal{M}=\emptyset \mbox{ and } \bar{B}(x,\varepsilon)\neq X\}$.

\begin{prop} \label{finite points proj} $\forall \varepsilon>0$
$p_\varepsilon(X)\backslash p_\varepsilon (A_\varepsilon)$ is a finite
number of points.
\end{prop}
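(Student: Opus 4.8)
The plan is to read off the complement $X\setminus A_\varepsilon$ directly from the definition of $A_\varepsilon$, split it into a finite part and a part that $p_\varepsilon$ crushes to a single element of $2^X_H$, and then invoke the trivial inclusion $p_\varepsilon(X)\setminus p_\varepsilon(A_\varepsilon)\subseteq p_\varepsilon(X\setminus A_\varepsilon)$. Unwinding the definition of $A_\varepsilon$ gives
\[X\setminus A_\varepsilon=\mathcal{V}\cup\mathcal{M}\cup\{x\in X:\partial B(x,\varepsilon)\cap\mathcal{V}\neq\emptyset\}\cup\{x\in X:\partial B(x,\varepsilon)\cap\mathcal{M}\neq\emptyset\}\cup E_\varepsilon,\]
where $E_\varepsilon:=\{x\in X:\bar B(x,\varepsilon)=X\}$.

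Next I would check finiteness of all but the last summand. The sets $\mathcal{V}$ and $\mathcal{M}$ are finite because $X$ is a finite graph, and the two \emph{sphere} sets are finite by Lemma~\ref{finite points 2} (recall $\partial\bar B(x,\varepsilon)=\partial B(x,\varepsilon)=S(x,\varepsilon)$). So $X\setminus A_\varepsilon=F\cup E_\varepsilon$ with $F$ finite. The set $E_\varepsilon$ may well be infinite, but here is the key point: $p_\varepsilon$ is constant on it, since $p_\varepsilon(x)=\bar B(x,\varepsilon)=X$ for every $x\in E_\varepsilon$ by definition — this is exactly the extinction behaviour of Remark~\ref{atractor}. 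Hence $p_\varepsilon(E_\varepsilon)=\{X\}$ and
\[p_\varepsilon(X\setminus A_\varepsilon)=p_\varepsilon(F)\cup\{X\},\]
a finite subset of $2^X_H$.

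Finally I would record the elementary inclusion $p_\varepsilon(X)\setminus p_\varepsilon(A_\varepsilon)\subseteq p_\varepsilon(X\setminus A_\varepsilon)$: any ball on the left is $p_\varepsilon(x)$ for some $x\in X$, and $x$ cannot lie in $A_\varepsilon$ (otherwise the ball would belong to $p_\varepsilon(A_\varepsilon)$), so $x\in X\setminus A_\varepsilon$. Combining with the previous display yields that $p_\varepsilon(X)\setminus p_\varepsilon(A_\varepsilon)$ is finite, as claimed. I do not expect a genuine obstacle: the only point needing attention — and precisely the reason the clause $\bar B(x,\varepsilon)\neq X$ is built into the definition of $A_\varepsilon$ — is that $E_\varepsilon$ need not be finite, so one must collapse it via $p_\varepsilon$ rather than count it; everything else reduces to Lemma~\ref{finite points 2} and the finiteness of the graph.
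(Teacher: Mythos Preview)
Your proof is correct and follows essentially the same approach as the paper: decompose $X\setminus A_\varepsilon$ into a finite set (using Lemma~\ref{finite points 2} and finiteness of the graph) together with the extinction set $E_\varepsilon$, observe that $p_\varepsilon$ collapses $E_\varepsilon$ to the single point $\{X\}$, and conclude via the inclusion $p_\varepsilon(X)\setminus p_\varepsilon(A_\varepsilon)\subseteq p_\varepsilon(X\setminus A_\varepsilon)$. The only minor slip is writing $p_\varepsilon(E_\varepsilon)=\{X\}$ unconditionally; when $E_\varepsilon=\emptyset$ this image is empty (the paper hedges with ``together with the total space if there is such a ball''), but this does not affect the finiteness conclusion.
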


\begin{proof} $X\backslash A_\varepsilon$ consists of all the points in
$X$ for which the ball of radius $\varepsilon$ is the total space
together with a finite number of points by \ref{finite points 2}
and because the graph is finite. $p_\varepsilon(X)\backslash
p_\varepsilon (A_\varepsilon)$ consist of the projection of that finite
number of points together with the total space if there is such a
ball.
\end{proof}

\begin{nota} \label{ditta dist borde} For every $x\in A_\varepsilon$,
if $\varepsilon>d_1$, $\varepsilon=d_0+k+d_0'=d_1+k'+d_1'$ and
$0<d_0'\neq \frac{1}{2}$. Then $d_1'=d_0-d_1 +k-k'
+d_0'=d_0-1+d_0+k-k'+d_0'=2d_0+d_0'+k''$ with $k''$ some integer,
and since $2d_0$ is not an integer $d_1'\neq d_0'$.
\end{nota}

\begin{lema}\label{short path} Let $x\in A_\varepsilon$ with
$d_1<\varepsilon=d_0+k+d_0'=d_1+k'+d_1'$ and $y\in \partial
\bar{B}(x,\varepsilon) \cap cl(X\backslash \bar{B}(x,\varepsilon))$.
Then there is an edge $e'=[w,w']$ such that $y\in e'$,
$d(y,w)=d(x,y)-d(y,w)$ and $d(y,w')>d(x,y)-d(y,w')$. Moreover, either
$d(w,y)=d'_0$ and any geodesic segment from $x$ to $y$ is
$[x,v_0]\cup [v_0,w]\cup [w,y]$ 
with lengths $d_0,k,d'_0$
respectively or $d(w,y)=d'_1$ and it is $[x,v_1]\cup [v_1,w]\cup
[w,y]$ with lengths $d_1,k',d'_1$.
\end{lema}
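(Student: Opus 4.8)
The plan is to cut any geodesic from $x$ to $y$ into three consecutive pieces --- a partial edge at $x$, a path of full edges, and a partial edge at $y$ --- and then match up lengths using the two decompositions $\varepsilon=d_0+k+d_0'=d_1+k'+d_1'$ of the radius. First I would locate $y$. Since $y\in\partial\bar{B}(x,\varepsilon)$ and $x\in A_\varepsilon$, the border of the ball meets neither $\mathcal V$ nor $\mathcal M$, so $y$ lies in the interior of a unique edge $e'=[w,w']$ and $d(w,y)\ne d(w',y)$; as each edge has length $1$ and $y$ is interior, $d(w,y)$ and $d(w',y)$ are the two edge-parameters of $y$, both in $(0,1)$, with $d(w,y)+d(w',y)=1$.

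Next I would dissect a geodesic. Fix a geodesic $\gamma$ from $x$ to $y$. Since the $X$-distance from $x$ to any point of its own edge $e=[v_0,v_1]$ is at most $\max\{d_0,d_1\}=d_1<\varepsilon=d(x,y)$, the point $y$ is not in $e$, so $\gamma$ leaves $e$ through $v_0$ or through $v_1$; assume it leaves through $v_0$, the other case being symmetric. Because $\gamma$ does not backtrack and ends in $\mathrm{int}(e')$, it enters $e'$ exactly once, crossing one endpoint $u\in\{w,w'\}$ and then running monotonically to $y$. The middle sub-path of $\gamma$, from $v_0$ to $u$, is a geodesic between two vertices, hence (as is standard for simplicial metric graphs) a concatenation of whole edges, so its length is the integer $m:=d(v_0,u)$; the last sub-path, from $u$ to $y$, has length $s:=d(u,y)\in(0,1)$. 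Thus $\gamma=[x,v_0]\cup[v_0,u]\cup[u,y]$ with lengths $d_0,m,s$, and $d_0+m+s=\varepsilon=d_0+k+d_0'$ forces $m=k$ and $s=d_0'$ (an integer plus an element of $(0,1)$ determine both summands). Hence $d(v_0,u)=k$ and $d(u,y)=d_0'$; in the symmetric case one gets $d(v_1,u)=k'$ and $d(u,y)=d_1'$.

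Now I would pin down $w$ and the dichotomy using $y\in cl(X\backslash\bar{B}(x,\varepsilon))$. If \emph{both} endpoints of $e'$ lay on geodesics from $x$ to $y$, then for a point $y'$ of $\mathrm{int}(e')$ close to $y$ one would have $d(x,y')=d(x,y)-d(y,y')<\varepsilon$ from either side of $y$, so a whole neighbourhood of $y$ would lie in $\bar{B}(x,\varepsilon)$, contradicting $y\in cl(X\backslash\bar{B}(x,\varepsilon))$. Since the endpoint $u$ found above does lie on a geodesic (a sub-path of a geodesic is a geodesic, so $d(x,u)=d(x,y)-d(u,y)$), exactly one endpoint of $e'$ does; rename it $w$ and the other $w'$. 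Then $d(x,w)=d(x,v_0)+d(v_0,w)=d_0+k=d(x,y)-d(w,y)$, while $w'$ lies on no geodesic, i.e.\ $d(x,w')>d(x,y)-d(w',y)$. Finally, every geodesic from $x$ to $y$ must enter $e'$ through $w$ (only $w$ is on a geodesic) and must leave $e$ through $v_0$ or $v_1$; repeating the count, its middle portion is a path of whole edges and $d(w,y)$ equals $d_0'$ in the first case and $d_1'$ in the second. Since $d(w,y)$ is a single number and $d_0'\ne d_1'$ by Remark~\ref{ditta dist borde}, only one case can occur: either every geodesic is $[x,v_0]\cup[v_0,w]\cup[w,y]$ with lengths $d_0,k,d_0'$ and $d(w,y)=d_0'$, or every geodesic is $[x,v_1]\cup[v_1,w]\cup[w,y]$ with lengths $d_1,k',d_1'$ and $d(w,y)=d_1'$.

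The step I expect to cost the most care is the bookkeeping behind ``a geodesic between two vertices is a concatenation of full edges'': one must check that such a geodesic cannot enter the interior of an edge and leave it again without traversing it fully, which follows from injectivity and the absence of backtracking, and hence that its length is an integer. The other delicate point is the role of the hypothesis $y\in cl(X\backslash\bar{B}(x,\varepsilon))$: it is precisely this ``outer boundary'' condition that rules out both endpoints of $e'$ lying on geodesics from $x$ to $y$, and Remark~\ref{ditta dist borde} then forces all geodesics to exit $e$ through the same vertex, which is what makes the ``either\,\ldots\,or'' in the statement exclusive.
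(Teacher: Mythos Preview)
Your proof is correct and follows the same route as the paper's: locate $y$ in the interior of an edge $e'$ (using $x\in A_\varepsilon$), use $y\in cl(X\setminus\bar B(x,\varepsilon))$ to show that only one endpoint of $e'$ can lie on a geodesic from $x$, and decompose any geodesic $[x,y]$ into an initial piece $[x,v_i]$, an integer-length vertex-to-vertex piece, and a final piece inside $e'$, matching lengths against $\varepsilon=d_i+(\text{integer})+d_i'$. Your write-up is somewhat more explicit than the paper's---you spell out why the middle piece has integer length and you invoke Remark~\ref{ditta dist borde} ($d_0'\neq d_1'$) to make the final dichotomy exclusive---but the argument is the same.
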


\begin{proof} Since $y\in \partial
\bar{B}(x,\varepsilon)$ there is a geodesic segment $[x,y]$ of
length $\varepsilon$ which contains one (and only one) of
the vertices, let us consider it $w$, of $e'$ ($y$ is not a vertex
because $x\in A_\varepsilon$). Since $y \in cl(X\backslash
\bar{B}(x,\varepsilon))$, then $d(y,w')>d(x,y)-d(x,w')$. Otherwise
$[y,w']\cup [w',x]$  would
be also a path of length $\varepsilon$ and $|e'|$ would be contained in
$\bar{B}(x,\varepsilon)$. This would be a contradiction because it would
make $d(y,X\backslash \bar{B}(x,\varepsilon))\geq min\{d'_0,d'_1\}>0$
and $y \not \in cl(X\backslash \bar{B}(x,\varepsilon))$.

The distance between any two vertices is an integer and $d_0<d_1$.
Then $d(x,w)$ can be of the type $d_0+k$ or $d_1+k'$. The first
case occurs if and only if $d(w,y)=d'_0$ and any geodesic segment $[x,y]$ would be $[x,v_0]\cup [v_0,w]\cup [w,y]$  with
$d(v_0,w)=k$ and the second one occurs if and only if
$d(w,y)=d'_1$ and the geodesic segment would be $[x,v_1]\cup
[v_1,w]\cup [w,y]$ with $d(v_1,w)=k'$.
\end{proof}

\begin{nota}\label{dist unica 3} If we apply lemma \ref{dist unica} we obtain a ball
about this point $y\in e'$ and both connected components $C_0\subset (w_i,y)$
where $\forall z \in C_0 \quad d(z,x)=d(x,y)-d(z,y)$ and
$C_1\subset (y,w'_i)$ where $\forall z \in C_1 \quad
d(z,x)=d(x,y)+d(z,y)$.
\end{nota}

\begin{lema} \label{subset border} For every $x\in A_\varepsilon$ and $\{y_1,\cdots,y_n\}=
\partial \bar{B}(x,\varepsilon) \cap cl(X\backslash
\bar{B}(x,\varepsilon))$ there are two disjoint subsets  $\{y_1,\cdots,y_k\}$ and $\{y_{k+1},\cdots,y_n\}$
so that for any geodesic segment $\gamma_i$ from $x$ to $y_i$
$[v_0,x]\cap \gamma_i\neq \{x\}$ for $i\leq k$ and $[v_1,x]\cap
\gamma_i\neq \{x\}$ for $i>k$. In particular, if
$d_1<\varepsilon=d_0+k+d_0'=d_1+k'+d_1'$, then for every $i\leq k$
there is a vertex $w_i$ so that
$d(w_i,y_i)=d'_0$ and $d(x,w_i)=d_0+k$ and for every $i> k$
there is a vertex $w_i$ so that
$d(w_i,y_i)=d'_1$ and $d(x,w_i)=d_1+k'$
\end{lema}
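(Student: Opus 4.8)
\textbf{Proof plan for Lemma \ref{subset border}.}

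The plan is to exploit the structure already extracted in Lemma \ref{short path} and Remark \ref{dist unica 3}. Fix $x \in A_\varepsilon$. By Lemma \ref{short path}, for each $y_i \in \partial \bar{B}(x,\varepsilon) \cap cl(X\backslash \bar{B}(x,\varepsilon))$ every geodesic segment from $x$ to $y_i$ must pass through exactly one of the two vertices $v_0, v_1$ of the edge containing $x$ (the case $x$ a vertex is degenerate and handled separately, or excluded since $x \in A_\varepsilon$ means $x \notin \mathcal{V}$; if $x$ sits in the interior of an edge, any path leaving $x$ exits through $v_0$ or $v_1$). So first I would partition the $y_i$ according to which vertex the geodesic uses: declare $i \le k$ if some (equivalently, by the analysis below, every) geodesic $\gamma_i$ from $x$ to $y_i$ satisfies $[v_0,x] \cap \gamma_i \neq \{x\}$, and $i > k$ otherwise, in which case $[v_1,x]\cap \gamma_i \neq \{x\}$. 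The two index sets are disjoint and exhaust $\{1,\dots,n\}$ because the geodesic, being a path in the graph starting at the interior point $x$ of an edge, has a well-defined first step towards $v_0$ or towards $v_1$.

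Next I would pin down the "moreover" clause in the generic range $d_1 < \varepsilon = d_0 + k + d_0' = d_1 + k' + d_1'$. For $i \le k$, the geodesic $\gamma_i$ enters $v_0$ and then proceeds along edges of total integer length to a vertex $w_i$, after which it travels the fractional remainder into the edge $e_i'$ containing $y_i$; by Lemma \ref{short path} this forces $d(w_i, y_i) = d_0'$ and $d(x, w_i) = d_0 + k$, since the total length is $\varepsilon = d_0 + k + d_0'$ and the portion $[x,v_0]$ has length $d_0$, the portion $[v_0,w_i]$ is a concatenation of full edges hence has integer length, necessarily $k$, and the tail $[w_i,y_i]$ is the fractional part $d_0'$. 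Symmetrically, for $i > k$ the geodesic enters $v_1$, reaching a vertex $w_i$ with $d(x,w_i) = d_1 + k'$ and $d(w_i, y_i) = d_1'$. Here I would invoke Remark \ref{ditta dist borde} to note $d_0' \neq d_1'$, which guarantees the two cases genuinely correspond to distinct behaviours and that the assignment $i \mapsto$ (which side) is unambiguous even when a point $y_i$ lies on an edge reachable in principle from both sides — the realized geodesic length $\varepsilon$ selects exactly one fractional remainder.

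The main obstacle I anticipate is the possibility that a single point $y_i$ admits two geodesics from $x$, one through $v_0$ and one through $v_1$, which would make the partition ill-defined. I would resolve this by observing that if both $[x,v_0]\cup[v_0,w]\cup[w,y_i]$ (length $d_0 + k + d_0' = \varepsilon$) and $[x,v_1]\cup[v_1,w']\cup[w',y_i]$ (length $d_1 + k' + d_1' = \varepsilon$) were geodesics, then concatenating appropriately around the cycle through $x$ would place the whole edge containing $y_i$ inside $\bar B(x,\varepsilon)$, exactly the contradiction already derived in the proof of Lemma \ref{short path} with the condition $y_i \in cl(X\backslash \bar B(x,\varepsilon))$. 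Hence each $y_i$ lands in exactly one of the two classes, the decomposition is well-defined, and the stated distance relations follow. The remaining edge cases ($\varepsilon \le d_1$, or $d_0' = 0$, i.e. $y_i$ a vertex) are either vacuous because then $y_i$ would be a vertex contradicting $x \in A_\varepsilon$ via Lemma \ref{short path}, or reduce to the same bookkeeping with $k$ or $k'$ possibly zero. $\hfill\square$
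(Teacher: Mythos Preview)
Your proposal is correct and follows the same approach as the paper: the paper's proof simply reads ``The case when $\varepsilon\leq d_1$ is trivial. If $\varepsilon>d_1$ the proof follows easily from Lemma \ref{short path},'' and you have spelled out exactly that, including the well-definedness argument (that no $y_i$ can admit geodesics through both $v_0$ and $v_1$) which is implicit in Lemma \ref{short path}. Your treatment is more detailed than the paper's but not different in substance.
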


\begin{proof} The case when $\varepsilon\leq d_1$ is trivial. If
$\varepsilon>d_1$ the proof follows easily from lemma \ref{short path}.
\end{proof}

The interesting case comes when we consider lemma \ref{dist unica}
applied to a point in $A_\varepsilon$ and the points in $\partial
\bar{B}(x,\varepsilon) \cap cl(X\backslash \bar{B}(x,\varepsilon))$
using this partition.

\begin{lema} \label{dist unica 2} Let $x\in A_\varepsilon$ and
$\{y_1,\cdots,y_n\}= \partial \bar{B}(x,\varepsilon) \cap
cl(X\backslash \bar{B}(x,\varepsilon))$ with the partition defined
in Lemma \ref{subset border}. Then there is some $\delta>0$ such
that $B(x,\delta)\backslash \{x\}$ is contained in an edge and has
two connected components $C_{0,\delta}\subset
(v_0,x),C_{1,\delta}\subset (v_1,x)$, and $\forall z\in
C_{0,\delta}$, $d(z,y_i)=d(x,y_i)-d(x,z) \ \forall i\leq k$,
$d(z,y_i)=d(x,y_i)+d(x,z) \ \forall i> k$ and $\forall z\in
C_{1,\delta}$, $d(z,y_i)=d(x,y_i)-d(x,z) \ \forall i> k$ and
$d(z,y_i)=d(x,y_i)+d(x,z) \ \forall i\leq k$.
\end{lema}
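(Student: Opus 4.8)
The plan is to obtain the statement by applying Lemma~\ref{dist unica} simultaneously to the finitely many points $y_1,\dots,y_n$, and then to pin down, for each $y_i$ and on each of the two components of $B(x,\delta)\setminus\{x\}$, which of the two a priori possibilities $d(z,y_i)=d(x,y_i)\pm d(x,z)$ actually occurs, using the structural description of geodesics supplied by Lemmas~\ref{short path} and~\ref{subset border}.

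First I would fix notation: since $x\in A_\varepsilon$, $x$ lies in the interior of an edge $[v_0,v_1]$, and I write $d_0=d(x,v_0)\le d_1=d(x,v_1)=1-d_0$, so $d_0>0$. The case $\varepsilon\le d_1$ is essentially trivial (as already noted in the proof of Lemma~\ref{subset border}), so the substance is the case $d_1<\varepsilon$. There, Lemma~\ref{short path} says that for each $y_i$ every geodesic segment from $x$ to $y_i$ has the form $[x,v_0]\cup[v_0,w_i]\cup[w_i,y_i]$ or $[x,v_1]\cup[v_1,w_i]\cup[w_i,y_i]$, and by the partition of Lemma~\ref{subset border} the first form occurs exactly for $i\le k$ and the second exactly for $i>k$; in particular $d(x,y_i)=d_0+d(v_0,y_i)$ for $i\le k$ and $d(x,y_i)=d_1+d(v_1,y_i)$ for $i>k$.

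The key point, and the step I expect to be the main obstacle, is the strict inequalities $d_1+d(v_1,y_i)>\varepsilon$ for $i\le k$ and $d_0+d(v_0,y_i)>\varepsilon$ for $i>k$ (the reverse inequalities being automatic by the triangle inequality). These say that the ``wrong'' side of $x$ provides no geodesic to $y_i$. I would argue by contradiction: if, say, $d_1+d(v_1,y_i)=\varepsilon$ for some $i\le k$, then concatenating $[x,v_1]$ with a geodesic $[v_1,y_i]$ yields a geodesic segment from $x$ to $y_i$ leaving $x$ toward $v_1$; since a geodesic is injective and cannot re-enter the edge $[v_0,v_1]$ through $x$ or through $v_1$, such a segment meets $[v_0,x]$ only in $\{x\}$, contradicting Lemma~\ref{subset border}. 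The case $i>k$ is symmetric. Once this is settled, everything reduces to routine estimates.

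With these (finitely many) positive gaps in hand, I would choose $\delta>0$ smaller than $d_0$, than $\varepsilon/2$, than all the $\delta_i$ furnished by Lemma~\ref{dist unica} for the pairs $(x,y_i)$, and than half of each gap $d_1+d(v_1,y_i)-\varepsilon$ ($i\le k$) and $d_0+d(v_0,y_i)-\varepsilon$ ($i>k$). Since $x$ is interior to $[v_0,v_1]$ and $\delta<d_0\le d_1$, the set $B(x,\delta)\setminus\{x\}$ lies in this edge and splits into exactly $C_{0,\delta}\subset(v_0,x)$ and $C_{1,\delta}\subset(v_1,x)$. For $z\in C_{0,\delta}$: when $i\le k$, the route $z\to v_0\to y_i$ has length $d(x,y_i)-d(x,z)$, which together with the triangle inequality forces $d(z,y_i)=d(x,y_i)-d(x,z)$; when $i>k$, any $z$--$y_i$ path either passes through $x$ (length $\ge d(x,z)+d(x,y_i)$) or runs $z\to v_0\to\cdots\to y_i$ (length $\ge (d_0-d(x,z))+d(v_0,y_i)>\varepsilon+d(x,z)$ by the choice of $\delta$, the case of a path turning back toward $x$ without reaching it being excluded by $\delta<\varepsilon/2$), so $d(z,y_i)=d(x,y_i)+d(x,z)$. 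The assertions for $z\in C_{1,\delta}$ follow by interchanging $v_0\leftrightarrow v_1$ and $i\le k\leftrightarrow i>k$, completing the proof.
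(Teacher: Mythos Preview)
Your proof is correct and follows essentially the same route as the paper's: both arguments reduce to the strict inequalities $d_1+d(v_1,y_i)>\varepsilon$ for $i\le k$ and $d_0+d(v_0,y_i)>\varepsilon$ for $i>k$, then choose $\delta$ smaller than (half of) these finitely many positive gaps and compute the distances directly. The paper simply writes ``let $0<2\delta'_i<d(y_i,v_1)+d_1-\varepsilon$'' without justifying positivity, whereas you spell out why equality would contradict the partition of Lemma~\ref{subset border}; this is a genuine improvement in clarity. Two minor remarks: your constraint $\delta<\varepsilon/2$ and the parenthetical about paths ``turning back toward $x$'' are unnecessary once you restrict to geodesic segments (which are injective and must exit the edge $[v_0,v_1]$ through $v_0$ or through $x$ and $v_1$), and the $\delta_i$ from Lemma~\ref{dist unica} are redundant since your explicit distance computation already covers every $z$ in the component. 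Neither affects correctness.
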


\begin{proof} The cases where $\varepsilon \leq d_1<1$ are quite
trivial and it suffices to take $\delta<d_0,\varepsilon$. Let us
study the case when $\varepsilon>d_1$. Let $x\in |e|=[v_0,v_1]$,
$d_0=d(x,v_0)<d(x,v_1)=d_1$ and $\varepsilon=d_0+k+d_0'=d_1+k'+d_1'$
(remember that, since $x\in A_\varepsilon$, $d_0',d_1'\neq
0,\frac{1}{2}$).

By lemma \ref{short path}, every $y_i$ is contained in an edge
$[w_i,w'_i]$ with $d(w_i,y_i)=d'_0$ $\forall i\leq k$ and
$d(w_i,y_i)=d'_1$ $\forall i>k$.

Let $0<2 \delta_i<d(w'_i,x)+ d(w'_i,y_i)- \varepsilon$. Then, if
$\delta_0=\min\{\delta_i\}$, $\forall z\in B(x,\delta_0)$ any
geodesic segment $[z,y_i]$ contains $w_i$ and not
$w'_i$.

If $\delta_1<d_0$ there are two connected components in
$B(x,\delta_1)\backslash \{x\}$: $C_{0,\delta_1}\subset (v_0,x)$
contained in the part of the edge $e$ between $v_0$ and $x$, and
$C_{1,\delta_1}\subset (v_1,x)$.

Finally, let $0<2\delta'_i<d(y_i,v_1)+d_1-\varepsilon$ for $i\leq k$
and $0<2\delta'_i<d(y_i,v_0)+d_0-\varepsilon$ for $i> k$, and
$\delta_2=\min\{\delta'_i,\delta_1\}$. Then $\forall z\in
B(x,\delta_2)$ the geodesic segment $[z,y_i]$ still contains
$v_0$ if $i\leq k$ and $v_1$ if $i>k$.

Then define $\delta=\min\{\delta_0,\delta_2\}$. If $z\in
C_{0,\delta}$, $d(z,y_i)=d(z,v_0)+d(v_0,w_i)+ d(w_i,y)=
d(x,y_i)-d(x,z) \ \forall i\leq k$ and
$d(z,y_i)=d(z,v_1)+d(v_1,w_i)+ d(w_i,y)=d(x,y_i)+d(x,z) \ \forall
i> k$ and if $z\in C_{1,\delta}$, $d(z,y_i)=d(z,v_1)+d(v_1,w'_i)+
d(w'_i,y)=d(x,y_i)-d(x,z) \ \forall i> k$ and
$d(z,y_i)=d(z,v_0)+d(v_0,w'_i)+ d(w'_i,y)=d(x,y_i)+d(x,z) \
\forall i\leq k$.
\end{proof}

\begin{lema}\label{Lema: ciclos}  Suppose that $x$ is not a vertex nor a middle point and let $y' \in \partial  \bar{B}(x,\varepsilon)\backslash cl(X\backslash \bar{B}(x,\varepsilon))$ so that $y'$ is not a vertex. Let $[v_0,v_1]$ and $[u_0,u_1]$ be two edges such that $x\in [v_0,v_1]$ and $y'\in [u_0,u_1]$. Then, $[v_0,v_1]$ and $[u_0,u_1]$ are part of a minimal
cycle of length $2\varepsilon\in \mathbb{N}$ composed by
two geodesic segments from $x$ to $y'$. Moreover, if $2\varepsilon$ is
even, then $d_0=d_1'$ and $d_1=d_0'$ and if $2\varepsilon$ is odd
then $d_1'=d_0+\frac{1}{2}$ and $d_0'=d_1-\frac{1}{2}$.
\end{lema}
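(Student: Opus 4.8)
The plan is to extract everything from the behaviour of the function $d(x,\cdot)$ near $y'$ and on the edge through $y'$, and then to exhibit explicitly the two geodesics that bound the cycle. Set $d_0=d(x,v_0)\le d_1=d(x,v_1)$; since $x$ is neither a vertex nor a midpoint, $0<d_0<\tfrac12<d_1<1$, so $d_0-d_1=2d_0-1\notin\mathbb{Z}$. We also use that $y'$ is not the midpoint of an edge — automatic whenever $x\in A_\varepsilon$, the case of interest, and not removable (otherwise $2\varepsilon$ need not be an integer). Two remarks first. (a) A geodesic from $x$ to any vertex $u$ leaves $x$ along $[v_0,v_1]$ towards $v_0$ or towards $v_1$, hence $d(x,u)-d_0\in\mathbb{Z}_{\ge0}$ or $d(x,u)-d_1\in\mathbb{Z}_{\ge0}$; since $d_0-d_1\notin\mathbb{Z}$ exactly one holds, and we say the geodesic \emph{leaves through} $v_0$ or through $v_1$ accordingly. (b) $[u_0,u_1]\ne[v_0,v_1]$: on the interior of $[v_0,v_1]$, $d(x,\cdot)$ is $t\mapsto|t-d_0|$ (a path from $x$ to an interior point of that edge cannot beat running along the edge), which has no interior local maximum, whereas the hypothesis $y'\in\partial\bar B(x,\varepsilon)\setminus cl(X\setminus\bar B(x,\varepsilon))$ says precisely that $y'$ is an interior local maximum of $d(x,\cdot)$ with value $\varepsilon=d(x,y')$.

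By (b), every path from $x$ into the interior of $[u_0,u_1]$ enters through $u_0$ or $u_1$; parametrising this edge by $p\colon[0,1]\to[u_0,u_1]$, $p(0)=u_0$, this yields $d(x,p(s))=\min\{a+s,\ b+1-s\}$ with $a=d(x,u_0)$, $b=d(x,u_1)$. This function is concave, so its interior local maximum $y'$ is its global maximum, attained at the balance point: $y'=p(s^*)$ with $s^*=\tfrac{b+1-a}{2}\in(0,1)$ (equivalently $|a-b|<1$, forced since $y'$ is not a vertex), and $\varepsilon=\tfrac{a+b+1}{2}$, $d(y',u_0)=s^*$, $d(y',u_1)=1-s^*$. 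Therefore the path from $x$ to $u_0$ followed by the subarc $[u_0,y']$ of the edge has length $a+s^*=\varepsilon$, and the one through $u_1$ has length $b+(1-s^*)=\varepsilon$; each is a geodesic from $x$ to $y'$, which we call $\gamma_0$ (through $u_0$) and $\gamma_1$ (through $u_1$).

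The crucial step is: $\gamma_0$ and $\gamma_1$ leave $x$ through different endpoints of $[v_0,v_1]$. If both left through $v_0$ then, since the subarc of $\gamma_i$ from $x$ to the corresponding $u_j$ is a geodesic through $v_0$, we get $a=d_0+d(v_0,u_0)$ and $b=d_0+d(v_0,u_1)$, so $s^*=\tfrac{1+d(v_0,u_1)-d(v_0,u_0)}{2}\in(0,1)$ forces $d(v_0,u_0)=d(v_0,u_1)$, hence $s^*=\tfrac12$, i.e.\ $y'$ a midpoint — excluded; "both through $v_1$" is symmetric. Relabelling $u_0,u_1$ (and $\gamma_0,\gamma_1$) if needed, assume $\gamma_0$ leaves through $v_0$ and $\gamma_1$ through $v_1$, so $a=d_0+n$ and $b=d_1+m$ with $n=d(v_0,u_0)$, $m=d(v_1,u_1)\in\mathbb{Z}_{\ge0}$; then $2\varepsilon=a+b+1=n+m+2\in\mathbb{N}$ since $d_0+d_1=1$. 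Writing $\gamma_0=[x,v_0]\cup\sigma_0\cup[u_0,y']$ and $\gamma_1=[x,v_1]\cup\sigma_1\cup[u_1,y']$ with $\sigma_i$ geodesics between vertices, and using that $x$ is interior to $[v_0,v_1]$ and $y'$ to $[u_0,u_1]$, we have $[v_0,v_1]=[x,v_0]\cup[x,v_1]\subset\gamma_0\cup\gamma_1$ and $[u_0,u_1]=[u_0,y']\cup[u_1,y']\subset\gamma_0\cup\gamma_1$. Finally $\gamma_0\cap\gamma_1=\{x,y'\}$: these are not vertices; a common vertex $w$ would force $d(x,w)=d_0+d(v_0,w)=d_1+d(v_1,w)$, impossible since $d_0-d_1\notin\mathbb{Z}$; and a geodesic meets an edge not containing its endpoints in a single vertex or in the whole edge, so a common interior-of-edge point would again force a common vertex, while on $[v_0,v_1]$ and $[u_0,u_1]$ themselves $\gamma_0,\gamma_1$ meet only at $x$, resp.\ $y'$. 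Hence $\gamma_0\cup\gamma_1$ is an embedded circle of length $2\varepsilon$ containing $[v_0,v_1]$ and $[u_0,u_1]$ and cut by $x,y'$ into two geodesics — a minimal cycle.

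The numerical relations now drop out: $d(y',u_0)=s^*=\tfrac{b+1-a}{2}=d_1+\tfrac{m-n}{2}$ and $d(y',u_1)=1-s^*=d_0-\tfrac{m-n}{2}$ (using $d_0+d_1=1$), and since these lie in $(0,1)$ with $d_0<\tfrac12<d_1$ the integer $m-n$ must be $0$ or $-1$. If $m=n$, then $2\varepsilon=n+m+2$ is even and $d(y',u_0)=d_1$, $d(y',u_1)=d_0$; if $m=n-1$, then $2\varepsilon$ is odd and $d(y',u_0)=d_1-\tfrac12$, $d(y',u_1)=d_0+\tfrac12$. With $d_0'=d(y',u_0)$, $d_1'=d(y',u_1)$ (equivalently the fractional parts of $\varepsilon-d_0$, $\varepsilon-d_1$), this is the asserted "moreover". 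The delicate point is the "different endpoints" step: a priori the two geodesics from $x$ to $y'$ could both issue from $x$ in the same direction, which would inject the non-integer $2d_0$ into $2\varepsilon$; ruling this out uses exactly the concavity of $d(x,\cdot)$ on $[u_0,u_1]$ (pinning $y'$ at the balance point) together with $y'$ not being a midpoint, and it is also what makes the cycle straddle $[v_0,v_1]$ rather than touch it on one side.
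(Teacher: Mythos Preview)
Your proof is correct and, in several places, more complete than the paper's. The overall outline is the same---produce two geodesics from $x$ to $y'$, one through each endpoint $u_i$, show they leave $x$ through different $v_i$, and show they meet only at $x,y'$---but your execution is tighter. You make explicit the concave profile $d(x,p(s))=\min\{a+s,b+1-s\}$ on $[u_0,u_1]$, which pins $y'$ at the balance point and yields $2\varepsilon=a+b+1$ directly; the paper simply asserts the existence of $\gamma_0,\gamma_1$ through $v_0,u_0$ and $v_1,u_1$ without justifying why the two geodesics must leave through different $v_i$. Your disjointness argument (a common vertex $w$ would force $d_0-d_1\in\mathbb Z$) is also cleaner than the paper's case analysis on $d(v_0,z)$ versus $d(v_1,z)$. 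Finally, you actually derive the ``moreover'' relations, which the paper's proof omits.

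You are also right to flag that the hypothesis ``$y'$ is not a midpoint'' is needed and is only implicit in the statement (it holds whenever $x\in A_\varepsilon$, which is the only context in which the lemma is invoked). Without it, both geodesics could leave through the same $v_i$, and then $2\varepsilon=2d_i+(\text{integer})\notin\mathbb N$ and $[v_0,v_1]$ would not lie on the cycle; the paper's proof silently assumes this cannot happen.
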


\begin{proof} Let $\varepsilon=d_0 +k
+d'_0=d_1+k' +d'_1$. Let us assume, with no loss of generality, that $d_0\leq d_1$,  $d(y',u_0)=d'_0>0$,
$d(y',u_1)=d'_1=1-d'_0>0$, $d(x,u_0)=d_0 +k$ and $d(x,u_1)=d_1+ k'$.
Since $y'\not \in cl(X\backslash \bar{B}(x,\varepsilon))$, there exist two geodesic segments $\gamma_0,\gamma_1$  with
length $\varepsilon$ from $x$ to $y'$, $\gamma_0$ containing $v_0$ and
$u_0$, and $\gamma_1$ containing $v_1$ and $u_1$.

Consider the restriction $\gamma_0'$ of $\gamma_0$ joining $v_0$ and $u_0$
and the restriction $\gamma_1'$ of $\gamma_1$ joining $v_1$ and
$u_1$. If they are disjoint we are done. Otherwise, there would be a
common vertex $z\subset \gamma_0'\cap \gamma_1'$ and
$d_0+n_1=d(v_0,z)\neq d(v_1,z)=d_1+n_2$ (see Figure \ref{fig_grafo}).

\begin{figure}[ht]
\includegraphics[scale=0.5]{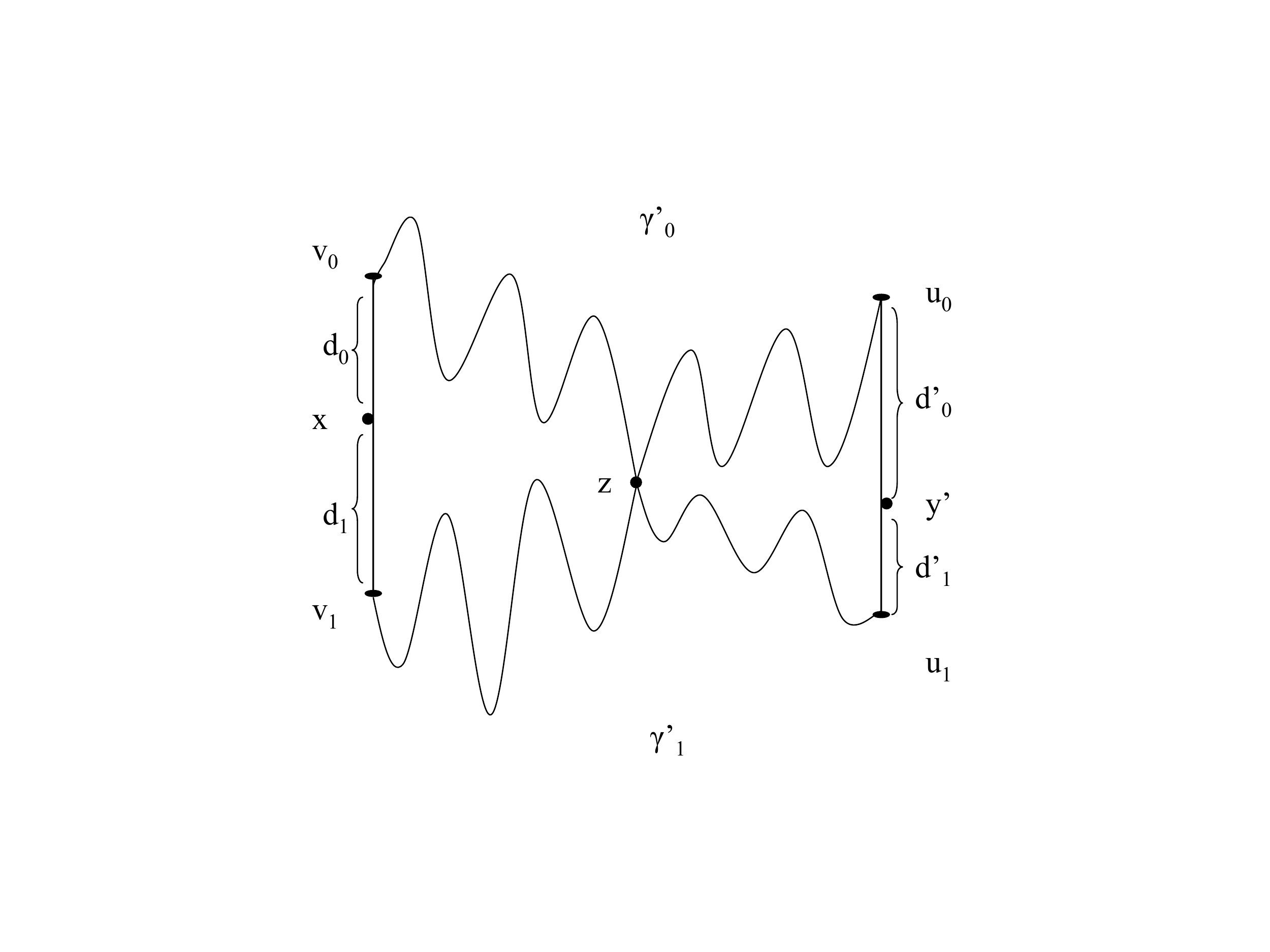}
\caption{In the conditions of the Lemma, there is a contradicion if the geodesic segments $\gamma'_0,\gamma'_1$ intersect.} \label{fig_grafo}
\end{figure}

If $d(v_0,z)\leq d(v_1,z)$ the vertices $u_0,u_1$ hold that
$d(u_0,v_0)\leq d(u_0,v_1)$ and $d(u_1,v_0)\leq d(u_1,v_1)$ but
this last inequality is not possible because there is a geodesic segment from $x$ to $u_1$ containing $v_1$ (restriction of $\gamma_1$)
and since $d_1>d_0$ we would get
$d(x,u_1)=d_1+d(u_1,v_1)>d_0+d(u_1,v_0)\geq d(x,u_1)$.

Otherwise $d(v_1,z)\leq d(v_0,z)-1$ and then $d(v_1,u_0)\leq
d(v_0,u_0)-1$ and $d(v_1,u_1)\leq d(v_0,u_1)-1$. Then $[x,v_1]\cup
[v_1,u_0] \cup [u_0,y']$ defines a path of length $d_1+
d(v_1,u_0) +d'_0\leq d_1+d(v_0,u_0)-1 +d'_0$ and since $d_1<
d_0+1$ this yields a path from $x$ to $y'$ shorter than
$\gamma_0$, this is, $d(x,y')<\varepsilon$ which is a contradiction.
\end{proof}

Let us define \begin{equation}\label{Definicion: K} K_{x,\varepsilon}:=\{x\in |e| \, | \, |e|\subset B(x,\varepsilon)\},\end{equation} the set of points in $X$
which belong to some edge entirely contained in the open ball
$B(x,\varepsilon)$. Notice that, since $X$ is a finite graph, $K_{x,\varepsilon}$ is compact.

\begin{lema}\label{balls 2} Let $x\in A_\varepsilon$, $\{y_1,\cdots,
y_n\}=\partial \bar{B}(x,\varepsilon)\cap cl(X\backslash
\bar{B}(x,\varepsilon))$ the partition defined in Lemma \ref{subset border}. 
Then there exists $ \delta>0$ such that,
$\forall z \in C_{0,\delta}$ \quad \[\bar{B}(z,\varepsilon)=
\bar{B}(x,\varepsilon) \ \cup \ \{\cup_{i=1}^k \bar{B}(y_i,d(z,x))\} \
\backslash \ \cup_{i=k+1}^{n} B(y_i,d(z,x))\] and \ $\forall z \in
C_{1,\delta}$ \quad \[\bar{B}(z,\varepsilon)= \bar{B}(x,\varepsilon) \
\cup \ \{\cup_{i=k+1}^n \bar{B}(y_i,d(z,x))\} \ \backslash \
\cup_{i=1}^{k} B(y_i,d(z,x))\]  
In particular we can take $\delta$
small enough so that each ball $\bar{B}(y_i,d(z,x))$ is contained in
some edge and it is disjoint from the other balls.
\end{lema}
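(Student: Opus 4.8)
The plan is to compute $d(z,\cdot)$ from $d(x,\cdot)$ for $z$ close to $x$ and then to read off the set equality by a neighbourhood analysis of the sphere $\partial B(x,\varepsilon)$. Fix $z\in C_{0,\delta}$ and set $t:=d(z,x)\in(0,\delta)$. Since $x\in A_\varepsilon$ it is neither a vertex nor a midpoint, so $0<d_0<d_1$, and for $\delta<d_0$ the point $z$ is interior to the edge $e=[v_0,v_1]$ with $d(z,v_0)=d_0-t$, $d(z,v_1)=d_1+t$. The points $w$ with $d(x,w)$ bounded away from $\varepsilon$ cause no trouble: for $\delta$ small they lie in both $\bar B(x,\varepsilon)$ and $\bar B(z,\varepsilon)$, or in neither, and likewise on the corresponding side of the identity to be proved, so I may assume $d(x,w)$ is close to $\varepsilon$; for such $w$ a shortest path from $z$ leaves $e$ through $v_0$ or through $v_1$, whence
\[
d(z,w)=\min\{\,d_0-t+d(v_0,w),\ d_1+t+d(v_1,w)\,\},\qquad
d(x,w)=\min\{\,d_0+d(v_0,w),\ d_1+d(v_1,w)\,\}.
\]
Call $w$ a \emph{$v_0$-point} when $d_0+d(v_0,w)\le d_1+d(v_1,w)$ and a \emph{$v_1$-point} otherwise; one reads off $d(z,w)=d(x,w)-t$ for every $v_0$-point and $d(z,w)=d(x,w)+t$ for every $v_1$-point once $t$ is small compared with its positive ``margin'' $d_0+d(v_0,w)-d_1-d(v_1,w)$. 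Hence
\[
\bar B(z,\varepsilon)=\{\,w\ v_0\text{-point}:\ d(x,w)\le\varepsilon+t\,\}\ \cup\ \{\,w\ v_1\text{-point}:\ d(x,w)\le\varepsilon-t\,\}.
\]

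Next I would localize the difference with $\bar B(x,\varepsilon)=\{w:d(x,w)\le\varepsilon\}$. If $d(x,w)\in(\varepsilon,\varepsilon+t]$ a geodesic $[x,w]$ meets the sphere within distance $t$ of $w$, so $\{\varepsilon<d(x,\cdot)\le\varepsilon+t\}$ and, likewise, $\{\varepsilon-t<d(x,\cdot)\le\varepsilon\}$ lie inside $\bigcup_{y\in\partial B(x,\varepsilon)}\bar B(y,t)$. Since $x\in A_\varepsilon$, the finite set $\partial B(x,\varepsilon)$ is $\{y_1,\dots,y_k\}\cup\{y_{k+1},\dots,y_n\}\cup\{y_1',\dots,y_m'\}$, where the $y_j'$ are the boundary points outside $cl(X\setminus\bar B(x,\varepsilon))$, and none of these points is a vertex; so for $\delta$ small each $\bar B(y,t)$ with $t<\delta$ lies in a single edge, the balls $\bar B(y_i,t)$ ($1\le i\le n$) are pairwise disjoint and disjoint from the compact set $K_{x,\varepsilon}$, and the ``margins'' above stay bounded below on them — which is what legitimises the formula of the first paragraph exactly where it is used. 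By Lemma~\ref{Lema: ciclos} every $y_j'$ lies on a minimal $2\varepsilon$-cycle through $x$, on which $\bar B(x,\varepsilon)$ already contains a two-sided neighbourhood of $y_j'$ and on which a direct computation gives $d(z,w)\le\varepsilon$ throughout; so the $y_j'$ yield neither gains nor losses. Using Lemma~\ref{short path} and the distance control of Lemma~\ref{dist unica 2}, near a $v_0$-point $y_i$ ($i\le k$) the ball $\bar B(x,\varepsilon)$ fills the inward side of the edge through $y_i$ while $d(x,w)=\varepsilon+d(w,y_i)$ on the outward side, so the only points gained near $y_i$ are those of $\bar B(y_i,t)\setminus\bar B(x,\varepsilon)$ and none is lost; symmetrically, near a $v_1$-point $y_i$ ($i>k$) the only points lost are those of $B(y_i,t)\cap\bar B(x,\varepsilon)$ and none is gained. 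Away from all these neighbourhoods $K_{x,\varepsilon}$ stays inside $\bar B(z,\varepsilon)$ and the edges on which $d(x,\cdot)>\varepsilon$ stay outside, by compactness.

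Assembling the local pictures gives
\[
\bar B(z,\varepsilon)=\Bigl(\bar B(x,\varepsilon)\setminus\bigcup_{i>k}B(y_i,t)\Bigr)\ \cup\ \bigcup_{i\le k}\bigl(\bar B(y_i,t)\setminus\bar B(x,\varepsilon)\bigr),
\]
and since the $\bar B(y_i,t)$ are pairwise disjoint, each $\bar B(y_i,t)$ with $i\le k$ misses every removed set $B(y_j,t)$ with $j>k$, so the right-hand side equals $\bigl(\bar B(x,\varepsilon)\cup\bigcup_{i\le k}\bar B(y_i,t)\bigr)\setminus\bigcup_{i>k}B(y_i,t)$, the asserted identity; the case $z\in C_{1,\delta}$ follows by interchanging $v_0\leftrightarrow v_1$ and $\{i\le k\}\leftrightarrow\{i>k\}$, and the ``in particular'' clause is the smallness of $\delta$ arranged above. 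The step I expect to be delicate is the localization of the second paragraph: one must show that outside tiny neighbourhoods of the $y_i$ the ball does not move at all, and that the two kinds of boundary points behave oppositely — the points of $cl(X\setminus\bar B(x,\varepsilon))$ producing the gains and losses $\bar B(y_i,t)$, the cyclic points $y_j'$ producing nothing — which is where Lemmas~\ref{short path}, \ref{dist unica 2} and \ref{Lema: ciclos} together with the compactness of $K_{x,\varepsilon}$ all enter. The easy regime $\varepsilon\le d_1$, where $\bar B(x,\varepsilon)$ is merely a star of sub-intervals around $x$, is handled directly as in the proofs of the preceding lemmas.
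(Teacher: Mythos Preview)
Your proof is correct and follows essentially the same route as the paper's: both arguments split off the trivial regime $\varepsilon\le d_1$, then for $\varepsilon>d_1$ use the compactness of $K_{x,\varepsilon}$ to freeze the interior, invoke Lemma~\ref{Lema: ciclos} to neutralize the cyclic boundary points $y'_j$, and use Lemmas~\ref{short path} and~\ref{dist unica 2} to show that near each $y_i$ the ball grows by $\bar B(y_i,t)$ (for $i\le k$) or shrinks by $B(y_i,t)$ (for $i>k$), with $\delta$ chosen small enough to keep these balls disjoint and edge-contained. Your $v_0$-point/$v_1$-point language is a clean repackaging of the paper's edge-by-edge verification, but the ingredients and their roles are identical.
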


\begin{proof} If $\varepsilon\leq d_1$ the lemma is immediate taking
$\delta<\min\{d_0,\varepsilon\}$. Let us suppose $\varepsilon> d_1$.

Let $\delta_0<d'_0,d'_1,1-d'_0,1-d'_1$.

\begin{quote} The first two bounds are
quite redundant with the next one, $\delta_1$, and may be
eliminated from the proof but to justify how they follow from the
other one is less clear than explicitly asking for them. The last two
are unnecessary for the first part of this result but, later on,
it will be useful to make sure that each ball $B(y_i,d(z,x))$ is contained in one edge.
\end{quote}

Consider $\delta_1>0$ so that lemma \ref{dist unica 2} is
satisfied.

Let $\varepsilon'=\max_{a \in
K_{x,\varepsilon}}\{d(a,x)\}$. Since
$K_{x,\varepsilon}$ is compact and it is contained in the open ball, $0\leq \varepsilon'<\varepsilon$.
Let $0<\delta_2<\varepsilon-\varepsilon'$ then $\forall z\in
B(x,\delta_1)$, $K_{x,\varepsilon}\subset B(z,\varepsilon)$.

Thus, we only have to care about edges $|e_i|=[w_i,w'_i]$ containing
$\{y_i\}$ for $i=1,n$ and those $e'_1,\cdots, e'_r$ such that
$|e'_j|\subset \bar{B}(x,\varepsilon)$ but there is a
point $y'_j$ of the border of the ball in them. Let us start with one of these $y'_j\in
|e'_j|$ with $1\leq j\leq r$.

Since $x\in A_\varepsilon$, $y'=y'_j$ is an interior point of $e'_j=[u_0,u_1]$. By Lemma \ref{Lema: ciclos},
any border point $y'$ in an edge entirely
contained in the closed ball is in a cycle of length $2\varepsilon\in
\mathbb{N}$ given by two geodesic segments from $x$ to $y'$. Moreover
if $2\varepsilon$ is even, then $k=k'$ and hence $d_0=d_1'$ and
$d_1=d_0'$. If $2\varepsilon$ is odd then $k'=k+1$ and $d_0+
d_0'+1=d_1+d_1'$ (other thing is not possible because $d_0<d_1$),
and since $d_1=1-d_0$ and in this situation $d_0'=1-d_1'$ it
follows immediately that $d_1'=d_0+\frac{1}{2}$ and
$d_0'=d_1-\frac{1}{2}$.

The important fact here for these points $y'_i$ is that if we
consider $\delta_3<d_0<d_1$, for any point $z\in B(x,\delta_3)$
the closed ball $\bar{B}(z,\varepsilon)$ also contains any cycle of
length $2\varepsilon$ containing the edge $[v_0,v_1]$ and there would
not be any difference between the balls centered in $x$ and in $z$
in those edges $|e'_i|$.

Hence if $0<\delta<\delta_1,\delta_2,\delta_3$ the unique
difference between those balls would be in those edges $[w_i,w'_i]$
which contain the points $\{y_1,\cdots,y_n\}$ (not outside
$\bar{B}(x,\varepsilon)$ either because $\delta<1-d'_0,1-d'_1$).
(Note that two of these points $y_i,y_j$ may be in the same edge
if $2d'_0<1$, $2d'_1<1$ or $d'_0+d'_1<1$. This would mean in the
notation that $w_i=w'_j$ and $w'_i=w_j$ and doesn't lead to any
contradiction).

Let $0<\delta_4=\frac{1}{2} min_{i\neq j}\{d(y_i,y_j)\}$. If
$0<\delta<\delta_4$, for any pair of border points $y_i,y_j$
$\bar{B}(y_i,\delta)\cap \bar{B}(y_j,\delta)=\emptyset$.

Thus finally, let
$0<\delta<\min\{\delta_0,\delta_1,\delta_2,\delta_3,\delta_4\}$.

We are going to prove the equality for the case $z\in
C_{0,\delta}$. If $z\in C_{1,\delta}$ is analogous:

$\bar{B}(z,\varepsilon)= \bar{B}(x,\varepsilon) \cup_{1}^k
\{\bar{B}(y_i,d(x,z))\}\backslash \cup_{i=k+1}^{n}
\{B(y_i,d(x,z))\}$.

As we said, $K_{x,\varepsilon}$ and $\{e'_1,\cdots,e'_r\}$ are both
contained in $\bar{B}(z,\varepsilon)$ and in $\bar{B}(x,\varepsilon)
\cup_{1}^k \{\bar{B}(y_i,d(z,x))\}\backslash \cup_{i=k+1}^{n}
\{B(y_i,d(z,x))\}$ since $\delta<\delta_1<d'_0,d'_1$ and we are
not removing any of those points with the balls $B(y_i,d(z,x))$
with $i>k$ because $\delta<d'_1$. To prove the equality it remains
to see what happens on $|e_i|$.

The balls $B(y_i,\delta)$ are all disjoint and contained in an
edge so we can study what happens around each border point
independently.

If $i\leq k$ then $d(z,y_i)=\varepsilon-d(z,x)$ and hence the ball
$\bar{B}(z,\varepsilon)$ includes $[w_i,y_i]$ and also around $y_i$
exactly a ball $\bar{B}(y_i,d(x,z)$. If $i>k$
$d(z,y_i)=\varepsilon+d(z,x)$ and $d(z,x)<\delta<d'_1$ implies that
$\bar{B}(z,\varepsilon)\cap [w_i,y_i]=[w_i,y_i]\backslash
B(y_i,d(x,z))$.
\end{proof}

\begin{lema} \label{balls} Let $\bar{B}(x,\varepsilon)=X$ and let $\{y_1,\cdots,
y_n\}=\partial \bar{B}(x,\varepsilon)$. Let $\delta_1>0$ be such that
$B(x,\delta_1)$ holds lemma \ref{dist unica} for every border
point $y_i$. For any component $C_i\in B(x,\delta_1)\backslash
\{x\}$ consider $\{y_1,\cdots,y_k\}$ those border points such that
$\forall z\in C_i \quad d(z,y_{i})=d(x,y_{i})+d(x,z)$. Then, there
exists some $\delta< \delta_1$ such that $\forall z\in C_i\cap
B(x,\delta)$, $\bar{B}(z,\varepsilon)=X \backslash \cup_{i=1}^{k}
\{B(y_i,d(x,z))\}$.
\end{lema}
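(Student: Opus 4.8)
The plan is to prove the two inclusions separately. The inclusion $\bar{B}(z,\varepsilon)\subseteq X\setminus\bigcup_{j=1}^{k}B(y_j,d(x,z))$ will be immediate and will hold for \emph{every} $z\in C_i$ with $0<d(x,z)<\delta_1$, with no further shrinking: if $q\in B(y_j,d(x,z))$ for some $j\le k$, then, since $d(z,y_j)=d(x,y_j)+d(x,z)=\varepsilon+d(x,z)$ by the defining property of the $y_j$'s, the triangle inequality gives $d(z,q)\ge d(z,y_j)-d(q,y_j)>\varepsilon$, so $q\notin\bar{B}(z,\varepsilon)$. All the work goes into the reverse inclusion, i.e. into finding a $\delta$ for which $d(z,q)>\varepsilon$ forces $q\in B(y_j,d(x,z))$ for some $j\le k$.

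My first step would be a localization via compactness. By Lemma \ref{finite points} the border $\partial\bar{B}(x,\varepsilon)=\{y_1,\dots,y_n\}$ is finite, so I can pick $\delta_*>0$ so small that the balls $B(y_i,\delta_*)$ are pairwise disjoint and each sits inside a single edge whenever $y_i$ is an interior point of an edge. Since $q\mapsto d(x,q)$ is continuous, is $\le\varepsilon$ everywhere (because $\bar{B}(x,\varepsilon)=X$), and equals $\varepsilon$ exactly on $\{y_1,\dots,y_n\}$, it is strictly below $\varepsilon$ on the compact set $X\setminus\bigcup_iB(y_i,\delta_*)$, hence $\le\varepsilon-\eta$ there for some $\eta>0$. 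I would also invoke Lemma \ref{dist unica} at the finitely many vertices and at the endpoints of the edges meeting $\{y_1,\dots,y_n\}$ to get a common radius $\delta'$ on which the distance from $z\in C_i$ to each such special point has the form $d(x,\cdot)\pm d(x,z)$, and set $\delta:=\min\{\delta_1,\delta_*,\eta,\delta'\}$. Then any $q$ with $d(z,q)>\varepsilon$ (for $z\in C_i\cap B(x,\delta)$) satisfies $d(x,q)\ge d(z,q)-d(x,z)>\varepsilon-\delta\ge\varepsilon-\eta$, so $q$ lies in $B(y_i,\delta_*)$ for a unique $i$; the problem is now purely local around this $y_i$.

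The second step is a tent-function computation around $y_i$. Because $y_i$ realizes the global maximum $\varepsilon$ of $d(x,\cdot)$ and this function is, on each edge, a minimum of two affine functions of slope $\pm1$, being a local maximum at $y_i$ forces $d(x,a_i)+d(a_i,y_i)=d(x,b_i)+d(b_i,y_i)=\varepsilon$ on the edge $e_i=[a_i,b_i]$ containing $y_i$ (and, if $y_i$ is a vertex, $d(x,u)=\varepsilon-1$ for every neighbour $u$), so that $d(x,w)=\varepsilon-d(y_i,w)$ near $y_i$. Writing $d(z,a_i)=d(x,a_i)+\sigma_a\,d(x,z)$ and $d(z,b_i)=d(x,b_i)+\sigma_b\,d(x,z)$ with $\sigma_a,\sigma_b\in\{\pm1\}$ (Lemma \ref{dist unica}), one has $d(z,w)=\min\{d(z,a_i)+d(a_i,w),\,d(z,b_i)+d(b_i,w)\}$ for $w\in e_i$. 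If $d(z,y_i)=\varepsilon+d(x,z)$, i.e. $y_i\in\{y_1,\dots,y_k\}$, then $d(z,y_i)\le d(z,a_i)+d(a_i,y_i)=\varepsilon+\sigma_a d(x,z)$ forces $\sigma_a=+1$, and likewise $\sigma_b=+1$; hence $d(z,w)=d(x,z)+d(x,w)$ on $e_i$, and $d(z,q)>\varepsilon\iff d(x,q)>\varepsilon-d(x,z)\iff d(y_i,q)<d(x,z)$, i.e. $q\in B(y_i,d(x,z))$ with $i\le k$, as wanted. If instead $d(z,y_i)=\varepsilon-d(x,z)$ then at least one of $\sigma_a,\sigma_b$ equals $-1$, and checking the three cases $(-,+),(+,-),(-,-)$ shows $d(z,w)\le\varepsilon$ throughout $e_i$ — contradicting $d(z,q)>\varepsilon$, so this situation cannot arise. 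When $y_i$ is a vertex the same computation is run on each incident edge, using $d(z,u)=(\varepsilon-1)+\sigma_u d(x,z)$.

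The hard part is the uniformity: producing one $\delta$ that works simultaneously for all $q$. The compactness step is what makes this possible, since it traps every point that leaves the ball inside a fixed small neighbourhood of the finitely many border points; once that is done the argument is local and rests on the elementary identity $d(z,w)=d(x,z)+d(x,w)$ along an edge carrying a ``pushed-out'' border point. A minor annoyance is the extra bookkeeping forced when $y_i$ or $x$ is a vertex or an edge-midpoint, but this only enlarges the finite list of points to which Lemma \ref{dist unica} is applied and introduces no new idea.
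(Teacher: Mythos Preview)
Your proof is correct and follows the same overall strategy as the paper: localize the problem near the finitely many border points via a compactness argument, then do an edge-by-edge computation there.  The paper's version of your compactness step uses the auxiliary set $K_{x,\varepsilon}$ (the union of edges lying entirely inside the open ball $B(x,\varepsilon)$); since $K_{x,\varepsilon}$ is compact inside the open ball, one gets $\varepsilon':=\max_{a\in K_{x,\varepsilon}}d(a,x)<\varepsilon$ and takes $\delta_2<\varepsilon-\varepsilon'$, which plays exactly the role of your $\eta$.  Your formulation---taking $\eta$ so that $d(x,\cdot)\le\varepsilon-\eta$ on the compact complement of small balls around the $y_i$---is a slightly more metric (less graph-structural) way of saying the same thing.

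Your local ``tent-function'' analysis is in fact more explicit than the paper's.  The paper handles the pushed-out border points $y_1,\dots,y_k$ with essentially your triangle-inequality computation, but passes rather quickly over the pulled-in border points $y_{k+1},\dots,y_n$ (the claim that on their edges nothing leaves $\bar B(z,\varepsilon)$ is left implicit).  Your case analysis on the signs $(\sigma_a,\sigma_b)$ fills this in cleanly.  The extra invocation of Lemma~\ref{dist unica} at the edge-endpoints $a_i,b_i$ to control $d(z,a_i),d(z,b_i)$ is a nice touch that makes the computation uniform; the paper achieves the same control a bit more implicitly through the structure of geodesics in the graph.
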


\begin{proof} If $\varepsilon\leq 1$ then $X$ consists of two vertices
and a single edge or two edges and $x$ is a common vertex. In both
cases the proof is immediate. Let us suppose that there are at
least two edges and $\varepsilon> 1$.

Let $x\in [v_0,v_1]$ with $0\leq d_0=d(x,v_0)<1$ and
$d_1=d(x,v_1)=1-d_0$. Let $k=[\varepsilon-d_0]$, $k'=[\varepsilon-d_1]$,
$d'_0=\varepsilon-k-d_0$ and $d'_1=\varepsilon-k'-d'_0$.

Let $\varepsilon'=\max_{a \in K_{x,\varepsilon}}\{d(a,x)\}$. Then $0\leq
\varepsilon'<\varepsilon$ since $K_{x,\varepsilon}$ is compact and it is
contained in the open ball. Let $0<\delta_2<\varepsilon-\varepsilon'$
then $\forall z\in B(x,\delta_2)$, $K_{x,\varepsilon}\subset
B(z,\varepsilon)$.

If $\{y_1,\cdots,y_k\}\neq \emptyset$ and
$\delta<\min\{\delta_1,\delta_2\}$ we only have to check those
edges containing a border point $y_i$ with $i\leq k$. Any geodesic segment
$[x,y_i]$ has length $\varepsilon$ and $\forall z\in
C_i$, any geodesic segment $[z,y_i]$ has length
$\varepsilon+d(z,x)$.

Thus, $d(z,x)=\varepsilon+d(z,x)$ implies that $\forall z'\in
B(y_i,d(z,x))$, $d(z',z)> \varepsilon$.
Then $B(y_i,d(x,z))\cap \bar{B}(z,\varepsilon)=\emptyset$ and the
ball $\bar{B}(z,\varepsilon)\subset X \backslash \cup_{i=1}^{k}
\{B(y_i,d(x,z))\}$.

On the other hand, consider any point $z'\in X \backslash
\cup_{i=1}^{k} \{B(y_i,d(x,z))\}$. If $z'\in K_{x,\varepsilon}$, then, since $d(x,z)<\delta, \delta_2$,  $z'\in
\bar{B}(z,\varepsilon)$. If $z'\not \in K_{x,\varepsilon}$, then $z'\in [w_i,y]$ with $[w_i,w_i']$ some edge
such that $y_i\in [w_i,w'_i]$. Therefore, $d(z,z')\leq
d(x,y)+d(x,z)-d(z',y) =\varepsilon +d(x,z)-d(z',y)$. But, since
$z'\not \in B(y_i,d(x,z))$, $d(z',y_i)\geq d(x,z)$ and therefore,
$d(z,z')\leq \varepsilon$ and $z'\in \bar{B}(z,\varepsilon)$.

Thus, $\bar{B}(z,\varepsilon) = X \backslash \cup_{i=1}^{k}
\{B(y_i,d(x,z))\}$.
\end{proof}

\begin{cor} \label{isometry 2} Let $\bar{B}(x,\varepsilon)=X$ and let $\emptyset \neq \{y_1,\cdots,
y_n\}=\partial \bar{B}(x,\varepsilon)$. Then there is some $\delta>0$
such that the restriction $p_\varepsilon|_{C_{i,\delta}\cup \{x\}}:C_{i,\delta}\cup \{x\}\to
p_\varepsilon(C_{i,\delta}\cup \{x\})$ is an isometry when
$p_\varepsilon(C_{i,\delta})\neq \{X\}$.
\end{cor}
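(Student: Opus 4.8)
The plan is to combine Lemma \ref{balls} with the structural information about distances from Lemma \ref{dist unica}, and then to show that on a single small component the map $p_\varepsilon$ not only is injective but preserves distances exactly. First I would fix $x$ with $\bar{B}(x,\varepsilon)=X$ and take $\delta$ as furnished by Lemma \ref{balls} (shrinking it further if necessary so that each $B(y_i,d(x,z))$ lies in a single edge and so that the conclusion of Lemma \ref{dist unica} holds for every border point $y_i$). Recall from Lemma \ref{dist unica} that on the component $C_{i,\delta}$ the set of border points splits into those $y_j$ with $d(z,y_j)=d(x,y_j)+d(x,z)$ for all $z\in C_{i,\delta}$ (call these the ``receding'' ones, $y_1,\dots,y_k$) and the remaining ``approaching'' ones. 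By Lemma \ref{balls}, for $z\in C_{i,\delta}$ we have the clean description
\[
\bar{B}(z,\varepsilon)=X\setminus\bigcup_{j=1}^{k}B(y_j,d(x,z)).
\]

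The key computation is then to evaluate the Hausdorff distance between two such balls. Take $z,z'\in C_{i,\delta}\cup\{x\}$ and set $s=d(x,z)$, $s'=d(x,z')$; since $C_{i,\delta}$ sits inside one edge, we may assume $s\le s'$, so $d(z,z')=s'-s$. The balls differ only inside the (pairwise disjoint, each edge-contained) small balls $B(y_j,s)$ versus $B(y_j,s')$ for $j\le k$; since $d(x,z),d(x,z')<\delta$ and $\delta$ was chosen so these balls sit in single edges away from vertices, each ``annulus'' $B(y_j,s')\setminus B(y_j,s)$ is an interval of length $s'-s$ on one side (or at most two intervals of total ``reach'' $s'-s$), and its points are at distance at most $s'-s$ from $\bar{B}(z,\varepsilon)$ while the farthest such point is exactly $s'-s$ from it. Hence $d_H(\bar{B}(z,\varepsilon),\bar{B}(z',\varepsilon))=s'-s=|d(x,z)-d(x,z')|=d(z,z')$, which is precisely the statement that $p_\varepsilon|_{C_{i,\delta}\cup\{x\}}$ is an isometry onto its image. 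The hypothesis $p_\varepsilon(C_{i,\delta})\neq\{X\}$ guarantees $\delta$ can be taken with $d(x,z)$ strictly less than the first radius at which $\bar{B}(z,\varepsilon)$ becomes all of $X$, so the above description of $\bar{B}(z,\varepsilon)$ genuinely applies and none of the removed balls is empty for the wrong reason.

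The main obstacle I anticipate is bookkeeping the ``two intervals'' case: a border point $y_j$ can be the middle point of an edge so that $B(y_j,s')\setminus B(y_j,s)$ consists of two arcs, one on each side, and in principle the nearest point of $\bar{B}(z',\varepsilon)$ to a point in one arc could be routed through the graph rather than straight back into $B(y_j,s)$. Controlling this is exactly what the choices $\delta<d'_0,d'_1,1-d'_0,1-d'_1$ in the proof of Lemma \ref{balls 2} (and the analogous bounds here) are for: they force every competing path from such a point to the complement of $B(y_j,s')$ to be longer, so the Hausdorff distance is realized locally and equals $s'-s$. Once that local-versus-global comparison is pinned down, the rest is the routine Minkowski-free estimate above, and the isometry statement follows immediately from Remark \ref{quotient} (injectivity) upgraded to distance preservation.
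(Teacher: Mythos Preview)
Your approach is essentially the paper's: invoke Lemma~\ref{balls} to describe $\bar{B}(z,\varepsilon)$ explicitly and then read off the Hausdorff distance. The difference is in economy. The paper does not compute both halves of the Hausdorff distance directly. The upper bound $d_H(\bar{B}(z,\varepsilon),\bar{B}(z',\varepsilon))\le d(z,z')$ is free in any length space (the semiflow is $1$-Lipschitz), so only the lower bound needs work. For that the paper picks a \emph{single} receding border point $y_j$ (whose existence is exactly what the hypothesis $p_\varepsilon(C_{i,\delta})\neq\{X\}$ guarantees), notes that $\bar{B}(z,\varepsilon)\cap B(y_j,d(x,z))=\emptyset$, exhibits one point $p\in \bar{B}(z',\varepsilon)\cap S(y_j,d(x,z'))$, and gets $d(p,\bar{B}(z,\varepsilon))\ge d(x,z)-d(x,z')$ by the triangle inequality alone.

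This makes your ``main obstacle'' evaporate: the routing-through-the-graph worry only threatens the upper bound, which you never needed to argue locally. Your role for the hypothesis is also slightly misstated; it does not control $\delta$ but simply ensures $k\ge 1$, i.e.\ that at least one $y_j$ recedes along $C_{i,\delta}$. (Minor slip: in your annulus sentence you want ``distance at most $s'-s$ from $\bar{B}(z',\varepsilon)$'', not $\bar{B}(z,\varepsilon)$.) None of this is a genuine gap; your argument goes through, but the paper's one-witness-plus-Lipschitz version is shorter and sidesteps the bookkeeping you flagged.
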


\begin{proof} By the properties of the length metric $\forall
z,z'\in X \quad d_H(\bar{B}(z,\varepsilon),\bar{B}(z',\varepsilon))\leq
d(z,z')$. Suppose $\delta>0$ holding Lemma \ref{balls}. If
$p_\varepsilon(C_{i,\delta})\neq \{X\}$ then, by Lemma \ref{dist
unica}, there is some $y_j$ such that $\forall z\in C_{i,\delta}$
$d(z,y_j)=d(z,x)+d(x,y_j)$.

As we have just shown
$\bar{B}(z,\varepsilon)\cap B(y_j,d(z,x))=\emptyset$.
Let $z'\in (z,x]$. $\bar{B}(z',\varepsilon)\cap S(y_j,d(z',x))\neq
\emptyset$ since $d(z',y_j)=\varepsilon+d(z',x)$.

Thus, any point $p\in \bar{B}(z',\varepsilon)\cap S(y_j,d(z',x))$
holds that $d(p,\bar{B}(z,\varepsilon))\geq d(z,x)-d(z',x)=d(z,z')$
and hence $d_H(\bar{B}(z,\varepsilon),\bar{B}(z',\varepsilon))\geq
d(z,z')$ which proves the equality.
\end{proof}

\begin{prop}\label{isometry} For any $x\in X$ such that
$\bar{B}(x,\varepsilon)\neq X$, if $x$ is not a vertex nor a middle
point of an edge then there exists $\delta>0$ such that the
restriction $p_\varepsilon |_{B(x,\delta)} :B(x,\delta) \to p_\varepsilon
(B(x,\delta))$ is an isometry.
\end{prop}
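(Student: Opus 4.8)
The plan is to reduce the statement to a local, one‑dimensional computation and then exhibit an explicit witness for the Hausdorff distance. Since $x$ is neither a vertex nor a middle point it lies in the interior of a single edge $e=[v_0,v_1]$; relabelling if necessary I assume $d_0:=d(x,v_0)<d_1:=d(x,v_1)=1-d_0$, so $0<d_0<\frac12$. For $\delta<d_0/2$ the ball $B(x,\delta)$ is the open subinterval $(x-\delta,x+\delta)$ of $e$, on which $d$ is the parameter distance; denote the two components of $B(x,\delta)\setminus\{x\}$ by $C_0\subset[v_0,x)$ and $C_1\subset(x,v_1]$. Recall that $p_\varepsilon$ is $1$‑Lipschitz, $d_H(\bar B(z,\varepsilon),\bar B(z',\varepsilon))\le d(z,z')$ (the inequality quoted in the proof of Corollary \ref{isometry 2}), so only the reverse inequality is needed; and since $B(x,\delta)$ is isometric to a real interval, an elementary argument with the triangle inequality reduces everything to proving $d_H(\bar B(z,\varepsilon),\bar B(z',\varepsilon))\ge d(z,z')$ for pairs $z,z'$ on opposite sides of $x$ (pairs with $x$ as an endpoint being identical).

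First I treat the case $x\in A_\varepsilon$. Because $\bar B(x,\varepsilon)\ne X$ and $X$ is connected, $\{y_1,\dots,y_n\}:=\partial\bar B(x,\varepsilon)\cap cl(X\setminus\bar B(x,\varepsilon))$ is nonempty; by Lemma \ref{subset border} these points split into those reached from $x$ through $v_0$, say $y_1,\dots,y_k$, and those reached through $v_1$, and one block is nonempty — say $k\ge1$. I choose $\delta$ as in Lemma \ref{balls 2}, so that for $z\in C_0$
\[
\bar B(z,\varepsilon)=\bar B(x,\varepsilon)\cup\Big(\bigcup_{i\le k}\bar B(y_i,d(z,x))\Big)\setminus\Big(\bigcup_{i>k}B(y_i,d(z,x))\Big),
\]
symmetrically for $z'\in C_1$, with $\delta$ small enough that the little balls lie in distinct edges and are pairwise disjoint, and with $\bar B(x,\varepsilon)$ meeting the edge $[w_1,w_1']$ carrying $y_1$ exactly in $[w_1,y_1]$ (Lemma \ref{short path}). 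Given $z\in C_0$ and $z'\in C_1$, let $p$ be the point of $[y_1,w_1']$ at distance $d(z,x)$ from $y_1$. Then $p\in\bar B(z,\varepsilon)$, it being the far tip of $\bar B(z,\varepsilon)\cap[w_1,w_1']=[w_1,y_1]\cup\bar B(y_1,d(z,x))$, whereas $\bar B(z',\varepsilon)\cap[w_1,w_1']=[w_1,y_1]\setminus B(y_1,d(z',x))$ ends a distance $d(z',x)$ before $y_1$; since $\delta$ is small every path from $p$ to $\bar B(z',\varepsilon)$ effectively runs through $y_1$, so $d(p,\bar B(z',\varepsilon))=d(z,x)+d(z',x)=d(z,z')$, giving $d_H(\bar B(z,\varepsilon),\bar B(z',\varepsilon))\ge d(z,z')$. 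This is the computation underlying Corollary \ref{isometry 2}; if $k=0$ one runs it with some $y_i$, $i>k$, and $z,z'$ exchanged.

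The hard part will be the finitely many $x$ (Lemma \ref{finite points 2}) with $S(x,\varepsilon)\cap(\mathcal V\cup\mathcal M)\ne\emptyset$, i.e. $x\notin A_\varepsilon$. The scheme is the same: choose a boundary point $y$ as above, relabel so that every geodesic from $x$ to $y$ leaves $x$ towards $v_0$ — hence, by Lemma \ref{dist unica} and $\delta$ small, $d(z,y)=\varepsilon-d(x,z)$ on $C_0$ and $d(z,y)=\varepsilon+d(x,z)$ on $C_1$ — and take the point just past $y$ at distance $d(x,z)$ on an escape edge as the witness. The point requiring care is that when $y$ is a vertex, a neighbourhood of $y$ in $\bar B(x,\varepsilon)$ is a union of initial segments of the edges at $y$, and one must show this configuration deforms affinely as $z$ moves, only the escape edges (those missed by every geodesic $x\to y$) opening, resp. receding, by $d(x,z)$ at $y$. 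Establishing this uniform local behaviour forces $\delta$ below all the finitely many positive gaps attached to $y$ — the widths of the gaps at $y$ on the escape edges, the numbers $|d(x,w)-\varepsilon|$ for vertices and middle points $w$ with $d(x,w)\ne\varepsilon$, and a fixed fraction of each — after which the witness computation above goes through verbatim. This verification, and the analogue of Lemma \ref{subset border} guaranteeing the chosen $y$ is one‑sided, are the substance of the proof; the rest is the $1$‑Lipschitz property and bookkeeping on the finite graph.
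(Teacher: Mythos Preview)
Your overall strategy is the same as the paper's: pick a border point $y\in\partial\bar B(x,\varepsilon)\cap cl(X\setminus\bar B(x,\varepsilon))$, show that moving $x$ towards one vertex $v_i$ decreases $d(\cdot,y)$ while moving towards the other increases it, and use the outward direction at $y$ as a Hausdorff witness.  The reduction to opposite-side pairs via $1$-Lipschitzness and the reverse triangle inequality is fine.

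There are two differences worth noting.  First, the case split $x\in A_\varepsilon$ versus $x\notin A_\varepsilon$ is unnecessary.  The paper gives a single argument covering both: it picks \emph{one} point $y$ in the outer border, one escape edge $[w,w']$ at $y$, and works only with that.  Invoking the full description of $\bar B(z,\varepsilon)$ from Lemma~\ref{balls 2} (and the partition from Lemma~\ref{subset border}) in the $A_\varepsilon$ case is overkill; a single witness point suffices, just as in your sketch of the other case.

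Second, and more importantly, the step you yourself flag as ``the substance of the proof'' --- that the chosen $y$ is \emph{one-sided}, i.e.\ every geodesic $[x,y]$ leaves $x$ through the same vertex $v_i$ --- is precisely what the paper supplies, and it is a two-line parity argument rather than a long verification.  Since $x$ is neither a vertex nor a middle point, $d_0\ne d_1$ and $0<|d_0-d_1|<1$; since distances between vertices of the graph are integers, $d(x,v_0)+d(v_0,w)\ne d(x,v_1)+d(v_1,w)$ for the vertex $w$ on any geodesic $[x,y]$ nearest to $y$.  Hence all geodesics $[x,y]$ pass through the same $v_i$, say $v_0$.  This holds whether $y$ is an interior point, a vertex, or a middle point.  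With this in hand, Lemma~\ref{dist unica} applied at $x$ (relative to $y$) and at $y$ (relative to $v_0$, to control the escape direction $D_1\subset(y,w']$) gives the affine behaviour you describe, and the witness computation is exactly yours.  So your outline is correct, but the ``hard part'' you postpone is in fact the short observation that makes the whole proposition work; once you see it, the $A_\varepsilon$/non-$A_\varepsilon$ dichotomy and the appeal to Lemma~\ref{balls 2} become superfluous.
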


\begin{proof} If $\varepsilon \leq d_1$ it is immediate.

Otherwise, let $y\in \partial \bar{B}(x,\varepsilon)\cap cl(X\backslash
\bar{B}(x,\varepsilon))$ and $\delta_1>0$ such that Lemma
\ref{dist unica} holds for $x,y$. Since $x$ is not a vertex,
$B(x,\delta_1)\backslash \{x\}$ decomposes in two connected
components $C_{0,\delta_1}$,$C_{1,\delta_1}$ and, since $x$ is not
a middle point, $d_0=d(x,v_0)\neq d(x,v_1)=d_1$. Therefore, $y$ is contained
in an edge $[w,w']$ with $d(w,x)=d(x,y)-d(w,y)$ (there is a
geodesic segment $\gamma=[x,y]$ with $w\in \gamma$) and
$d(w',x)>d(x,y)-d(w',y)$ (every geodesic segment $[x,y]$ must contain
$w$ because $y\in cl(X\backslash
\bar{B}(x,\varepsilon))$).

Since $d_0\neq d_1$, $0<|d_0-d_1|<1$. As we saw in Lemma \ref{finite
points}, the distance between vertices of the graph is an integer
which now implies that $d(x,v_0)+d(v_0,w)\neq d(x,v_1)+d(v_1,w)$.
Then suppose $\varepsilon=d(x,v_0)+d(v_0,w)+
d(w,y)<d(x,v_1)+d(v_1,w)+d(w,y)=\varepsilon'$ and any geodesic segment
$\gamma$ contains $v_0$. See Figure \ref{grafolema2}.

\begin{figure}[ht]
\includegraphics[scale=0.5]{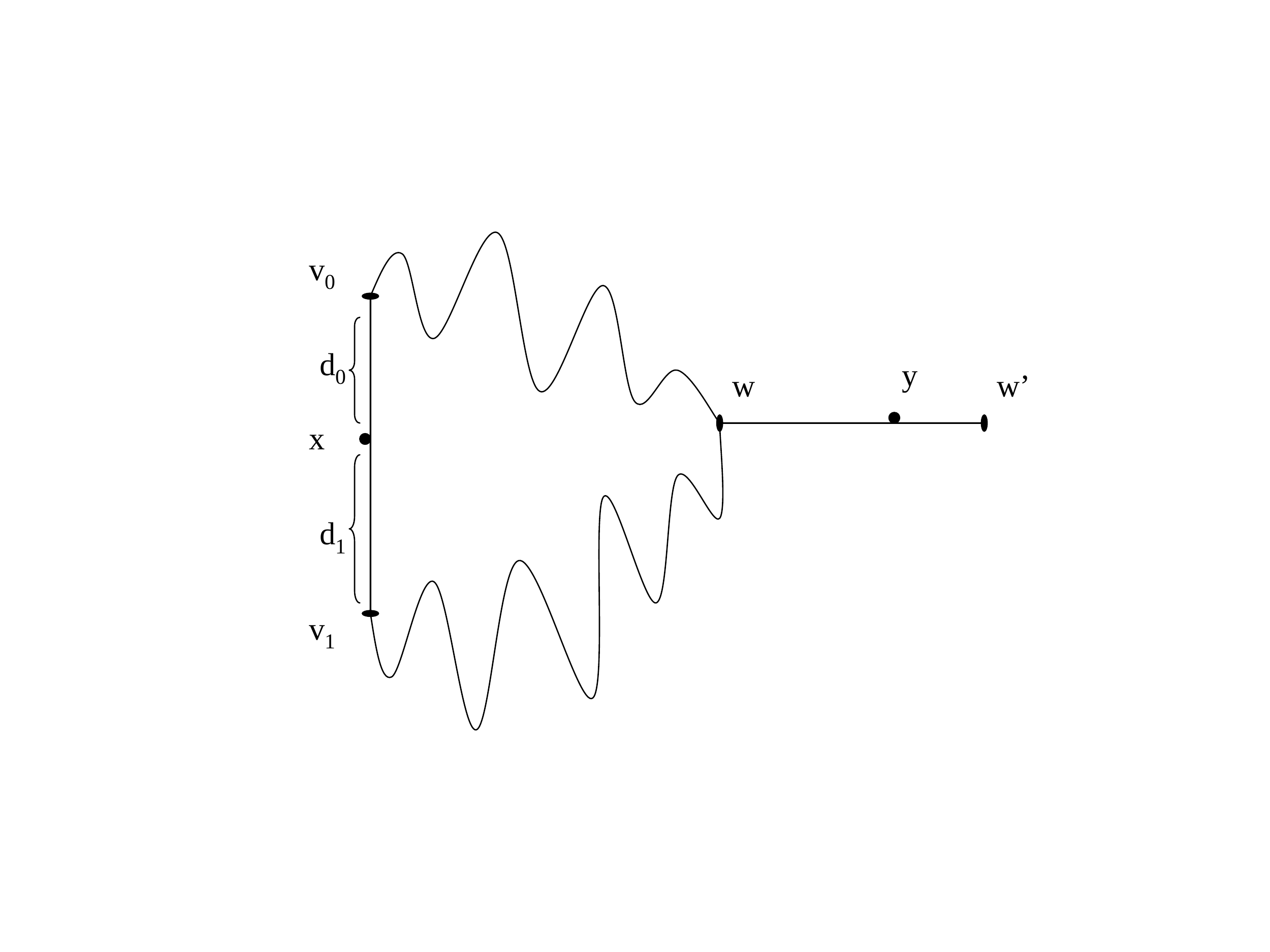}
\caption{Every geodesic segment $[x,y]$ contains the same vertex $v_0$ or $v_1$.}\label{grafolema2}
\end{figure}

If we apply Lemma \ref{dist unica} on $y,v_0$ obtaining a ball
around $y$, then there is a component $D_{1,\delta_2}$
contained in $(y,w']$ and such that
$d(z',v_0)=d(z',y)+d(y,v_0) \forall z'\in D_{1,\delta_2}$. If it
were such that $d(z',v_0)=d(y,v_0)-d(z',y)$ there would be a path
of length $\varepsilon-d_0$ given by $[v_0,z']\cup [z',y]$ with
$z'\in (y,w')$. The image of this path necessarily contains the vertex $w'$ and
we will arrive to a contradiction because
$d(y,w')+d(w',v_0)+d(v_0,x)\geq d(y,w')+d(w',x)>
d(y,w)+d(w,v_0)+d(v_0,x)=\varepsilon$ and hence $d(y,w')+d(w',v_0)>
d(y,w)+d(w,v_0)=\varepsilon-d_0$ and there is no geodesic segment
$[v_0,y]$ containing $w'$.

Let $\delta<\min\{\delta_1,\delta_2,\frac{1}{4}
(\varepsilon-\varepsilon')\}$. Then $\forall z\in B(x,\delta)$ and
$\forall z'\in B(y,\delta)$, $z\in [v_0,v_1]$ and any geodesic segment
$\gamma=[z,z']$ contains $v_0$ since otherwise we
would have a path form $x$ to $y$ across $v_1$ with length $\leq
\varepsilon +2\delta <\varepsilon'$.

Then, if $z\in C_{0,\delta}$, $D_{0,\delta}\subset
\bar{B}(z,\varepsilon)$ and $\bar{B}(z,\varepsilon)\cap
D_{1,\delta}=\bar{B}(y,d(z,x))\cap D_{1,\delta}$. If $z\in
C_{1,\delta}$ $D_{1,\delta} \cap \bar{B}(z,\varepsilon)=\emptyset$
and $\bar{B}(z,\varepsilon)\cap D_{0,\delta}=D_{0,\delta}\backslash
B(y,d(z,x))$. Thus, it is immediate to check that $\forall
z_1,z_2\in B(x,\delta)$ $d_H(\bar{B}
(z_1,\varepsilon),\bar{B}(z_2,\varepsilon))\geq d(z,z')$.
\end{proof}

This holds, in particular, for any $x\in A_\varepsilon$. Nevertheless,
the projection $p_\varepsilon$ need not be open.

\begin{ejp} Consider the graph in Figure \ref{ejpabierta} and the projection with $\varepsilon=2+\frac{3}{4}$.
\end{ejp}

\begin{figure}[ht]
\includegraphics[scale=0.5]{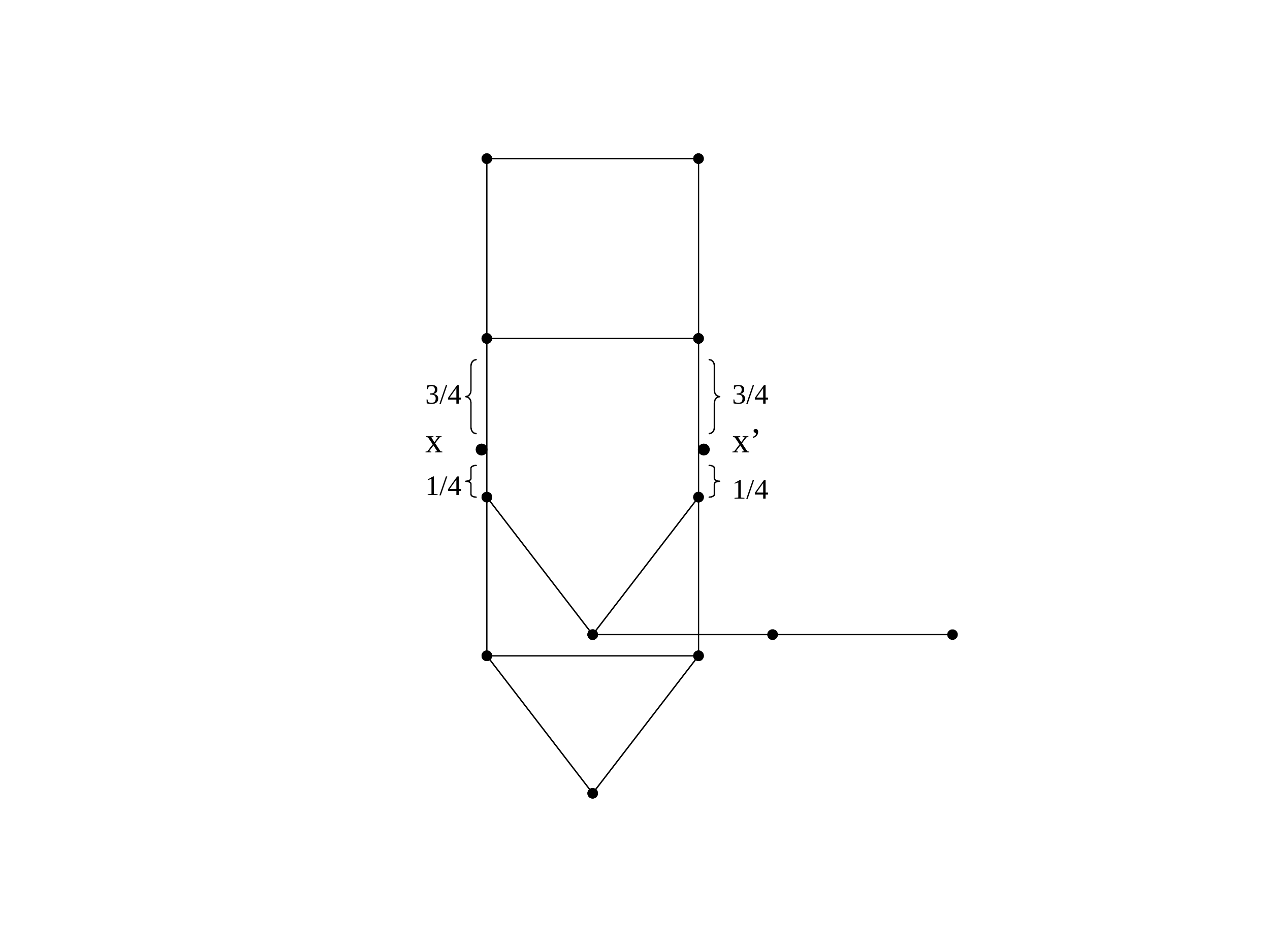}
\caption{The projection need not be open.}\label{ejpabierta}
\end{figure}

In this case, $p_\varepsilon$ is not open, not even restricted to $X\backslash p_\varepsilon^{-1}(X)$. If we consider some small enough
$\delta$ around $x$, for instance
$\delta\leq \frac{1}{8}$ it is immediate to see that
$p_\varepsilon^{-1}(p_\varepsilon (B(x,\delta)))=B(x,\delta)\cup
\{x'\}$, which is not open. This means that $p_\varepsilon
(B(x,\delta))$ is not open since, as we showed above,
$p_\varepsilon$ is continuous.

\begin{lema} \label{isometry 3} Let $\bar{B}(x,\varepsilon)\neq X$.
Then there is some $\delta>0$ such that for any connected
component $C_i\in B(x,\delta)\backslash \{x\}$ the restriction
$p_\varepsilon|:C_{i,\delta}\cup \{x\}\to p_\varepsilon(C_{i,\delta}\cup
\{x\})$ is an isometry.
\end{lema}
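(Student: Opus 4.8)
The plan is to peel off the case already treated and then argue one branch at a time. If $x$ is neither a vertex nor a middle point of an edge, Proposition \ref{isometry} already produces $\delta>0$ with $p_\varepsilon|_{B(x,\delta)}$ an isometry, and restricting it to any component of $B(x,\delta)\setminus\{x\}$ settles that case. So assume $x$ is a vertex or a middle point of an edge. For $\delta<1$ the punctured ball $B(x,\delta)\setminus\{x\}$ is a finite disjoint union of branches $C_{i,\delta}$, each contained in a single edge, and it suffices to show that, after shrinking $\delta$, $p_\varepsilon$ is distance preserving on one such $C_{i,\delta}\cup\{x\}$. As recorded in the proof of Corollary \ref{isometry 2}, $d_H(\bar B(z,\varepsilon),\bar B(z',\varepsilon))\le d(z,z')$ for all $z,z'\in X$, so the whole content is the reverse inequality for $z,z'\in C_{i,\delta}\cup\{x\}$; by symmetry we may take $z'\in[x,z]$, so $d(z,z')=d(x,z)-d(x,z')$.

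Fix the branch $C_{i,\delta}$. Since $\bar B(x,\varepsilon)\ne X$ the eccentricity of $x$ exceeds $\varepsilon$, and as it depends continuously on the point we have $\bar B(z,\varepsilon)\ne X$ for all $z$ in a small ball about $x$; hence the outer border $\partial\bar B(x,\varepsilon)\cap cl(X\setminus\bar B(x,\varepsilon))$ is nonempty and, by Lemma \ref{finite points}, a finite set $\{y_1,\dots,y_m\}$. Applying Lemma \ref{dist unica} to each pair $(x,y_j)$ and taking $\delta$ below the minimum of the radii obtained, we get that on $C_{i,\delta}$ the function $z\mapsto d(z,y_j)$ equals $\varepsilon+d(x,z)$ or $\varepsilon-d(x,z)$, with a uniform sign; the minus sign occurs exactly when some geodesic $[x,y_j]$ enters $C_{i,\delta}$ (one direction: $[x,z]\cup[z,y_j]$ realizes such a geodesic; the other: the uniform dichotomy propagates the sign forced on the geodesic). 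This yields a clean alternative for the branch: \textbf{(A)} some geodesic from $x$ to some outer border point $y:=y_j$ enters $C_{i,\delta}$; or \textbf{(B)} no geodesic from $x$ to any outer border point enters $C_{i,\delta}$, in which case $d(z,y_j)=\varepsilon+d(x,z)$ on $C_{i,\delta}$ for every $j$ (and at least one $y_j$ exists).

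In case \textbf{(A)} the ball grows past $y$: let $p$ be the point at distance $d(y,p)=d(x,z)$ from $y$ along the edge direction at $y$ pointing outside $\bar B(x,\varepsilon)$ (this direction exists because $y$ is an outer border point, and it is the one opposite to where the geodesic $[x,y]$ arrives). For $\delta$ small, $[x,y]$ continued by $[y,p]$ is still a geodesic, of length $\varepsilon+d(x,z)$, passing through $C_{i,\delta}$ and hence through $z$ and $z'$; therefore $d(z,p)=\varepsilon$ and $d(z',p)=\varepsilon+d(z,z')$, and since $d(p,\bar B(z',\varepsilon))\ge d(z',p)-\varepsilon$ we conclude $d_H(\bar B(z,\varepsilon),\bar B(z',\varepsilon))\ge d(z,z')$. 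In case \textbf{(B)} the ball shrinks away from $y$: let $p$ be the point at distance $\varepsilon$ from $z'$ along a geodesic $[z',y]$, so $d(p,y)=d(z',y)-\varepsilon=d(x,z')$; then by the triangle inequality $d(z,p)\ge d(z,y)-d(p,y)=(\varepsilon+d(x,z))-d(x,z')=\varepsilon+d(z,z')$, while $p\in\bar B(z',\varepsilon)$, so $d(p,\bar B(z,\varepsilon))\ge d(z,p)-\varepsilon\ge d(z,z')$ and again $d_H\ge d(z,z')$. Together with the automatic upper bound this gives the isometry on $C_{i,\delta}\cup\{x\}$.

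The main obstacle is the simultaneous choice of $\delta$: it must keep $\bar B(z,\varepsilon)\ne X$ near $x$, make Lemma \ref{dist unica} effective for every $y_j$, place the witness $p$ on its expected edge away from the perturbations occurring near the other border points, and — the only genuinely geometric demand — keep the extended geodesic of case \textbf{(A)} shortcut-free. For the last point the competing path runs through the far endpoint of the edge carrying $p$, and it is strictly longer than $\varepsilon+d(x,z)$ once $\delta$ is below half of the positive slack by which $y$ fails to lie on a $2\varepsilon$-cycle of the type in Lemma \ref{Lema: ciclos}; that slack is positive precisely because the chosen direction at $y$ is exterior, i.e. because $y$ is an outer border point. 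These are finitely many constraints, so a common $\delta>0$ survives. One should also note the statement is not vacuous: $\bar B(x,\varepsilon)\ne X$ furnishes an outer border point, which every branch meets through alternative \textbf{(A)} or \textbf{(B)}, so $p_\varepsilon$ is nonconstant on each $C_{i,\delta}\cup\{x\}$ and its image is a nondegenerate arc.
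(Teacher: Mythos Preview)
Your proof is correct and follows essentially the same strategy as the paper's: fix an outer border point $y$, split each branch according to whether the distance to $y$ decreases (your (A)) or increases (your (B)) as one moves into the branch, and in each case exhibit a witness point near $y$ forcing $d_H\ge d(z,z')$. The paper's version is marginally leaner---it works with a single fixed $y$ for all components (so the preliminary appeal to Proposition~\ref{isometry} and the enumeration of all $y_j$ are unnecessary), and it handles your Case~(A) by applying Lemma~\ref{dist unica} at $y$ to obtain the outward component $D_{1,\delta_2}$ rather than via your cycle-slack argument---but the substance is the same.
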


\begin{proof} If $\varepsilon \leq d_1$ it is immediate.

Else, let $y\in \partial \bar{B}(x,\varepsilon)\cap cl(X\backslash
\bar{B}(x,\varepsilon))$ and $\delta_1>0$ such that Lemma
\ref{dist unica}  holds for $x,y$. Any connected component
$C_{i,\delta_1}$ of $B(x,\delta_1)$ holds that $\forall z\in
C_{i,\delta_1} \quad d(z,y)=d(z,x)+d(x,y)$ or
$d(z,y)=d(x,y)-d(x,z)$. In the first case,
$\bar{B}(z,\varepsilon)\subset X\backslash B(y,d(x,z))$ and for any
$z'\in (z,x)$, since there is a path of length $\varepsilon +d(z',y)$
from $z'$ to $y$, $\bar{B}(z',\varepsilon)$ contains a point $p$ such
that $d(p,y)=d(z',x)$ and hence
$d_H(\bar{B}(z,\varepsilon),\bar{B}(z',\varepsilon))\geq d(z,z')$.

Now, suppose that $\forall z\in C_{i,\delta_1} \quad d(z,y)=d(x,y)-d(x,z)$. Applying Lemma \ref{dist unica} on $y,x$ we obtain a ball
about $y$, $B(y,\delta_2)$ so that, since $y\in cl(X\backslash
\bar{B}(x,\varepsilon))$, at least one of the components
$D_{1,\delta_2}$ will be such that $d(z',x)=d(z',y)+d(y,x) \ \forall \,
z'\in D_{1,\delta_2}$. Let $\delta<min\{\delta_1,\delta_2\}$.
$\forall z\in C_{i,\delta} \quad d(z,y)=d(x,y)-d(x,z)$ and hence
$\bar{B}(z,\varepsilon)\cap D_{1,\delta}= \bar{B}(y,d(x,z))\cap
D_{1,\delta}$. Thus $\forall z,z'\in C_{i,\delta}$
$d_H(\bar{B}(z,\varepsilon),\bar{B}(z',\varepsilon))\geq d(z,z')$.

Hence, by the properties of the length metric, for any pair of points
$z,z'\in X$ \ $d_H(\bar{B}(z,\varepsilon),\bar{B}(z',\varepsilon))\leq
d(z,z')$ finishing the proof.
\end{proof}

\begin{prop}\label{preim. finitos puntos} Let $X$ be a finite metric graph and let $x\in X$
and $\varepsilon>0$ be such that $\bar{B}(x,\varepsilon)\neq X$. Then,
$p_\varepsilon^{-1}(\bar{B}(x,\varepsilon))$ has a finite number of
points.
\end{prop}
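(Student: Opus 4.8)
The plan is to argue by contradiction, the essential tool being the local isometry property of Lemma \ref{isometry 3}. Write $S := p_\varepsilon^{-1}(\bar{B}(x,\varepsilon)) = \{y\in X \mid \bar{B}(y,\varepsilon)=\bar{B}(x,\varepsilon)\}$ and suppose $S$ is infinite. Since $X$ is a compact metric space, $S$ has an accumulation point $z_0\in X$, so we may pick a sequence $(z_n)$ in $S$ with $z_n\neq z_0$ and $z_n\to z_0$. Because $p_\varepsilon\colon X\to 2^X_H$ is continuous (see Remark \ref{quotient}) and $p_\varepsilon(z_n)=\bar{B}(x,\varepsilon)$ for every $n$, continuity forces $p_\varepsilon(z_0)=\bar{B}(x,\varepsilon)$; in particular $z_0\in S$ and $\bar{B}(z_0,\varepsilon)=\bar{B}(x,\varepsilon)\neq X$, so Lemma \ref{isometry 3} is applicable at $z_0$.

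By Lemma \ref{isometry 3} there is $\delta>0$ such that for every connected component $C$ of $B(z_0,\delta)\setminus\{z_0\}$ the restriction $p_\varepsilon|_{C\cup\{z_0\}}$ is an isometry, hence injective. After shrinking $\delta$ (for instance to $\delta<1$), the punctured ball $B(z_0,\delta)\setminus\{z_0\}$ is contained in the union of the finitely many half-open edge segments issuing from $z_0$, so it has only finitely many components $C_1,\dots,C_m$. This is the only point where the hypothesis that $X$ is a \emph{finite} graph is used beyond compactness: it is exactly the local finiteness of the graph that bounds the number of components of a small punctured ball.

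Now, for $n$ large enough we have $z_n\in B(z_0,\delta)$, and since $z_n\neq z_0$ it lies in one of $C_1,\dots,C_m$. Pick any such $z_{n_0}$ and let $C_j$ be the component containing it. Then $z_0$ and $z_{n_0}$ are distinct points of $C_j\cup\{z_0\}$ with
\[ d_H\big(p_\varepsilon(z_0),p_\varepsilon(z_{n_0})\big)=d(z_0,z_{n_0})>0, \]
contradicting $p_\varepsilon(z_0)=p_\varepsilon(z_{n_0})=\bar{B}(x,\varepsilon)$. Hence $S$ admits no accumulation point, and being a subset of the compact space $X$ it must be finite.

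I do not expect a genuine obstacle here: Lemma \ref{isometry 3} already encapsulates the delicate metric-graph analysis (valid at vertices, midpoints and generic points alike), and what remains is a routine compactness-plus-pigeonhole argument. The only step requiring (entirely elementary) care is the reduction to finitely many components of a small punctured ball, which is immediate from local finiteness of a finite graph.
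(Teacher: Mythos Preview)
Your proof is correct but follows a different, somewhat heavier route than the paper. The paper gives a two-line argument: since $\bar{B}(x,\varepsilon)\neq X$, there exists a point $y\in\partial\bar{B}(x,\varepsilon)\cap\mathrm{cl}(X\setminus\bar{B}(x,\varepsilon))$; any $x'$ with $\bar{B}(x',\varepsilon)=\bar{B}(x,\varepsilon)$ must then satisfy $d(x',y)=\varepsilon$, so $p_\varepsilon^{-1}(\bar{B}(x,\varepsilon))\subset S(y,\varepsilon)$, which is finite by Lemma~\ref{finite points}. This is elementary and avoids any limiting argument.

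Your approach instead leverages Lemma~\ref{isometry 3} together with compactness. This is legitimate---Lemma~\ref{isometry 3} is proved just before and does not depend on the present proposition---and arguably more conceptual: it shows directly that the fibre $S$ is discrete, hence finite by compactness. The cost is that Lemma~\ref{isometry 3} is itself a substantial result resting on several earlier lemmas, whereas the paper's argument needs only the very basic Lemma~\ref{finite points}. One small remark: your discussion of the finitely many components $C_1,\dots,C_m$ is unnecessary. The contradiction requires only a single $z_n\in B(z_0,\delta)\setminus\{z_0\}$; whatever component $C$ contains it, Lemma~\ref{isometry 3} gives $d_H(p_\varepsilon(z_0),p_\varepsilon(z_n))=d(z_0,z_n)>0$, and you are done. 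The finite-graph hypothesis is genuinely used only inside Lemma~\ref{isometry 3} itself, not in your pigeonhole step.
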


\begin{proof} If $\bar{B}(x,\varepsilon)\neq X$, then there is a point
$y\in \partial \bar{B}(x,\varepsilon)\cap cl(X\backslash
\bar{B}(x,\varepsilon))$. For any point $x'$ such that
$\bar{B}(x',\varepsilon)=\bar{B}(x,\varepsilon)$ also $y\in \partial
\bar{B}(x',\varepsilon)\cap cl(X\backslash \bar{B}(x',\varepsilon))$ and
$d(x',y)=\varepsilon$. This means that
$p_\varepsilon^{-1}(\bar{B}(x,\varepsilon))$ is contained in
$S(y,\varepsilon)$ which is a finite number of points by Lemma \ref{finite
points}.
\end{proof}

\begin{lema}\label{dist minima} Fixed any $\varepsilon>0$,
$\forall x \in X$ and $\forall \delta_0>0$ there exists some
$\delta_1>0$ such that $\forall z \not \in
B(p_\varepsilon^{-1}(\bar{B}(x,\varepsilon)),\delta_0)$,
$d_H(\bar{B}(z,\varepsilon),\bar{B}(x,\varepsilon))>\delta_1$.
\end{lema}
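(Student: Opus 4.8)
The plan is to reduce the statement to a routine continuity-and-compactness argument; the only real content is the continuity of the map $z\mapsto \bar B(z,\varepsilon)$, which has already been recorded in Remark \ref{quotient}.

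First I would fix $x$ and $\delta_0$ and consider the function
\[ f\colon X\longrightarrow[0,\infty),\qquad f(z):=d_H\big(\bar B(z,\varepsilon),\bar B(x,\varepsilon)\big). \]
Since $p_\varepsilon\colon X\to 2^X_H$ is continuous and $d_H(\,\cdot\,,\bar B(x,\varepsilon))\colon 2^X_H\to[0,\infty)$ is continuous, $f$ is continuous on $X$. Because $d_H$ is a genuine metric on $2^X$, we have $f(z)=0$ exactly when $\bar B(z,\varepsilon)=\bar B(x,\varepsilon)$, i.e.\ when $z\in p_\varepsilon^{-1}(\bar B(x,\varepsilon))$; hence $f^{-1}(0)=p_\varepsilon^{-1}(\bar B(x,\varepsilon))$.

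Next I would put $K:=X\setminus B\big(p_\varepsilon^{-1}(\bar B(x,\varepsilon)),\delta_0\big)$. If $K=\emptyset$ there is nothing to prove, so assume $K\neq\emptyset$; then $K$ is closed in the compact space $X$, hence compact, and by construction $K\cap f^{-1}(0)=\emptyset$. Therefore $f$ attains its minimum $m:=\min_{z\in K}f(z)$ on $K$, and $m>0$ since $f$ vanishes nowhere on $K$. Taking $\delta_1:=m/2>0$ then gives $d_H(\bar B(z,\varepsilon),\bar B(x,\varepsilon))=f(z)\ge m>\delta_1$ for every $z\notin B\big(p_\varepsilon^{-1}(\bar B(x,\varepsilon)),\delta_0\big)$, which is exactly what was claimed.

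I do not expect any genuine obstacle: the statement does not even use that $X$ is a finite metric graph, only that $X$ is compact. The two points worth a line of justification are the continuity of $f$ (which follows from the continuity of $p_\varepsilon$ noted above) and the identification of its zero set with $p_\varepsilon^{-1}(\bar B(x,\varepsilon))$ (immediate from $d_H$ being a metric); everything else is the extreme value theorem on the compact set $K$.
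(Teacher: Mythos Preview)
Your proof is correct and uses essentially the same ingredients as the paper's: continuity of $p_\varepsilon$ and compactness of $X$. The paper argues by contradiction via sequential compactness (extract a cluster point of a sequence $z_n$ with $d_H(\bar B(z_n,\varepsilon),\bar B(x,\varepsilon))\le 1/n$ and derive a contradiction), while you package the same content as a direct application of the extreme value theorem to the continuous function $f$ on the compact set $K$; the two arguments are equivalent reformulations of one another.
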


\begin{proof} Otherwise, there would be some $\delta_0>0$ such that for each
$\delta_n=\frac{1}{n}$ there is a point $z_n\not \in
B(p_\varepsilon^{-1}(\bar{B}(x,\varepsilon)),\delta_0)$ for which
$d_H(\bar{B}(z_n,\varepsilon),\bar{B}(x,\varepsilon))\leq \frac{1}{n}$.

Since X is compact there is a cluster point $z$ of $(z_n)$ and
obviously $\bar{B}(z,\varepsilon)=\bar{B}(x,\varepsilon)$. Thus $z\in
p_\varepsilon^{-1}(\bar{B}(x,\varepsilon)$ and it is a cluster point of
$z_n$ which leads to a contradiction since $z_n\not \in
B(p_\varepsilon^{-1}(\bar{B}(x,\varepsilon)),\delta_0)$.
\end{proof}

\begin{prop} \label{preim. loc. conexa} If $X$ is a finite metric
graph and $\varepsilon>0$ is such that $X\in p_\varepsilon(X)$, then
$p_\varepsilon^{-1}(X)$ is locally connected.
\end{prop}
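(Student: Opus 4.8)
The plan is to check local connectedness directly at each point: I will show that every $x \in p_\varepsilon^{-1}(X)$ has a neighbourhood basis in $p_\varepsilon^{-1}(X)$ made of connected open sets. If $p_\varepsilon^{-1}(X)=\emptyset$ there is nothing to prove, so fix $x$ with $\bar B(x,\varepsilon)=X$ and put $\{y_1,\dots,y_n\}=\partial\bar B(x,\varepsilon)$. First I would invoke the ``moreover'' part of Lemma \ref{dist unica}, applied simultaneously to the finitely many pairs $x,y_j$, to obtain $\delta_1>0$ such that each component $C$ of $B(x,\delta_1)\setminus\{x\}$ lies inside a single edge, has $x$ in its closure, and satisfies, for each border point $y_j$, either $d(z,y_j)=d(x,y_j)+d(z,x)$ for all $z\in C$ or $d(z,y_j)=d(x,y_j)-d(z,x)$ for all $z\in C$. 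Call $y_j$ \emph{receding along} $C$ in the first case, and let $S_C$ denote the set of border points receding along $C$.

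Next I would apply Lemma \ref{balls} to each of these finitely many components and take the minimum of the resulting radii, obtaining a single $\delta\in(0,\delta_1]$ with $\bar B(z,\varepsilon)=X\setminus\bigcup_{y_j\in S_C}B(y_j,d(z,x))$ for every $z\in C\cap B(x,\delta)$. The decisive observation is then immediate: for $z\in C\cap B(x,\delta)$ with $z\neq x$ one has $d(z,x)>0$, so each ball $B(y_j,d(z,x))$ with $y_j\in S_C$ is non-empty; hence $\bar B(z,\varepsilon)=X$ if and only if $S_C=\emptyset$. Since $x\in p_\varepsilon^{-1}(X)$ always, this yields
\[ p_\varepsilon^{-1}(X)\cap B(x,\delta)=\{x\}\cup\bigcup_{S_C=\emptyset}\bigl(C\cap B(x,\delta)\bigr), \]
the union running over those components $C$ of $B(x,\delta_1)\setminus\{x\}$ having no receding border point. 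This set is open in $p_\varepsilon^{-1}(X)$ because $B(x,\delta)$ is open in $X$, and it is connected: each $C\cap B(x,\delta)$ together with $x$ is a half-open sub-arc of an edge, hence connected, and all of these arcs share the point $x$ (if no component qualifies, the set is simply $\{x\}$, which is still connected). Letting $\delta\downarrow 0$ then produces the desired neighbourhood basis at $x$, and since $x$ was arbitrary the space $p_\varepsilon^{-1}(X)$ is locally connected.

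I do not expect a real obstacle: the analytic heavy lifting is already done in Lemmas \ref{dist unica} and \ref{balls}, and the only points that need some care are (i) choosing the radius $\delta$ uniformly over the finitely many edge-directions at $x$, and (ii) reading off from the explicit description of $\bar B(z,\varepsilon)$ exactly which directions emanating from $x$ stay inside $p_\varepsilon^{-1}(X)$ --- precisely those along which no border point of $\bar B(x,\varepsilon)$ recedes. As a by-product this identifies $p_\varepsilon^{-1}(X)$, near each of its points, as a sub-star of $X$.
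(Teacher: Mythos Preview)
Your proposal is correct and follows essentially the same route as the paper: both arguments apply Lemma~\ref{dist unica} to each border point $y_j$ to classify the edge-directions at $x$ into ``approaching'' and ``receding'' ones, and then show that $p_\varepsilon^{-1}(X)\cap B(x,\delta)$ is exactly $\{x\}$ together with the approaching components. The only cosmetic differences are that you package the computation of $\bar B(z,\varepsilon)$ via Lemma~\ref{balls} (the paper instead re-imposes the bound $\delta<\varepsilon-\max_{a\in K_{x,\varepsilon}}d(a,x)$ inline), and the paper treats the case $\partial\bar B(x,\varepsilon)=\emptyset$ separately while in your write-up it is absorbed as the degenerate $n=0$ case.
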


\begin{proof} Let $x\in p_\varepsilon^{-1}(X)$.
If $\partial \bar{B}(x,\varepsilon)=\emptyset$ then, since X is
compact, let $\varepsilon':=max_{y\in X}\{d(x,y)\}<\varepsilon$. Hence, if
$0<\delta<\varepsilon-\varepsilon'$, then $\forall z\in B(x,\delta) \quad
\bar{B}(z,\varepsilon)=X$.

If $\partial \bar{B}(x,\varepsilon)=\{y_1,\cdots,y_k\}$ we use Lemma
\ref{dist unica} with each $y_i$ and we consider a minimum
$\delta>0$ for the $k$ points such that each connected component
$C_i$ of $B(x,\delta)\backslash \{x\}$ is contained in some edge and
$\forall z\in C_i$ and $\forall i$, $d(z,y_i)=d(x,y_i) + d(x,z)$
or $d(z,y_i)=d(x,y_i) - d(x,z)$. Consider $C_1,\cdots,C_r$
those components for which $d(z,y_i)=d(x,y_i) - d(x,z)$ for every $i$ (if any).
If we also impose that $0<\delta< \varepsilon - \max_{a\in K_{x,\varepsilon}}\{d(a,x)\}$,
%
%where
%$p$ is any point of an edge entirely contained in the open ball
%$B(x,\varepsilon)$ ($\forall p\in K_{x,\varepsilon}$), as in the proof
%of lemma \ref{balls}
%
then $\bar{B}(z,\varepsilon)=X \ \forall
z\in C_i$ with $i\leq r$.

If $z\in C_i$ with $i>r$, then at least for one of the border
points $y_j$, $d(z,y_j)>\varepsilon$ and hence
$\bar{B}(z,\varepsilon)\neq X$.

Thus, $B(x,\delta)\cap p_\varepsilon^{-1}(X)=\{x\}\cup C_1\cup \cdots
\cup C_r$ and it is connected.
\end{proof}

\begin{prop}\label{finitas c c} If $X$ is a finite metric graph then for any $x\in X$
and any $\varepsilon>0$, $p_\varepsilon^{-1}(\bar{B}(x,\varepsilon))$ has a
finite number of connected components.
\end{prop}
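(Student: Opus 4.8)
The plan is to split on whether $\bar B(x,\varepsilon)=X$ or not, reducing in each case to finiteness statements already established.

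First I would dispose of the case $\bar B(x,\varepsilon)\ne X$. By Proposition \ref{preim. finitos puntos} the fibre $p_\varepsilon^{-1}(\bar B(x,\varepsilon))$ is already a finite \emph{set} of points, so it has finitely many connected components and there is nothing further to prove. Hence the only real content is the case $\bar B(x,\varepsilon)=X$, i.e. $x\in p_\varepsilon^{-1}(X)$; write $F=p_\varepsilon^{-1}(X)$.

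The strategy for $F$ is to show that it is a \emph{closed subgraph} of $X$: a closed subset which near each of its points is a finite union of sub-arcs of edges. Closedness of $F$ is immediate since $p_\varepsilon\colon X\to 2^X_H$ is continuous and $\{X\}$ is closed in $2^X_H$. For the local structure I would invoke Proposition \ref{preim. loc. conexa}: it shows that for each $x\in F$ there is a $\delta>0$ with $B(x,\delta)\cap F=\{x\}\cup C_1\cup\cdots\cup C_r$, where the $C_i$ are a subcollection of the (finitely many) connected components of $B(x,\delta)\setminus\{x\}$, each contained in a single edge. Thus $F$ is a closed subset of the finite graph $X$ which, near every point, meets only finitely many edge-directions and meets each of those in an arc emanating from the point; in particular $F$ is locally connected and has locally finitely many "branches". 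Since $X$ has finitely many vertices and edges, $F$ is a compact, locally connected set that is, combinatorially, a finite union of closed arcs and points inside the edges of $X$ — i.e. homeomorphic to (the geometric realization of) a finite graph. A compact locally connected space with finitely many "ends/branch directions" overall has finitely many connected components; concretely, the components of $F$ are open in $F$ (local connectedness), and $F$ compact, so there are only finitely many.

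The main obstacle is making the "finitely many branches, hence finitely many components" step rigorous rather than hand-wavy. I would argue it as follows: the set $V'$ of vertices of $X$ lying in $F$ is finite, and $F$ is contained in the closure of a finite union of sub-arcs obtained by intersecting $F$ with the (finitely many) open edges; on a single open edge $(v_0,v_1)$, $F$ restricted there is a closed subset of an interval whose only possible "isolated" structure comes from points $x$ where locally only one of the two components $C_0,C_1$ survives — but by Lemma \ref{dist unica}/Proposition \ref{preim. loc. conexa} those transition points are among the finitely many points where $\partial\bar B(x,\varepsilon)$ contains a vertex or a midpoint (Lemma \ref{finite points 2}) together with points where $K_{x,\varepsilon}$ changes, again a finite set by finiteness of the graph. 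Hence $F\cap(v_0,v_1)$ is a finite union of intervals, so $F$ is a finite union of arcs and points, and therefore has finitely many connected components. Combining with the trivial case above gives the result for every $x$ and every $\varepsilon>0$.
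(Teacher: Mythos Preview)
Your proof is correct and follows the same two-case split as the paper: for $\bar B(x,\varepsilon)\neq X$ invoke Proposition~\ref{preim. finitos puntos}, and for $\bar B(x,\varepsilon)=X$ use that $F=p_\varepsilon^{-1}(X)$ is closed (continuity) and locally connected (Proposition~\ref{preim. loc. conexa}), hence its components are open in $F$; compactness of $F$ then forces finitely many components.

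The one comment is that you are working harder than necessary. The sentence ``the components of $F$ are open in $F$ (local connectedness), and $F$ compact, so there are only finitely many'' is already a complete and rigorous proof of the second case---this is exactly the paper's argument. Your subsequent ``main obstacle'' paragraph, where you try to exhibit $F$ as a finite union of arcs and isolated points by analyzing $F\cap(v_0,v_1)$ edge-by-edge, is aiming at a stronger structural statement that is not needed here. There is no gap to fill: in any locally connected space the connected components are open, and an open cover of a compact space by disjoint nonempty sets is finite. You can safely delete the final paragraph.
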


\begin{proof} If $\bar{B}(x,\varepsilon)\neq X$, then by Proposition
\ref{preim. finitos puntos}, $p_\varepsilon^{-1}(\bar{B}(x,\varepsilon))$
has a finite number of points and we are done.

If $\bar{B}(x,\varepsilon)= X$, by Proposition \ref{preim. loc.
conexa}, $p_\varepsilon^{-1}(X)$ is locally connected which implies
that the connected components are open. Also, since $p_\varepsilon$ is continuous, $p_\varepsilon^{-1}(X)$  is
compact. Therefore, the connected components must be finite since they
define an open covering of a compact set.
\end{proof}

We take the following definitions and the characterization of a
graph from chapter IX in \cite{N1}. $Bd(V)$ denotes the boundary of a set $V$, this is,
$Bd(V)=\bar{V}\cap \bar(V^c)$.

\begin{definicion} Let $(X,T)$ a topological space, and let $A\subset X$.
Let $\beta$ be a cardinal number. We say that $A$ is \emph{of
order less than or equal to $\beta$ in X}, written \[ord(A,X)\leq \beta,\]
provided that for each $U\in T$ such that $A\subset U$, there
exists $V\in T$ such that \[A\subset V\subset U \mbox{ and }
|Bd(V)|\leq \beta.\]

We say that $A$ is \emph{of order $\beta$ in $X$}, written
\[ord(A,X)=\beta,\] provided that $ord(A,X)\leq \beta$ and $ord(A,X)\not \leq
\alpha$ for any cardinal number $\alpha<\beta$. If $A=\{p\}$ it is
usually denoted $ord(p,X)$ instead of $ord(\{p\},X)$.
\end{definicion}

\begin{teorema}\cite{N1}\label{caract. grafo} A \emph{continuum} $X$ (i.e. compact connected and metrizable) is a graph if and only if (1) and
(2) below both hold:

(1) $ord(x,X)<\aleph_0$ for all $x\in X$;

(2) $ord(x,X)\leq 2$ for all but finitely many $x\in X$.
\end{teorema}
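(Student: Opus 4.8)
The plan is to prove both implications, with almost all of the work going into the sufficiency direction, that (1) and (2) together imply $X$ is a graph. The necessity direction is routine: realize $X=|K|$ for a finite one-dimensional simplicial complex $K$. A point interior to an edge has a neighborhood basis of open intervals, so its order is $2$; a vertex $v$ of degree $d$ has arbitrarily small star neighborhoods $V$ with $|Bd(V)|=d$, whence $ord(v,X)=d<\aleph_0$. Since $K$ has only finitely many vertices, every point has finite order and all but finitely many points (the edge interiors) have order $2$, which is exactly (1) and (2).

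For sufficiency, the first step is to extract the local structure from (1). A continuum in which every point admits arbitrarily small open neighborhoods with finite boundary is a \emph{regular curve}, and it is classical (Whyburn, Menger) that regular curves are locally connected. Hence condition (1) already forces $X$ to be a Peano continuum, in particular arcwise connected and locally arcwise connected; one may assume $X$ is non-degenerate, since a one-point space is trivially a graph.

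Next set $B=\{x\in X:ord(x,X)\ge 3\}$ and $E=\{x\in X:ord(x,X)=1\}$. By (2) the branch set $B$ is finite, but note that (2) does \emph{not} directly bound $E$, since endpoints have order $1\le 2$; so the key subsidiary claim is that $E$ is finite as well. I would argue by contradiction: if $E$ were infinite then, by compactness, the endpoints accumulate at some $p\in X$. Choosing a connected neighborhood $V$ of $p$ with finite boundary and tracing the arcs that join infinitely many of these endpoints inside $V$ to the exterior of $V$, one finds that either infinitely many such arcs meet at a single point, forcing $ord(p,X)=\aleph_0$ and contradicting (1), or they branch off the bulk of $X$ at infinitely many distinct junctions, contradicting the finiteness of $B$ obtained from (2). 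I expect this deduction that $E$ is finite to be the main obstacle, since it must rule out ``hair comb'' type configurations without direct help from (2).

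Finally, once $B\cup E$ is known to be finite, the complement $X\setminus(B\cup E)$ consists entirely of points of order $2$, so each of its components is locally an open arc, i.e.\ a $1$-manifold without endpoints. Removing the finite set $B\cup E$ from the compact, locally connected space $X$, the closure of each such component is an arc with endpoints in $B\cup E$ or a circle; a compactness argument shows there are only finitely many components, for infinitely many components of diameter bounded below cannot coexist in a compact space meeting only at the finite set $B\cup E$, while infinitely many components of shrinking diameter would again produce an accumulation point of infinite order, barred by (1). Taking $B\cup E$ as the vertex set (adjoining one auxiliary vertex to each circle component and subdividing edges as needed) and the closures of the order-$2$ components as edges exhibits $X$ as the geometric realization of a finite one-dimensional simplicial complex, completing the proof.
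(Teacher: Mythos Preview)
The paper does not prove this theorem. It is quoted verbatim from Nadler's book \cite{N1} (as the citation in the theorem header indicates) and used as a black box to establish Theorem~\ref{grafo}; there is no proof environment following the statement. So there is nothing in the paper to compare your proposal against.

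That said, your outline follows the classical route one finds in the continuum-theory literature. The necessity direction is fine. For sufficiency, your identification of the finiteness of the endpoint set $E$ as the crux is correct, but the argument you sketch for it is the weakest part: the dichotomy ``either infinitely many arcs meet at a single point \dots\ or they branch off at infinitely many distinct junctions'' is not quite a dichotomy as stated, and the first horn does not obviously force $ord(p,X)=\aleph_0$ without more care (one needs that the arcs are pairwise disjoint near $p$, which is exactly what has to be produced). If you want a self-contained proof, that step deserves a genuine argument rather than a gesture. The final assembly (finitely many order-$2$ components, each an arc or circle) is standard once $B\cup E$ is finite.
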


\begin{prop} \label{orden finito} If X is a finite metric graph, then $\forall p\in
p_\varepsilon(X) \ ord(p,p_\varepsilon(X))<\aleph_0$.
\end{prop}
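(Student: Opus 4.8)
The plan is to exhibit, for every point $p \in p_\varepsilon(X)$ and every open neighborhood $U$ of $p$ in $p_\varepsilon(X)$, a smaller open neighborhood $V$ with $p \in V \subset U$ whose boundary $Bd(V)$ is finite; this is precisely the statement $ord(p,p_\varepsilon(X)) < \aleph_0$. First I would split into the two cases according to whether $p = \bar B(x,\varepsilon)$ with $\bar B(x,\varepsilon) = X$ (the extinction point, if it occurs) or $\bar B(x,\varepsilon) \neq X$. In the second case, Proposition~\ref{preim. finitos puntos} tells us $p_\varepsilon^{-1}(p)$ is a finite set, say $\{x_1,\dots,x_m\}$, and Lemma~\ref{isometry 3} (together with Proposition~\ref{isometry}) gives, at each $x_j$, a $\delta_j > 0$ so that $p_\varepsilon$ restricted to each connected component of $B(x_j,\delta_j)\setminus\{x_j\}$, union $\{x_j\}$, is an isometric embedding; since $X$ is a finite graph, only finitely many components occur and each lies in a single edge. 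The neighborhood $V$ of $p$ will then be the image under $p_\varepsilon$ of a small enough union of these local pieces around $x_1,\dots,x_m$; I would shrink the radii further using Lemma~\ref{dist minima} to guarantee that points far from $p_\varepsilon^{-1}(p)$ have balls Hausdorff-far from $\bar B(x,\varepsilon)$, so that $V$ is genuinely open in $p_\varepsilon(X)$ and its frontier is carried by the finitely many ``outgoing'' edge-endpoints — hence $Bd(V)$ is finite.

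For the extinction point $p = \{X\}$ (when $X \in p_\varepsilon(X)$), I would instead use Proposition~\ref{preim. loc. conexa}, which says $p_\varepsilon^{-1}(X)$ is locally connected, and Proposition~\ref{finitas c c}, which says $p_\varepsilon^{-1}(\bar B(x,\varepsilon))$ has finitely many connected components. Around each point of $p_\varepsilon^{-1}(X)$ the proof of Proposition~\ref{preim. loc. conexa} produces a basic connected neighborhood $\{x\}\cup C_1\cup\cdots\cup C_r$ consisting of $x$ together with those components $C_i$ of a punctured ball on which $\bar B(z,\varepsilon)=X$ still holds; the frontier of such a neighborhood in $X$ is a finite set of points (one interior edge point per $C_i$), and these finitely many frontier points have $\bar B(\cdot,\varepsilon)\neq X$, so they inject nicely under $p_\varepsilon$ by Proposition~\ref{preim. finitos puntos}. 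Taking the union over one such neighborhood for each of the finitely many components of $p_\varepsilon^{-1}(X)$ and pushing forward by $p_\varepsilon$ yields an open neighborhood $V$ of $\{X\}$ in $p_\varepsilon(X)$ whose boundary is the image of a finite point set, hence finite.

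The main obstacle I anticipate is the bookkeeping needed to be sure that $V$ as constructed is actually \emph{open} in $p_\varepsilon(X)$ and that its boundary is contained in the (finite) image of the ``outgoing'' frontier points, rather than picking up extra boundary from other preimage sheets: one must simultaneously keep the local isometry pieces from Lemma~\ref{isometry 3} small, keep the $p_\varepsilon^{-1}$-fibers separated, and invoke Lemma~\ref{dist minima} with a uniform $\delta_0$ so that no ``distant'' point of $X$ has its $\varepsilon$-ball sneaking into $V$. This is where continuity of $p_\varepsilon$ (Remark~\ref{quotient}), compactness of $X$, and the finiteness of the relevant exceptional point sets (Lemmas~\ref{finite points}, \ref{finite points 2}) all have to be combined carefully; once those are in place, the bound $|Bd(V)|<\aleph_0$ is immediate and the proposition follows.
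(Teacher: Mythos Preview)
Your proposal is correct and follows essentially the same route as the paper: the same case split ($p\neq\{X\}$ versus $p=\{X\}$) and the same key inputs (Proposition~\ref{preim. finitos puntos}, Lemma~\ref{isometry 3}, Lemma~\ref{dist minima}, Proposition~\ref{finitas c c}, and Corollary~\ref{isometry 2}).

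There is one tactical difference worth noting. You build $V$ as the $p_\varepsilon$-image of a small union of local pieces around the preimage points, which is why you (rightly) flag the bookkeeping needed to ensure $V$ is open and that its boundary does not pick up contributions from other sheets. The paper sidesteps this entirely by taking $V = P(p,\delta_0)$, the Hausdorff-metric ball of radius $\delta_0$ about $p$ inside $p_\varepsilon(X)$; this is open for free, and Lemma~\ref{dist minima} then says that any $z$ with $p_\varepsilon(z)\in Bd(V)$ must lie in $B(p_\varepsilon^{-1}(p),\delta)$, after which Lemma~\ref{isometry 3} (respectively Corollary~\ref{isometry 2} in the $p=\{X\}$ case, applied at the finitely many points of $Bd(p_\varepsilon^{-1}(X))$) gives only finitely many candidates for $p_\varepsilon(z)$ at Hausdorff distance exactly $\delta_0$. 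This eliminates the openness worry and shortens the argument, but the underlying mechanism is the same as yours.
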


\begin{proof} Let $p\in p_\varepsilon(X)$. If $p\neq \{X\}$, by Proposition
\ref{preim. finitos puntos}, the inverse image is a finite number
of points. Let $p_\varepsilon^{-1}(p)=\{x_1,...,x_n\}$. Let $\delta>0$
such that Lemma \ref{isometry 3} holds for $x_1,...,x_n$. Then, by Lemma \ref{dist minima} there exists some
$\delta_1>0$ such that $\forall z\not \in
B(p_\varepsilon^{-1}(p),\delta)$ then
$d_H(\bar{B}(z,\varepsilon),p)>\delta_1$. (Assume also
$\delta<\frac{1}{2}d(x_i,x_i) \forall i\neq j$). So let
$\delta_0<\delta_1,\delta$ and let us study the boundary $Bd(P(p,\delta_0))$
where $P(p,\delta_0)$ represents the ball about $p$ of radius $\delta_0$ in
$p_\varepsilon(X)$ with the Hausdorff metric restricted from $2^X$.

First, note that Lemma \ref{dist minima} means that
$P(p,\delta_0)\subset p_\varepsilon(B(p_\varepsilon^{-1}(p),\delta))$.
But there is a unique point at distance $\delta_0<\delta$ from
$x_i$ in each connected component of $B(x_i,\delta)$ and then, by
Lemma \ref{isometry 3}, since there is a finite number of points
in $p_\varepsilon^{-1}(p)$ and the graph is locally finite, there will be a finite
number of different balls $p_j\in p_\varepsilon(X)$ such that
$d_H(p_j,p)=\delta_0$. Thus $|Bd(P(p,\delta_0))|<\aleph_0 \ \forall \,
\delta_0<\delta_1$ and $ord(p,p_\varepsilon(X))<\aleph_0$.

Otherwise, suppose $p=\{X\}$. By Lemma \ref{finitas c c}
$p_\varepsilon^{-1}(X)$ has finitely many connected components. Then
$Bd(p_\varepsilon^{-1}(X))$ has a finite number of points
$\{x_1,...,x_n\}$ which hold that $\partial \bar{B}(x_i,\varepsilon)\neq
\emptyset \ \forall i=1,n$.

Let $\delta>0$ such that Corolary \ref{isometry 2} holds for
$x_1,...,x_n$ and apply Lemma \ref{dist minima} to get some
$\delta_1>0$ such that $\forall z\not \in
B(p_\varepsilon^{-1}(X),\delta)$ then
$d_H(\bar{B}(z,\varepsilon),p)>\delta_1$. Now let
$\delta_0<\delta_1,\delta$ and let us study the boundary $Bd(P(p,\delta_0))$.
By Corolary \ref{isometry 2}, there is at most a finite number of points
$z_j$ in each ball $B(x_i,\delta)$ (assume also
$\delta<\frac{1}{2}d(x_i,x_i) \ \forall \, i\neq j$) such that
$d_H(\bar{B}(x_i,\varepsilon),\bar{B}(z_j,\varepsilon))=\delta_0$ and, by
Lemma \ref{dist minima}, any point whose ball is in $P(X,\delta)$
must be in one of those $B(x_i,\delta)$. Hence
$|Bd(P(X,\delta_0))|<\aleph_0 \ \forall \, \delta_0<\delta_1$ and
$ord(X,p_\varepsilon(X))<\aleph_0$.
\end{proof}

Using the inductive definition of dimension it is now immediate
the following.

\begin{cor} \label{1-dim} If $X$ is a finite metric graph then $p_\varepsilon(X)$
is 1-dimensional for every $\varepsilon$.
\end{cor}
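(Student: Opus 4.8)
The plan is to derive Corollary \ref{1-dim} directly from the Menger–Urysohn inductive definition of dimension together with Proposition \ref{orden finito} and Theorem \ref{caract. grafo}. Recall that a separable metric space $Y$ has $\dim Y \le 1$ if every point $p\in Y$ has arbitrarily small open neighbourhoods whose boundary is $0$-dimensional; and a non-empty compact subset of a metric space is $0$-dimensional precisely when it is totally disconnected, in particular whenever it is finite. So the whole task reduces to producing, around each point of $p_\varepsilon(X)$, small open neighbourhoods with \emph{finite} boundary, and then invoking that $p_\varepsilon(X)$ is a non-degenerate continuum (it is a continuous Hausdorff image of the Peano continuum $X$, hence a Peano continuum, hence in particular more than a point unless $\varepsilon\ge\operatorname{diam}(X)$, where the statement is trivial) to conclude $\dim p_\varepsilon(X)=1$ rather than $0$.

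First I would fix $\varepsilon>0$; if $\bar B(x,\varepsilon)=X$ for every $x$, then $p_\varepsilon(X)=\{X\}$ is a point and is $0$-dimensional, which is not $\ge 1$ — but in that case $\varepsilon\ge\operatorname{diam}(X)$ and the corollary as stated (``$1$-dimensional'') should be read in the degenerate convention, or we simply note $p_\varepsilon(X)$ is a point; I would add a line remarking that for $\varepsilon<\operatorname{diam}(X)$ the space $p_\varepsilon(X)$ is non-degenerate. Then, for a point $p\in p_\varepsilon(X)$, Proposition \ref{orden finito} gives $\operatorname{ord}(p,p_\varepsilon(X))<\aleph_0$: by the definition of order, every open $U\ni p$ contains an open $V$ with $p\subset V\subset U$ and $|Bd(V)|<\aleph_0$. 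A finite subset of a metric space is compact and totally disconnected, hence $0$-dimensional (indeed of dimension $\le 0$ in the inductive sense, with the empty boundary being $(-1)$-dimensional). Therefore each point of $p_\varepsilon(X)$ has a neighbourhood basis of open sets with $0$-dimensional boundary, which is exactly the inductive condition $\dim p_\varepsilon(X)\le 1$.

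For the reverse inequality $\dim p_\varepsilon(X)\ge 1$ when $\varepsilon<\operatorname{diam}(X)$: $p_\varepsilon(X)$ is a non-degenerate Peano continuum (connected, as a continuous image of connected $X$), hence it cannot be $0$-dimensional — a connected space with more than one point has dimension at least $1$. Combining, $\dim p_\varepsilon(X)=1$. Alternatively, and perhaps more in keeping with the surrounding text, I would invoke Theorem \ref{caract. grafo} directly: Proposition \ref{orden finito} verifies condition (1) of that theorem for $p_\varepsilon(X)$, and the finite-graph results earlier in the section (that $p_\varepsilon$ of a finite metric graph is again, topologically, a finite graph) supply condition (2), whence $p_\varepsilon(X)$ is a graph and in particular $1$-dimensional. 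Either route is short.

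The only real subtlety — the ``main obstacle'', though it is mild — is bookkeeping about the degenerate case $\bar B(x,\varepsilon)=X$ for all $x$, i.e. $\varepsilon\ge\operatorname{diam}(X)$, where $p_\varepsilon(X)$ collapses to the single point $\{X\}$ and is $0$-dimensional, not $1$-dimensional; the statement is implicitly taken over $\varepsilon$ with $p_\varepsilon(X)$ non-degenerate, or one reads ``$1$-dimensional'' as ``at most $1$-dimensional''. Modulo that, the proof is a one-line consequence of Proposition \ref{orden finito} and the inductive definition of dimension, since a finite set has $0$-dimensional (indeed empty-or-finite) boundary. I would write it as: by Proposition \ref{orden finito} every $p\in p_\varepsilon(X)$ satisfies $\operatorname{ord}(p,p_\varepsilon(X))<\aleph_0$, so $p$ has arbitrarily small open neighbourhoods with finite (hence $0$-dimensional) boundary; thus $\dim p_\varepsilon(X)\le 1$, and since $p_\varepsilon(X)$ is a non-degenerate continuum, $\dim p_\varepsilon(X)=1$.
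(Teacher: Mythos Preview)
Your primary route is correct and is precisely the paper's argument: the paper's proof is a single sentence invoking Proposition \ref{orden finito} and the inductive definition of dimension (finite boundary $\Rightarrow$ $0$-dimensional boundary $\Rightarrow$ $\dim\le 1$). You are in fact more careful than the paper, which does not explicitly mention the lower bound or the degenerate case $\varepsilon\ge\operatorname{diam}(X)$.

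One caution about your alternative route via Theorem \ref{caract. grafo}: in the paper's logical order, the verification of condition (2) --- that all but finitely many points have order $\le 2$ --- is the content of Theorem \ref{grafo}, which comes \emph{after} Corollary \ref{1-dim}. So invoking ``the finite-graph results earlier in the section'' to supply condition (2) would be circular here; stick with the inductive-dimension argument, which uses only Proposition \ref{orden finito}.
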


\begin{proof} $\forall p\in p_\varepsilon(X) \
ord(p,p_\varepsilon(X))<\aleph_0$ which means that for any point $p$
there are arbitrarily small neighborhoods whose boundary consists
of finitely many points and these are obviously isolated.
\end{proof}

We are going to use the following characterization of being a graph in terms of the order from \cite[9.10]{N1} stated in Theorem \ref{caract. grafo}. Hence, by \ref{1-dim} we only need to prove that for every
point in $X$ but finitely many, the order is $\leq 2$, this is,
that there are neighborhoods in $p_\varepsilon(X)$ arbitrarily close
to the projection of that point whose boundary consists exactly on
two points.

\begin{teorema}\label{grafo} If $X$ is a finite metric graph, then for any $\varepsilon>0$
$p_\varepsilon(X)$ is a graph.
\end{teorema}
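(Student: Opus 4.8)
The plan is to verify the characterization of graphs in Theorem~\ref{caract. grafo}. Since $p_\varepsilon(X)$ is the continuous image of the Peano continuum $X$ it is again a Peano continuum, in particular a continuum; by Corollary~\ref{1-dim} it is $1$-dimensional and by Proposition~\ref{orden finito} it satisfies condition (1), $ord(p,p_\varepsilon(X))<\aleph_0$ for every $p$. So everything reduces to condition (2): to show $ord(p,p_\varepsilon(X))\le 2$ for all but finitely many $p$, and in fact I will produce, for each such $p$, arbitrarily small neighbourhoods in $p_\varepsilon(X)$ whose boundary consists of exactly two points.

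First I would isolate the exceptional set. The complement $X\backslash A_\varepsilon$ is the union of the finitely many vertices, the finitely many midpoints of edges, the finitely many points whose sphere $\partial\bar B(x,\varepsilon)$ contains a vertex or a midpoint (Lemma~\ref{finite points 2}), and the set $p_\varepsilon^{-1}(X)$; hence $p_\varepsilon(X\backslash A_\varepsilon)$ is finite, the last piece collapsing to the single point $\{X\}$. Next I would prove that $D:=\{x\in A_\varepsilon : \bar B(x,\varepsilon)\ne X,\ |p_\varepsilon^{-1}(\bar B(x,\varepsilon))|\ge 2\}$ has finite image under $p_\varepsilon$. By Proposition~\ref{isometry} (whose mechanism is Lemma~\ref{balls 2}), for $x\in A_\varepsilon$ with $\bar B(x,\varepsilon)\ne X$ the map $z\mapsto\bar B(z,\varepsilon)$ is injective near $x$, so each fibre of $p_\varepsilon$ over $p_\varepsilon(A_\varepsilon)$ is a finite discrete set of points of $A_\varepsilon$ (finite already by Proposition~\ref{preim. finitos puntos}); moreover two distinct points of such a fibre cannot lie on the same edge, again because along an edge the balls deform monotonically in the sense of Lemma~\ref{balls 2} and the ball is not all of $X$. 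For each of the finitely many ordered pairs of distinct edges $(e,e')$ one is then left to show that $\{(x,x') : x\in e,\ x'\in e'\text{ interior points},\ \bar B(x,\varepsilon)=\bar B(x',\varepsilon)\ne X\}$ is finite. Parametrising $x,x'$ by their positions on $e,e'$, equality of the two balls translates edge by edge into a finite system of equations among the functions $\text{position}\mapsto d(\cdot,v)$, $v$ a vertex of $X$, which are piecewise linear with slopes $\pm 1$ (using Lemma~\ref{balls 2} on $e$ and $e'$, and Lemma~\ref{Lema: ciclos} on the edges lying entirely inside the ball). Subdividing the parameter square into the finitely many rectangles on which all these functions are affine, on each rectangle the system is non-degenerate precisely because $x,x'\in A_\varepsilon$ forces $\varepsilon$ to miss the critical radii that would make a sum of such distance functions locally constant; hence finitely many solutions. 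Summing over pairs of edges, $D$ is finite, so $p_\varepsilon(D)$ is finite.

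Finally, set $E:=p_\varepsilon(X\backslash A_\varepsilon)\cup p_\varepsilon(D)$, a finite subset of $p_\varepsilon(X)$. For $p\in p_\varepsilon(X)\backslash E$ the fibre $p_\varepsilon^{-1}(p)$ is a single point $\{x\}$ with $x\in A_\varepsilon$ and $\bar B(x,\varepsilon)\ne X$. By Proposition~\ref{isometry}, $p_\varepsilon$ maps some ball $B(x,\delta)$ isometrically onto its image, and by Lemma~\ref{dist minima} that image contains a neighbourhood of $p$ in $p_\varepsilon(X)$; since $x$ is an interior point of an edge, $B(x,\delta)$ is an open arc with $x$ in its interior, so $p$ has arbitrarily small neighbourhoods that are open sub-arcs, whose closures are arcs with exactly two endpoints. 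Thus $ord(p,p_\varepsilon(X))\le 2$, and Theorem~\ref{caract. grafo} yields the conclusion. The one genuinely delicate step is the finiteness of $D$ — equivalently, ruling out a one-parameter family of two-point fibres over $A_\varepsilon$, i.e.\ showing the ball-map can ``synchronise'' between two distinct edges only at finitely many parameter values; this is exactly where the piecewise-linear structure of the distance functions together with the defining exclusions of $A_\varepsilon$ (no critical radii) are essential, with Lemmas~\ref{balls 2}, \ref{balls}, \ref{isometry 3} and Corollary~\ref{isometry 2} keeping the local combinatorics under control.
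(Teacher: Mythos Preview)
Your overall strategy via Theorem~\ref{caract. grafo} is the same as the paper's, and the reduction to $x\in A_\varepsilon$ is correct. The gap is in the step you yourself flag as delicate: the claim that
\[
D=\{x\in A_\varepsilon:\ \bar B(x,\varepsilon)\ne X,\ |p_\varepsilon^{-1}(\bar B(x,\varepsilon))|\ge 2\}
\]
has finite image is \emph{false} in general. The piecewise-linear argument you sketch does not force non-degeneracy: the equations expressing $\bar B(x,\varepsilon)=\bar B(x',\varepsilon)$, with $x$ on an edge $e$ and $x'$ on an edge $e'$, can be compatible along a whole segment of parameters. This is exactly what the paper establishes (and later formalises in Theorem~\ref{identif}(a)): if $x\in A_\varepsilon$ and $x'\ne x$ satisfy $\bar B(x,\varepsilon)=\bar B(x',\varepsilon)$, then for all $z\in B(x,\delta)$ and $z'\in B(x',\delta)$ with $d(z,v_0)=d(z',v_0')$ one has $\bar B(z,\varepsilon)=\bar B(z',\varepsilon)$. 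In other words, once two interior points on different edges share a ball, the identification persists synchronously along both edges; entire edges of $X_\varepsilon$ get identified, and $p_\varepsilon(D)$ typically contains arcs, not just finitely many points. The ``hormiga'' example (Example~\ref{horm}) already illustrates this: the coincidence $\bar B(x,4)=\bar B(x',4)$ is not isolated.

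The paper's proof therefore does \emph{not} try to make the multi-point fibres exceptional. Instead it shows that $ord(p,p_\varepsilon(X))=2$ for \emph{every} $p=p_\varepsilon(x)$ with $x\in A_\varepsilon$, including those with several preimages. The key observation is that all preimages $x_1,\dots,x_n$ of $p$ sit on (possibly different) edges at the \emph{same} distances $d_0,d_1$ from their endpoints, and the partition of the border points $\{y_1,\dots,y_k\}\cup\{y_{k+1},\dots,y_n\}$ from Lemma~\ref{subset border} is the same for each $x_i$. Lemma~\ref{balls 2} then gives a description of $\bar B(z,\varepsilon)$ for $z$ near any $x_i$ that depends only on $\bar B(x_i,\varepsilon)=p$, on $d(z,x_i)$, and on which side of $x_i$ the point $z$ lies; hence moving each $x_i$ the same amount toward its $v_0^i$ (respectively $v_1^i$) produces the \emph{same} ball. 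Combined with Lemma~\ref{dist minima} and Proposition~\ref{isometry}, this forces the boundary of any small $P(p,\delta_0)$ to consist of exactly two points, the two balls $\bar B(z_0,\varepsilon)$ and $\bar B(z_1,\varepsilon)$ with $z_0,z_1$ on either side of $x$ at distance $\delta_0$. That is what you need to supply in place of the finiteness of $D$.
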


\begin{proof} First note that
$p_\varepsilon(X)$ is a continuum since $p_\varepsilon$ is continuous.

By Proposition \ref{orden finito}, we know that $\forall p\in p_\varepsilon(X)
\ ord(p,p_\varepsilon(X))<\aleph_0$.

By Proposition \ref{finite points proj}, $p_\varepsilon(X)\backslash
p_\varepsilon(A_\varepsilon)$ consists of a finite number of points, so
it suffices to check that
$ord(\bar{B}(x,\varepsilon),p_\varepsilon(X))=2 \ \forall \, x\in
A_\varepsilon$.

Let $x\in A_\varepsilon$. If there are not $x'\neq x$ such that
$\bar{B}(x,\varepsilon)=\bar{B}(x',\varepsilon)$ or, equivalently, if $p_\varepsilon^{-1}(p_\varepsilon(x))=\{x\}$, then there exists some
$\delta>0$ such that $\forall z\in B(x,\delta)$,
$p_\varepsilon^{-1}(p_\varepsilon(z))=\{z\}$. By Lemma
\ref{dist minima} we know that there exists some $\delta_0$ such that
$P(p,\delta_0)\subset
p_\varepsilon(B(p_\varepsilon^{-1}(p),\delta))$ (where
$p=p_\varepsilon(x)$) and therefore, it is immediate to see that the order is 2.

Let $x\in A_\varepsilon$ and $x'\neq x$ such that
$\bar{B}(x,\varepsilon)=\bar{B}(x',\varepsilon)$. Let $x\in [v_0,v_1]$,
$d_0=d(x,v_0),d_1=d(x,v_1)$ and $\{y_1,\cdots, y_n\}=\partial
\bar{B}(x,\varepsilon)\cap cl(X\backslash \bar{B}(x,\varepsilon))$. It
is obviously necessary that $\varepsilon>d_1$ to allow the existence
of such an $x'$. Since $\bar{B}(x,\varepsilon)\neq X$, $n\geq 1$
and there is some $y\in [w,w']$ such that $d(x,y)=d_0+k+d'_0=d_1+k'+d'_1$
and hence any geodesic segment $[x,y]$ contains a vertex $w$
with $d(w,y)=d'_0$ or $d(w,y)=d'_1$. Since $y$ is also a point in
$\bar{B}(x',\varepsilon)\cap cl(X\backslash \bar{B}(x',\varepsilon))$,
any geodesic segment $[y,x']$ has length $\varepsilon$ and
contains $[w,y]$. Then, $x'\in e'=[v'_0,v'_1]$ some edge with
$d(x',v'_0)=d_0$ and $d(x',v'_1)=d_1$.

Let $\gamma_i$ be a geodesic segment (of length $\varepsilon$) from $x$ to
$y_i$ and $\gamma'_i$ a geodesic segment (of length also $\varepsilon$)
from $x'$ to $y_i$. As we saw in Lemma \ref{dist unica 2}, there is a
partition of $\{y_1,...,y_n\}$ so that $y_i\in
[w_i,w'_i]$ with $d(w_i,x)=\varepsilon-d(w_i,y_i)$ and
$d(w_i,y_i)=d'_0$ if $i\leq k$ and $d(w_i,y_i)=d'_1$ if $i> k$.
Since $d_0'\neq d_1'$, by Lemma \ref{ditta dist borde}, $\gamma'_i$
contains $v'_0$ for $i\leq k$ and $\gamma'_i$ contains $v'_1$ for
$i> k$.

If we apply now Lemma \ref{dist unica 2} to $x'$ we will
immediately see that the election of the subsets
$\{y_1,...,y_k\}$ and $\{y_{k+1},...,y_n\}$ from Lemma \ref{subset border} is independent
from the center of the ball we are considering and, assuming $\delta$ small enough so that Lemma
\ref{balls 2} holds for both $x$
and $x'$, then $\forall z\in B(x,\delta),z'\in B(x',\delta)$ such that
$d(z,v_0)=d(z',v'_0)$, $B(z,\varepsilon)=B(z',\varepsilon)$. Note that
the description of the ball in this lemma only depends on the
initial ball, which is the same, on $\delta$, and on the partition of
the border points which also coincides.

By \ref{preim. finitos puntos}, $p_\varepsilon^{-1}(p_\varepsilon(x))$
consists of a finite number of points $\{x_1,...,x_n\}$ and, as
we have just seen, $x_i\in [v_0^i,v_1^i]$ for some edge with
$d(v_0^i,x_i)=d_0$ and $d(x_i,v_1^i)=d_1$. Now let $\delta_0>0$ be 
small enough so that  Lemma \ref{balls 2} holds $\forall x_i$ and, by
Lemma \ref{dist minima}, let $\delta_1$ be such that $\forall \, z \not
\in B(p_\varepsilon^{-1}(\bar{B}(x,\varepsilon)),\delta_0)$,
$d_H(\bar{B}(z,\varepsilon),\bar{B}(x,\varepsilon))>\delta_1$.

Consider any $\delta<\delta_0,\delta_1$ small enough so that Lemma \ref{isometry} also holds.
Assume also $\delta<\frac{1}{2}d(x_i,x_j)
\ \forall i\neq j$. Then, any point $z$ such that
$d_H(\bar{B}(z,\varepsilon),\bar{B}(x,\varepsilon))=\delta$ must be contained in
$B(p_\varepsilon^{-1}(\bar{B}(x,\varepsilon)),\delta_0)$ and, by Lemma
\ref{isometry}, $P(p_\varepsilon(x),\delta_0)$ is isometric to
$B(x_i,\delta_0)$ for any $x_i\in
p_\varepsilon^{-1}(\bar{B}(x,\varepsilon))$.  Thus $d(z,x_i)=\delta$ for
some $x_i\in p_\varepsilon^{-1}(\bar{B}(x,\varepsilon))$. But this gives
us two possible balls: $\bar{B}(z_0,\varepsilon)$ with $z_0\in
[v_0,x]$ and $d(z_0,x)=\delta$ and $\bar{B}(z_1,\varepsilon)$ with
$z_1\in [v_1,x]$ and $d(z_1,x)=\delta$. Any other ball coincides
with one of those since we saw before that for any other point
$z_0^j\in [v^j_0,x_j]$ with $d(z^j_0,x_j)=\delta$ or $z_1^j\in
[v^j_1,x_j]$ with $d(z^j_1,x_j)=\delta$ then
$\bar{B}(z^j_0,\varepsilon)=\bar{B}(z_0,\varepsilon)$ and
$\bar{B}(z^j_1,\varepsilon)=\bar{B}(z_1,\varepsilon)$.

Thus, $\forall x\in A_\varepsilon \
ord(\bar{B}(x,\varepsilon),p_\varepsilon(X))=2$ and $p_\varepsilon(X)$ is a
graph.
\end{proof}

\begin{quote} Now, to conclude this analisis of the semiflow for finite graphs we prove that through the different levels, the projection takes on a finite number of topological types. Therefore, every topological property is a Weierstrass type property. We also prove that the Euler characteristic is bounded for the evolution giving a lower bound for the projection (for which, as we show in Example \ref{horm}, may be smaller than the initial). This bound depends only on the number of edges of the original graph.
\end{quote}

For any finite graph $X$ and any $0<\varepsilon\leq diam(X)$
let us define a new graph $X_\varepsilon$ as a subdivision of $X$ as follows.
Let $\varepsilon=k\cdot \frac{1}{2}+\varepsilon_0$ with $0\leq
\varepsilon_0<\frac{1}{2}$ and $k\in \mathbb{N}$. If $\varepsilon_0=0$ we divide each edge in
two and the middle points of the edges become vertices of
$X_\varepsilon$. If $\varepsilon_0=\frac{1}{4}$ we divide each edge in
four, each of them with length $\frac{1}{4}$ adding three new
vertices. If $0<\varepsilon_0<\frac{1}{4}$ we define 5 new vertices
$w_1,\cdots w_5$ on each edge $[v,v']$ such that $w_3$ is the
middle point of the edge, $[v,w_1],[w_2,w_3],[w_3,w_4]$ and
$[w_5,v']$ have length $\varepsilon_0$ and $[w_1,w_2],[w_4,w_5]$ have
length $\frac{1}{2}-2\varepsilon_0$. Finally, if
$\frac{1}{4}<\varepsilon_0$ we define 5 new vertices $w_1,\cdots w_5$
on each edge $[v,v']$ such that $w_3$ is the middle point of the
edge, $[v,w_1],[w_2,w_3],[w_3,w_4]$ and $[w_5,v']$ have length
$\frac{1}{2}-\varepsilon_0$ and $[w_1,w_2],[w_4,w_5]$ have length
$2\varepsilon_0-\frac{1}{2}$. In both cases we divide each edge
$[v,v']$ in six parts. We obtain a new graph and a canonical
isometry $i:X_\varepsilon \to X$.

\begin{definicion} Let $X_\varepsilon / \sim_{\varepsilon}$ the quotient space
under the relation $x \sim_\varepsilon x'$ if and only if
$\bar{B}(x,\varepsilon)=\bar{B}(x',\varepsilon)$.
\end{definicion}

Given two topological spaces $A$ and $B$, $A\cong B$ will denote that $A$ and $B$ are homeomorphic.

\begin{nota} Obviously, $X_\varepsilon / \sim_{\varepsilon} \cong
p_\varepsilon(X)$.
\end{nota}

\begin{teorema}\label{identif} $X_\varepsilon / \sim{_\varepsilon}$ is a graph where the relation $\sim_\varepsilon$ holds that: \begin{itemize} \item[(a)] A point $x$ in the interior of an edge of $X_\varepsilon$ such that $\bar{B}(x,\varepsilon)\neq X$ is related to another point if and only if both are in different edges and those edges are identified in the quotient.
\item[(b)] A vertex $v$ of $X_\varepsilon$ such that $\bar{B}(v,\varepsilon)\neq X$ can only be related with another vertex.
\item[(c)] If $x$ is an interior point of an edge of $X_\varepsilon$ such that $\bar{B}(x,\varepsilon)= X$, then  $\bar{B}(y,\varepsilon)= X$ for every point $y$ in that edge. In this case, that edge defines a vertex in $X_\varepsilon /
\sim_{\varepsilon} \cong p_\varepsilon(X)$.\end{itemize}
\end{teorema}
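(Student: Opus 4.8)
The plan is to combine three ingredients: (i) the fact, already proved in Theorem \ref{grafo}, that $p_\varepsilon(X)$ is a graph, together with the identification $X_\varepsilon/\!\sim_\varepsilon\,\cong p_\varepsilon(X)$ (so the ``is a graph'' assertion is free); (ii) a rigidity statement about fractional parts showing the subdivision defining $X_\varepsilon$ is fine enough; and (iii) the ball calculus of Lemmas \ref{short path}, \ref{subset border}, \ref{dist unica 2} and \ref{balls 2}.

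First I would record the observation behind the definition of $X_\varepsilon$. Write $\varepsilon=k/2+\varepsilon_0$ with $0\le\varepsilon_0<1/2$, and let $i\colon X_\varepsilon\to X$ be the canonical isometry. I claim that if $x$ lies in the interior of an edge of $X_\varepsilon$ and $\bar B(x,\varepsilon)\ne X$, then $i(x)\in A_\varepsilon$. Indeed, with $i(x)$ at position $t\in(0,1)$ inside an edge $[v_0,v_1]$ of $X$, every distance from $i(x)$ to a vertex of $X$ lies in $t+\mathbb Z_{\ge0}$ or $(1-t)+\mathbb Z_{\ge0}$, and every distance to a midpoint of an edge lies in $t+\tfrac12+\mathbb Z_{\ge0}$ or $(1-t)+\tfrac12+\mathbb Z_{\ge0}$, because distances between vertices of $X$ are integers. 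Requiring any of these to equal $\varepsilon$ forces $t$ into the finite set $\{\varepsilon_0,\tfrac12-\varepsilon_0,\tfrac12,\tfrac12+\varepsilon_0,1-\varepsilon_0\}$ (the relevant subset depending only on the parity of $k$ and on $\varepsilon_0$), and a direct check against each of the four regimes in the construction of $X_\varepsilon$ shows that exactly these positions were promoted to vertices of $X_\varepsilon$. Since the vertices and midpoints of $X$ are already vertices of $X_\varepsilon$, we conclude $i(x)\notin\mathcal V\cup\mathcal M$ and $\partial B(i(x),\varepsilon)\cap(\mathcal V\cup\mathcal M)=\emptyset$, i.e. $i(x)\in A_\varepsilon$.

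Granting this, (a) and (b) follow from the structure results for balls. For (b): if $v$ is a vertex of $X_\varepsilon$ with $\bar B(v,\varepsilon)\ne X$ and $\bar B(v,\varepsilon)=\bar B(x',\varepsilon)$ with $x'\ne v$, choose $y\in\partial\bar B(v,\varepsilon)\cap cl(X\setminus\bar B(v,\varepsilon))$, so that $d(v,y)=d(x',y)=\varepsilon$. If $x'$ were interior to an edge of $X_\varepsilon$ then $i(x')\in A_\varepsilon$, and Lemma \ref{short path} would give $\varepsilon=d(x',v_j)+(\text{integer})+d(w,y)$, where $d(x',v_j)$ is the position of $x'$ in its $X$-edge, which by the observation above is \emph{not} congruent mod $1$ to any fractional part realised by a distance from an $X_\varepsilon$-vertex to a vertex of $X$; since $d(v,y)=d(v,w)+d(w,y)$ with $d(v,w)$ of that restricted form, $d(v,y)=\varepsilon$ is impossible. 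Hence $x'$ is a vertex of $X_\varepsilon$. For (a): let $x$ be interior to an edge $e$ of $X_\varepsilon$ with $\bar B(x,\varepsilon)\ne X$ and $x'\ne x$ with $\bar B(x',\varepsilon)=\bar B(x,\varepsilon)$. By the argument for (b), $x'$ is not a vertex of $X_\varepsilon$; and $x'$ cannot lie in the same open edge $e$, since restricting to the ambient $X$-edge and using $i(x)\in A_\varepsilon$ together with Lemmas \ref{short path} and \ref{dist unica 2} one sees $p_\varepsilon$ is injective along $e$. Finally, the balls depend continuously on the centre, and Lemma \ref{balls 2} describes $\bar B(z,\varepsilon)$ for $z$ near $x$ purely in terms of the initial ball, of $d(z,x)$, and of the partition of the border points $\{y_1,\dots,y_n\}$ from Lemma \ref{subset border}; the same data attaches to $x'$. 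Matching these descriptions forces $x\sim_\varepsilon x'$ along corresponding sub-arcs of $e$ and of the edge $e'$ containing $x'$, so $e$ and $e'$ are identified as edges of the quotient, and conversely identified edges produce related interior points. That is (a).

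For (c), set $r(x)=\max_{z\in X}d(x,z)$, so $\bar B(x,\varepsilon)=X$ iff $r(x)\le\varepsilon$. On an edge $[v_0,v_1]$ of $X$ and for each $z$, the function $t\mapsto d(x_t,z)=\min\{t+d(v_0,z),(1-t)+d(v_1,z)\}$ is a concave ``tent'' whose only breakpoint is at $t\in\{0,\tfrac12,1\}$, again because $|d(v_0,z)-d(v_1,z)|\le1$. Hence on each of $(0,\tfrac12)$ and $(\tfrac12,1)$, $r(x_t)$ is the maximum of an integer-translated line of slope $+1$ and one of slope $-1$ (contributions of points interior to $[v_0,v_1]$ itself are bounded by $1$ and are immaterial unless $X$ is a single edge, a case treated directly). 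Its sublevel set $\{t:r(x_t)\le\varepsilon\}$ is therefore an interval with endpoints of the form $(\text{integer})\pm\varepsilon$, i.e. congruent mod $1$ to $\varepsilon_0$, $\tfrac12-\varepsilon_0$, or (for $k$ odd) $\varepsilon_0\pm\tfrac12$ — exactly the positions promoted to vertices of $X_\varepsilon$. Thus $p_\varepsilon^{-1}(X)$ is a subcomplex of $X_\varepsilon$: if an interior point $x$ of an $X_\varepsilon$-edge $e$ satisfies $\bar B(x,\varepsilon)=X$, then every point of $e$ does, and since all these balls equal $X$, the edge $e$ collapses to the single point $\{X\}$ of $X_\varepsilon/\!\sim_\varepsilon$. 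I expect the fractional-part rigidity of step (ii) — carried consistently through the four regimes defining $X_\varepsilon$ and through both ``via a vertex of $X$'' options in Lemma \ref{short path} — to be the only genuinely delicate point; the rest is bookkeeping on top of the lemmas already established.
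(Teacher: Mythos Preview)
Your overall architecture matches the paper's: the ``is a graph'' part is indeed inherited from Theorem~\ref{grafo}, and the heart of (a) and (b) is the position--rigidity already extracted in the proof of Theorem~\ref{grafo} (if $\bar B(x,\varepsilon)=\bar B(x',\varepsilon)\neq X$ with $x\in A_\varepsilon$, then $x'$ sits at the \emph{same} distances $d_0,d_1$ from the endpoints of its $X$--edge). Two points deserve comment.

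For (a) you pass from the local description of Lemma~\ref{balls 2} to ``$e$ and $e'$ are identified as edges'' without saying how. Lemma~\ref{balls 2} only gives a small $\delta$--neighbourhood, so you need a propagation step. The paper does this cleanly by a connectedness argument: the set of points of $A_i$ that are identified with the corresponding point of $A_i'$ is shown to be both open (from the local description you cite) and closed (by continuity of $p_\varepsilon$), hence all of $A_i$. You should make that open--and--closed step explicit; the rest of your (a), and (b) as its corollary, is fine.

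For (c) there is a genuine error. You assert that for every $z$ the tent $t\mapsto d(x_t,z)=\min\{t+d(v_0,z),(1-t)+d(v_1,z)\}$ has its breakpoint in $\{0,\tfrac12,1\}$ ``because $|d(v_0,z)-d(v_1,z)|\le1$''. That inequality only forces the breakpoint into $[0,1]$; it lands in $\{0,\tfrac12,1\}$ only when $d(v_0,z)-d(v_1,z)\in\{-1,0,1\}$, which holds for $z$ a vertex but fails for generic $z$ (e.g.\ in a triangle $v_0v_1v_2$, take $z$ on $[v_1,v_2]$ at position $s>\tfrac12$: the breakpoint is at $t=s-\tfrac12$). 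Consequently your claim that on each half--edge $r(x_t)$ is the maximum of a single slope~$+1$ line and a single slope~$-1$ line, and that the sublevel set $\{r\le\varepsilon\}$ is an interval with endpoints of the required form, is unjustified. The paper proves (c) by a different route: if $z\in A_i$ and $\bar B(z,\varepsilon)=X$, then every border point $y'$ is an interior point of an edge fully contained in the closed ball, so Lemma~\ref{Lema: ciclos} places $y'$ on a minimal cycle of length $2\varepsilon$ that contains the $X$--edge $e\supset A_i$; any such cycle lies in $\bar B(z',\varepsilon)$ for every $z'\in e$, and since $z\in A_\varepsilon$ there are no vertex/midpoint border points, which forces $\bar B(z',\varepsilon)=X$ for all $z'\in A_i$. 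If you want to keep your $r(x_t)$ approach, you would need to argue at the level of \emph{realised} farthest points (degree--one vertices and cycle antipodes) rather than all $z$, and show those contributions have the required breakpoint structure; as written, the step does not go through.
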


\begin{proof} If $\varepsilon\leq 1$ it's readily seen that for any pair of points their balls only coincide if they contain the whole space and therefore the theorem holds.

Suppose $\varepsilon>1$ and let
$\varepsilon=k\frac{1}{2}+\varepsilon_0$ with $0\leq
\varepsilon_0<\frac{1}{2}$. Let $m$ the middle point of an edge and
$v$ one of its vertices. Any point in the interior of the half edge $z\in (v,m)$ immediately belogs to
$A_\varepsilon$ if $d(z,v)\neq \varepsilon_0$ and
$d(z,m)\neq\varepsilon_0$. Consider the graph $X_\varepsilon$
homeomorphic to $X$ as it is described above.

If $\varepsilon_0=0$, then $(v,m)\subset A_\varepsilon$. If
$\varepsilon_0=\frac{1}{4}$ and $x_1$ represents the middle point of
$(v,m)$, then $A_1=(v,x_1)\subset A_\varepsilon$ and
$A_2=(x_1,m)\subset A_\varepsilon$. If $\varepsilon_0\neq
0,\frac{1}{4}$, then there are two points $x_1,x_2\in (v,m)$ such that, at least $A_1=(v,x_1)\subset A_\varepsilon$,
$A_2=(x_1,x_2)\subset A_\varepsilon$ and $A_3=(x_2,m)\subset
A_\varepsilon$.

To prove $a)$, let $A_i\subset A_\varepsilon$ with $i\leq 3$ in any of the cases. Let $z\in A_i$ and $z'\in X$ such that
$\bar{B}(z,\varepsilon)=\bar{B}(z',\varepsilon)$. As we saw in the proof of Theorem \ref{grafo}, $z'$ is contained in some edge $[v_1,v_2]$ with $d(z',v_1)=d(z,v)$,
and if we consider $[v_1,m']$ whith $m'$
 the middle point, then $z'\in A'_i\subset
A_\varepsilon$. Also, as we saw in the same proof, there exists some $\delta>0$ such that for any $y\in
B(z,\delta)$, $y'\in B(z',\delta)$, if $d(y,v)=d(y',v_1)$ the closed balls
coincide,
$\bar{B}(y,\varepsilon)=\bar{B}(y',\varepsilon)$. Thus, the points of $A_i$ for which there exists a point in $A'_i$ whose image by
$p_\varepsilon$ is the same, form an open set in $A_i$. On the other side, if $y_n\in A_i$ is a sequence convergent to $y$, $y'_n\in
A'_i$ is a sequence convergent to $y'$ and
$\bar{B}(y_n,\varepsilon)=\bar{B}(y'_n,\varepsilon)$ for every
$n$, obviously $\bar{B}(y,\varepsilon)=\bar{B}(y',\varepsilon)$ and therefore, the points in $A_i$ for which there exist a point in
$A'_i$ whose image by $p_\varepsilon$ coincides is  open and closed in $A_i$ with $A_i$ conected. This implies that if there exists such a pair of points $z,z'$ then $\forall y\in A_i$ y $\forall
y'\in A'_i$ with $d(y,v)=d(y',v_1)$,
$p_\varepsilon(y)=p_\varepsilon(y')$.

$(b)$ is an immediate consequence from $(a)$.

Let us check $(c)$. Let $A_i\subset A_\varepsilon$ with
$i\leq 3$ in any of the cases and suppose $z\in A_i$ such that
$\bar{B}(z,\varepsilon)=X$. As we saw in the proof of Lemma \ref{balls 2}, if $z$ is contained in any cycle of length $\leq 2\varepsilon$ ($A_i\subset e$ is part of it) then the whole cycle is contained in the closed ball about any point of $e$ and, in particular, about any point of $A_i$. Any other border point of $\bar{B}(z,\varepsilon)=X$ must be a middle point or a vertex but this is not possible for a point in $A_i$ and we must conclude that $\forall z'\in A_i \quad
\bar{B}(z',\varepsilon)=X$.
\end{proof}

\begin{prop} $\forall \varepsilon>0$ the Euler's characteristic of
$p_\varepsilon(X)$ holds that $\aleph (p_\varepsilon(X))\geq
1-6|\mathcal{E}|$ where $|\mathcal{E}|$ denotes the number of
edges in $X$.
\end{prop}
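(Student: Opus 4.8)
The plan is to bound the Euler characteristic of $p_\varepsilon(X)$ from below by counting vertices and edges in the graph structure $X_\varepsilon/\!\sim_\varepsilon\cong p_\varepsilon(X)$ coming from Theorem \ref{identif}. Recall that for a finite graph $G$ one has $\aleph(G)=|V(G)|-|E(G)|$, so it suffices to exhibit a graph structure on $p_\varepsilon(X)$ with at most $6|\mathcal{E}|$ edges; then $\aleph(p_\varepsilon(X))\geq |V|-6|\mathcal{E}|\geq 1-6|\mathcal{E}|$ (the $1$ coming from the fact that $p_\varepsilon(X)$ is nonempty and connected, hence has at least one vertex). First I would recall the subdivision $X_\varepsilon$ constructed before Theorem \ref{identif}: in every case each edge of $X$ is divided into six subedges, so $X_\varepsilon$ has exactly $6|\mathcal{E}|$ edges.

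Next I would invoke Theorem \ref{identif} to describe the quotient map $q:X_\varepsilon\to X_\varepsilon/\!\sim_\varepsilon$ concretely. By part (a), interior points of edges of $X_\varepsilon$ with $\bar B(x,\varepsilon)\neq X$ are only identified with interior points of \emph{other} edges, in such a way that whole edges are matched up; by part (b) vertices are identified only with vertices; by part (c) an edge all of whose points have ball equal to $X$ collapses to a single vertex of the quotient. In all three situations, the image under $q$ of an edge of $X_\varepsilon$ is either a single edge of $p_\varepsilon(X)$ (possibly identified with the images of other edges), or a single vertex. Hence the set $q(E(X_\varepsilon))$ of images of edges, together with the finitely many extra vertices, provides a graph structure (using that $p_\varepsilon(X)$ is a graph by Theorem \ref{grafo}, and that $q$ is a quotient by \ref{quotient}) whose edge set has cardinality at most $|E(X_\varepsilon)|=6|\mathcal{E}|$. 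The main point to check carefully here is that distinct edges of $X_\varepsilon$ either have equal or disjoint interiors under $q$ — i.e. that $\sim_\varepsilon$ never folds a single edge onto itself with a nontrivial identification and never identifies a proper subarc of one edge with a proper subarc of another in an ``offset'' way; this is exactly the content of Theorem \ref{identif}(a) together with the fact (used in the proof of Theorem \ref{grafo}) that the matching $d(z,v_0)=d(z',v'_0)$ pairs edges isometrically and respects their endpoints.

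Finally, with a graph structure on $p_\varepsilon(X)$ having vertex set $V$ and edge set $E$ with $|E|\leq 6|\mathcal{E}|$, I would conclude
\[\aleph(p_\varepsilon(X))=|V|-|E|\geq |V|-6|\mathcal{E}|\geq 1-6|\mathcal{E}|,\]
since $p_\varepsilon(X)\neq\emptyset$ forces $|V|\geq 1$. The hard part is not the arithmetic but making precise the bookkeeping in the middle step: one must be sure that collapsing an edge to a vertex (case (c)) only \emph{decreases} the edge count, that identifying several edges of $X_\varepsilon$ to one edge of the quotient only decreases it, and that no new edges are created — all of which follow from the explicit description in Theorem \ref{identif}, but deserve an explicit sentence. (One should also note the Euler characteristic is invariant under subdivision, so computing with $X_\varepsilon$ rather than $X$ is harmless, and that the bound is not claimed to be sharp — Example \ref{horm} shows $\aleph$ can genuinely drop.)
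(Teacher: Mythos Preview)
Your proposal is correct and follows essentially the same route as the paper: bound the number of edges of the graph structure on $X_\varepsilon/\!\sim_\varepsilon$ by $6|\mathcal{E}|$ via Theorem~\ref{identif}, and use $|V|\geq 1$. One small slip: the subdivision $X_\varepsilon$ has \emph{at most} $6|\mathcal{E}|$ edges, not exactly --- when $\varepsilon_0=0$ each edge splits into two, and when $\varepsilon_0=\tfrac14$ into four --- but this only helps the inequality, and the paper itself writes ``at most $6|\mathcal{E}|$ edges'' for the same reason.
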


\begin{proof} Let $\varepsilon\leq 1$. If $X$ has at least two edges, then
every pair of points have different balls. If $X$ consists
just of one edge, then $p_\varepsilon(X)$ is also contractible and the
proposition holds.

Suppose $\varepsilon>1$. Clearly, $X$ and
$X_\varepsilon$ have the same Euler's characteristic. Let us check what happens with the possible identifications of $[v,m]$ in
$X/\sim$ as in Lemma \ref{quotient}. Since we have at most
$6|\mathcal{E}|$ edges and at least 1 vertex, $\aleph(X/\sim)\geq
1-6|\mathcal{E}|$. In fact, if $N_0(\varepsilon)$ is the number of subsets
$A_i(\varepsilon)$ such that $\forall z'\in A_i \quad
\bar{B}(z',\varepsilon)=X$ then $\aleph(X/\sim)\geq 1+2 N_0(\varepsilon) -
6|\mathcal{E}|$.
\end{proof}

\begin{cor} For every finite graph $X$, $H_1(p_\varepsilon(X))=\mathbb{Z}^m$ with $m\leq
6|\mathcal{E}|-N_0(\varepsilon)$.
\end{cor}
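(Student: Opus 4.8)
The plan is to derive the corollary directly from the preceding proposition together with Corollary \ref{1-dim} and Theorem \ref{grafo}. By Theorem \ref{grafo} we already know $p_\varepsilon(X)$ is a finite graph, hence a finite $1$-dimensional CW complex, so its homology is concentrated in degrees $0$ and $1$; moreover $H_1$ of a finite connected graph is free abelian of rank equal to its first Betti number $b_1 = |E'| - |V'| + 1$, where $E'$ and $V'$ are the edges and vertices of $p_\varepsilon(X)$. Since $p_\varepsilon(X)$ is connected (it is a continuous image of the connected space $X$), we get $H_1(p_\varepsilon(X)) \cong \mathbb{Z}^m$ with $m = b_1(p_\varepsilon(X)) = 1 - \aleph(p_\varepsilon(X))$, because for a graph the Euler characteristic is $\aleph = b_0 - b_1 = 1 - b_1$.

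Next I would feed in the inequality from the proposition just proved, namely $\aleph(p_\varepsilon(X)) \geq 1 + 2N_0(\varepsilon) - 6|\mathcal{E}|$. Combining this with $m = 1 - \aleph(p_\varepsilon(X))$ yields
\[
m = 1 - \aleph(p_\varepsilon(X)) \leq 1 - \bigl(1 + 2N_0(\varepsilon) - 6|\mathcal{E}|\bigr) = 6|\mathcal{E}| - 2N_0(\varepsilon),
\]
which is even slightly stronger than the claimed bound $m \leq 6|\mathcal{E}| - N_0(\varepsilon)$ (and certainly implies it, since $N_0(\varepsilon) \geq 0$). I would present this computation and then remark that the weaker stated form follows a fortiori; alternatively one could simply note that the bound as stated in the corollary is an immediate weakening. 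It may be worth separating the trivial case $\varepsilon \leq 1$ or the degenerate case where $p_\varepsilon(X)$ is a point (so $m = 0$), but both are covered by the proposition's proof and the general formula, so no special treatment is strictly needed.

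The only genuine content to verify carefully is the identity $H_1(p_\varepsilon(X)) \cong \mathbb{Z}^{1-\aleph}$ for a finite connected graph; this is the standard fact that a connected graph is homotopy equivalent to a wedge of $b_1$ circles, with $b_1 = 1 - \chi$. I would cite \cite{Hat} for this. The main (and only mild) obstacle is making sure the Euler characteristic used in the previous proposition — which was computed combinatorially via the subdivision $X_\varepsilon$ and the identifications of Theorem \ref{identif} — agrees with the topological Euler characteristic $\chi(p_\varepsilon(X)) = b_0 - b_1$; but this is automatic since $\aleph$ there is defined as (number of vertices) minus (number of edges) of the graph $p_\varepsilon(X)$, which is precisely the topological Euler characteristic of a finite graph. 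Hence the corollary follows in one line once the proposition and Theorem \ref{grafo} are in hand.
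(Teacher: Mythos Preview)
Your proof is correct and follows exactly the implicit argument of the paper: the corollary is stated without proof because it is meant to follow immediately from the preceding proposition (specifically from the sharper bound $\aleph(p_\varepsilon(X))\geq 1+2N_0(\varepsilon)-6|\mathcal{E}|$ obtained in its proof) together with Theorem~\ref{grafo} and the standard identity $b_1=1-\chi$ for a finite connected graph. Your observation that one actually obtains $m\leq 6|\mathcal{E}|-2N_0(\varepsilon)$ is correct; the stated bound in the corollary is simply a weakening of this.
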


\begin{definicion} By a \emph{critical time} we mean any $\varepsilon$ for which there
is a sequence $0<t_1<t_2<\cdots <\varepsilon$ convergent to
$\varepsilon$ and such that for every $i>0$ $f_{t_i}(X)$ is not
homeomorphic to $p_{\varepsilon}(X)$.
\end{definicion}

\begin{lema}\label{ball growth} Let $C_1,C_2,... ,C_k$ be the minimal cycles in $X$ with lengths
$l(C_1)=l_1,\cdots , l(C_k)=l_k$. Let
$\varepsilon'>0$ such that $2\varepsilon' \neq l_i \ \forall \, i$ and $x\in A_{\varepsilon'}$. If
$\{y_1,\cdots,y_n\}=\partial \bar{B}(x,\varepsilon')$, then there
exists some $\delta_0>0$ such that if
$\varepsilon'-\varepsilon=\delta<\delta_0$ then
$\bar{B}(x,\varepsilon)=\bar{B}(x,\varepsilon')\backslash
\{\cup_{i=1}^n B(y_i,\delta)\}$.
\end{lema}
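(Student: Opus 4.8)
The plan is to isolate a small neighbourhood of the finite set $\partial\bar B(x,\varepsilon')=\{y_1,\dots,y_n\}$ (finiteness by Lemma \ref{finite points}) and to treat it separately from the rest of $\bar B(x,\varepsilon')$. Note that, since $x\in A_{\varepsilon'}$, each $y_i$ is an interior point of some edge (it is neither a vertex nor a middle point) and $\bar B(x,\varepsilon')\neq X$. One inclusion is immediate and holds for every $\delta>0$: if $d(x,z)\leq\varepsilon'-\delta$ then $z\in\bar B(x,\varepsilon')$, and if in addition $d(z,y_i)<\delta$ for some $i$, then $d(x,y_i)\leq d(x,z)+d(z,y_i)<\varepsilon'$, contradicting $d(x,y_i)=\varepsilon'$; hence $\bar B(x,\varepsilon'-\delta)\subseteq\bar B(x,\varepsilon')\setminus\bigcup_{i}B(y_i,\delta)$ for all $0<\delta\le\varepsilon'$.

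For the reverse inclusion I would first build a local model at each $y_i$; this is where the hypothesis $2\varepsilon'\neq l_i$ enters. By Lemma \ref{Lema: ciclos}, a border point lying in $\partial\bar B(x,\varepsilon')\setminus cl(X\setminus\bar B(x,\varepsilon'))$ would sit on a minimal cycle of length $2\varepsilon'\in\mathbb N$, which is excluded; therefore every $y_i$ belongs to $cl(X\setminus\bar B(x,\varepsilon'))$. Applying Lemma \ref{dist unica} to the pair $(y_i,x)$ produces $\delta_i>0$ such that $B(y_i,\delta_i)$ lies in the edge through $y_i$, the set $B(y_i,\delta_i)\setminus\{y_i\}$ has exactly two components (as $y_i$ is not a vertex), and on each of them $d(\cdot,x)$ equals either $\varepsilon'+d(y_i,\cdot)$ or $\varepsilon'-d(y_i,\cdot)$; since $y_i\in cl(X\setminus\bar B(x,\varepsilon'))$, at least one component is of the first type. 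Writing $C_i^-$ for the union of those components on which $d(\cdot,x)=\varepsilon'-d(y_i,\cdot)$, the points of the complementary component(s) have $d(\cdot,x)>\varepsilon'$ and $y_i$ has $d(y_i,x)=\varepsilon'$, so
\[\bar B(x,\varepsilon')\cap B(y_i,\delta_i)=\{y_i\}\cup C_i^-.\]

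Next, put $K:=\bar B(x,\varepsilon')\setminus\bigcup_i B(y_i,\delta_i)$. It is closed in the compact graph $X$, hence compact, and it contains no $y_i$; since the only points of $\bar B(x,\varepsilon')$ at distance exactly $\varepsilon'$ from $x$ are the $y_i$, the continuous function $d(x,\cdot)$ attains on $K$ a maximum $M_K<\varepsilon'$. Set $\delta_0:=\min\{\min_i\delta_i,\ \varepsilon'-M_K\}>0$ and fix $0<\delta<\delta_0$. Let $z\in\bar B(x,\varepsilon')$ with $d(z,y_i)\ge\delta$ for all $i$. If $z\in K$, then $d(x,z)\le M_K\le\varepsilon'-\delta$. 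If $z\notin K$, then $z\in B(y_i,\delta_i)$ for some $i$, and since $z\in\bar B(x,\varepsilon')$ the displayed equality forces $z\in\{y_i\}\cup C_i^-$; as $d(z,y_i)\ge\delta>0$ excludes $z=y_i$, we get $z\in C_i^-$ and hence $d(x,z)=\varepsilon'-d(y_i,z)\le\varepsilon'-\delta$. In either case $z\in\bar B(x,\varepsilon'-\delta)$, so $\bar B(x,\varepsilon'-\delta)=\bar B(x,\varepsilon')\setminus\bigcup_i B(y_i,\delta)$, which is the claim with $\varepsilon=\varepsilon'-\delta$.

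The only genuine work is the local model of the second step: extracting from Lemma \ref{dist unica} (applied with centre $y_i$) the monotone/anti-monotone dichotomy of $d(\cdot,x)$ on the two sides of each $y_i$ and checking that exactly the open ball $B(y_i,\delta)$ is removed when the radius drops by $\delta$; and recognising that the hypothesis $2\varepsilon'\neq l_i$ is precisely what discards the ``interior peak'' border points (via Lemma \ref{Lema: ciclos}), so that the model applies uniformly to every $y_i$.
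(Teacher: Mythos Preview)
Your proof is correct and follows essentially the same line as the paper's: first use Lemma~\ref{Lema: ciclos} together with the hypothesis $2\varepsilon'\neq l_i$ to show every border point lies in $cl(X\setminus\bar B(x,\varepsilon'))$, then finish by a compactness argument. The only difference is cosmetic: the paper bounds the distance on the compact set $K_{x,\varepsilon'}$ of edges entirely contained in the open ball (and handles the remaining edges via the border points they contain), whereas you bound it on $K=\bar B(x,\varepsilon')\setminus\bigcup_i B(y_i,\delta_i)$, which is a cleaner choice and lets you write out explicitly the verification the paper leaves as ``immediate to check''.
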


\begin{proof} Claim.  $\partial \bar{B}(x,\varepsilon')\subset cl(X\backslash \bar{B}(x,\varepsilon'))$.

Suppose $y\in \partial \bar{B}(x,\varepsilon')\backslash cl(X\backslash \bar{B}(x,\varepsilon'))$ with $x\in [v_0,v_1]$ and $y\in[u_0,u_1]$. Since $y\in \bar{B}(x,\varepsilon')\backslash cl(X\backslash \bar{B}(x,\varepsilon'))$, $[u_0,u_1]\subset \bar{B}(x,\varepsilon')$ and, in particular, there are two geodesic segments of length $\varepsilon'$, $\gamma_0$, $\gamma_1$, such that $u_0\in \gamma_0$ and $u_1\in \gamma_1$.

If there is $v_i$ with $i=0,1$ so that $v_i\in \gamma_0\cap \gamma_1$ then $d(x,v_i)+d(v_i,u_0)+d(u_0,y)=\varepsilon'=d(x,v_i)+d(v_i,u_1)+d(u_1,y)$. Hence, $d(v_i,u_0)+d(u_0,y)=d(v_i,u_1)+d(u_1,y)$ and, since $d(v_i,u_j)\in \mathbb{Z}$ for $j=0,1$, then $|d(u_0,y)-d(u_1,y)|$ is an integer number which means that $y$ is either a vertex or a middle point. This contradicts the fact that $x\in A_\varepsilon$ and, therefore, $d(y,u_i)\neq 0,\frac{1}{2},1$.

Thus, we may assume, relabelling $u_0,u_1$ if necessary, that $v_0,u_0\in \gamma_0$ and $v_1,u_1\in \gamma_1$. As we saw in  Lemma \ref{Lema: ciclos}, see Figure \ref{fig_grafo}, this implies that $\gamma_0,\gamma_1$ form a minimal cycle of length $2\varepsilon'$ which is a contradiction. This proves the claim.

Let $\{y_1,\cdots,y_n\}=\partial \bar{B}(x,\varepsilon')\subset
cl(X\backslash \bar{B}(x,\varepsilon'))$. Let $\delta_0<\delta_2$ be such that
the balls $\bar{B}(y_i,\delta_0)$ are disjoint and contained is some edge. Suppose also that $\varepsilon'-\delta_0>\max_{a\in K_{x,\varepsilon'}}\{d(a,x)\}$ (in particular, $\frac{l_i}{2}\not \in (\varepsilon'-\delta_0,\varepsilon')$). It is immediate to check that if 
$\varepsilon=\varepsilon'-\delta$ for some $\delta<\delta_0$
$\bar{B}(x,\varepsilon)=\bar{B}(x,\varepsilon')\backslash
\{\cup_{i=1}^n B(y_i,\delta)\}$.
\end{proof}

The main point of this lemma is that for any $\varepsilon'$ but a finite number and any $x\in A_{\varepsilon'}$ the ball about $x$ of radius $\varepsilon=\varepsilon'-\delta$ (with $\delta$ small enough) is determined by the ball of radius $\varepsilon'$ and the number $\delta$ independently of its center $x$.

\begin{obs}\label{Obs: simplicial} Let $X$ be a finite metric graph and let $\varepsilon' >0$ such that $\varepsilon'=k\cdot \frac{1}{2}+\varepsilon'_0$ with $0<\varepsilon'_0<\frac{1}{2}$ and $\varepsilon'_0\neq\frac{1}{4}$. Suppose $\delta_0<\varepsilon'_0,|\varepsilon'_0-\frac{1}{4}|$. Then, for any $\varepsilon\in (\varepsilon'-\delta_0, \varepsilon')$ there is a canonical simplicial map  $i_{\varepsilon,\varepsilon'}\colon X_\varepsilon \to X_{\varepsilon'}$ which is an isomorphism.
\end{obs}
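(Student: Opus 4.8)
The plan is to reduce the statement to an elementary fact about the subdivision construction $\varepsilon\mapsto X_\varepsilon$: once $\varepsilon$ and $\varepsilon'$ are known to fall in the same branch of the case distinction that defines $X_\bullet$, the two graphs $X_\varepsilon$ and $X_{\varepsilon'}$ are literally the same abstract simplicial complex — only the edge \emph{lengths} differ — and the bijection matching up the two labelled subdivisions is the isomorphism we want.

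First I would locate $\varepsilon$. Write $\varepsilon=k'\cdot\frac{1}{2}+\varepsilon_0$ with $0\le\varepsilon_0<\frac{1}{2}$ and $k'\in\mathbb{N}$ (this decomposition is unique). Since $\varepsilon\in(\varepsilon'-\delta_0,\varepsilon')=\left(k\cdot\frac{1}{2}+\varepsilon'_0-\delta_0,\ k\cdot\frac{1}{2}+\varepsilon'_0\right)$ with $\delta_0<\varepsilon'_0<\frac{1}{2}$, this entire interval lies in $\left(k\cdot\frac{1}{2},\ (k+1)\cdot\frac{1}{2}\right)$, so $k'=k$ and $\varepsilon_0=\varepsilon-k\cdot\frac{1}{2}\in(\varepsilon'_0-\delta_0,\varepsilon'_0)\subset(0,\frac{1}{2})$. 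Moreover $\delta_0<|\varepsilon'_0-\frac{1}{4}|$ together with $\varepsilon_0<\varepsilon'_0$ forces $\varepsilon_0\ne\frac{1}{4}$: if $\varepsilon'_0>\frac{1}{4}$ then $\varepsilon_0>\varepsilon'_0-\delta_0>\frac{1}{4}$, while if $\varepsilon'_0<\frac{1}{4}$ then $\varepsilon_0<\varepsilon'_0<\frac{1}{4}$. Hence both $\varepsilon$ and $\varepsilon'$ fall under the ``$5$ new vertices per edge, $6$ parts'' branch of the definition of $X_\bullet$, with the same integer $k$.

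Next I would record the combinatorial shape of $X_\varepsilon$ in that branch and note that it does not depend on the exact value of $\varepsilon_0$. Fix once and for all an ordering of the two endpoints of every edge of $X$. For each edge $e=[v,v']\in\mathcal{E}$, the graph $X_\varepsilon$ contains five new vertices $w_1^e,\dots,w_5^e$ (with $w_3^e$ the midpoint of $e$) together with the six sub-edges $[v,w_1^e],[w_1^e,w_2^e],[w_2^e,w_3^e],[w_3^e,w_4^e],[w_4^e,w_5^e],[w_5^e,v']$; varying $\varepsilon_0$ inside $(0,\frac{1}{4})\cup(\frac{1}{4},\frac{1}{2})$ changes only the \emph{lengths} assigned to these six sub-edges, never the incidence pattern. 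Consequently $X_\varepsilon$ and $X_{\varepsilon'}$ have the same vertex set $\mathcal{V}\sqcup\{\,w_j^e : e\in\mathcal{E},\ 1\le j\le 5\,\}$ and the same set of $1$-simplices. I would then let $i_{\varepsilon,\varepsilon'}\colon X_\varepsilon\to X_{\varepsilon'}$ be the simplicial map which is the identity on the vertices of $X$ and sends $w_j^e\mapsto w_j^e$ for all $e$ and all $j$; it is a bijection of vertex sets and carries each of the six sub-edges of $e$ in $X_\varepsilon$ onto the sub-edge of $e$ in the same position in $X_{\varepsilon'}$, so it is a simplicial isomorphism. It is the ``canonical'' one in that it is the unique simplicial map respecting the data ``which edge of $X$, and which of the six positions along it''; note that it is in general \emph{not} the isometry $i_{\varepsilon'}^{-1}\circ i_{\varepsilon}$ obtained from the canonical identifications $X_\varepsilon\cong X\cong X_{\varepsilon'}$, since the interior vertices have moved.

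The only point requiring genuine care is the first step: verifying that the two imposed bounds $\delta_0<\varepsilon'_0$ and $\delta_0<|\varepsilon'_0-\frac{1}{4}|$ (together with $\varepsilon<\varepsilon'$) are precisely what keeps $\varepsilon$ in the same branch as $\varepsilon'$, with the same $k$, and keeps $\varepsilon_0$ away from the excluded values $0$ and $\frac{1}{4}$. Everything after that is the tautology that subdividing an edge ``into six parts'' produces the same graph whatever positive lengths are assigned to the parts.
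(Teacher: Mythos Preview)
Your argument is correct and is exactly the reasoning the paper leaves implicit: the statement is recorded there as a remark without proof, and your verification that $\varepsilon$ and $\varepsilon'$ share the same integer $k$ and lie on the same side of $\frac{1}{4}$---hence land in the same ``six sub-edges per edge'' branch of the construction of $X_\bullet$---is precisely the content being asserted. Your observation that the canonical map is the one matching positions along each edge (and not the underlying isometry $i_{\varepsilon'}^{-1}\circ i_\varepsilon$) is the right reading, consistent with how the paper uses $i_{\varepsilon,\varepsilon'}(A_i^\varepsilon)=A_i^{\varepsilon'}$ in the subsequent theorem.
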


\begin{teorema} For every finite metric graph $X$ there is a finite number of critical
times.
\end{teorema}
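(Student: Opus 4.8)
The plan is to exhibit a finite set $\mathcal{B}\subset(0,\operatorname{diam}(X)]$ of ``bad'' radii with the property that every critical time lies in $\mathcal{B}$. Since a critical time $\varepsilon$ comes equipped, by definition, with a sequence $t_i\uparrow\varepsilon$ for which $p_{t_i}(X)\not\cong p_\varepsilon(X)$, it suffices to prove that for every $\varepsilon'\notin\mathcal{B}$ there is $\delta_0>0$ with $p_\varepsilon(X)\cong p_{\varepsilon'}(X)$ for all $\varepsilon\in(\varepsilon'-\delta_0,\varepsilon')$: such an $\varepsilon'$ cannot be critical. I would take $\mathcal{B}=\mathcal{B}_1\cup\mathcal{B}_2\cup\mathcal{B}_3$, where $\mathcal{B}_1$ is the finite set of $\varepsilon\le\operatorname{diam}(X)$ with $\varepsilon\equiv 0$ or $\tfrac14\pmod{\tfrac12}$; $\mathcal{B}_2=\{l_i/2:1\le i\le k\}$ for $l_1,\dots,l_k$ the lengths of the finitely many minimal cycles of $X$; and $\mathcal{B}_3$ the finite set of radii at which the piecewise linear sublevel set $p_\varepsilon^{-1}(\{X\})=\{x\in X:\Phi(x)\le\varepsilon\}$ changes its combinatorial position with respect to the subdivision $X_\varepsilon$. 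Finiteness of $\mathcal{B}_3$ holds because $\Phi$ is piecewise linear on $X$, whereas within each of the four cases $\varepsilon'_0=0$, $0<\varepsilon'_0<\tfrac14$, $\varepsilon'_0=\tfrac14$, $\tfrac14<\varepsilon'_0<\tfrac12$ (writing $\varepsilon'=k\tfrac12+\varepsilon'_0$) the vertices of $X_\varepsilon$ depend affinely on $\varepsilon$, so the relevant coincidences occur only finitely often.

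Now fix $\varepsilon'\notin\mathcal{B}$. Since $\varepsilon'_0\notin\{0,\tfrac14\}$, Observation \ref{Obs: simplicial} gives $\delta_1>0$ and, for each $\varepsilon\in(\varepsilon'-\delta_1,\varepsilon')$, a simplicial isomorphism $i_{\varepsilon,\varepsilon'}\colon X_\varepsilon\to X_{\varepsilon'}$. By Theorem \ref{identif}, $p_\varepsilon(X)\cong X_\varepsilon/\!\sim_\varepsilon$, so it is enough to show that, after shrinking $\delta_1$, the isomorphism $i_{\varepsilon,\varepsilon'}$ carries $\sim_\varepsilon$ onto $\sim_{\varepsilon'}$. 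Regarding $X_\varepsilon$ and $X_{\varepsilon'}$ as the same underlying space $X$, subdivided differently, $\sim_\varepsilon$ is just the relation $z\sim z'\iff\bar B(z,\varepsilon)=\bar B(z',\varepsilon)$ on $X$, and the semigroup property $\bar B(z,\varepsilon')=\pi(\bar B(z,\varepsilon),\varepsilon'-\varepsilon)$ already shows that $\sim_\varepsilon$ refines $\sim_{\varepsilon'}$ for $\varepsilon<\varepsilon'$. The key step is the converse on ``generic'' balls: \emph{if $z,z'\in A_{\varepsilon'}$ and $\bar B(z,\varepsilon')=\bar B(z',\varepsilon')$, then $\bar B(z,\varepsilon)=\bar B(z',\varepsilon)$ for every $\varepsilon\in(\varepsilon'-\delta_0,\varepsilon')$ with $\delta_0$ small.} Indeed, since $2\varepsilon'\neq l_i$ for all $i$, the Claim established in the proof of Lemma \ref{ball growth} gives $\partial\bar B(z,\varepsilon')\subset \mathrm{cl}(X\setminus\bar B(z,\varepsilon'))$, and then, exactly as in that lemma, $\bar B(z,\varepsilon'-\delta)=\bar B(z,\varepsilon')\setminus\bigcup_iB(y_i,\delta)$ with $\{y_i\}=\partial\bar B(z,\varepsilon')$; here $\delta_0$ can be chosen uniformly over all centers of the ball $\bar B(z,\varepsilon')$, of which there are finitely many by Proposition \ref{preim. finitos puntos}. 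As the right-hand side depends on the center only through $\bar B(\cdot,\varepsilon')$, the two balls of radius $\varepsilon$ coincide.

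By Proposition \ref{finite points proj}, all but finitely many balls of $p_{\varepsilon'}(X)$ are images of points of $A_{\varepsilon'}$; moreover, by the construction of $X_{\varepsilon'}$ used in the proof of Theorem \ref{identif}, every open sub-edge of $X_{\varepsilon'}$ lies entirely in $A_{\varepsilon'}$ or entirely in $p_{\varepsilon'}^{-1}(\{X\})$, and a ball with one center in $A_{\varepsilon'}$ has all of its centers there (by Theorem \ref{identif}(a) such a center is again an interior point of a sub-edge, hence again in $A_{\varepsilon'}$). Thus the key step identifies $\sim_\varepsilon$ with $\sim_{\varepsilon'}$ on all $A$-type sub-edges; the edge-wise propagation of identifications in Theorem \ref{identif}(a) then matches the block structure and the gluing data, the induced identifications of endpoints follow from continuity of $p_\varepsilon$, and Theorem \ref{identif}(b) rules out any other vertex identification. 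What remains is the finite set of exceptional classes of $\sim_{\varepsilon'}$ (those contained in $p_{\varepsilon'}(X)\setminus p_{\varepsilon'}(A_{\varepsilon'})$), in particular the collapsed class $\{X\}$: for these I would invoke Theorem \ref{identif}(c), Lemma \ref{balls} and Corollary \ref{isometry 2}, using that $\varepsilon'\notin\mathcal{B}_3$ so that the collapsing sub-edges of $X_\varepsilon$ are matched under $i_{\varepsilon,\varepsilon'}$ with those of $X_{\varepsilon'}$ once $\delta_1$ is small enough. Hence $i_{\varepsilon,\varepsilon'}$ carries $\sim_\varepsilon$ onto $\sim_{\varepsilon'}$ and descends to a homeomorphism $p_\varepsilon(X)\cong p_{\varepsilon'}(X)$, which completes the argument.

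The step I expect to be the main obstacle is the treatment of these non-generic, exceptional classes: making the definition of $\mathcal{B}_3$ precise and checking that avoiding it really forces both the collapsing pattern and the induced vertex identifications to be locally constant in $\varepsilon$ — this combines the piecewise linearity of $\Phi$ on $X$ with the affine dependence of the vertices of $X_\varepsilon$ on $\varepsilon$, and requires careful bookkeeping of the uniform thresholds $\delta_0$ over the finitely many centers of a given ball. By contrast, the behaviour on the interiors of sub-edges is essentially a repackaging of Observation \ref{Obs: simplicial}, Lemma \ref{ball growth}, and the semigroup property of the semiflow.
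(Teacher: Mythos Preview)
Your overall strategy matches the paper's: exclude a finite set of bad radii and show that away from them the homeomorphism type of $p_\varepsilon(X)$ is locally constant from the left, via the simplicial isomorphism $i_{\varepsilon,\varepsilon'}$ of Remark~\ref{Obs: simplicial} and the ball-shrinking formula of Lemma~\ref{ball growth}. The treatment of the interiors of sub-edges lying in $A_{\varepsilon'}$ is essentially the paper's argument.

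There are, however, two points where your outline diverges from the paper and one of them is a genuine gap.

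First, the set $\mathcal{B}_3$ is not needed. The paper excludes only your $\mathcal{B}_1\cup\mathcal{B}_2$. For an interior point $x$ of a sub-edge with $\bar B(x,\varepsilon')=X$, the hypothesis $2\varepsilon'\neq l_i$ together with Lemma~\ref{Lema: ciclos} forces $\partial\bar B(x,\varepsilon')=\emptyset$; hence $\bar B(x,\varepsilon)=X$ for all $\varepsilon$ in a left neighbourhood of $\varepsilon'$, and the collapsed sub-edges persist without any further restriction on $\varepsilon'$. So the ``combinatorial position of $p_\varepsilon^{-1}(\{X\})$'' is already locally constant once $\varepsilon'\notin\mathcal{B}_1\cup\mathcal{B}_2$, and your piecewise-linear analysis of $\Phi$ is unnecessary.

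Second, and more seriously, your sentence ``Theorem~\ref{identif}(b) rules out any other vertex identification'' is a misreading: part (b) says only that a vertex of $X_{\varepsilon'}$ with ball $\neq X$ can be $\sim_{\varepsilon'}$-related only to another vertex, not that such a relation must be inherited from an adjacent edge identification. The case you are skipping is exactly the one the paper devotes the last third of its proof to: two vertices $w,w'$ of $X_{\varepsilon'}$ (not vertices or midpoints of $X$) with $w\sim_{\varepsilon'}w'$, $\bar B(w,\varepsilon')\neq X$, no adjacent sub-edges identified, and $\partial\bar B(w,\varepsilon')$ containing a vertex or midpoint of $X$. Continuity of $p_\varepsilon$ gives you nothing here, and your $\mathcal{B}_3$ does not touch it since the ball is not $X$. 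The paper resolves this by a direct contradiction: it shows that for suitable $\delta$, the points $z\in[a,w]$ and $z'\in[a',w']$ at distance $\delta$ from $w,w'$ satisfy $\bar B(z,\varepsilon'-\delta)=\bar B(w,\varepsilon')\setminus\bigcup_{j\le k}B(y_j,2\delta)=\bar B(z',\varepsilon'-\delta)$, so $z\sim_{\varepsilon'}z'$ with $z,z'$ interior to adjacent sub-edges, contradicting the assumption that no adjacent edges were identified. This is the ``main obstacle'' you anticipated, but it is not a question of enlarging the bad set; it is a structural fact about $\sim_{\varepsilon'}$ that has to be proved.
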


\begin{proof} Let $C_1,... ,C_k$ be the minimal cycles in $X$ with lengths
$l(C_1)=l_1,... , l(C_k)=l_k$. Let $\varepsilon'>0$ be such that $2\varepsilon'\neq l_i \ \forall i$ and suppose
$\varepsilon'=k\cdot \frac{1}{2}+ \varepsilon'_0$ with
$0< \varepsilon'_0<\frac{1}{2}$ and $\varepsilon'_0\neq
0,\frac{1}{4}$. Note that this includes every possible  $\varepsilon'\leq diam(X)$ but a finite number.

Now let $0<\delta_0$ such that
$\delta_0<|2\varepsilon'-l_i| \ \forall i$ and
$\delta_0<\varepsilon'_0,|\varepsilon'_0-\frac{1}{4}|$. Then we claim
that there is some $\delta\leq \delta_0$ such that $\forall
\varepsilon \in (\varepsilon'-\delta,\varepsilon')$,
$p_\varepsilon(X)\cong p_{\varepsilon'}(X)$ and $\varepsilon'$ is not a
critical time. We are going to prove the existence of some $\delta$ for each edge and each vertex of $X_{\varepsilon'}$. Then, since they are a finite number, it suffices to take the minimun.

First, let us study the edges of $X_{\varepsilon'}$.

\underline{Case 1} Suppose $\varepsilon'_0<\frac{1}{4}$. Let $(v,m)$
half an edge with $v$ a vertex and $m$ a middle point. Consider the graph $X_{\varepsilon'}$ defined above and let
$w_1,w_2\in (v,m)$ be the points dividing $(v,m)$ in three
parts: $A_1^{\varepsilon'}=(v,w_1)$ of length $\varepsilon'_0$,
$A_2^{\varepsilon'}=(w_1,w_2)$ with length
$\frac{1}{2}-2\varepsilon'_0$ and $A_3^{\varepsilon'}=(w_3,m)$ with
length $\varepsilon'_0$.

Consider also the graph $X_{\varepsilon}$ with $\varepsilon'-\varepsilon<\delta_0$. Then, by Remark \ref{Obs: simplicial}, the half edge $(v,m)$ is divided in three parts $A_1^\varepsilon$, $A_2^\varepsilon$, $A_3^{\varepsilon'}$ and $i_{\varepsilon,\varepsilon'}\colon X_\varepsilon \to X_{\varepsilon'}$ holds that
$i_{\varepsilon,\varepsilon'}(A_i^\varepsilon)=A_i^{\varepsilon'}$ for $i=1,2,3$.

Since $\varepsilon'-\varepsilon<\delta_0$, there exists some $x_i\in A_i^{\varepsilon'}\cap A_i^{\varepsilon} \quad \forall \, i=1,2,3$. Clearly, either $x_i\in A_{\varepsilon'}$ or $\bar{B}(x_i,\varepsilon')=X$ with no vertices in the border.

If $x_i\in A_{\varepsilon'}$, then, by Lemma \ref{ball growth}, there is some $\delta>0$ such that if $\varepsilon'-\varepsilon<\delta$, then
$x_i\sim_{\varepsilon'} x'_i$ if and only if $x_i \sim_\varepsilon
x'_i$. 

If $\bar{B}(x_i,\varepsilon')=X$
since $2\varepsilon'\neq l_i$ $\forall \, i$, by Lemma \ref{Lema: ciclos}, $\partial B(x_i,\varepsilon')=\emptyset$.
Therefore, there exists $\delta>0$ such that $\forall \varepsilon\in (\varepsilon'-\delta,\varepsilon)$, $\bar{B}(x_i,\varepsilon)=X$. The same holds for $x'_i\in {A'}_i^{\varepsilon'}$ and, therefore, $x_i\sim_{\varepsilon'} x'_i$ if and only if $x_i \sim_\varepsilon x'_i$. By Theorem \ref{identif},  $A_i^{\varepsilon'}\sim_{\varepsilon'} {A'}_i^{\varepsilon'}$ if and only if $A_i^{\varepsilon}\sim_\varepsilon {A'}_i^{\varepsilon}$.

Hence, taking the minimum $\delta$ over all $A_i^{\varepsilon'}$, we conclude that the identification of edges
in $X_\varepsilon$ is the same as in $X_{\varepsilon'}$.

\underline{Case 2} If $\frac{1}{4}<\varepsilon'_0<\frac{1}{2}$ the
argument is analogous redefining the partitions $A_i^{\varepsilon}$ and $A_i^{\varepsilon'}$.

Consider now any vertex: $w_0=v,w_1,w_2$ or $w_3=m$ of $X_{\varepsilon'}$.

If $w_i\not\sim_{\varepsilon'} w'_i$ then $w_i\not\sim_{\varepsilon} w'_i$ for any $\varepsilon<\varepsilon'$ 
since, in a length space, once the balls about two points coincide for some radius they coincide also for any bigger radius.
Thus, it suffices to check that if $w_i\sim_{\varepsilon'}w_i'$, then there exists some
$0<\delta<\delta_0$ such that, 
$\forall
\varepsilon \in (\varepsilon'-\delta,\varepsilon')$ 
the corresponding vertices  in
$X_\varepsilon$, $v_i=i_{\varepsilon,\varepsilon'}^{-1}(w_i)$,$v'_i=i_{\varepsilon,\varepsilon'}^{-1}(w'_i)$ hold that $v_i\sim_\varepsilon
v'_i$.

First note that for the condition to fail, the edges adjacent to $w_i$ and $w'_i$ can't be identified at level
$\varepsilon'$. Otherwise, as we just saw, there would exist some $\delta$ so that for any  $\varepsilon \in [\varepsilon'-\delta,\varepsilon']$ the corresponding edges are identified and, with them, the vertices in their closure.

Also, if every point in the border of the ball were in the interior of an edge of $X$ different from the middle point, using the same argument from \ref{ball
growth} and assuming $\delta$ small enough, we obtain that $w_i\sim_\varepsilon w'_i$ with $w_i, w'_i$ contained in edges adjacent to $v_i$ and $v'_i$. Therefore, those edges are identified at level
$\varepsilon$ and, with them, the vertices $v_i$ and $v'_i$ in their closure. 

Thus, let us see the case where $\partial \bar{B}(w_i,\varepsilon')=\partial \bar{B}(w_i,\varepsilon')$ contains vertices or middle point of edges in $X$. Since we assumed that
$0<\varepsilon_0<\frac{1}{2}$, it suffices to consider the vertices in
$X_{\varepsilon'}$ which are not vertices nor middle points of edges in $X$.

So, let us suppose that $w,w$ are two vertices of $X_{\varepsilon'}$ which are not vertices nor middle point of edges in $X$, suppose that $w\sim_{\varepsilon'}w'$, suppose that no edge of $X_{\varepsilon'}$ adjacent to $w$ is identified with any other edge adjacent to $w'$. Suppose $\partial \bar{B}(w,\varepsilon')$ and $\partial \bar{B}(w,\varepsilon')$ contain vertices or middle point of edges in $X$ and suppose that for every $0<\delta<\delta_0$ there is some $\varepsilon \in (\varepsilon'-\delta,\varepsilon')$ such that $v=i_{\varepsilon,\varepsilon'}^{-1}(w)$,$v'=i_{\varepsilon,\varepsilon'}^{-1}(w')$  hold that $v\sim_{\varepsilon}v'$. This will lead to contradiction.

Let us fix $[v,m]$ the middle edge in $X$ containing $w$ and
$[v',m']$ the middle edge in $X$ containing $w'$. Let us relabell $\{a,b\}=\{v,m\}$ so that $d(w,a)=\varepsilon'_0$, $d(w,b)=\frac{1}{2}-\varepsilon'_0$ and $\{a',b'\}=\{v',m'\}$ so that $d(w',a')=\varepsilon'_0$, $d(w',b')=\frac{1}{2}-\varepsilon'_0$. See figure \ref{figura_5_39}.
\begin{quote} This takes account of all the possible cases: 
\begin{itemize}
	\item[a)] If $0<\varepsilon_0<\frac{1}{4}$ and $d(w,v)=d(w',v')=\varepsilon_0$.
	\item[b)] If $0<\varepsilon_0<\frac{1}{4}$ and $d(w,m)=d(w',m')=\varepsilon_0$.
	\item[c)] If $\frac{1}{4}<\varepsilon_0<\frac{1}{2}$ and $d(w,v)=d(w',v')=\frac{1}{2}-\varepsilon_0$.
	\item[d)] If $\frac{1}{4}<\varepsilon_0<\frac{1}{2}$ and $d(w,v)=d(w',v')=\frac{1}{2}-\varepsilon_0$.
\end{itemize}
\end{quote}

\begin{figure}[ht]
\includegraphics[scale=0.4]{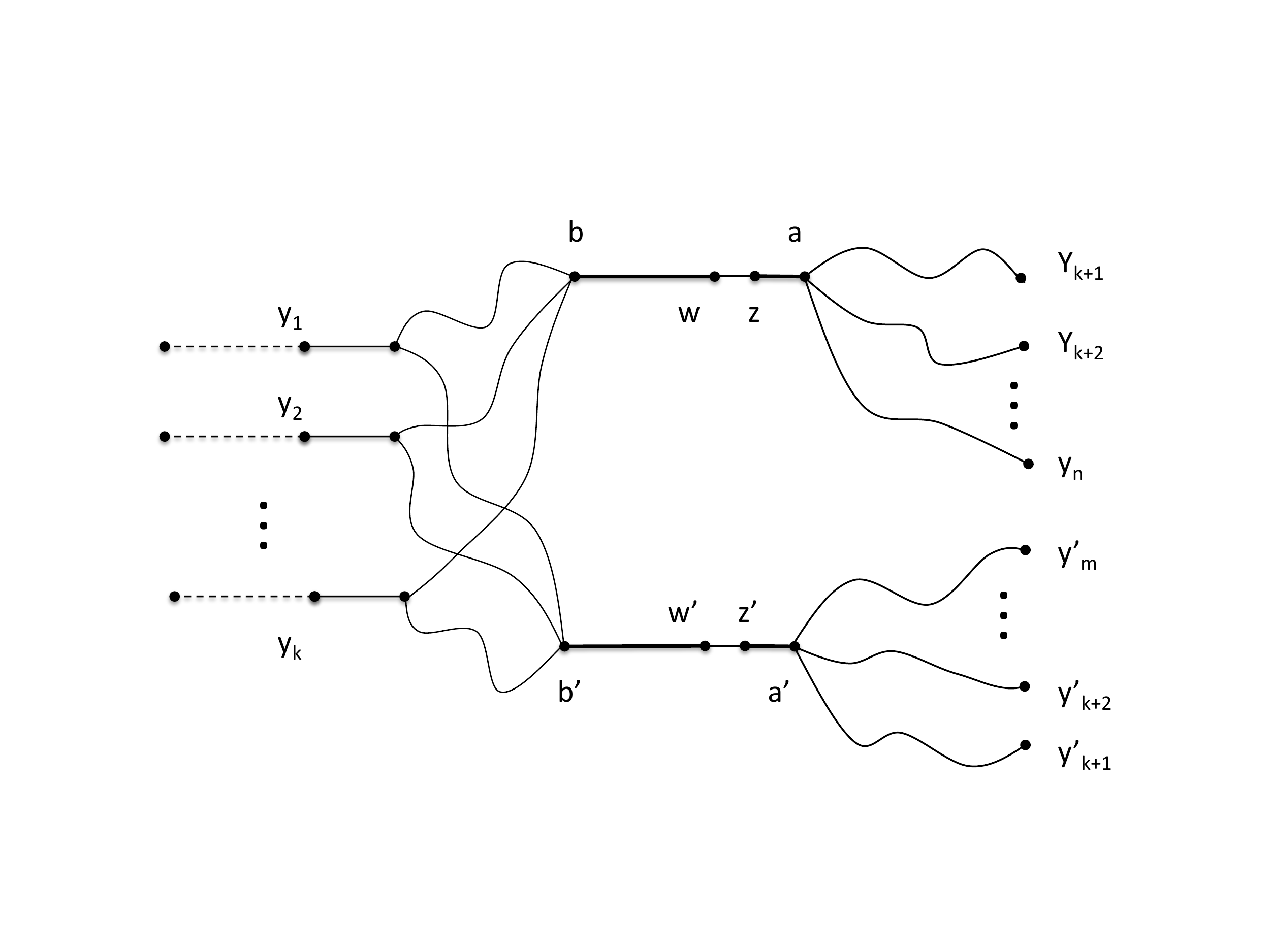}
\caption{Ideal representation where all the border points are in different edges.}\label{figura_5_39}
\end{figure}

We are considering $\varepsilon'_0\neq \frac{1}{4}$,  $|\varepsilon'_0-\frac{1}{2}|\neq k\frac{1}{2}$ for any $k\in \mathbb{Z}$. This means that there is a canonical partition of $\partial B(w,\varepsilon')=\{y_1,...,y_k\}\cup\{y_{k+1},...,y_n\}$ where $\{y_1,...,y_k\}$ are the border points which are not vertices nor middle points, i.e., border poins $y_j$ so that the geodesic segment $[w,y_j]$ contains $b$, and $\{y_{k+1},...,y_n\}$ are the border points which are vertices or middle points, i.e., border poins $y_j$ so that the geodesic segment $[w,y_j]$ contains $a$. 

Also, it is immediate to check that $\{y_1,...,y_k\}\subset cl(X\backslash \bar{B}(w,\varepsilon))$.

The same argument works for $w'$ and, since $cl(X\backslash \bar{B}(w,\varepsilon'))=cl(X\backslash \bar{B}(w',\varepsilon'))$ we obtain that $\partial B(w',\varepsilon')=\{y_1,...,y_k\}\cup\{y'_{k+1},...,y'_m\}$ where $\{y_1,...,y_k\}$ are the border points which are not vertices nor middle points, i.e., border poins so that the geodesic segment $[w',y_j]$ contains $b'$ (and these are the same for $w$ and $w'$), and $\{y'_{k+1},...,y'_m\}$ are the border points which are vertices or middle points, i.e., border poins $y'_j$ so that the geodesic segment $[w,y'_j]$ contains $a'$. (Notice that $\{y'_{k+1},...,y'_m\}$ need not be equal to $\{y_{k+1},...,y_n\}$ if there are border points which are not in $cl(X\backslash \bar{B}(w,\varepsilon'))$. See figure \ref{ejpabierta}).

Let $C$ be the set of edges in $X$ contained in the open ball $B(w,\varepsilon')$ and $C'$ be the set of edges in $X$ contained in the open ball $B(w',\varepsilon')$. Since the realizations $|C|, |C'|$ are compact, there is some $\delta_1$  such that $|C|\subset B(w,\varepsilon'-\delta_1)$ and $|C'|\subset B(w',\varepsilon'-\delta_1)$.

Let $\delta <\varepsilon_0,\delta_1$ be such that the balls $B(y_j,2\delta)$ are disjoint and contained in the interior of half an edge of $X$ $\forall \, 1\leq j\leq k$. Let $z\in [a,w]$ so that $d(z,w)=\delta$ and $z'\in [a',w']$ so that $d(z',w')=\delta$. 

Claim: $\bar{B}(z,\varepsilon'-\delta)=\bar{B}(w,\varepsilon')\backslash \Big(\cup_{j=1}^k B(y_j,2\delta) \Big)=\bar{B}(w',\varepsilon')\backslash \Big(\cup_{j=1}^k B(y_j,2\delta) \Big)=\bar{B}(z',\varepsilon'-\delta)$.

First, notice that $|C|\subset B(w,\varepsilon'-\delta)$ and $|C'|\subset B(w',\varepsilon'-\delta)$. By Lemma \ref{dist unica}, it is immediate to check that $d(z,y_j)=d(w,y_j)-d(w,z)=d(w,y_j)-\delta$ for every $k+1\leq j \leq n$ and $d(z',y'_j)=d(w',y'_j)-d(w',z')=d(w',y'_j)-\delta$ for every $k+1\leq j \leq m$. Hence, for every edge $e$ containing a point $y_j$, $k+1\leq j \leq n$, the geodesic segment $\gamma=[w,y_j]$  holds that $\gamma\cap e\subset \bar{B}(z,\varepsilon'-\delta)$. Also, for any edge $e'$ containing a point $y'_j$, $k+1\leq j \leq m$, the geodesic segment $\gamma'=[w',y'_j]$ holds that $\gamma'\cap e'\subset \bar{B}(z',\varepsilon'-\delta)$.
The only edges left are those containing the points $\{y_1,...,y_j\}$. Also, by Lemma \ref{dist unica}, we have that $d(z,y_j)=d(w,y_j)+d(w,z)=d(w,y_j)+\delta$  and $d(z',y_j)=d(w',y_j)+d(w',z')=d(w',y_j)+\delta$ for every $1\leq j \leq k$. Then, by the election of $\delta$, it is clear that for any edge $e$ contaning a point $y_j$, $1\leq j\leq k$, and for any geodesic segment $\gamma=[w,y_j]$, $(\gamma\cap e)\cap \bar{B}(z,\varepsilon'-\delta)=(\gamma\cap e)\backslash B(y_j,2\delta)$ and for any geodesic segment $\gamma'=[w',y_j]$, $(\gamma'\cap e)\cap \bar{B}(z',\varepsilon'-\delta)=(\gamma'\cap e)\backslash B(y_j,2\delta)$. 
Therefore, we conclude the claim.

Hence, $\bar{B}(z,\varepsilon'-\delta)=\bar{B}(z',\varepsilon'-\delta)$. In particular, $z\sim_{\varepsilon'} z'$ and we obtain the contradiction since there are no adjacent edges identified.
\end{proof}

\begin{cor} For every finite metric graph $X$  there is a finite number of possible
topological types on the set of projections $\{p_\varepsilon(X) \, : \, \varepsilon>0\}$.
\end{cor}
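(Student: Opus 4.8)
The plan is to bound, uniformly in $\varepsilon$, the combinatorial complexity of the graph $p_\varepsilon(X)$ and then to invoke the elementary fact that only finitely many homeomorphism types of connected finite graphs have at most a prescribed number of edges.

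First I would combine the identification $p_\varepsilon(X)\cong X_\varepsilon/\!\sim_\varepsilon$ with the explicit construction of $X_\varepsilon$: there each edge of $X$ is subdivided into at most six sub-edges, so $X_\varepsilon$ has at most $6|\mathcal{E}|$ edges, where $|\mathcal{E}|$ denotes the number of edges of $X$. Next I would read off from Theorem \ref{identif} that the quotient map $q_\varepsilon\colon X_\varepsilon\to p_\varepsilon(X)$ is \emph{cellular}: by part (c) the closed $\varepsilon$-ball equals $X$ either at every point of an edge of $X_\varepsilon$ or at none, and in the first case that edge is collapsed to a point; by part (a) two interior points of one and the same edge are never $\sim_\varepsilon$-related, and an interior point of an edge $e$ on which the $\varepsilon$-ball is not all of $X$ can only be related to interior points of edges identified with $e$ \emph{in their entirety}; by part (b) vertices are identified only with vertices. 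Hence $p_\varepsilon(X)$, which is a finite graph by Theorem \ref{grafo}, carries a graph structure whose edges are exactly the $q_\varepsilon$-images of the non-collapsed edges of $X_\varepsilon$; in particular it has at most $6|\mathcal{E}|$ edges.

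To finish I would argue that a connected finite graph with at most $N$ edges, after suppressing its vertices of order $2$, still has at most $N$ edges and (since the orders of the vertices add up to twice the number of edges) at most $2N$ vertices, so only finitely many homeomorphism types can occur; applying this with $N=6|\mathcal{E}|$ to the family $\{p_\varepsilon(X):\varepsilon>0\}$ — each member connected, being a continuous image of $X$ — gives the corollary. As a by-product this recovers and sharpens the Euler-characteristic estimate, since $\aleph(p_\varepsilon(X))=V-E\geq 1-6|\mathcal{E}|$.

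The step that needs genuine care is the passage from Theorem \ref{identif} to cellularity: one must make sure that $\sim_\varepsilon$ never splits an edge of $X_\varepsilon$ and never mixes, within a single edge, the regime where the $\varepsilon$-ball is $X$ with the regime where it is not — this is exactly what forced the fine (six-fold) subdivision in the definition of $X_\varepsilon$, and everything else is routine bookkeeping. A shorter but slightly weaker alternative is available from the preceding theorem: the potentially critical times lie in an explicit finite set $B$ (certain multiples of $\tfrac14$ together with the half-lengths of the minimal cycles), and the argument proving that theorem — via Lemma \ref{ball growth} and Remark \ref{Obs: simplicial} — in fact shows that $p_\varepsilon(X)\cong p_{\varepsilon'}(X)$ for all $\varepsilon$ in a two-sided neighbourhood of any $\varepsilon'\notin B$; hence $\varepsilon\mapsto[p_\varepsilon(X)]$ is locally constant on $(0,diam(X)]\setminus B$, which has finitely many connected components, and one then adds the at most $|B|$ extra values attained on $B$ and the single value $\{X\}$ attained for $\varepsilon\geq diam(X)$.
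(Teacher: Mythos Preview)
Your main argument is correct and takes a genuinely different route from the paper. The paper derives the corollary from the preceding theorem on finitely many critical times: the proof of that theorem exhibits an explicit finite set $B\subset(0,diam(X)]$ (the half-lengths of minimal cycles together with the multiples of $\tfrac14$) and shows, via the simplicial isomorphism $i_{\varepsilon,\varepsilon'}$ of Remark~\ref{Obs: simplicial} and Lemma~\ref{ball growth}, that the combinatorial identification pattern of $X_\varepsilon/\!\sim_\varepsilon$ is the same for all $\varepsilon$ in a single component of $(0,diam(X)]\setminus B$; hence the homeomorphism type is constant on each such component, giving at most $|B|+1$ types on the complement plus at most $|B|$ extra types at the points of $B$. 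Your approach bypasses the critical-time analysis entirely: you read off from Theorem~\ref{identif} that $q_\varepsilon$ is cellular on the $(\le 6)$-fold subdivision $X_\varepsilon$, so $p_\varepsilon(X)$ is a connected graph with at most $6|\mathcal E|$ edges, and then invoke the elementary finiteness of homeomorphism types of such graphs. The trade-off is that the paper's route localises \emph{where} the type can change (and thereby yields the critical-times theorem as the substantive statement, with the corollary a by-product), while your route gives a clean explicit upper bound on the \emph{number} of types and is logically independent of the delicate critical-time argument. Your closing ``alternative'' is exactly the paper's intended deduction; note that strictly speaking the proof of the critical-times theorem only gives a \emph{left} neighbourhood, and one obtains the two-sided statement by applying the one-sided result at nearby $\varepsilon''>\varepsilon'$ together with the observation that the admissible $\delta$ in that proof depends continuously on $\varepsilon'$ away from $B$.
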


\section[Geometrical models  for the semiflow]{ $\mathbb{R}$-trees: geometrical models  for the semiflow}\label{section trees}

A \emph{real tree} or \emph{$\mathbb{R}$--tree} is a metric space $(T,d)$
that is uniquely arcwise connected and $\forall \, x, y \in T$ the
unique arc from $x$ to $y$, denoted $[x,y]$, is isometric to the
subinterval $[0,d(x,y)]$ of $\mathbb{R}$. In this section we are going to use $\mathbb{R}$-trees and their ends spaces, in the sense of \cite{Hug} (see also \cite{Mart-Mor} and \cite{Hug-M-M}), to produce a geometrical model for the semiflow and use it to describe the identification process of the closed balls in the levels of the semiflow.

We propose here the idea that some properties of the semiflow can be reflected into geometrical properties of end spaces of certain $\mathbb{R}$-trees. We treat herein the property of being topologically robust. To do this we use Whitney functions on hyperspaces. Let us recall the definition from \cite{N2}.

\begin{definicion} Let $\mathcal{H}=2^X$ or $C(X)$. A Whitney function in $\mathcal{H}$ is a continuous function $w:\mathcal{H}
\to [0,+\infty)$ satisfying:

\begin{itemize} \item[(a)] If $A,B\in \mathcal{H}$ are such that $A\subset
B$ and $A\neq B$ then $w(A)<w(B)$. \item[(b)] $w(\{x\})=0$ for every $x\in X$.
\end{itemize}
\end{definicion}

It is well known the existence of a Whitney map $w: 2^X_H \to
[0,\infty)$ for every nonempty compact metric space, see \cite{N2}. A natural way
of defining levels on the semiflow, instead of considering projections
$p_\varepsilon$, would be to consider Whitney levels restricted to the semiflow
$\mathcal{B}:=\{\bar{B}(x,\varepsilon \ | x\in X \mbox{ and }
\varepsilon>0)\}$: $w^{-1}(t)\cap \mathcal{B}$.

Nevertheless, the behavior of those levels doesn't work for some
of the results given here. For example, theorems \ref{flujo
homeom.geod} and \ref{flujo homeom.pol} would fail even for very simple
examples of Peano continua.

Let us define a projection $\pi_t:X \to w^{-1}(t)\cap \mathcal{B}$
sending each point $x\in X$ to the unique closed ball
$\bar{B}(x,\varepsilon)\in \mathcal{B}$ such that
$w(\bar{B}(x,\varepsilon))=t$. This is well defined since $w$ is
continuous and strictly increasing.

First, let's see that if we try to do the same using levels defined by a Whitney function we lose information, specially with respect to the map relating the initial space with the corresponding level.

\begin{ejp} Let $X$ be the $[0,1]$ subinterval of the real line with
the euclidean metric. For any Whitney function $w: 2^X_H \to
[0,\infty)$ and for every $\varepsilon_0>0$ there exists
$t<\varepsilon_0$ such that $\pi_t:X \to w^{-1}(t)\cap \mathcal{B}$
is not a homeomorphism.

Let $0<\varepsilon<1$ such that $w(\bar{B}(0,\varepsilon))=t$. Consider the closed ball
$\bar{B}(\frac{\varepsilon}{2},\frac{\varepsilon}{2})$. Clearly these
two balls coincide and thus $\pi_t(0)=\pi_t(\frac{\varepsilon}{2})$
and $\pi_t$ is not injective nor a homeomorphism.
\end{ejp}

Nevertheless we can establish some relation between Whitney levels and the levels defined in the semiflow.

\begin{lema} For every $0<\varepsilon_0$ there
exists some $t_0>0$ such that $\forall t<t_0 \quad w^{-1}(t)\cap
\mathcal{B}\subset \cup_{\varepsilon \leq \varepsilon_0} p_\varepsilon(X)$.
\end{lema}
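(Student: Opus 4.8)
The plan is to produce an explicit candidate for $t_0$ and then verify the inclusion by a short monotonicity argument. First I would set
\[ t_0 := \min_{x \in X} w\bigl(\bar B(x,\varepsilon_0)\bigr). \]
This minimum is attained: the assignment $x \mapsto \bar B(x,\varepsilon_0) = p_{\varepsilon_0}(x)$ is continuous from $X$ into $2^X_H$ (Remark \ref{quotient}), $w$ is continuous, and $X$ is compact. The one genuinely necessary observation is that $t_0 > 0$. Since $X$ is a connected length space with more than one point, no ball of positive radius can reduce to its centre (otherwise that centre would be an isolated point, contradicting connectedness), so $\{x\} \subsetneq \bar B(x,\varepsilon_0)$ for every $x \in X$; Whitney property (a) then gives $w(\bar B(x,\varepsilon_0)) > w(\{x\}) = 0$, and taking the minimum over the compact set $X$ keeps it strictly positive.

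Next I would fix an arbitrary $t < t_0$ and an arbitrary $B \in w^{-1}(t) \cap \mathcal{B}$, write $B = \bar B(x,\varepsilon)$ for some $x \in X$ and $\varepsilon \geq 0$, and show $\varepsilon \leq \varepsilon_0$; this finishes the proof, because then $B = p_\varepsilon(x) \in p_\varepsilon(X) \subset \bigcup_{\varepsilon \leq \varepsilon_0} p_\varepsilon(X)$. I would argue by contradiction, assuming $\varepsilon > \varepsilon_0$. If $\bar B(x,\varepsilon_0) = X$, then $\bar B(x,\varepsilon) = X$ as well (the closed balls about $x$ are nested in the radius and cannot shrink), so $w(B) = w(X) = w(\bar B(x,\varepsilon_0)) \geq t_0 > t = w(B)$, a contradiction. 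If $\bar B(x,\varepsilon_0) \neq X$, then Lemma \ref{increasing} yields $\bar B(x,\varepsilon_0) \subsetneq \bar B(x,\varepsilon) = B$, so Whitney property (a) gives $w(\bar B(x,\varepsilon_0)) < w(B) = t < t_0 \leq w(\bar B(x,\varepsilon_0))$, again a contradiction. Hence $\varepsilon \leq \varepsilon_0$, as desired.

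I do not expect a serious obstacle: the statement reduces to compactness together with the strict monotonicity of $w$ along the order arcs in $\mathcal{B}$. The only point needing care is the strict positivity of $t_0$, which rests on the non-degeneracy of $X$ and on the separating feature of Whitney functions (property (a) applied to a singleton inside a strictly larger closed set); note also that the bound $\varepsilon \leq \varepsilon_0$ is derived from an \emph{arbitrary} representation $B = \bar B(x,\varepsilon)$, so no choice of representative for an element of $\mathcal{B}$ is involved.
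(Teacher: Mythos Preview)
Your proof is correct and follows essentially the same approach as the paper: both define $t_0 = \min_{x\in X} w(\bar B(x,\varepsilon_0))$ by compactness and then use the monotonicity of $w$ along the order arcs $\varepsilon \mapsto \bar B(x,\varepsilon)$ to conclude. Your write-up is in fact more careful than the paper's, which does not explicitly verify $t_0>0$ nor separate the two cases; the case split is harmless but unnecessary, since in either case $\bar B(x,\varepsilon_0)\subset B$ gives $w(\bar B(x,\varepsilon_0))\leq w(B)=t<t_0\leq w(\bar B(x,\varepsilon_0))$.
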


\begin{proof} Since $X$ is compact there exists $t_0=\min_{x\in X}\{w(\bar{B}(x,\varepsilon_0))
\}$. $\forall t<t_0$ and $\forall y\in X$ there exists some
$\delta>0$ such that $\pi_t(y)=\bar{B}(y,\delta)$ and, since the
Whitney map is increasing on the trajectories,
$\delta<\varepsilon_0$.
\end{proof}

\begin{lema} For every $0<t_0$ there
exists some $\varepsilon_0>0$ such that $\forall \, \varepsilon<\varepsilon_0$
and $\forall \, x\in X \ w(\bar{B}(x,\varepsilon))<t_0$.
\end{lema}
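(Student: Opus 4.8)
The plan is to reduce the statement to a compactness argument — essentially the tube (Wallace) lemma — applied to the continuous function $(x,\varepsilon)\mapsto w(\bar{B}(x,\varepsilon))$.

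First I would observe that the assignment $(x,\varepsilon)\mapsto \bar{B}(x,\varepsilon)$ is a continuous map from $X\times[0,\infty)$ into $2^X_H$. Indeed, $\bar{B}(x,\varepsilon)=\pi(\{x\},\varepsilon)$, and $\pi$ is continuous because $(2^X_H,\mathbb{R}_+,\pi)$ is a semidynamical system, while $x\mapsto\{x\}$ is the canonical isometric embedding of $X$ into $2^X_H$. Composing with the continuous Whitney function $w$, the function $F\colon X\times[0,\infty)\to[0,\infty)$ given by $F(x,\varepsilon):=w(\bar{B}(x,\varepsilon))$ is continuous.

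Next, fix $t_0>0$. Since $w(\{x\})=0$ for every $x\in X$, we have $F(x,0)=0<t_0$ for all $x$, so $X\times\{0\}\subset F^{-1}([0,t_0))$, which is open in $X\times[0,\infty)$. For each $x\in X$ choose an open neighborhood $U_x$ of $x$ in $X$ and some $\delta_x>0$ with $U_x\times[0,\delta_x)\subset F^{-1}([0,t_0))$. By compactness of $X$, finitely many $U_{x_1},\dots,U_{x_n}$ cover $X$; set $\varepsilon_0:=\min\{\delta_{x_1},\dots,\delta_{x_n}\}>0$. Then for any $\varepsilon<\varepsilon_0$ and any $x\in X$, picking $i$ with $x\in U_{x_i}$ gives $(x,\varepsilon)\in U_{x_i}\times[0,\delta_{x_i})$, hence $w(\bar{B}(x,\varepsilon))=F(x,\varepsilon)<t_0$, which is the assertion.

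There is essentially no serious obstacle: the only point requiring care is the joint continuity of $(x,\varepsilon)\mapsto\bar{B}(x,\varepsilon)$, which is immediate from the continuity of the semiflow $\pi$ and of the canonical embedding of $X$; the remainder is the standard tube lemma for compact spaces. (Alternatively, one could argue directly by uniform continuity of $F$ on the compact set $X\times[0,\mathrm{diam}(X)]$, but the covering argument above is the cleanest.)
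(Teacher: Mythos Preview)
Your proof is correct. The paper's own argument is a one-line compactness statement: it simply sets $\varepsilon_0=\min_{x\in X}\{\varepsilon : w(\bar{B}(x,\varepsilon))=t_0\}$ and invokes compactness of $X$. That version implicitly uses the strict monotonicity of $w$ along trajectories (so that for each $x$ there is a unique such $\varepsilon$, giving a well-defined function $x\mapsto\varepsilon_x$ whose minimum is attained and positive). Your tube-lemma formulation is a bit longer but has the advantage of needing only the continuity of $F(x,\varepsilon)=w(\bar{B}(x,\varepsilon))$ and the fact that $F(x,0)=0$; no monotonicity is required. Both are the same compactness idea, just packaged differently.
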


\begin{proof} Since $X$ is compact let $\varepsilon_0=min_{x\in X}\{\varepsilon \ | \
w(\bar{B}(x,\varepsilon))=t_0\}$.
\end{proof}

We may also give a Lyapunov function from the Whitney function, obtaining also that it takes value 1 on the single points:

\begin{prop}\label{lyapunov} For any compact length space $(X,d)$ there exists a
Lyapunov function $\Phi:C(X) \to [0,1]$ with $\Phi(x)=1 \ \forall
x$ and $\Phi(X)=0$.
\end{prop}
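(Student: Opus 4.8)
The plan is to build $\Phi$ from a Whitney function rather than from the Hausdorff‑distance function $A\mapsto d_H(A,X)$ used earlier: a Whitney map is already strictly monotone along the order arcs which, by the Proposition on trajectories, are exactly the trajectories of $\pi$, so essentially no extra work is needed to make it decreasing along orbits, and it automatically vanishes on singletons.

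Concretely, I would first invoke the existence of a Whitney map $w\colon 2^X_H\to[0,\infty)$ (see \cite{N2}) and restrict it to $C(X)$. Since $X$ is non-degenerate, for any $x\in X$ we have $\{x\}\subsetneq X$, so Whitney axiom (a) gives $0=w(\{x\})<w(X)$; in particular $w(X)>0$ and we may set
\[
\Phi\colon C(X)\longrightarrow[0,1],\qquad \Phi(A):=1-\frac{w(A)}{w(X)}.
\]
Continuity of $\Phi$ follows from that of $w$. For $A\in C(X)$ and any $x\in A$ one has $\{x\}\subseteq A\subseteq X$, hence $0\le w(A)\le w(X)$ by the Whitney axioms, so $\Phi$ takes values in $[0,1]$, with $\Phi(\{x\})=1$ for every $x\in X$ and $\Phi(X)=0$. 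Moreover $\Phi(A)=0$ iff $w(A)=w(X)$ iff $A=X$, and $\Phi(A)>0$ otherwise; this is condition (i) in the definition of a Lyapunov function, with attractor $M=\{X\}$ and neighbourhood $N=C(X)$ (the whole positively invariant phase space).

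It then remains to check the decrease condition (ii): $\Phi(\pi(A,t))<\Phi(A)$ whenever $A\neq X$ and $t>0$. Here I would recall that $C(X)$ is positively invariant under $\pi$ — in a compact (hence geodesic) length space the generalized closed ball $\bar B(A,t)$ about a subcontinuum $A$ is again a subcontinuum, since every one of its points is joined to $A$ by a geodesic segment lying inside $\bar B(A,t)$ — and that Lemma \ref{increasing}, applied with $\varepsilon_0=0<\varepsilon_1=t$, gives $A=\bar B(A,0)\subsetneq\bar B(A,t)$ as soon as $A\neq X$. Hence, by Whitney axiom (a), $w(A)<w(\bar B(A,t))=w(\pi(A,t))$, so $\Phi(\pi(A,t))<\Phi(A)$; and when $\bar B(A,t)=X$ already, $\Phi(\pi(A,t))=0<\Phi(A)$ still holds since $\Phi(A)>0$. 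I do not anticipate a genuine obstacle; the only points needing a moment's care are the positive invariance of $C(X)$ and the strict inclusion $A\subsetneq\bar B(A,t)$, both of which rest on the connectedness of $X$ together with Lemma \ref{increasing}. The identical argument works verbatim on $2^X_H$ in place of $C(X)$.
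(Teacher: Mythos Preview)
Your proof is correct and follows essentially the same approach as the paper: take a Whitney map, set $\Phi=1-w$ (after normalizing so that $w(X)=1$), and invoke Lemma~\ref{increasing} for the strict decrease along trajectories. Your version is simply more detailed, making the normalization and the positive invariance of $C(X)$ explicit; the only cosmetic point is that Lemma~\ref{increasing} is stated for $0<\varepsilon_0<\varepsilon_1$, so strictly speaking one should note (as its proof shows) that $A\neq X$ forces $A\subsetneq\bar B(A,t)$ because otherwise $A$ would be clopen.
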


\begin{proof} For any compact length space there exists $w:C(X)\to [0,1]$
a Whitney map such that $w(x)=0 \ \forall x \in X$ and
$\Phi(X)=1$. It suffices to define $\Phi(A):=1-w(A)$ to obtain such
a Lyapunov function. Notice that we only need the function to be
decreasing along trajectories and this comes from
\ref{increasing}.
\end{proof}

As we introduced above, now we are going to characterize the property of being topologically robust in terms of the geometry in the boundary of the $\mathbb{R}$--tree induced by the semiflow.

\begin{lema}\cite{Chi}\label{0-hip} A metric space $(X,d)$ is an $\mathbb{R}$-tree if and only if it is connected and 0-hyperbolic.
\end{lema}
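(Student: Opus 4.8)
The plan is to prove the two implications separately, using the definition of $0$-hyperbolicity via the Gromov four-point condition. Recall that a metric space is $0$-hyperbolic if for every four points $w,x,y,z$ the Gromov product inequality
\[
(x\cdot y)_w \geq \min\{(x\cdot z)_w, (y\cdot z)_w\}
\]
holds, where $(x\cdot y)_w = \tfrac{1}{2}\big(d(w,x)+d(w,y)-d(x,y)\big)$; equivalently, among the three sums $d(w,x)+d(y,z)$, $d(w,y)+d(x,z)$, $d(w,z)+d(x,y)$ the largest two are equal.

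For the forward direction, assume $(X,d)$ is an $\mathbb{R}$-tree. It is connected by hypothesis (being arcwise connected), so I only need $0$-hyperbolicity. First I would establish the elementary "tripod" fact: for any three points $x,y,z\in T$, the arcs $[x,y]$, $[y,z]$, $[x,z]$ meet in a single point $m$ (the median), with $[x,y]=[x,m]\cup[m,y]$ and similarly for the others, each a concatenation realizing distances. This follows from unique arcwise connectedness: $[x,y]\cup[y,z]$ contains an arc from $x$ to $z$, which must be $[x,z]$ by uniqueness, so $[x,z]\subset[x,y]\cup[y,z]$; a short argument using the isometry of each arc with a real interval then pins down the branch point $m$ and gives $d(x,y)=d(x,m)+d(m,y)$, etc. With the median $m$ in hand, a direct computation shows $(x\cdot y)_z = d(z,m)$ for the appropriate labeling, and then the four-point condition is checked by applying the tripod fact to the relevant triples among $\{w,x,y,z\}$ — concretely, one computes the three sums $d(w,x)+d(y,z)$ etc.\ in terms of distances along the tree and verifies the two largest coincide. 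This is a routine but slightly fiddly case analysis depending on the combinatorial position of the four medians.

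For the converse, assume $(X,d)$ is connected and $0$-hyperbolic; I must produce, for each pair $x,y$, a unique arc isometric to $[0,d(x,y)]$. The key input is that a $0$-hyperbolic space embeds isometrically into an $\mathbb{R}$-tree — this is Gromov's reconstruction / the standard "$0$-hyperbolic spaces are subspaces of trees" theorem — but since I want a self-contained route I would instead argue: $0$-hyperbolicity forces any two distinct arcs between $x$ and $y$ to coincide (if two arcs differed, one could extract four points violating the four-point condition), giving uniqueness; and it forces any arc from $x$ to $y$ to be geodesic, by the following standard argument — if $\gamma$ is an arc from $x$ to $y$ and $p\in\gamma$, apply $0$-hyperbolicity to suitable points of $\gamma$ near $p$ together with $x,y$ to show $d(x,p)+d(p,y)=d(x,y)$, so every point of the arc lies "between" $x$ and $y$; parametrizing $\gamma$ by $t\mapsto d(x,\gamma(t))$ then exhibits it as an isometric copy of $[0,d(x,y)]$. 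Connectedness is used to guarantee arcwise connectedness in the first place: a connected metric space need not be arcwise connected in general, so here one genuinely needs that $0$-hyperbolicity plus connectedness yields arcwise connectedness — this is the delicate point, handled by a Menger-type / completion argument (pass to the completion, which remains connected and $0$-hyperbolic, build geodesics there via the four-point condition using a convergent sequence of "approximate midpoints", then observe the geodesic actually stays in $X$).

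The main obstacle I expect is precisely this last point in the converse: upgrading "connected + $0$-hyperbolic" to "arcwise connected" and constructing the actual arcs, since $0$-hyperbolicity is purely a condition on quadruples of points and does not obviously produce paths. The cleanest fix is to cite the known structure theory (Chiswell, Tits, Dress) rather than reprove it — which is exactly what the paper does by attributing the statement to \cite{Chi} — so in the write-up I would present the forward direction in full (the tripod computation) and for the converse either invoke the embedding theorem or sketch the midpoint-completion construction, flagging it as standard.
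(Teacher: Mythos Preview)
The paper provides no proof of this lemma: it is stated with a citation to Chiswell's book and used as a black box. Your proposal therefore goes well beyond what the paper does, and you yourself note this at the end. There is nothing to compare on the paper's side.

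As for the sketch itself: the forward direction via the tripod/median computation is standard and correct. For the converse, the cleanest route is the one you mention only in passing --- a $0$-hyperbolic space embeds isometrically into an $\mathbb{R}$-tree (this is the construction in Chiswell/Dress), and a connected subspace of an $\mathbb{R}$-tree is again an $\mathbb{R}$-tree; this bypasses the awkward ``connected $\Rightarrow$ arcwise connected'' step entirely. Your alternative plan of building geodesics directly via approximate midpoints in the completion and then arguing they lie in $X$ is more delicate than you indicate: the completion of a $0$-hyperbolic space is $0$-hyperbolic, but showing that a geodesic in the completion between two points of $X$ actually stays in $X$ needs precisely the connectedness hypothesis together with the tree structure of the completion, so you end up proving the embedding theorem anyway. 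If you want a self-contained write-up, the embedding-into-a-tree argument is the honest one; otherwise, citing Chiswell as the paper does is entirely appropriate.
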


%Abusing of the notation, we sometimes identify
%the element of the end space with its image on the tree. This will
%be usually called \emph{branch}. 

%Also, for non-geodesically
%complete $\mathbb{R}$--trees, we also use \emph{branch} to call
%any rooted non-extendable isometric embedding

\begin{definicion}\cite{Gr}\label{Gr1} Given a base point $x$ in a metric space $(X,d)$, the
\emph{Gromov product} of two points $y,z\in X$ is
\[(y\cdot z)_x=\frac{1}{2}\{d(x,y)+d(x,z)-d(y,z)\}.\]
\end{definicion}

Let us define a subset $A\subset X \times [0,diam(X))$ where the
pair $(x,t)\in A$ if $0\leq  t < t_x:=inf\{t:\bar{B}(x,t)=X\}$.
Define an equivalence relation on $A$ by $(x,t)\sim (y,t')$ if
$\bar{B}(x,t_x-t)=\bar{B}(y,t_y-t')$. Note that if $(x,t)\sim
(y,t')$, $\bar{B}(x,t_x-t)=\bar{B}(y,t_y-t')$ implies
that $d_H(X,\bar{B}(x,t_x-t))=t_x-(t_x-t)=
t_y-(t_y-t')=d_H(X,\bar{B}(y,t_y-t'))$, and therefore, $t=t'$.

Let $S=A/\sim$ and let us endow $S$ with the following metric.
$D([x,t],[y,t'])=t_x-t+t_y-t'-2min\{t_x-t,t_y-t',l(x,y)\}$ where
$l(x,y)=t_x-inf\{s:\bar{B}(x,s)\cap \alpha_y \neq \emptyset\}=
t_y-inf\{s:\bar{B}(y,s)\cap \alpha_x \neq \emptyset\}$.

It can be seen with the same method used in Proposition \ref{Prop: metric} that the metric is well defined and $(S,D)$ is an
$\mathbb{R}$--tree. Then, fixing the class $v=[(x,0)]$ which corresponds to the ball $\bar{B}(x,t_x)=X$ for every $x\in X$, $(S,v)$ is a rooted tree. By a \emph{brach} we mean any rooted non-extendable isometric embedding $f\colon [0,t)\to S$ (let us recall that rooted means that $f(0)=v$). 
Clearly, there exist a bijection between $X$ and the branches of $(S,v)$. In fact, any branch $[x\times
[0,t_x)]$ of $(S,v)$ is isometric to the restriction of the hyperspace to the segment $(\alpha_x,d_H|_{\alpha_x})$ which Kelley called \emph{segment from $\{x\}$ to $\{X\}$}, \cite{Ke}, or
\emph{order arc} according to \cite{I-N} or \cite{Ly}. Nevertheless, this tree is bounded and it is not geodesically complete. Hence, it is not suitable to represent the ramification process from a geometric point of view. Let us define it in such a way that the trajectories generate infinite branches.

Now, to define a geodesically complete $\mathbb{R}$-tree let us
consider a Lyapunov function $\Phi:C(X) \to [0,1]$ with $\Phi(x)=1
\ \forall x$ and $\Phi(X)=0$.

Let us parametrize  $\alpha_x$ as follows: $\Phi_x^{-1}:[0,1]\to
X$ where $\Phi_x^{-1}(t)=\bar{B}(x,\varepsilon(x,t))$ such that
$\Phi(\bar{B}(x,\varepsilon(x,t)))=1-t$. Note that this
$\varepsilon(x,t)$ is uniquely determined by $x$ and $t$.

Define an equivalence relation on $X\times [0,\infty)$ where
$(x,t)$ represents $\bar{B}(x,\varepsilon(x,e^{-t}))$ and $(x,t)\sim
(y,t')$ if $\varepsilon(x,e^{-t})=\varepsilon(y,e^{-t'})$ and
$\bar{B}(x,\varepsilon(x,e^{-t}))=\bar{B}(y,\varepsilon(y,e^{-t'}))$.
Note that when the balls coincide, the Lyapunov function on them
will be
$1-e^{-t}=\Phi(\bar{B}(x,\varepsilon(x,e^{-t})))=\Phi(\bar{B}(y,\varepsilon(y,e^{-t'})))
=1-e^{-t'}$ and hence, $t=t'$. Also note that when two balls with
different radius coincide they are not identified in the tree, and
the branches corresponding to their centers are disjoint from the
root on.

\begin{lema} If $(x,t)\sim (y,t)$ then $(x,t')\sim (y,t') \ \forall
t'<t$.
\end{lema}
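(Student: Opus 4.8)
The plan is to reduce everything to two facts: once the closed balls about $x$ and $y$ coincide for some radius, they coincide for \emph{every} larger radius; and along a single trajectory $\alpha_z$ the parameter is controlled by the continuous strictly monotone function $r\mapsto\Phi(\bar{B}(z,r))$, which is literally the same function for $x$ and for $y$ on the range of radii where their balls agree.

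First I would unwind the hypothesis. By the definition of $\sim$, the relation $(x,t)\sim(y,t)$ means that $\rho:=\varepsilon(x,e^{-t})=\varepsilon(y,e^{-t})$ and $A:=\bar{B}(x,\rho)=\bar{B}(y,\rho)$; moreover the parametrization gives $\Phi(A)=1-e^{-t}$, so if $t>0$ then $\Phi(A)>0=\Phi(X)$ and hence $A\neq X$, i.e. $\rho<t_x,t_y$. (If $t=0$ there is nothing to prove, since no $t'<0$.) Applying the semiflow composition law $\bar{B}(\bar{B}(z,\rho),\sigma)=\bar{B}(z,\rho+\sigma)$ to $z=x$ and $z=y$ and using $\bar{B}(x,\rho)=A=\bar{B}(y,\rho)$, we obtain
\[\bar{B}(x,r)=\bar{B}(A,r-\rho)=\bar{B}(y,r)\qquad\text{for all }r\ge\rho .\]
Consequently $\Phi(\bar{B}(x,r))=\Phi(\bar{B}(y,r))$ for every $r\ge\rho$ and, taking infima over $\{r\ge\rho:\bar{B}(z,r)=X\}$, also $t_x=t_y=:t_0$.

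Next, fix $t'<t$ and put $s=e^{-t'}$, so $s>e^{-t}$. By Lemma \ref{increasing} the map $r\mapsto\bar{B}(x,r)$ is strictly increasing on $[0,t_0]$, so (since $\Phi$ is continuous and strictly decreasing along nested chains, with $\Phi(\{x\})=1$ and $\Phi(X)=0$ by Proposition \ref{lyapunov}) the function $g_x\colon r\mapsto\Phi(\bar{B}(x,r))$ is a continuous strictly decreasing bijection $[0,t_0]\to[0,1]$; the analogous statement holds for $g_y$, and $g_x=g_y$ on $[\rho,t_0]$ by the displayed identity. Now $\varepsilon(x,s)$ is by definition the unique $r$ with $g_x(r)=1-s$. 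If $t'>0$, then $1-s\in(0,1)$ and $1-s>1-e^{-t}=g_x(\rho)$, so $\varepsilon(x,s)$ and, likewise, $\varepsilon(y,s)$ lie in $(\rho,t_0)$; since $g_x=g_y$ there, both equal the unique solution of $g(r)=1-s$ in $[\rho,t_0]$, whence $\varepsilon(x,e^{-t'})=\varepsilon(y,e^{-t'})=:\rho'$. As $\rho'\ge\rho$, the displayed identity gives $\bar{B}(x,\rho')=\bar{B}(y,\rho')$, and therefore $(x,t')\sim(y,t')$. If $t'=0$, then $\varepsilon(x,1)=t_0=\varepsilon(y,1)$ and $\bar{B}(x,t_0)=X=\bar{B}(y,t_0)$, so again $(x,t')\sim(y,t')$.

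The one point that needs care is that one must know $\varepsilon(x,e^{-t'})\ge\rho$ (and the same for $y$) \emph{before} invoking the identity $\bar{B}(x,r)=\bar{B}(y,r)$; this is exactly what the inequality $g_x(\rho)=1-e^{-t}<1-e^{-t'}$ together with the strict monotonicity of $g_x$ along $\alpha_x$ provides. Everything else is a direct application of the semiflow axiom and of Lemma \ref{increasing}, together with the fact that $\Phi$ restricted to a trajectory is a strictly monotone homeomorphism onto $[0,1]$.
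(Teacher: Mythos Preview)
Your argument is essentially the paper's own: once $\bar{B}(x,\rho)=\bar{B}(y,\rho)$ with $\rho=\varepsilon(x,e^{-t})=\varepsilon(y,e^{-t})$, the semiflow law gives $\bar{B}(x,r)=\bar{B}(y,r)$ for all $r\ge\rho$, and then strict monotonicity of $\Phi$ along each trajectory forces $\varepsilon(x,e^{-t'})=\varepsilon(y,e^{-t'})>\rho$ for every $t'<t$, whence $(x,t')\sim(y,t')$. The paper phrases the last step as ``one ball is contained in the other and both lie in the common part of $\alpha_x,\alpha_y$, with equal Lyapunov value, hence they coincide''; your formulation via the bijection $g_x=g_y$ on $[\rho,t_0]$ is the same idea made explicit.

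One slip to correct: with $s=e^{-t'}$ and $t'<t$ you have $s>e^{-t}$, hence $1-s<1-e^{-t}=g_x(\rho)$, not $1-s>g_x(\rho)$ as you wrote (the same reversal appears in your closing paragraph). It is precisely $g_x(\varepsilon(x,s))=1-s<g_x(\rho)$ together with $g_x$ strictly \emph{decreasing} that gives $\varepsilon(x,s)>\rho$; with the inequality as you stated it, the conclusion would go the wrong way. This is only a sign typo---your overall logic and conclusion are correct.
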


\begin{proof} For any $t'<t$, $e^{-t'}>e^{-t}$ and let
$\bar{B}(x,\varepsilon_1')=\Phi_x^{-1}(e^{-t'})$ and
$\bar{B}(y,\varepsilon_2')=\Phi_y^{-1}(e^{-t'})$. By the properties
of the length metric, since $\bar{B}(x,\varepsilon(x,e^{-t})) =
\bar{B}(y,\varepsilon(y,e^{-t}))$, and
$\varepsilon(x,e^{-t})=\varepsilon(y,e^{-t})=\varepsilon_0$, then $\forall
\varepsilon'> \varepsilon_0$, $\bar{B}(x,\varepsilon') =
\bar{B}(y,\varepsilon')$. Therefore, one of the balls
$\bar{B}(x,\varepsilon_1'),\bar{B}(y,\varepsilon_2')$ must be contained
in the other and both are in the common part of the trajectories
$\alpha_x,\alpha_y$; but since the Lyapunov function on them takes
the same value, $e^{-t'}$, those balls must coincide and with
$\varepsilon'_i>\varepsilon_0$ and by lemma \ref{increasing}, this can
only occur if $\varepsilon'_1=\varepsilon'_2$ and thus, we finally
obtain that $(x,t')\sim (y,t')$.
\end{proof}

Let $T=X\times [0,\infty)/\sim$ and let us endow $T$ with the
following metric. \[D([x,t],[y,t'])=t+t'-2min\{t,t',m(x,y)\}
\mbox{ where } m(x,y)=sup\{s:(x,s)\sim (y,s)\}.\]

\begin{prop}\label{Prop: metric} $D$ is a metric.
\end{prop}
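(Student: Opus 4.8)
The plan is to organize the proof around a single structural fact about the relation $\sim$: for each pair $x,y\in X$ the set $I(x,y)=\{s\ge 0:(x,s)\sim(y,s)\}$ is a \emph{closed} initial segment $[0,m(x,y)]$ of $[0,\infty)$. Downward–closedness is exactly the preceding Lemma. Closedness is the one genuinely analytic point: if $s_n\uparrow s$ with $s_n\in I(x,y)$, put $r_n=\varepsilon(x,e^{-s_n})=\varepsilon(y,e^{-s_n})$; continuity of $\varepsilon(x,\cdot)$ and $\varepsilon(y,\cdot)$ — which follows from continuity of $\Phi$ together with Lemma \ref{increasing}, since these make $\Phi$ a homeomorphism along each trajectory — gives $\varepsilon(x,e^{-s})=\varepsilon(y,e^{-s})=:r$, and continuity of the trajectory $\delta\mapsto\bar B(\cdot,\delta)$ in $2^X_H$ (the trajectories are geodesic paths) turns $\bar B(x,r_n)=\bar B(y,r_n)$ into $\bar B(x,r)=\bar B(y,r)$; hence $s\in I(x,y)$. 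In particular $(x,m(x,y))\sim(y,m(x,y))$ and $m(x,y)=\sup I(x,y)$ is attained. I would also record here that $\sim$ is genuinely an equivalence relation and, crucially, that $(x,t)\sim(y,t')$ forces $t=t'$, so all identifications happen at a fixed height, where transitivity is just transitivity of equality of balls.

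From this I extract the only combinatorial input needed, the tree inequality: for all $x,y,z$,
\[m(x,z)\ \ge\ \min\{m(x,y),m(y,z)\}.\]
Indeed, if $s<\min\{m(x,y),m(y,z)\}$ then $(x,s)\sim(y,s)$ and $(y,s)\sim(z,s)$, so $(x,s)\sim(z,s)$ by transitivity, i.e. $s\in I(x,z)$; letting $s$ tend to $\min\{m(x,y),m(y,z)\}$ gives the claim.

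Next I would check that $D$ does not depend on the chosen representatives. If $(x,t_0)\sim(x',t_0)$, the preceding Lemma gives $(x,s)\sim(x',s)$ for all $s<t_0$, hence $(x,s)\sim(y,s)\iff(x',s)\sim(y,s)$ there; since $I(x,y)$ and $I(x',y)$ are closed initial segments agreeing on $[0,t_0)$, one gets $\min\{t_0,m(x,y)\}=\min\{t_0,m(x',y)\}$ for every $y$. Given $[x,t]=[x',t]$ and $[y,t']=[y',t']$, assume w.l.o.g. $t\le t'$; then $\min\{t,t',m(x,y)\}=\min\{t,m(x,y)\}$, and applying the previous observation first with $x\leftrightarrow x'$ (at height $t$) and then with $y\leftrightarrow y'$ (using $(y,t)\sim(y',t)$, valid since $t\le t'$) yields $\min\{t,t',m(x,y)\}=\min\{t,t',m(x',y')\}$. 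Symmetry of $D$ is immediate, and since $\min\{t,t',m(x,y)\}\le\min\{t,t'\}\le\tfrac12(t+t')$ we get $D\ge|t-t'|\ge 0$; moreover $D([x,t],[y,t'])=0$ forces $t=t'$ and $m(x,y)\ge t$, so $t\in I(x,y)=[0,m(x,y)]$, i.e. $(x,t)\sim(y,t)$ and the classes coincide (the converse being clear).

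Finally, the triangle inequality. After cancelling the terms $t$ and $t''$, the inequality $D([x,t],[z,t''])\le D([x,t],[y,t'])+D([y,t'],[z,t''])$ is equivalent to
\[\min\{t,t',m(x,y)\}+\min\{t',t'',m(y,z)\}-t'\ \le\ \min\{t,t'',m(x,z)\}.\]
Writing $A=\min\{t,t',m(x,y)\}$ and $B=\min\{t',t'',m(y,z)\}$, both are $\le t'$, so $A+B-t'\le\min\{A,B\}=\min\{t,t',t'',m(x,y),m(y,z)\}$; this last quantity is $\le\min\{t,t''\}$ and, by the tree inequality above, is $\le\min\{m(x,y),m(y,z)\}\le m(x,z)$, hence $A+B-t'\le\min\{t,t'',m(x,z)\}$, as required. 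The main obstacle is really just the closedness of $I(x,y)$ flagged in the first paragraph; once that and the downward–closedness Lemma are in hand, the rest is the tree inequality plus elementary bookkeeping with minima.
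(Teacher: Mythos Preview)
Your proof is correct and follows the same skeleton as the paper's---verify well-definedness, then the four metric axioms, with the tree inequality $m(x,z)\geq\min\{m(x,y),m(y,z)\}$ as the key structural input---but it improves on the paper in two places. First, you explicitly verify that $I(x,y)$ is \emph{closed}, i.e.\ that the supremum $m(x,y)$ is attained; the paper's argument for ``$D=0\Rightarrow$ equal'' silently uses this (from $m(x,y)\ge t$ it jumps to $(x,t)\sim(y,t)$, which needs $t\in I(x,y)$ even when $m(x,y)=t$), so you are filling a small gap. Second, your triangle inequality is cleaner: where the paper runs a three-case analysis on which of $s,u,a$ is smallest (with further subcases inside), you observe once that $A,B\le t'$ forces $A+B-t'\le\min\{A,B\}$ and then bound $\min\{A,B\}$ by $\min\{t,t'',m(x,z)\}$ via the tree inequality. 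The paper's route has the virtue of being entirely elementary case-checking; yours isolates the conceptual content (the initial-segment structure of $I(x,y)$ plus the ultrametric-type inequality on $m$) and gets a shorter, more transparent argument in return.
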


\begin{proof} $D$ is Well defined. Suppose $[x,t]=[x',t]$,
then we only need to show that $d([x,t],[y,t'])=d([x',t],[y,t'])$
for any $[y,t']\in T$. We can distinguish two cases.
\begin{itemize} \item[Case 1.] $sup\{s|(x,s)\sim (x',s)\}
\geq sup\{s|(x,s)\sim (y,s)\}$. Hence it is immediate to see that
$m(x,y)=m(x',y)$ and the distance is the same.

\item[Case 2.] $t\leq sup\{s|(x,s)\sim (x',s)\} < sup\{s|(x,s)\sim
(y,s)\}$. In this case, $t<m(x,y)$ and $t< m(x',y)$ and hence,
$min\{t,t',m(x,y)\}=min\{t,t'\}=min\{t,t',m(x',y)\}$ and the
distance is the same.
\end{itemize}

$D$ is a metric. \begin{itemize} \item[1)] $D\geq 0$.
It is clear that $t+t'-2 min \{t,t',m(x,y)\}\geq |t-t'| \geq 0$.

\item[2)] $D([x,t],[y,t'])=0 \Leftrightarrow [x,t]=[y,t']$. If
$D([x,t],[y,t'])=0$ then $t+t'-2 \min\{t,t',m(x,y)\}=0 \Rightarrow
\min\{t,t',m(x,y)\}=t=t'$ and since $m(x,y)\geq t=t'$ then
$[x,t]=[y,t']$.

\item[3)] Symmetric. This is obvious since the
definition is symmetric.

\item[4)] Triangle inequality,
$D([x,s],[y,t])\leq D([x,s],[z,u])+D([z,u],[y,t])$. Clearly
$t+s-2min\{s,t,m(x,y)\} \leq
s+u-2min\{s,u,m(x,z)\}+u+t-2min\{u,t,m(z,y)\} \Leftrightarrow
-min\{s,t,m(x,y)\} \leq u-min\{s,u,m(x,z)\}-min\{u,t,m(z,y)$.

Let $a=m(x,z)$, $b=m(y,z)$, $c=m(x,y)$. Clearly $min\{a,b\}\leq
c$. Without loss of generality assume that $a\leq b$ and hence $a
= min\{a,b,c\}$.

Thus we need to show  that $min\{s,u,a\}+min\{t,u,b\}\leq
min\{s,t,c\}+u$. There are three cases to consider: \item[(a)]
$u=min\{s,u,a\}$. Then $u+min\{t,u\}\leq min\{s,t,c\}+u$ because
$u\leq a \leq c$. \item[(b)] $s=min\{s,u,a\}$. Then it suffices to check that $s+min\{t,u,b\}\leq min\{s,t\}+u$. This is readily seen considering the
cases $t\leq s$ and $s\leq t$. \item[(c)] $a=min\{s,u,a\}$. Then
it is clear that $a+min\{t,u,b\}\leq min\{s,t,c\}+u$ considering
the cases $t\leq a$ and $a\leq t$.
\end{itemize}
\end{proof}

\begin{prop} $(T,D)$ is a geodesically complete $\mathbb{R}$-tree.
\end{prop}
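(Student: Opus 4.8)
The plan is to verify the three defining properties of a geodesically complete $\mathbb{R}$-tree for $(T,D)$: that it is an $\mathbb{R}$-tree (uniquely arcwise connected with arcs isometric to real intervals), and that every isometric embedding of a half-open interval $[0,\ell)$ into $T$ extends to an embedding of the whole of $[0,\infty)$. By Lemma~\ref{0-hip} (Chiswell's characterization), to show $(T,D)$ is an $\mathbb{R}$-tree it suffices to check that $T$ is connected and $0$-hyperbolic, i.e.\ that for any three points the Gromov product condition holds: $(\xi\cdot\eta)_\omega \geq \min\{(\xi\cdot\zeta)_\omega,(\zeta\cdot\eta)_\omega\}$ for every base point, equivalently that the four-point condition holds with no defect.

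First I would observe that, with root $v=[x,0]$ (well-defined since all balls of radius $0$ lifted to $t=0$ correspond to the whole space, wait --- actually $[x,0]$ represents $\bar B(x,\varepsilon(x,1))$; one checks directly from the construction that $(x,0)\sim(y,0)$ for all $x,y$ since $\Phi$ of that ball is $0$, i.e.\ the ball is $X$), the Gromov product of $[x,t]$ and $[y,t']$ based at $v$ is exactly $\min\{t,t',m(x,y)\}$, reading off the formula for $D$. So $D([x,t],[y,t'])= t+t' - 2([x,t]\cdot[y,t'])_v$, which is the standard shape of a tree metric relative to a root. Then $0$-hyperbolicity reduces to the ultrametric-type inequality $m(x,y)\geq \min\{m(x,z),m(z,y)\}$ for the function $m$, which is immediate from its definition as a supremum of an ``agreement time'' together with the nesting lemma just proved (if $(x,s)\sim(y,s)$ and $(y,s)\sim(z,s)$ then $(x,s)\sim(z,s)$, and the relation at time $s$ persists to all $s'<s$). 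Connectedness follows by exhibiting, for each $[x,t]$, the explicit path $s\mapsto [x,s]$ for $s\in[0,t]$, which is an isometric embedding of $[0,t]$ joining $v$ to $[x,t]$; concatenating two such paths via their agreement point $m(x,y)$ joins any two points, and uniqueness of arcs follows from $0$-hyperbolicity in the standard way. This establishes that $(T,D)$ is an $\mathbb{R}$-tree.

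For geodesic completeness, I would argue that the branches $t\mapsto [x,t]$, $t\in[0,\infty)$, are isometric embeddings of the full ray $[0,\infty)$: this uses that for fixed $x$ the map $t\mapsto \varepsilon(x,e^{-t})$ is a genuine reparametrization of the trajectory $\alpha_x$ running over all of $[0,\infty)$ because $\Phi$ restricted to $\alpha_x$ is a continuous strictly decreasing bijection onto $[0,1]$ (strict monotonicity along trajectories comes from Lemma~\ref{increasing}, and $\Phi(x)=1$, $\Phi(X)=0$ by Proposition~\ref{lyapunov}), so $e^{-t}$ ranges over $(0,1]$ and the branch never terminates. Any isometric embedding $f\colon[0,\ell)\to T$ with $f(0)$ arbitrary can be pushed to start at $v$ (translate along the geodesic), and then $f$ must eventually run along one of these branches --- more precisely, for $s$ close to $\ell$ the points $f(s)$ lie in a single branch by the tree structure (a non-extendable ray from $v$ in an $\mathbb{R}$-tree is determined by a point $x\in X$), so it extends to $[0,\infty)$. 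The main obstacle I anticipate is the bookkeeping around the reparametrization: showing cleanly that $m(x,y)$ in the $[0,\infty)$-parametrization relates correctly to the original ``agreement radius'' $l(x,y)$ of the bounded tree $S$, and confirming that the supremum defining $m(x,y)$ is actually attained (so that the agreement set is a closed initial segment $[0,m(x,y)]$), which is what makes the path $s\mapsto[x,s]$ well-behaved and the branches non-extendable exactly at the ends corresponding to points of $X$. This attainment follows from continuity of $\varepsilon(x,\cdot)$ and of the Hausdorff metric, but it should be spelled out.
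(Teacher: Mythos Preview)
Your proposal is correct and follows essentially the same approach as the paper: connectedness via the paths $s\mapsto[x,s]$ to the root, $0$-hyperbolicity by computing the Gromov product at the root as $\min\{t,t',m(x,y)\}$ and reducing to $m(x,y)\geq\min\{m(x,z),m(z,y)\}$, then geodesic completeness by extending any rooted isometric embedding along the branch $t\mapsto[x,t]$ determined by its endpoint. The paper's argument is terser and simply omits the technical worries you flag (attainment of $m(x,y)$, the reparametrization bookkeeping); these are not actually needed for the proof as stated, so you can safely drop that last paragraph.
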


\begin{proof} By \ref{0-hip}, it suffices to show that $T$ is connected and
0-hyperbolic in the sense of Gromov. For every point
there is a path connecting it to the root so the first part is
obvious.

The Gromov product, \ref{Gr1}, of $[x,t]$ and $[y,s]$ with respect to the
root, $w=[x,0]$ for any $x$, is given by
\[([x,t]\cdot[y,s])_w=\frac{1}{2}\{D([x,t],w)+D([y,s],w)-D([x,t],[y,s])\}\]
 Since $D([x,t],w)=t$, this means that
 $([x,t]\cdot[y,s])_w=min\{t,s,m(x,y)\}$. Given $[z,u]\in T$, this
 must be compared with
  $min\{([x,t]\cdot[z,u])_w,([z,u]\cdot[y,s])_w\}=min\{min\{t,u,m(x,z)\},
 min\{u,s,m(z,y)\}\}=min\{t,u,s,m(x,z),m(z,y)\}$.

 Thus, it suffices to check that $m(x,y)\geq min\{m(x,z),m(z,y)\}$
 which is obvious.

 Finally, to see that $(T,w)$ is geodesically complete let $\alpha:[0,t_0]\to
 (T,w)$ be an isometric embedding such that $\alpha(0)=w$. Then,
 $\alpha(t_0)=[x,t_0]$ for some $x\in X$ and by the uniqueness of
 arcs, $\alpha(t)=[x,t]$ for $0\leq t\leq t_0$ and $\alpha(t)=[x,t] \forall
 t\geq 0$ gives the desired extension of $\alpha$ to a geodesic ray.
\end{proof}

The end space of this tree endows $X$ with an ultrametric, $d_U$, where
two points are near if there is some small radius $\varepsilon$ such
that the balls centered at both points coincide. This ultrametric endows $X$ with a topology which is thinner than the initial, resulting a space which is not compact nor separable.

\begin{prop} The identity map $id: (X,d_u) \to (X,d)$ is continuous.
\end{prop}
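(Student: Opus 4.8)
The plan is to unwind the definitions and reduce the statement to a comparison between the ultrametric $d_U$ coming from the end space of the $\mathbb{R}$--tree $(T,D)$ and the original length metric $d$ on $X$. Recall that points of $X$ correspond bijectively to the geodesic rays (branches) of $(T,v)$ from the root, the branch of $x$ being $[x\times[0,\infty)]$, and that for $x,y\in X$ the end-space ultrametric satisfies $d_U(x,y)=e^{-m(x,y)}$ (up to the usual normalisation), where $m(x,y)=\sup\{s:(x,s)\sim(y,s)\}$ is the parameter at which the two branches split. By construction $(x,s)\sim(y,s)$ exactly when the closed balls $\bar B(x,\varepsilon(x,e^{-s}))$ and $\bar B(y,\varepsilon(y,e^{-s}))$ coincide \emph{and} have equal radius; since $\Phi$ is strictly decreasing along each trajectory and $\Phi=1-e^{-s}$ on that common ball, this just says that $x$ and $y$ have a common closed ball at the $\Phi$-level $1-e^{-s}$. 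So a small $d_U(x,y)$ means a \emph{large} $m(x,y)$, which means $x$ and $y$ share a common closed ball of \emph{small} radius.

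First I would make the dictionary precise: fix $x\in X$ and $\delta>0$; I want to produce $\eta>0$ so that $d_U(x,y)<\eta$ forces $d(x,y)<\delta$. Using Lemma~\ref{increasing}, if $x\ne y$ then $\bar B(x,\varepsilon)=\bar B(y,\varepsilon)$ can only happen for $\varepsilon\ge d(x,y)$ (indeed for $\varepsilon<d(x,y)$ one has $y\notin\bar B(x,\varepsilon)$), so a common ball of radius $\varepsilon$ already forces $d(x,y)\le\varepsilon$. Hence it suffices to control the radius: I need that $d_U(x,y)$ small implies the first radius $\varepsilon$ at which the two trajectories agree is small. Because $\Phi$ restricted to the trajectory $\alpha_x$ is a continuous strictly decreasing reparametrisation of $\varepsilon\mapsto\bar B(x,\varepsilon)$, and because $X$ is compact, the map $(x,s)\mapsto\varepsilon(x,e^{-s})$ is continuous and, for $s$ in a fixed compact range, uniformly small when $s$ is large; more directly, $d_U(x,y)<\eta$ means $m(x,y)>\log(1/\eta)$, hence $x$ and $y$ have a common ball at $\Phi$-level $1-\eta'$ for $\eta'$ as small as we like, i.e.\ a common ball of radius $\varepsilon$ with $\Phi(\bar B(x,\varepsilon))$ close to $1$; by Proposition~\ref{lyapunov} and the fact that $\Phi(\{x\})=1$ together with continuity of $\Phi$, such $\varepsilon$ can be taken below any prescribed $\delta$ once $\eta$ is small enough (here uniform continuity of $\Phi$ on the compact $C(X)$, or of $w$, is what lets the choice of $\eta$ depend on $\delta$ alone, not on $x$).

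Concretely the argument runs: given $\delta>0$, by compactness of $X$ the set $\{\bar B(x,\varepsilon): x\in X,\ \varepsilon\ge\delta\}$ is a compact subset of $C(X)$ disjoint from the copy of $X$ (the radius-$0$ balls), so $\Phi$ attains a minimum $1-c$ on it with $c<1$; equivalently every closed ball of radius $\ge\delta$ has $\Phi$-value $\le 1-c$, so its trajectory parameter satisfies $e^{-s}\le 1-c$, i.e.\ $s\le\log\frac{1}{1-c}=:S_0$. Now pick $\eta=e^{-S_0}$ (or anything smaller). If $d_U(x,y)<\eta$ then $m(x,y)>S_0$, so $x$ and $y$ have a common closed ball at parameter $s=S_0$, hence of radius $\varepsilon<\delta$ (if the radius were $\ge\delta$ the parameter would be $\le S_0$, contradiction — using monotonicity of $s\mapsto\varepsilon(x,e^{-s})$). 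By the remark above, a common ball of radius $<\delta$ forces $d(x,y)<\delta$. This proves continuity of $id:(X,d_U)\to(X,d)$ at $x$, uniformly in $x$, hence everywhere.

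The main obstacle I anticipate is getting the quantifiers straight between the three parametrisations — the radius $\varepsilon$, the Lyapunov value $\Phi$, and the tree parameter $s$ with its exponential twist $e^{-t}$ — and in particular checking that the threshold $\eta$ can be chosen \emph{independently of the basepoint} $x$; this is exactly where compactness of $X$ (and of $C(X)$) enters, guaranteeing a uniform lower bound on $\Phi$ over all balls of radius $\ge\delta$. The other small point to be careful about is that ``common ball at parameter $s$'' literally means the balls coincide and have equal radius, but equal radius is automatic once they coincide on a trajectory (Lemma~\ref{increasing}), and equality of the balls is precisely what forces $d(x,y)\le\varepsilon$; these are routine once stated. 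No deep input beyond Lemma~\ref{increasing}, Proposition~\ref{lyapunov}, and compactness is needed.
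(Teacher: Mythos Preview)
Your argument is correct in substance and in fact proves something stronger than the paper states (uniform continuity rather than mere continuity), but it is considerably more elaborate than needed. The paper's proof is three lines: take a $d_u$-convergent sequence $x_n\to x$; by definition of $d_u$ there exist radii $\varepsilon_n\to 0$ with $\bar B(x_n,\varepsilon_n)=\bar B(x,\varepsilon_n)$; since $x_n\in\bar B(x_n,\varepsilon_n)=\bar B(x,\varepsilon_n)$ we get $d(x_n,x)\le\varepsilon_n\to 0$. You do isolate this same key observation (``a common ball of radius $\varepsilon$ forces $d(x,y)\le\varepsilon$''), but then invest a full compactness argument to make the $\eta$ uniform in the basepoint, which the bare statement does not require. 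The compactness route buys you uniform continuity; the paper's sequential route buys brevity.

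Two small slips to clean up. First, where you write ``$\Phi$ attains a minimum $1-c$'' you mean the \emph{maximum} of $\Phi$ on the compact set of balls of radius $\ge\delta$ is some value $1-c<1$ (equivalently, $w$ attains a positive minimum $c$ there). Second, the subsequent arithmetic is off: from $\Phi=1-e^{-s}\le 1-c$ you get $e^{-s}\ge c$, hence $s\le\log(1/c)$, not $e^{-s}\le 1-c$ and $s\le\log\frac{1}{1-c}$. Neither error affects the logic---the point is simply that balls of radius $\ge\delta$ live at bounded tree parameter, so sufficiently large $m(x,y)$ forces a common ball of radius $<\delta$---but you should fix the formulas. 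Also, your invocation of Lemma~\ref{increasing} for ``common ball implies $d(x,y)\le\varepsilon$'' is unnecessary; that step is just the definition of a closed ball.
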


\begin{proof} Consider any convergent sequence in $(X,d_u)$, $(x_n)\to
x$. There must be some sequence of positive real numbers
$(\varepsilon_n)\to 0$ such that
$\bar{B}(x_n,\varepsilon_n)=\bar{B}(x,\varepsilon_n)$ and, in
particular, $d(x_n,x)\leq \varepsilon_n$ converges to 0 and $(x_n)\to
x$ in $(X,d)$.
\end{proof}

\begin{prop} If $(X,d)$ is a compact length space then $(X,d_u)$
is not separable.
\end{prop}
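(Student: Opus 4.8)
The plan is to prove non-separability directly, by exhibiting for some fixed $\delta>0$ an \emph{uncountable} $\delta$-separated subset of $(X,d_u)$: the open $d_u$-balls of radius $\delta/2$ around such points are pairwise disjoint and uncountably many, so any dense set must be uncountable. Thus everything reduces to producing one such family.

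To get the family I would exploit that $(X,d)$ is geodesic and non-degenerate. Fix $x_0\neq x_1$ and a geodesic segment $\gamma\colon[0,L]\to(X,d)$ joining them, so $\gamma$ is an isometric embedding and $L=d(x_0,x_1)>0$; set $\varepsilon_0=L/2$ and consider the uncountable (because $\gamma$ is injective) set $\{\gamma(s):s\in[0,L/2]\}$. The elementary key point is: for $s<s'$ in $[0,L/2]$ and any $0<\varepsilon\leq\varepsilon_0$, the point $p=\gamma(s'+\varepsilon)$ is well defined ($s'+\varepsilon\leq L$), lies on $S(\gamma(s'),\varepsilon)$ since $d(p,\gamma(s'))=\varepsilon$, but has $d(p,\gamma(s))=(s'-s)+\varepsilon>\varepsilon$; hence $p\in\bar{B}(\gamma(s'),\varepsilon)\setminus\bar{B}(\gamma(s),\varepsilon)$ and consequently $\bar{B}(\gamma(s),\varepsilon)\neq\bar{B}(\gamma(s'),\varepsilon)$. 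In words: as long as the radius stays below $\varepsilon_0$, the closed balls about any two of these points never coincide.

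Next I would convert this into a lower bound for $d_u$ using the geodesically complete $\mathbb{R}$-tree $T=X\times[0,\infty)/\!\sim$ constructed in this section, recalling that $d_u(x,y)=e^{-m(x,y)}$ with $m(x,y)=\sup\{t:(x,t)\sim(y,t)\}$, and that $(x,t)\sim(y,t)$ entails $\bar{B}(x,\varepsilon(x,e^{-t}))=\bar{B}(y,\varepsilon(y,e^{-t}))$, where the radius $\varepsilon(x,e^{-t})$ is determined by $\Phi(\bar{B}(x,\varepsilon(x,e^{-t})))=1-e^{-t}$. Since $X$ is a connected compactum with more than one point, $\bar{B}(x,\varepsilon_0)$ is never a singleton, so $w(\bar{B}(x,\varepsilon_0))>0$ and $\Phi(\bar{B}(x,\varepsilon_0))<1$; as $x\mapsto\bar{B}(x,\varepsilon_0)$ is $d_H$-continuous and $\Phi$ is continuous, $c:=\max_{x\in X}\Phi(\bar{B}(x,\varepsilon_0))<1$ by compactness. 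Put $M:=-\ln(1-c)<\infty$. For $t>M$ one has $1-e^{-t}>c\geq\Phi(\bar{B}(x,\varepsilon_0))$ for every $x$, and since $r\mapsto\Phi(\bar{B}(x,r))$ is non-increasing (Lemma \ref{increasing} and the Whitney property), this forces $\varepsilon(x,e^{-t})<\varepsilon_0$. Then by the key point above, for any distinct $s,s'$ in $[0,L/2]$ and $t>M$ we get $\bar{B}(\gamma(s),\varepsilon(\gamma(s),e^{-t}))\neq\bar{B}(\gamma(s'),\varepsilon(\gamma(s'),e^{-t}))$, so $(\gamma(s),t)\not\sim(\gamma(s'),t)$; hence $m(\gamma(s),\gamma(s'))\leq M$ and $d_u(\gamma(s),\gamma(s'))\geq e^{-M}>0$. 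This gives the uncountable $e^{-M}$-separated family, and the result follows.

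The distance computations along $\gamma$, the $d_H$-continuity of $x\mapsto\bar{B}(x,\varepsilon_0)$, and the ``uncountable separated set $\Rightarrow$ not separable'' step are routine. I expect the one delicate point --- the main obstacle --- to be the last bookkeeping step: reconciling the purely metric fact ``the closed balls remain distinct while their radius is $<\varepsilon_0$'' with the tree's parametrization by the Lyapunov function, i.e.\ verifying that the set of tree-levels $t$ with $\varepsilon(x,e^{-t})\geq\varepsilon_0$ is bounded uniformly in $x$. That uniform bound is precisely where compactness of $X$ and the inequality $\Phi(\bar{B}(x,\varepsilon_0))<1$ are used.
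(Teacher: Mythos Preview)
Your argument is correct and follows essentially the same route as the paper: pick a geodesic segment, show that distinct points on it have distinct closed balls for all sufficiently small radii, and conclude that the segment sits inside $(X,d_u)$ as an uncountable uniformly separated set. The paper uses the full segment $[x,y]$ with $\delta<d(x,y)/2$ (which also works, after a short case split), whereas your restriction to $[0,L/2]$ makes the ball inequality a one-liner; and you spell out the Lyapunov/tree bookkeeping that the paper leaves implicit in the phrase ``therefore the subspace $(C,d_u|_C)$ is uniformly discrete.'' The paper additionally checks that $C$ is closed in $(X,d_u)$, but as you note this is not needed for non-separability.
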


\begin{proof} Let $x,y\in X$ be any two points and $[x,y]$ a geodesic
segment joining them. Let us see that $C=[x,y]\subset (X,d_u)$ is a
closed subset and $(C,d_u|_{C})$ is a discrete subset with
cardinal $>\aleph_0$. It is closed since $C$ is compact in
$(X,d)$ and $id:(X,d_u)\to (X,d)$ is continuous. Also, for any
$\delta<\frac{d(x,y)}{2}$ and any $z,z'\in [x,y]$, $\bar{B}(z,\delta)\neq \bar{B}(z',\delta)$ and therefore the subspace $(C,d_u|_{C})$ is uniformly discrete.
\end{proof}

The condition of being topologically robust can be characterized as follows:

\begin{prop} $(X,d_u)$ is uniformly discrete if and only $(X,d)$ is topologically robust.
\end{prop}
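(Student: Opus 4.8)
The plan is to restate both conditions as statements purely about coincidences of closed balls $\bar B(x,r)=\bar B(y,r)$ and then bridge them with a compactness argument in the hyperspace. Throughout I would use the elementary monotonicity fact that, for fixed $x,y$, the set $R(x,y)=\{r>0 \,:\, \bar B(x,r)=\bar B(y,r)\}$ is upward closed: this is immediate from the semiflow identity $\bar B(\bar B(A,s),t)=\bar B(A,s+t)$. On the robustness side, since $X$ is compact and each $p_\varepsilon$ is continuous into the Hausdorff space $2^X_H$, Remark \ref{quotient} gives that $p_\varepsilon$ is an embedding iff it is injective, and, as observed just after Definition \ref{Def: robust}, injectivity of one $p_{\varepsilon'}$ forces injectivity of all $p_\varepsilon$ with $\varepsilon\le\varepsilon'$; combined with upward closedness of $R(x,y)$ this shows that $(X,d)$ is topologically robust if and only if there is $\varepsilon_0>0$ such that $R(x,y)\subseteq(\varepsilon_0,\infty)$ for every $x\ne y$.

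On the ultrametric side I would unwind the tree $T=(X\times[0,\infty))/\!\sim$ and the relation $(x,s)\sim(y,s)$, which by definition holds iff $\varepsilon(x,e^{-s})=\varepsilon(y,e^{-s})=:r$ and $\bar B(x,r)=\bar B(y,r)$, where $\varepsilon(x,t)$ is the radius with Whitney value $w(\bar B(x,\varepsilon(x,t)))=t$ (using $\Phi=1-w$ from Proposition \ref{lyapunov}). Two observations: (i) if $x\ne y$ and $\bar B(x,r)=\bar B(y,r)\ne X$, then by Lemma \ref{increasing} the radius $r$ is the unique one with that Whitney value at either centre, so putting $s=-\ln w(\bar B(x,r))>0$ we get $(x,s)\sim(y,s)$ and hence $m(x,y)\ge -\ln w(\bar B(x,r))$; and (ii) conversely any $s$ with $(x,s)\sim(y,s)$ yields a common radius $r\in R(x,y)$ with $w(\bar B(x,r))=e^{-s}$. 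Recalling that the end-space ultrametric satisfies $d_u(x,y)=e^{-m(x,y)}$, it follows that $(X,d_u)$ is uniformly discrete iff $\sup_{x\ne y}m(x,y)<\infty$ iff there is $\delta>0$ with $w(\bar B(x,r))\ge\delta$ whenever $x\ne y$ and $\bar B(x,r)=\bar B(y,r)$.

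For the forward implication I would take $\varepsilon_0>0$ with $p_{\varepsilon_0}$ injective. By the first paragraph, any coincidence $\bar B(x,r)=\bar B(y,r)$ with $x\ne y$ has $r>\varepsilon_0$, hence $w(\bar B(x,r))\ge w(\bar B(x,\varepsilon_0))\ge w_0:=\min_{z\in X}w(\bar B(z,\varepsilon_0))$ by monotonicity of $w$. The number $w_0$ is strictly positive: $z\mapsto w(\bar B(z,\varepsilon_0))=w(\pi(z,\varepsilon_0))$ is continuous, $X$ is compact, and each $w(\bar B(z,\varepsilon_0))>0$ because a non-degenerate connected $X$ has no isolated points, so $\bar B(z,\varepsilon_0)\supsetneq\{z\}$ and $w$ is strictly monotone on proper inclusions. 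Thus the criterion of the previous paragraph holds with $\delta=w_0$, and $(X,d_u)$ is uniformly discrete.

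For the converse I would argue by contradiction: if $(X,d)$ is not topologically robust then for each $n$ the map $p_{1/n}$ is not injective, giving $x_n\ne y_n$ with $\bar B(x_n,1/n)=\bar B(y_n,1/n)$, and for $n>2/\mathrm{diam}(X)$ this common ball has diameter at most $2/n$ and is not all of $X$. The main point — and the step I expect to be the real obstacle, since it is where the global structure of the hyperspace genuinely enters — is that $w(A_n)\to 0$ whenever $\mathrm{diam}(A_n)\to 0$ in $C(X)$: this holds because $C(X)$ is compact and both $w$ and $\mathrm{diam}$ are continuous, so any Hausdorff cluster point of $(A_n)$ is a singleton, on which $w$ vanishes. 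Granting this, $s_n:=-\ln w(\bar B(x_n,1/n))\to+\infty$, while observation (i) gives $m(x_n,y_n)\ge s_n$, so $d_u(x_n,y_n)=e^{-m(x_n,y_n)}\le e^{-s_n}\to 0$, contradicting uniform discreteness. All remaining steps are routine bookkeeping with the definitions of $p_\varepsilon$, $\sim$ and $d_u$.
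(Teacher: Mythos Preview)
Your argument is correct and rests on the same mechanism as the paper's: both conditions are reformulated as a uniform bound on the Whitney value of any coinciding pair of balls, and compactness bridges that to a uniform radius bound. For the implication ``robust $\Rightarrow$ uniformly discrete'' your proof and the paper's are virtually identical (you use $w_0=\min_z w(\bar B(z,\varepsilon_0))$, the paper uses $\delta=\max_z \Phi(\bar B(z,\varepsilon_0))=1-w_0$). For the other implication you argue by contrapositive, producing coinciding balls of radius $1/n$ and invoking compactness of $C(X)$ together with continuity of $w$ and $\mathrm{diam}$ to force $w(A_n)\to 0$; the paper instead proceeds directly, setting $\varepsilon_0=\min_x\{\varepsilon_x:\Phi(\bar B(x,\varepsilon_x))=1-\delta\}$ (the printed value $1-\ln\delta$ is a typo) via compactness of $X$ alone. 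Your hyperspace argument is a touch more elaborate but makes the role of the Whitney axioms more transparent; the paper's direct version is shorter but relies on unwinding the parametrisation $\varepsilon(x,e^{-t})$ at a fixed level. Neither approach offers a real advantage over the other.
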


\begin{proof} Suppose $\delta>0$ such that $\forall x\in X$,
$B_{d_u}(x,\delta)=\{x\}$. Then let $\varepsilon_0=min_{x\in
X}\{\varepsilon_x \, | \, \Phi(\bar{B}(x,\varepsilon_x))=1-ln(\delta)\}$ which
is reached since $X$ is compact and obviously greater than 0 since
the Lyapunov function value of the ball is $1-\delta<1$.

Conversely, if the projection is injective $\forall
\varepsilon<\varepsilon_0$, let $\delta=max_{x\in
X}\{\Phi(\bar{B}(x,\varepsilon_0))\}$. Then $\delta<1$ and it is well defined
since $X$ is compact. For any point $x\in X$, the class $[x,t]$ in
the tree is not identified with any other class $[y,t]$ for any
$t>-ln(1-\delta)$ and thus, $B_{d_u}(x,1-\delta)=\{x\} \ \forall x
\in X$.
\end{proof}

This, together with \ref{flujo homeom.geod}, \ref{riemann},
\ref{curvat} respectively, implies the following corolaries.

\begin{cor} If $(X,d)$ is an r-perfectly geodesic compact length space
then $(X,d_u)$ is uniformly discrete.
\end{cor}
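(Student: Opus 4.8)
The plan is to obtain this corollary as a formal composition of the two results immediately preceding it. First I would invoke Theorem~\ref{flujo homeom.geod}: since $(X,d)$ is an $r$-perfectly geodesic compact length space, it is topologically robust, so there exists $\varepsilon_0>0$ such that $p_\varepsilon$ is a topological embedding for every $\varepsilon\leq\varepsilon_0$. Then I would feed this directly into the characterization just established, namely that $(X,d_u)$ is uniformly discrete if and only if $(X,d)$ is topologically robust, and read off the conclusion.

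Concretely, unpacking the relevant (``only if'') direction of that characterization: from injectivity of $p_{\varepsilon_0}$ one sets $\delta=\max_{x\in X}\{\Phi(\bar{B}(x,\varepsilon_0))\}$, a maximum attained by compactness of $X$; then for every $x\in X$ no class $[x,t]$ in the tree $T$ is identified with a class $[y,t]$ once $t>-\ln(1-\delta)$, whence $B_{d_u}(x,1-\delta)=\{x\}$ for all $x$ with the single uniform radius $1-\delta$. This is exactly uniform discreteness of $(X,d_u)$.

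There is essentially no obstacle: the corollary is a one-line consequence of Theorem~\ref{flujo homeom.geod} together with the characterization Proposition. The only point deserving a moment's attention is that the extracted radius does not degenerate, i.e.\ that $\delta<1$ so that $1-\delta>0$; this holds because one may take $\varepsilon_0<\mathrm{diam}(X)$, and then $\bar{B}(x,\varepsilon_0)$ is a proper closed subset of $X$ (by Lemma~\ref{increasing}), so its Lyapunov value $\Phi(\bar{B}(x,\varepsilon_0))=1-w(\bar{B}(x,\varepsilon_0))$ is strictly less than $1$. The uniformity of the discreteness bound over all $x\in X$ is likewise secured by compactness, since it is the maximum over $x$ of the values $\Phi(\bar{B}(x,\varepsilon_0))$ that is being used.
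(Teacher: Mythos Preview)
Your proposal is correct and matches the paper's approach exactly: the paper states explicitly that this corollary follows from Theorem~\ref{flujo homeom.geod} together with the characterization proposition, and you do precisely that. One cosmetic slip: the direction of the characterization you actually unpack (topologically robust $\Rightarrow$ uniformly discrete) is the ``if'' direction, not the ``only if'' direction, though the argument you give is the right one.
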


\begin{cor} If $(X,d)$ is a compact connected Riemannian manifold with the natural length metric 
then $(X,d_u)$ is uniformly discrete.
\end{cor}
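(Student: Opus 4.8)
The plan is to observe that this statement is a direct consequence of two results already established in the excerpt, so the proof will be essentially a two-line deduction with no new geometric input required. First I would invoke Corollary \ref{riemann}, which asserts that a compact connected Riemannian manifold equipped with its natural length metric is topologically robust; recall that the substance there is the existence of a uniform convexity radius $r>0$ (obtained from the pointwise convexity radii via a Lebesgue-number argument on the compact manifold), which makes the space $r$-perfectly geodesic and hence topologically robust via Theorem \ref{flujo homeom.geod}.

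Next I would apply the Proposition characterizing topological robustness, namely that $(X,d_u)$ is uniformly discrete if and only if $(X,d)$ is topologically robust. Combining the two, since our Riemannian manifold is topologically robust, the end-space ultrametric $d_u$ on $X$ must be uniformly discrete: there is $\delta>0$ such that $B_{d_u}(x,\delta)=\{x\}$ for every $x\in X$. Concretely, if $\varepsilon_0$ is a robustness radius (so $p_\varepsilon$ is an embedding for $\varepsilon\le\varepsilon_0$), then taking $\delta = \max_{x\in X}\Phi(\bar B(x,\varepsilon_0)) < 1$ works, exactly as in the proof of that Proposition.

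The only thing to double-check is that the hypotheses line up: the characterization Proposition is stated for compact length spaces, and a compact connected Riemannian manifold with its natural length metric is indeed a compact length space (it is geodesic by Hopf--Rinow, hence a length space, and compact by assumption), so the Proposition applies without modification. I do not expect any genuine obstacle here; the entire geometric difficulty has already been absorbed into Corollary \ref{riemann}, and the present statement is merely its reformulation through the end-space model built in this section. A short proof simply reading ``By Corollary \ref{riemann}, $(X,d)$ is topologically robust; now apply the previous Proposition'' is all that is needed.
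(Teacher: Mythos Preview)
Your proposal is correct and matches the paper's own approach exactly: the paper states that this corollary (along with the two surrounding it) follows immediately from the characterization Proposition ($(X,d_u)$ uniformly discrete iff $(X,d)$ topologically robust) combined with Corollary~\ref{riemann}. Your additional hypothesis check is sound and your concrete choice of $\delta$ simply unwinds that Proposition's proof, so there is nothing to add.
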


\begin{cor} If $(X,d)$ is a uniquely geodesic compact length space with curvature bounded below
then $(X,d_u)$ is uniformly discrete.
\end{cor}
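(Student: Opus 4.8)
The plan is simply to chain the two relevant results already proved in the paper. By the Proposition characterizing the ultrametric behaviour of the end space (namely that $(X,d_u)$ is uniformly discrete if and only if $(X,d)$ is topologically robust), it suffices to verify that the hypotheses of the present corollary force $(X,d)$ to be topologically robust. First I would observe that a uniquely geodesic metric space is, a fortiori, locally uniquely geodesic. Hence $(X,d)$ is a locally uniquely geodesic compact length space of curvature bounded below, so Corollary \ref{curvat} applies verbatim and yields that $(X,d)$ is topologically robust. Feeding this into the Proposition gives that $(X,d_u)$ is uniformly discrete, which is exactly the assertion.

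The only point that deserves a moment's attention is the matching of hypotheses: Corollary \ref{curvat} is stated for \emph{locally} uniquely geodesic spaces, a weaker requirement than being globally uniquely geodesic, so nothing extra has to be checked and there is no genuine obstacle. I expect the entire argument to be two lines: invoke Corollary \ref{curvat} for topological robustness (which itself rests on the non-branching of geodesics in spaces of curvature bounded below, see \cite[10.1.2]{Bu-Bu}), then invoke the characterization of uniform discreteness of $(X,d_u)$ via topological robustness. One could equivalently phrase the proof as: the same argument as in the two preceding corollaries, replacing the appeal to Theorem \ref{flujo homeom.geod} or Corollary \ref{riemann} by an appeal to Corollary \ref{curvat}.
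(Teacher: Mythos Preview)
Your proposal is correct and matches the paper's own argument exactly: the paper states that the three final corollaries follow from the characterization ``$(X,d_u)$ uniformly discrete $\Leftrightarrow$ $(X,d)$ topologically robust'' together with \ref{flujo homeom.geod}, \ref{riemann}, \ref{curvat} respectively, so for this corollary one simply invokes Corollary~\ref{curvat} and then the Proposition. Your observation that globally uniquely geodesic trivially implies locally uniquely geodesic is the only bridge needed, and it is correct.
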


\end{document}